\newfont{\bb}{msbm10 at 11pt}
\newfont{\bbsmall}{msbm8 at 8pt}
\def\R{\mathbb{R}}
\def\C{\mathbb{C}}
\def\N{\mathbb{N}}
\def\B{\mathbb{B}}
\def\D{\mathbb{D}}
\def\Z{\mathbb{Z}}
\newcommand{\esf}{\mbox{\bb S}}
\newcommand{\Te}{\mbox{\bb T}}
\newcommand{\Nsmall}{\mbox{\bbsmall N}}
\newcommand{\esfsmall}{\mbox{\bbsmall S}}
\newcommand{\Zsmall}{\mbox{\bbsmall Z}}
\newcommand{\Csmall}{\mbox{\bbsmall C}}
\newcommand{\rth}{\R^3}
\newcommand{\wt}{\widetilde}
\def\a{{\alpha}}
\def\g{{\gamma}}
\def\G{{\Gamma}}
\def\l{{\lambda}}
\def\de{{\delta}}
\def\be{{\beta}}
\def\ve{{\varepsilon}}
\def\centerbmp#1#2#3{\vskip#2\relax\centerline{\hbox to#1{\special
    {bmp:#3 x=#1, y=#2}\hfil}}}
\newtheorem{theorem}{Theorem}[section]
\newtheorem{lemma}[theorem]{Lemma}
\newtheorem{proposition}[theorem]{Proposition}
\newtheorem{remark}[theorem]{Remark}
\newtheorem{corollary}[theorem]{Corollary}
\newtheorem{definition}[theorem]{Definition}
\newtheorem{assertion}[theorem]{Assertion}
\newenvironment{proof}
{\smallskip\noindent{\it Proof.}\hskip \labelsep}
                          {\hfill\penalty10000\raisebox{-.09em}{$\Box$}\par\medskip}
\begin{document}

\begin{title}
{Properly embedded minimal planar domains}
\end{title}
\vskip .5in

\begin{author}
{William H. Meeks III\thanks{This material is based upon
  work for the NSF under Award No. DMS-1309236.
Any opinions, findings, and conclusions or recommendations
  expressed in this publication are those of the authors and do not
  necessarily reflect the views of the NSF.}
  \and Joaqu\'\i n P\' erez$^\dag $
\and Antonio Ros\thanks{Research partially supported by a Spanish MEC-FEDER Grant
no. MTM2007-61775 and MTM2011-22547, and a Regional J. Andaluc\'\i a Grant no.
P06-FQM-01642.}, }
\end{author}

\maketitle

\begin{abstract}
In 1997, Collin~\cite{col1} proved that any properly embedded
minimal surface in $\rth$ with finite topology and more than one end
has finite total Gaussian curvature.  Hence, by an earlier result of
L\'opez and Ros~\cite{lor1},  catenoids are the only non-planar,
non-simply connected, properly embedded, minimal planar domains in
$\rth$ of finite topology.  In 2005, Meeks and Rosenberg~\cite{mr8}
proved that the only simply connected, properly embedded minimal
surfaces in $\rth$ are planes and helicoids. Around 1860, Riemann
defined a one-parameter family of
periodic, infinite topology, properly embedded, minimal planar
domains ${\cal R}_t$ in $\rth$, $t \in (0,\infty)$.
These surfaces are called the {\it Riemann minimal examples},
and the family $\{ {\cal R}_t\} _t$ has natural limits being a vertical catenoid
as $t\to~0$, and a vertical helicoid as $t\to\infty$.
In this paper we complete the classification of properly embedded, minimal planar
domains in $\rth$ by proving that the only connected examples with infinite
topology are the Riemann minimal examples. We also prove that the
limit ends of Riemann minimal examples are model surfaces for
the limit ends of properly embedded minimal surfaces $M\subset\rth$
of finite genus and infinite topology, in the sense that such an $M$
has two limit ends, each of which has a representative which is
naturally asymptotic to a limit end representative of a Riemann
minimal example with the same associated flux vector.

\vspace{.1cm} \noindent{\it Mathematics Subject Classification:}
Primary 53A10,
 Secondary 49Q05, 53C42, 37K10.

\vspace{.1cm} \noindent{\it Key words and phrases:} Minimal surface,
finite total curvature, Jacobi function, Shiffman function,
Korteweg-de Vries equation, KdV hierarchy, algebro-geometric potential,
stability, index of
stability, curvature estimates,
 limit tangent plane at infinity.
\end{abstract}

\pagebreak

\tableofcontents

\section{Introduction.}
The history of minimal surfaces in $\rth$ begins with the discovery of the classical
examples found in the 18-th and 19-th centuries. The first important result in this
direction is due to Euler~\cite{eul}, who proved in 1741 that when a small arc on
the catenary $x_1=\cosh x_3$ is rotated around the $x_3$-axis, then one obtains a
surface which minimizes area among all surfaces of revolution with the same boundary
circles. The entire surface of revolution of $x_1=\cosh x_3$ was initially called the
{\it alysseid} but since Plateau's time, called the catenoid.

 In 1776,
Meusnier~\cite{meu1} observed that the plane, the catenoid and the
helicoid all have zero mean curvature. In this same paper, he proved
that the condition on the mean curvature of the graph $G$ of a
function $u=u(x,y)$ over a domain in the plane to vanish
identically can be expressed by the following quasilinear, second
order, elliptic partial differential equation. This equation was
found in 1762 by
Lagrange\footnote{In reality, Lagrange arrived to a slightly
different formulation, and equation (\ref{eq:ecminsurf}) was derived
five years
 later by Borda~\cite{Bord1}.}~\cite{lag1}, who
 proved that it is equivalent to $G$ being a {\it minimal
 surface},
i.e., a  critical point of the area functional  with respect to
variations fixing the boundary of the graph:
\begin{equation}
\label{eq:ecminsurf} (1+u_x^2)u_{yy}-2u_xu_yu_{xy}+
(1+u^2_y)u_{xx}=0.
\end{equation}
It follows that  the plane, the catenoid and the helicoid are
examples of properly embedded, minimal planar domains in $\rth$ (a
{\it planar domain} is a connected surface which embeds in the plane, or
equivalently which is non-compact, connected and has genus zero).

Around 1860, Riemann discovered  (and posthumously published, Hattendorf and
Riemann~\cite{ri2,ri1}) other interesting examples of properly embedded,
periodic, minimal planar domains in $\rth$. 
These examples, called the {\it Riemann minimal examples}, appear in a
one-parameter family ${\cal
R}_t$, $t \in (0,\infty)$, and satisfy the property that, after
a rotation, each ${\cal R}_t$
intersects every horizontal plane in a circle or in a line.
The ${\cal R}_t$ have natural limits being a vertical catenoid as $t\to
0$ and a vertical helicoid as $t\to \infty$.
The main theorem of this manuscript states that these
beautiful surfaces of Riemann are unique in a particularly simple
way. This result, which was conjectured in~\cite{mpr3}, is motivated
by our earlier proof of it under the additional hypothesis that the
minimal planar domain is periodic~\cite{mpr1} and by partial results
in our previous papers~\cite{mpr8,mpr9,mpr3,mpr4}.

\begin{theorem}
\label{mainthm}
After a homothety and a rigid motion,
any connected, properly embedded, minimal planar domain in $\rth$
with an infinite number of ends is a Riemann minimal example.
\end{theorem}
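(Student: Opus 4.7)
The plan is to combine the structural reductions established in our earlier papers in this series with an integrable-systems analysis of the Shiffman Jacobi function.

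\emph{Step 1 (reduction to a quasiperiodic model).} First I would invoke Collin's theorem to rule out finite total curvature for $M$, and then appeal to our papers \cite{mpr8,mpr9,mpr3,mpr4} to show that $M$ has exactly two limit ends; after a rotation has horizontal limit tangent plane at infinity; carries uniformly bounded Gaussian curvature and bounded local area; and is quasiperiodic, in the sense that every divergent sequence of translates of $M$ subconverges smoothly on compact sets to a nontrivial translate of $M$. Each limit end then admits a representative whose horizontal sections are smooth Jordan curves close to the round sections of a Riemann example.

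\emph{Step 2 (Shiffman function).} Next I would introduce the Shiffman function $S_M$, a canonically defined bounded solution of the Jacobi equation $L_M u = (\Delta_M - 2K_M) u = 0$ on $M$. Its crucial property is that $S_M \equiv 0$ on $M$ if and only if every horizontal section of $M$ is a circle or a straight line, equivalently, $M$ is (congruent to) a Riemann minimal example. The theorem is therefore reduced to proving $S_M \equiv 0$. In terms of the Weierstrass data of $M$, read on the Gauss map cylinder as a Schr\"odinger potential, $S_M$ is the infinitesimal generator of the first nontrivial flow of the Korteweg-de Vries hierarchy, and integrates to a smooth deformation $\{M_t\}$ of $M$ through minimal immersions.

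\emph{Step 3 (finite-gap analysis and conclusion).} Using the curvature and area bounds of Step~1 together with index and stability analysis for $L_M$ on the quasiperiodic ends, I would show that the space of bounded Jacobi functions on $M$ is finite-dimensional. Since the full KdV hierarchy applied to the underlying Schr\"odinger potential would produce an infinite sequence of independent bounded Jacobi functions unless that potential is finite-gap, this forces the Schr\"odinger potential of $M$ to be algebro-geometric. Consequently $M$ lies in a finite-dimensional moduli space of properly embedded minimal planar domains cut out by a finite-order ODE along the KdV hierarchy. Matching the horizontal flux vector and the asymptotic geometry of the two limit ends with those of a Riemann example (whose potential is a classical one-gap Lam\'e potential) then rigidifies $M$ to some $\mathcal{R}_t$.

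\emph{Main obstacle.} The hard part will be the second half of Step~3: showing that the space of bounded Jacobi functions on $M$ is finite-dimensional and translating this into the algebro-geometric structure of the Schr\"odinger potential. This requires the full strength of the quasiperiodicity of $M$, precise index estimates for $L_M$ on its ends, and the spectral machinery of the KdV hierarchy. Once the finite-gap property is in hand, the identification of $M$ with a member of the Riemann family $\{\mathcal{R}_t\}$ is a comparatively rigid moduli-theoretic matter of matching flux and asymptotic data.
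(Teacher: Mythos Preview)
Your Steps~1--2 and the first half of Step~3 match the paper's strategy closely: the reduction to the two-limit-end quasiperiodic class, the Shiffman Jacobi function, the finite dimensionality of bounded Jacobi functions, and the consequent algebro-geometric (finite-gap) nature of the Schr\"odinger potential are exactly what the paper does.

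The gap is in the second half of your Step~3. Two issues. First, the paper does \emph{not} aim to prove $S_M\equiv 0$; it proves the weaker statement that $S_M$ is \emph{linear} (i.e.\ $S_M\in L(N)$), and then shows separately (Lemma~\ref{lema7.2} and Proposition~\ref{propos7.3}) that linearity of $S_M$ forces $M$ to be singly periodic with a genus-one quotient, hence a Riemann example. Second, and more seriously, your proposed endgame---``matching flux and asymptotic data rigidifies $M$ to some $\mathcal{R}_t$''---is not an argument; knowing that the potential is finite-gap does not by itself pin down $M$ among all properly embedded minimal planar domains with the same flux. The paper's actual mechanism is: (i) use the algebro-geometric property to \emph{holomorphically integrate} the Shiffman function, producing a complex one-parameter family $t\mapsto M_t\in\mathcal{M}$ with $M_0=M$ that preserves the flux; (ii) use compactness of the fixed-flux slice $\mathcal{M}_F$ (from the uniform curvature estimates) to find extremizers of the successive planar-end spacings $h_j$; (iii) observe that along the holomorphic deformation the $h_j$ are harmonic in $t$, so by the maximum principle an extremizer has constant $h_j$ for all $j$, which forces $S_M$ to be linear; (iv) since the extremizers are then Riemann examples and there is a unique Riemann example per flux, the maximizer and minimizer coincide, hence every $M\in\mathcal{M}_F$ is that Riemann example. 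This extremization-plus-maximum-principle step is the missing idea in your outline, and without it the passage from ``finite-gap'' to ``Riemann'' is not justified.
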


Understanding properly embedded minimal surfaces in $\R^3$ is
the key for understanding the local structure of embedded minimal
surfaces in any Riemannian three-manifold. Inside this family of surfaces, the case of
genus zero is the most important, both because it is the starting point for
the general theory and because it gives the local picture for any finite genus minimal
surface in a three-manifold away from a finite collection of points where the
genus concentrates. Since we will focus on surfaces with genus zero, the only topological
information comes from the ends (i.e., the ways to go to infinity). For example,
the plane and the helicoid have only one end, the catenoid is topologically a cylinder and thus
has two ends, and the Riemann minimal examples are topologically cylinders with a periodic
set of punctures and thus have infinitely many ends.

Meeks and Rosenberg~\cite{mr8} proved that the only
simply connected, properly embedded, minimal planar domains in $\rth$ are
planes and helicoids. Earlier results of Collin~\cite{col1} and of
L\'opez and Ros~\cite{lor1} demonstrated  that the only non-simply connected,
properly embedded, minimal planar domains in $\rth$ with
a finite number of ends are catenoids.  Consequently,
Theorem~\ref{mainthm} gives the  following final classification
result.

\begin{theorem}[Classification Theorem for Minimal Planar Domains]
\label{classthm}
Up to scaling and rigid motion, any connected,
properly embedded, minimal planar domain in $\rth$ is a plane, a
helicoid, a catenoid or one of the Riemann minimal examples.  In
particular, for every such surface there exists a foliation of $\R^3$
by parallel
planes, each of which intersects the surface transversely in a
connected curve which is a circle or a line.
\end{theorem}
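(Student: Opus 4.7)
The plan is to prove Theorem~\ref{classthm} as a direct corollary of Theorem~\ref{mainthm} together with the three previously cited classification results, organized as a topological case analysis on the number of ends of the surface, followed by a verification of the foliation statement. Since a planar domain is connected, noncompact, and of genus zero, its topological type is determined by its end structure, which falls into exactly one of three cases: one end (simply connected), finitely many but at least two ends, or infinitely many ends.

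First I would handle the simply connected case by quoting the Meeks--Rosenberg theorem, which forces $M$ to be a plane or a helicoid. Next, in the case of finitely many ends with at least two ends, I would first apply Collin's theorem to deduce that $M$ has finite total Gaussian curvature, and then invoke the López--Ros classification of finite total curvature planar domains to conclude that $M$ must be a catenoid (after scaling and a rigid motion). Finally, the case of infinitely many ends is settled directly by Theorem~\ref{mainthm}, yielding a Riemann minimal example. Since the three cases are exhaustive and mutually exclusive, this completes the list of possible surfaces.

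For the ``in particular'' part, I would produce the foliation case by case: for a plane, take any family of parallel planes transverse to it, each intersection being a line; for a helicoid, take the family of planes orthogonal to its axis, each intersection being a line; for a catenoid, take the family of planes orthogonal to its axis, each intersection being a circle; for a Riemann minimal example $\mathcal{R}_t$, use the defining property recalled in the introduction that after a rotation each horizontal plane meets $\mathcal{R}_t$ in a circle or a line. The main ``obstacle'' is of course not in Theorem~\ref{classthm} itself, which is just an assembly statement, but rather sits entirely inside Theorem~\ref{mainthm}; once that infinite-topology classification is in hand, the case analysis and the foliation check are immediate.
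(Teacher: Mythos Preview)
Your proposal is correct and matches the paper's own argument essentially verbatim: the paper explicitly derives Theorem~\ref{classthm} from Theorem~\ref{mainthm} together with the Meeks--Rosenberg, Collin, and L\'opez--Ros results via precisely this case split on the number of ends, and treats the foliation statement as an immediate consequence of the resulting list of surfaces.
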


In a series of pioneering papers, Colding and Minicozzi~\cite{cm21,cm22,cm24,cm23}
gave a rather complete local description of properly embedded
minimal disks in a ball, showing that any such surface is either graphical (like the
plane) or contains a double-spiral staircase (like the helicoid).
Building on these results by Colding-Minicozzi,
Meeks and Rosenberg~\cite{mr8} characterized the plane and the helicoid as the only
simply connected, properly embedded, minimal surfaces in $\rth$.
The present paper relies on Colding-Minicozzi theory which will be used
to reduce the proofs of Theorems~\ref{mainthm} and \ref{classthm}
to Assertion~\ref{ass} below. This reduction is based on
our previously published papers~\cite{mpr3} and~\cite{mpr4} where we appealed to certain
results in the series~\cite{cm21,cm22,cm24,cm23} and to further
structure results by
Colding-Minicozzi for genus zero surfaces that appear in the fifth paper of their series~\cite{cm25}.
Besides these crucial applications of results by Colding-Minicozzi in our papers~\cite{mpr3} and~\cite{mpr4},
no other use will be made in the present paper of their results.
We will make clear in Section~\ref{sec3} which results of Colding-Minicozzi theory are needed and where in~\cite{mpr3} and~\cite{mpr4} they are applied.

The results of Collin~\cite{col1}, L\'opez-Ros~\cite{lor1} and
Meeks-Rosenberg~\cite{mr8} not only lead to the classification of all
properly embedded, minimal planar domains in $\rth$ of finite
topology, but these results and work of Bernstein and Breiner~\cite{bb2},
also characterize the asymptotic behavior of the
annular ends of any properly embedded minimal surface in $\rth$;
namely, each such end contains a representative which is asymptotic
to the end of a plane, a catenoid or a helicoid.
We will apply Theorem~\ref{mainthm} to obtain in Theorem~\ref{asympthm}
a similar characterization of the
asymptotic behavior of any properly embedded minimal surface in $\rth$
with finite genus and infinite topology. We remark that Hauswirth
and Pacard~\cite{hauP1} have  found many interesting
examples of such surfaces for any finite positive genus.

We next outline the organization of the paper and at the same time describe
the strategy for proving Theorems~\ref{mainthm} and
~\ref{classthm}. In
Section~\ref{sec2} we give a brief geometric and analytic description of the
Riemann minimal examples and present images of some of these surfaces.
In Section~\ref{sec3} we outline the basic
definitions and theory that reduce the proof of
Theorem~\ref{classthm} to Theorem~\ref{mainthm}, and the proof of
Theorem~\ref{mainthm} to Assertion~\ref{ass} below; see Theorems~\ref{thmcurvestim}
and~\ref{thm1finallimite} for these reductions, which are crucial in the proof
of our main results and which depend on our previously published papers~\cite{mpr3,mpr4};
in particular these reductions depend in an essential manner on Colding-Minicozzi theory.

Before stating Assertion~\ref{ass}, it is worth introducing some notation.
Suppose that $M\subset \R^3$ is a properly embedded minimal planar domain with
two limit ends (limit ends are defined in Section~\ref{sec3.3}),
such as one of the Riemann minimal examples. After a possible rotation of the surface,
any horizontal plane $P$ intersects $M$ in a simple closed curve or in a proper
Jordan arc $\g _P$ (see Theorem~\ref{thmcurvestim} 
for this property).
If we let $\eta $ denote the unitary upward pointing conormal to $M$
along $\g _P$, then the {\it flux vector}
 of $M$ is defined to be
\[
F_M=\int_{\g _P} \eta \, ds
\]
(here $ds$ stands for the length element), and $F_M$ is independent of the
choice of $P$. We proved in~\cite{mpr3} that, after
a rotation around the $x_3$-axis and a homothety, $F_M$ can be assumed to be
$(h,0,1)$ for some $h>0$. We remark that in our definition of the Riemann minimal
examples, $F_{{\cal R}_t} =(t,0,1).$

\begin{assertion}
\label{ass} Let ${\cal M}$ be the space of properly embedded, minimal planar domains
${M\subset\rth}$ with two limit ends, normalized so that every horizontal
plane intersects $M$ in a simple closed curve or a proper arc, and that the
flux vector is $F_M=(h,0,1)$ for some $h=h(M)>0$.
Then, ${\cal M}$ is the set of
Riemann minimal examples $\{{\cal R}_t\}_{t \in (0,\infty)}$.
\end{assertion}

The strategy to prove Assertion~\ref{ass} is by means of the classical
{\it Shiffman function,} which is a Jacobi function that adapts particularly well
to the problem under consideration. Since minimal surfaces can be viewed
as critical points for the area functional $A$, the nullity of the hessian of
$A$ at a minimal surface $M$ contains
valuable information about the geometry of $M$.
Normal variational fields for $M$ can be identified with functions
(we will always consider orientable
surfaces), and the second variation of area tells us that the functions in the
nullity of the hessian of $A$ coincide with the kernel of the Schr\"{o}dinger
operator $\Delta - 2K$ (called the {\it Jacobi operator}), where $\Delta
$ denotes the intrinsic Laplacian on $M$ and $K$ is the Gaussian curvature function on $M$.
Any function $v$ satisfying $\Delta v - 2Kv=0$
on $M$ is called a {\it Jacobi function,} and corresponds to an infinitesimal
deformation of $M$ by minimal surfaces.
A particularly useful Jacobi function in our
proof of Assertion~\ref{ass}
is the Shiffman function $S_M$, defined
for any surface $M\in {\cal M}$. In Section~\ref{sectShiffman}
we will study the Shiffman function, as well as basic properties of
Jacobi functions on a minimal surface which will be applied in our paper.
In the 1950s, Shiffman~\cite{sh1}
defined and applied $S_M$ to detect when a minimal surface
is foliated by circles and straight lines in parallel planes; this remarkable
property was known to characterize the surfaces $\mathcal{R}_t$ since
Riemann's times~\cite{ri2,ri1}. By \cite{ri2,sh1},
$S_M$ vanishes for a surface $M\in \mathcal{M}$ if and only if $M$ is a Riemann minimal example. Thus, a
possible approach to proving Assertion~\ref{ass} would be to
verify that $S_M$ vanishes for any $M\in {\cal M}$,
although we will not prove this fact directly. Instead, we will
demonstrate that $S_M$ is {\it linear} (i.e.,
it is the normal part of a parallel vector field in $\R^3$), a weaker property
which is enough to conclude that $M$ is a Riemann minimal example (Proposition~\ref{propos7.3}).

The desired linearity of $S_M$ for every $M\in \mathcal{M}$
will follow from the fact that $S_M$
can always be integrated in the following sense:
for an arbitrary $M\in {\cal M}$, there exists a one-parameter family
$\{ M_t\} _t\subset {\cal M}$ with $M_0=M$,
such that the normal part of the variational vector field for this variation,
when restricted to each $M_t$, is the Shiffman Jacobi function
$S_{M_{t}}$ multiplied by the unit normal vector field to $M_t$.
Moreover,
in our integration of $S_M$ by $\{ M_t\} _t$, the parameter $t$
can be extended to be a complex parameter, and $t\mapsto M_t$ can be viewed as
the real part of a complex valued {\it holomorphic} curve in a certain complex variety;
we will refer to this integrability property by saying that the Shiffman function can be
{\it holomorphically integrated;} see Definition~\ref{defholomorphicallyintegrated}
and Remark~\ref{remark6.13}.

Assume for the moment that $S_M$ can be
holomorphically integrated for any $M\in \mathcal{M}$, and we will explain why $S_M$ is linear.
The basic idea is to fix a flux vector $F=(h,0,1)$ and then extremize the spacing between the planar ends
among all examples in $\mathcal{M}_F=\{ M\in \mathcal{M}\ | \ F_M=F\} $ (this
requires a compactness result in $\mathcal{M}_F$ that uses the uniform geometric
estimates from Section~\ref{sec3}). Then one considers the
complex deformation $t\mapsto M_t$ around an extremizer $M_0\in \mathcal{M}_F$,
given by holomorphic integration of the Shiffman function $S_{M_0}$ of $M_0$,
and proves that the entire deformation is contained in $\mathcal{M}_F$ and that the
spacing between planar ends depends harmonically on the complex parameter $t$; this
harmonic dependence together with the maximum principle for harmonic functions
applies to give that the spacing remains constant along the deformation $t\mapsto M_t$,
which can be interpreted as the linearity of the Shiffman function of $M_0$.
From here we conclude that
any minimizer and any maximizer of the spacing between planar ends in $\mathcal{M}_F$
is a Riemann minimal example. As there is only one Riemann minimal
example with each flux, then
the maximizer and minimizer are the same, and thus every surface in $\mathcal{M}_F$ is
both a maximizer and minimizer, which implies that $\mathcal{M}_F$ consists of a single surface
that is a Riemann minimal example. The purpose of Section~\ref{per} is to give the
details of the arguments in this paragraph.

To finish our overview of the proof of Assertion~\ref{ass},
it remains to briefly explain how the Shiffman function $S_M$ can be holomorphically
integrated for any $M\in {\cal M}$, which will be the main task of Section~\ref{sec5}.
The approach is through the Korteweg-de Vries (KdV) equation
and its hierarchy. A change of variables transforms the holomorphic integration of
the Shiffman function into solving a Cauchy problem for a meromorphic KdV equation on the cylinder.
The key step for the solvability of this meromorphic KdV Cauchy
problem amounts to proving that the initial data is an {\it algebro-geometric potential} for KdV,
which is a finiteness condition in the hierarchy associated to the KdV equation that
will be established  in Corollary~\ref{sag}. This finiteness condition
depends crucially on the fact that the space ${\cal J}_{\infty }(M)$
of bounded Jacobi functions on any surface
$M\in \mathcal{M}$ is finite dimensional. Finite dimensionality of ${\cal J}_{\infty }(M)$
could be deduced from a paper by Colding, de Lellis and Minicozzi~\cite{cm39},
although we will include a self-contained  proof in Appendix~1.

In Section~\ref{3dim} we will
prove that all functions in ${\cal J}_{\infty }({\cal R})$ are linear
for any Riemann minimal example ${\cal R}$. This result could be seen as the
linearization of our main classification theorem, although does not directly follow
from the uniqueness of the Riemann minimal examples as there might be a bounded Jacobi function
on ${\cal R}$ that does not integrate to an actual variation.
Finally, in Section~\ref{asymp}, we will apply this characterization
of ${\cal J}_{\infty }({\cal R})$ and Theorem~\ref{mainthm} to prove Theorem~\ref{asympthm},
which describes the asymptotic behavior of the limit ends of properly embedded minimal surfaces
in $\rth$ with finite genus.

The authors would like to thank the referee for his valuable comments and suggestions
to improve the exposition.

\section {Analytic definition of the Riemann minimal examples.}
\label{sec2}
In \cite{ri1}, Riemann classified the compact minimal annuli in $\rth$
which are foliated by circles in some family of horizontal planes. He
proved that besides the catenoid and after a
homothety and a translation of $\rth$, each such annulus is contained
 in a properly embedded, minimal planar domain ${\cal R}(t)$ for some
$t\in (0,\infty)$, described as follows: Consider the rectangular torus
$\Te_t=\C / \Lambda_t$, where $\Lambda_t=\{tm + in \mid m, n \in \Z
\}$ and $i=\sqrt{-1}$, and let ${\cal P}_t$ denote the Weierstrass
${\cal P}$-function on $\Te_t$. Let $\sigma_t\colon \C / \langle i\rangle  \to \Te_t$
denote the related $\Z$-cover of $\Te_t$, where $\langle i\rangle =i\Z $.
Let $g_t$ denote the meromorphic function
$a_t{\cal P}_t\circ \sigma_t \colon  \C / \langle i\rangle  \to
\C\cup\{\infty\}$, where $a_t>0$ is chosen so that the branch values
of $g_t$ are $0,\infty $ and one pair of antipodal points on the real axis;
a simple calculation shows that $a_t=(\sqrt{-{\cal P}_t(i/2){\cal P}_t(t/2)})^{-1}$.
Let $dz$ be the holomorphic differential on
$\C / \langle i\rangle$ coming from the coordinate
$z=x+iy$ in $\C$.

In order to complete our description of ${\cal R}(t)$, it is
convenient to use the fact that $\C / \langle i\rangle $ is
isometric to the cylinder $\esf^1\times\R\subset \R^2\times\R$,
where $\esf^1 =i\R/\langle i\rangle$ is considered to be the circle
of circumference one centered at the origin in $\R^2$. Let $Z_t$ be
the set of zeroes of $g_t$, $P_t$ be the set of poles of $g_t$ and
$E_t=Z_t\cup { P}_t$. With this meromorphic data and with
$z_0=0+\frac{1}{2}i\in \C /\langle i\rangle $, the minimal planar
domain ${\cal R}(t)$ is defined analytically to be the image of the
conformal harmonic map $X_t\colon (\esf^1\times\R)-E_t\to \rth$
defined at $z=x+iy\in (\C /\langle i\rangle )-E_t$  by the
Weierstrass formula given in equation~(\ref{eq:Psi}) of Section~\ref{sectShiffman}:
\[
X_t(z)= \Re \int_{z_0}^{z}\left( \frac{1}{2}
(\frac{1}{g_t}-g_t),\frac{i}{2}(\frac{1}{g_t}+g_t),1\right) dz,
\]
where $\Re (w)$ is the real part of a complex vector
$w\in \C ^3$.

When we view ${\cal R}(t)$ as being parameterized by the punctured flat cylinder
$(\esf^1\times\R)-E_t$, then the level set circle $\esf^1 \times \{s\}$ at a height
$s$ different from $nt$ or $(n+\frac{1}{2})t$ for $n\in\Z$, has  image curve in
$\rth$ which is a circle in the horizontal plane at height $s$. Note that
$L_n=X_t[(\esf^1\times\{nt\} )-\{(0,nt)\}]$ is a line parallel to the $x_2$-axis,
 placed at height $nt$ (here we identify $\esf^1$ with $i\R /\langle i\rangle $). Similarly
$L_n^{\frac{1}{2}}=X_t[(\esf^1\times\{(n+\frac{1}{2})t
\})-\{(\frac{i}{2},(n+\frac{1}{2})t)\}]$ is a line parallel to the
$x_2$-axis and placed at height $(n+\frac{1}{2})t$. The isometry
group of ${\cal R}(t)$ is generated by the reflection in the
$(x_1,x_3)$-plane and the rotations of angle $\pi$ around the lines
$L_n$, $L'_n$, where $L'_n$ is the line parallel to $L_n$ which is
equidistant to $L_n$ and $L_n^{\frac{1}{2}}$ ($L_n'$ intersects
${\cal R}(t)$ orthogonally  at two points), $n\in\Z$. In
particular, the surface ${\cal R}(t)$ is periodic under the orientation preserving
translation $v_t$  given by the composition of the rotation by angle $\pi$
around $L_0$ with rotation by angle $\pi$ around $L_0^{\frac{1}{2}}$.
Thus, $v_t$ lies in the $(x_1,x_3)$-plane and has vertical component
$t$, see Figure~\ref{figure1}.
\begin{figure}
\begin{center}
 \includegraphics[width=15.1cm]{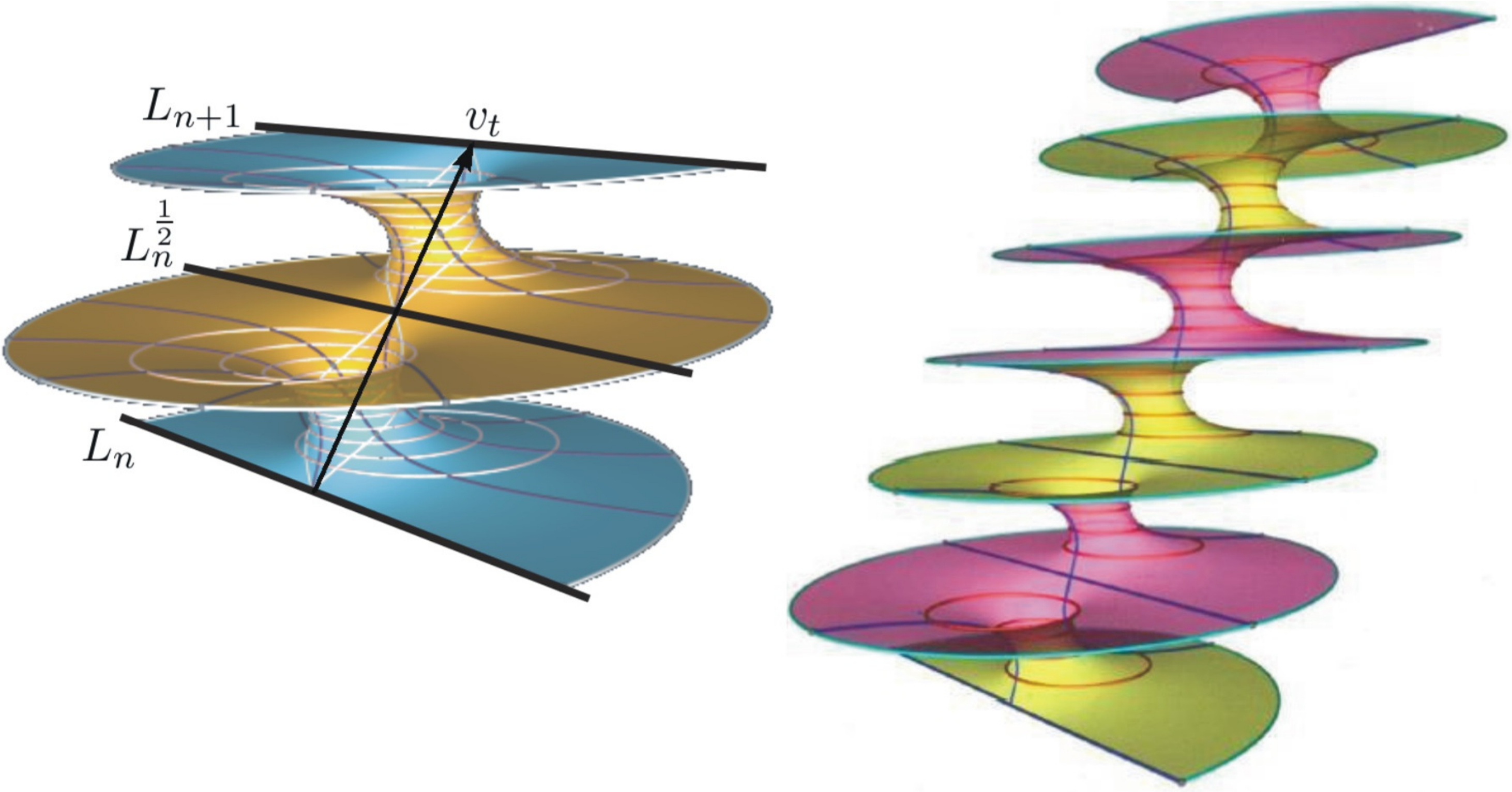}
  \caption{Two Riemann minimal examples (courtesy of M. Weber).}
\label{figure1}
\end{center}
\end{figure}

With respect to the above parametrization $X_t\colon (\C /\langle i\rangle ) -
E_t\to {\cal R}(t)\subset\rth$, the points in $E_t$ correspond to those ends
of ${\cal R}(t)$ which can be represented by annuli asymptotic to horizontal planes
 at heights in $H_t=\{nt,(n+\frac{1}{2})t \mid n\in\Z\}$.
Since $\C /\langle i\rangle $ is naturally conformally $\C-\{0\}\subset
\C\cup\{\infty\}=\esf^2$, we see that ${\cal R}(t)$ is conformally diffeomorphic to
$\esf^2 -{\cal E}({\cal R}(t))$, where ${\cal E}({\cal R}(t))$ is the union of the set of
planar  ends $E_t$ of ${\cal R}(t)$ with  $e_{-\infty}$ (resp. $e_{\infty}$), corresponding to
the bottom (resp. top) end of the cylinder $\C /\langle i\rangle =\esf^1\times\R$ viewed as being the south
(resp. north) pole of $\esf^2$. The set of points ${\cal E}({\cal R}(t)) \subset \esf^2$ with the subspace
topology can be naturally identified with the space of ends of the surface ${\cal
R}(t)$. By the topological classification of non-compact genus-zero surfaces,
${\cal R}(t)$ is the unique (up to homeomorphism) planar domain with two limit
ends. Finally, we remark that the holomorphic function
$g_t|_{(\Csmall /\langle i\rangle )-E_t}$ can be identified with the stereographic
projection of the Gauss map of ${\cal R}(t)$ when we view the Gauss map as being
defined on the parameter space $(\C /\langle i\rangle )- E_t$.

We now are in position to define a normalization of the Riemann minimal
examples that we will use in the sequel. For any $s$, let $\eta$ be the upward pointing,
unitary conormal vector along the boundary curve $\g_s$ of ${\cal R}(t)\cap \{x_3\leq s\}$.
The {\it flux} of ${\cal R}(t)$ is
\[
F_{{\cal R}(t)}=\int_{\g_s}\eta\, ds
\]
and has the form $(h(t),0,1)$ for some positive $h(t)$. It turns out
that $h(t)$ determines the Riemann minimal example ${\cal R}(t)$ and
moreover, $h(t)\rightarrow \infty $ as $t\rightarrow 0$ and $h(t)\to
0$ as $t\to \infty$. Define ${\cal R}_t={\cal R}(h^{-1}(t))$ for
each $t>0$; thus the flux of ${\cal R}_t$ is $(t,0,1)$. Then we
obtain the normalization $\{ {\cal R}_t\} _{t>0}$ of the family of
Riemann minimal examples to which we will we refer throughout this
paper. With this notation, the limit of suitable translations of the
${\cal R}_t$ as $t\to 0$ is a vertical catenoid, and the limit of suitable
translations and homotheties of the
${\cal R}_t$  as $t\to \infty $ is a vertical helicoid, see
Figure~\ref{figure2}.
\begin{figure}
\begin{center}
 \includegraphics[width=15cm]{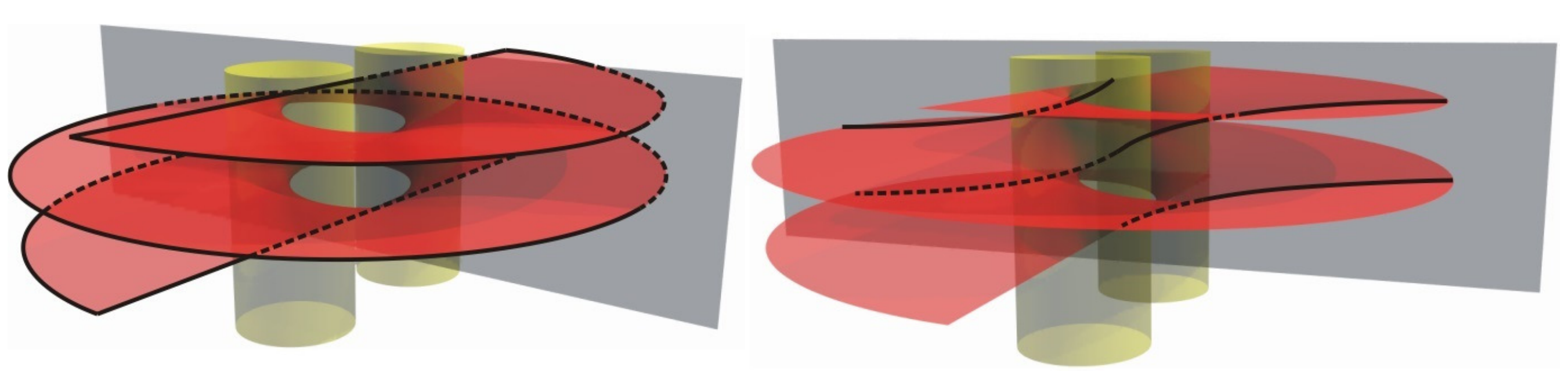}
  \caption{Two views of a
Riemann minimal example close to the helicoidal limit. Two vertical
helicoids are forming at opposite sides of the vertical plane of
symmetry (for the reader's convenience, we have also represented
vertical cylinders containing significant parts of the forming
helicoids).}
\label{figure2}
\end{center}
\end{figure}

\section{Reduction of the proof of Theorem~\ref{classthm}
to the case of two limit ends.}
\label{sec3}
Before proceeding with a discussion of the theoretical
results that reduce the proof of Theorem~\ref{classthm} to the case
of two-limit-end examples of genus zero, we make a few comments
that can suggest to the reader a visual idea of what is going on.
The most natural motivation for understanding this theorem and other
results presented in this section is to try to answer the following general
question: {\it What are the possible shapes of a complete embedded
surface $M\subset \R^3$ which satisfies a variational principle and has
a given topology?} In our case, the variational equation expresses the critical points
of the area functional. We will describe the situation according to the topology
of $M$.

\subsection{Properly embedded minimal surfaces with finite topology and
one end.}
If the requested topology for $M$ is the simplest
one of a disk, then the classification theorem of Meeks and Rosenberg
\cite{mr8} states that the possible shapes for complete examples are
the trivial one given by a plane and (after a rotation) an infinite
double spiral staircase, which is a visual description of a vertical
helicoid. A more precise description of the double spiral staircase
nature of a vertical helicoid is that this surface is the union of
two infinitely sheeted multigraphs, which are glued along a vertical
axis. Crucial in the proof of this classification result are the
results of Colding and Minicozzi~\cite{cm21,cm22,cm24,cm23} which describe
both the local structure of compact, embedded minimal disks
as essentially being modeled by either graphs or pairs of finitely sheeted multigraphs
glued along an ``axis'', and global properties of limits of these shapes.

More generally, if we allow our complete minimal surface $M\subset\rth$ to be
topologically a disk with a finite positive number of handles, then
it turns  out that $M$ is conformally a closed Riemann surface
$\overline{M}$ of positive genus punctured in a single point, see Bernstein
and Breiner~\cite{bb2} and also see Meeks and P\'erez~\cite{mpe3},
where they prove that $M$ is asymptotic to a helicoid and that
it can be defined analytically in terms of meromorphic data on
$\overline{M}$. Using different approaches, Hoffman,
Weber and Wolf~\cite{hweb1} and Hoffman and White~\cite{hofw1} proved the existence of
such a genus-one helicoid; also see Hoffman, Traizet and White~\cite{htw1}
where they construct properly embedded minimal surfaces
with arbitrary positive genus $g\in \N$ and one end.
Meeks and Rosenberg \cite{mr8} have conjectured
that there exists a unique genus $g$ helicoid for each positive
finite $g$.

\subsection{Properly embedded minimal surfaces with finite topology and
more than one end.}
 We now describe the special geometry of any properly
embedded minimal surface $M\subset\rth$ which  has finite topology
and more than one end. In this situation we find many beautiful
examples and even large dimensional families of examples, and so it
is not possible to obtain a general classification result for all of
these surfaces. However, the asymptotic behavior of the ends of $M$
is well-understood by Collin's Theorem~\cite{col1}, which states that
each end of $M$ is asymptotic to the end of a plane or catenoid.
In particular, we find that $M$ is conformally a compact Riemann
surface $\overline{M}$ punctured in a finite number of points and,
by a theorem of Osserman \cite{os3}, $M$ can be  defined
analytically in terms of meromorphic data on $\overline{M}$. For
example, one sees by a simple application of Picard's theorem that
the stereographic projection of the Gauss map $g\colon
M\rightarrow\C\cup\{\infty\}$ extends to a meromorphic function
$G\colon \overline{M}\rightarrow\C\cup\{\infty\}$.
The Gauss-Bonnet formula then implies that the degree of $G$ is equal
to the absolute total curvature of $M$ divided by $4\pi$.

A fundamental classification theorem of L\'opez and Ros \cite{lor1}
states that the plane and the catenoid are the only complete,
embedded, minimal planar domains in $\rth$ with finite total
curvature. Thus, Collin's theorem and the L\'opez-Ros theorem
together imply that the catenoid is the only connected, properly
embedded, minimal planar domain of finite topology in $\rth$ with
more than one end. Summarizing, {\it the plane, the helicoid and the
catenoid are the only properly embedded, minimal planar domains in
$\rth$ with finite topology.}

In order to better understand the asymptotic behavior of general finite topology examples
with more than one end (i.e., finite genus not necessarily zero),
it is helpful to consider the following question:
{\it  What is the visual picture for a
connected, properly embedded minimal surface $M\subset \rth$ with
finite topology and at least two ends?} By Collin's theorem, each end of
$M$ is asymptotic to the end of a catenoid or to a plane. It then
follows from the embeddedness of $M$ that after a fixed rotation of
$M$ and for some large $R_M>0$, $M\cap\{ (x_1,x_2,x_3)\ | \ x^2_1+x^2_2\geq R_M^2\}$
consists of a finite number $E_1,E_2,\ldots ,E_n$ of graphs over the annulus
$A(R_M)=(\R^2 \times \{ 0\} )-\{ (x_1,x_2,x_3)\ | \ x^2_1+x^2_2 < R_M^2\}$. These graphs have
logarithmic growths $\l_1 \leq \l_2
\leq\ldots \leq \l_n$, which are linearly ordered by the relative heights of the graphs
over $A(R_M)$ and the collection $\{E_1,E_2,\ldots ,E_n\}$ represents the
(annular) ends of $M$. Note that  when $\lambda_i=0$, then $E_i$ is
asymptotic to a horizontal plane. The Half-space Theorem by Hoffman and
Meeks~\cite{hm10}
implies that $M$ cannot be contained in a half-space of $\rth$ and so,
$\lambda_1<0$ and $0<\lambda_n$. After this rotation to have graphical
ends on the exterior of a disk on the $(x_1,x_2)$-plane,
 $M$ is said to have {\it horizontal limit tangent
plane at infinity.}

As in the case of one-ended minimal surfaces of finite topology,
there is a precise conjecture on the topological types allowed in  the class of properly
embedded minimal surfaces with finite topology and more than one end.
In 1982, Hoffman and Meeks conjectured that a necessary and sufficient
condition for a surface with finite genus $g$ and a finite number
$k>2$ of ends to admit a proper minimal embedding into $\R^3$ is that
$g+2\geq k$. The case $k=1$ reduces to the plane and Schoen~\cite{sc1} proved that if $k=2$ then the surface is a catenoid (hence $g=0$). L\'opez-Ros~\cite{lor1} proved that  $g=0$ implies that the surface is a catenoid or a plane, hence the last inequality also holds in this case. In the remaining cases, this conjecture  is supported by
existence theorems of Traizet~\cite{tra1} and of Weber and Wolf~\cite{ww1}.
Along these lines, the authors proved that for each fixed genus $g$, there is
an upper bound $k(g)$ for the number of ends of a properly embedded minimal surface
with genus $g$ and finitely many ends~\cite{mpr8}.

\subsection{Properly embedded minimal surfaces with finite genus and
an infinite number of ends.}
\label{sec3.3}
Any properly embedded minimal surface
$M\subset \rth$ with more than one end has an associated plane
passing through the origin which is called {\it the limit tangent
plane at infinity} of~$M$, defined as follows.
Firstly, it can be shown that $\rth - M$ contains the end $E$ of a plane
or catenoid. Such an end $E$ has a limiting normal vector
$v_E$ at infinity which turns out to not depend on the choice of $E$
 in $\rth - M$. The plane passing through
the origin and orthogonal to $v_E$ is the limit tangent plane at
infinity to $M$ (see \cite{chm3} for further details). We will
generally assume that the limit tangent plane at infinity to $M$ is
horizontal, or equivalently, it is the $(x_1,x_2)$-plane. A
fundamental aid in discussing the asymptotic geometry of $M$ is the
Ordering Theorem of Frohman and Meeks~\cite{fme2}, which
states that the space of ends ${\cal E}(M)$ of $M$ has a natural
linear ordering by their relative heights over the
$(x_1,x_2)$-plane, similar to the way in which the ends of a finite topology
minimal surface with more than on end can be linearly ordered.

With this linear ordering on ${\cal E}(M)$ in mind and using the
fact that ${\cal E}(M)$ has a natural topology induced by an order
preserving embedding as a
compact, totally disconnected subspace of the unit interval $[0,1]$
(see Section 2.7 in \cite{mpe1}), we find that there
exist unique elements $e_T,e_B\in {\cal E}(M)$ which are maximal and minimal
elements in the linear ordering on ${\cal E}(M)$, respectively. $e_T$ is called the
{\it top end} and $e_B$ the {\it bottom end} of $M$.
The other ends in ${\cal E}(M)-\{e_B,e_T\}$ are called the
{\it middle ends} of $M$. By a result of Collin, Kusner, Meeks
and Rosenberg~\cite{ckmr1}, the only possible limit ends of
$M$ (limit points of ${\cal E}(M)$ in its natural topology) are
$e_B$ or $e_T$.

The above discussion implies that the classification of the properly
embedded minimal planar domains in $\R^3$ reduces to the classification of examples
with two limit ends and to ruling out the case of one limit end.
We will start by describing the geometry of any surface in the two limit end
case.
Note that Theorem~\ref{thmcurvestim} below uses the notation in Assertion~\ref{ass}.
\begin{theorem}
\label{thmcurvestim}
Given any $M\in {\cal M}$, we have:
\begin{enumerate}
\item $M$ can be conformally parameterized by the cylinder
$\C /\langle i\rangle $ punctured at
an infinite discrete set of points $\{ p_j,q_j\} _{j\in \Zsmall }$.

\item The stereographically projected Gauss map of $M$, considered to be a
meromorphic function $g$ on $\C /\langle i\rangle $ after attaching the planar ends of $M$,
has order-two zeros at the points $p_j$ and order-two poles at the $q_j$.

\item The height differential of $M$ is $dh=dz$ and so, its height function is $x_3(z)=\Re (z)$.
In particular, the middle ends $p_j,q_j$ of $M$ are planar, and they are naturally ordered by heights
by $\Re (p_j)<\Re (q_j)<\Re (p_{j+1})$ for all $j\in \Z$, with $\Re (p_j)\to \infty $
{\rm (}resp. $\Re (p_j) \to -\infty ${\rm )} when $j\to \infty $ {\rm (}resp. $j\to -\infty ${\rm ).}

\item Every horizontal plane intersects $M$
in a simple closed curve when its height is not in $H=\{\Re (p_j),
\Re (q_j)\ | \ j\in \Z \} $ and in a single properly embedded arc
when its height is in $H$; in particular, the principal divisor of
$g$ is $(g)=\prod _{j\in \Zsmall }p_j^2q_j^{-2}$.

\item $M$ has bounded Gaussian curvature, and this bound only depends
on an upper bound of $h$ {\rm (}recall that
the flux of $M$ along a compact horizontal section is $F_M=(h,0,1)$ with $h>0${\rm ).}

\item The vertical spacings between consecutive ends are bounded from above and below
by positive constants that only depend on $h$. Also, $M$ admits an embedded
regular neighborhood of fixed radius $r=r(h)>0$.

\item For every divergent sequence $\{ z_k\} _k\subset \C /\langle i\rangle $,
there exists a subsequence of the meromorphic functions $g_k(z)=g(z+z_k)$ which converges uniformly on compact
subsets of $\C /\langle i\rangle $ to a non-constant meromorphic function
$g_{\infty }\colon \C /\langle i\rangle \to \C \cup \{ \infty \} $. In fact, $g_{\infty }$ corresponds to the
Gauss map of a surface $M_{\infty }\in {\cal M}$, which is the limit
 of a related subsequence of translations of $M$ by vectors whose
$x_3$-components are $\Re (z_k)$.
\end{enumerate}
\end{theorem}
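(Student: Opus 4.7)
The plan is to organize the seven assertions into three blocks and derive each by reducing to results already established in our previous papers~\cite{mpr3,mpr4}, combined with standard consequences of the Weierstrass representation and a blow-up argument. Throughout, the horizontal limit tangent plane at infinity and the normalization $F_M=(h,0,1)$ with $h>0$ provide the key structural inputs.

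For items (1)--(4), which fix the conformal type and the meromorphic Weierstrass data $(g,dh)$, I would first invoke the conformal structure theorems for two-limit-ended, genus-zero, properly embedded minimal surfaces from~\cite{mpr3,mpr4}: any such $M$ is conformally $\esf^2$ minus a countable discrete set of points which accumulates only at two distinguished punctures representing the top and bottom limit ends $e_T$ and $e_B$. Because the vertical component of $F_M$ equals $1$, the height differential $dh$ is a holomorphic, nowhere-zero one-form on $M$ whose absolute real period along any compact horizontal cross section equals $1$. This forces an identification of the punctured sphere with $(\C /\langle i\rangle)-\{p_j,q_j\}_{j\in\Zsmall}$ under which $dh=dz$, yielding items (1) and (3). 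Item (2) follows by examining each middle end: embeddedness and the horizontal limiting normal imply that each middle end is planar with vertical limiting Gauss map, so $g$ extends meromorphically across the puncture with branch value $0$ or $\infty$; requiring that the Weierstrass one-forms $\tfrac{1}{2}(g^{-1}-g)\,dh$ and $\tfrac{i}{2}(g^{-1}+g)\,dh$ have vanishing residues at the puncture (so that the end closes up as an embedded annulus with no logarithmic growth or horizontal flux contribution) forces the order of the zero or pole of $g$ to be exactly two. Item (4) then follows from the Ordering Theorem of Frohman--Meeks together with embeddedness: at heights outside $H$, the horizontal plane is transverse to $M$ and the slice is a compact $1$-manifold separating the two-limit-ended planar domain into top and bottom halves, and a component counting argument together with the two-limit-ends hypothesis forces a single connected component; at heights in $H$ the slice degenerates to a single proper arc due to the planar-end structure and the vertical Gauss map at the puncture.

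For items (5) and (6), the strategy is a rescaling argument. If the Gaussian curvature of a sequence $M_k\in\mathcal{M}$ with $h(M_k)\leq h_0$ were unbounded, then after rescaling by $\sqrt{|K|_{\max}}$ and translating to center the maximum at the origin, one extracts a subsequential limit that is a non-flat, complete, properly embedded minimal surface in $\rth$ with bounded curvature. The normalization $dh=dz$ together with the uniform upper bound on $h$ ensures that the limit inherits the horizontal limit tangent plane and at least one limit end; the results in~\cite{mpr3,mpr4} identify this limit as an element of $\mathcal{M}$ (or exclude lower-complexity alternatives such as catenoids or helicoids), contradicting the blow-up scaling and producing a uniform curvature bound depending only on $h$. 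The upper and lower bounds on the vertical spacings between consecutive middle ends then come from combining the curvature estimate with the flux normalization: a large spacing would produce an intermediate annulus of large conformal modulus, forcing large Gauss map oscillation and contradicting the curvature bound; a small spacing would force the horizontal flux through the intermediate horizontal slab to exceed $h$, contradicting $h(M_k)\leq h_0$. The uniform embedded regular neighborhood of radius $r=r(h)$ is a standard consequence of the curvature estimate together with embeddedness.

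Item (7) is then a compactness statement following from items (5) and (6). The uniform curvature bound implies that the translated Gauss maps $g_k(z)=g(z+z_k)$ form a normal family of meromorphic functions on $\C /\langle i\rangle$; after a diagonal subsequence one extracts a meromorphic limit $g_\infty$, which is non-constant because the uniform spacing bound guarantees at least one zero or pole of the $g_k$ in every vertical slab of width depending only on $h$. The Weierstrass pair $(g_\infty,dz)$ then reconstructs a surface $M_\infty\in\mathcal{M}$ realizing the geometric limit of the translates. The main obstacle I anticipate is in the blow-up step for items (5)--(6): ensuring that the rescaled limit of a sequence in $\mathcal{M}$ remains in $\mathcal{M}$, rather than escaping into a one-ended or finite-topology degeneration. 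This is precisely where the Colding--Minicozzi-based structural results for two-limit-ended minimal surfaces from~\cite{mpr3,mpr4} are indispensable; without them, the estimates in (5) and (6) cannot be closed and the entire chain of reductions breaks down.
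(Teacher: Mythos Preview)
Your outline for items (1)--(4) and (7) is broadly in line with the paper's proof, though the key external inputs for (1)--(4) come from Collin, Kusner, Meeks and Rosenberg~\cite{ckmr1} and Collin~\cite{col1} (parabolicity of $M(+)$, the slab structure separating middle ends, and the planar asymptotics of annular ends) rather than from~\cite{mpr3,mpr4}. The order-two behavior of $g$ at the middle ends in the paper is read off from the fact that the horizontal plane at the height of an end meets $M$ in a single proper Jordan arc, not from a residue computation.

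The substantive gap is in your argument for item~(5). After rescaling by the blowing-up curvature scale, the limit surface cannot inherit a limit end or remain in $\mathcal{M}$: any homotopically nontrivial loop on $M_k$ carries flux of order at least $1$ (the vertical component), and under a homothety by $\lambda_k\to\infty$ this flux diverges, so the limit must be simply connected. By the Meeks--Rosenberg classification it is therefore a vertical helicoid, and this is \emph{not} a contradiction by itself. The paper's proof accepts this helicoidal limit and then performs a second, more delicate renormalization: it locates a second point $\wt q$ at controlled distance where the Gauss map is again horizontal, applies Colding--Minicozzi's limit lamination theorem (Theorem~0.9 and the Non-Mixing Theorem~0.4 in~\cite{cm25}) to show the renormalized sequence converges to the horizontal foliation with two singular vertical lines, and then constructs explicit ``connection loops'' whose flux has horizontal component growing like the distance $6\tau$ between the two singular lines while the remainder stays bounded. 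Letting $\tau\to\infty$ forces the angle between $F_M$ and its horizontal projection to zero, contradicting $h\le h_0$. None of this mechanism is present in your sketch, and without it the curvature bound does not close.

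Your arguments for item~(6) are also too loose. The lower spacing bound does follow from the curvature estimate via the embedded tubular neighborhood, as you say. But the upper bound is not a conformal-modulus argument: the paper instead finds between consecutive planar ends a point with vertical tangent plane and nearby a point of definite curvature, passes to a limit of translates, and shows that unbounded spacing would produce a limit with a catenoidal end of vertical limiting normal and hence \emph{vertical} flux; this is then excluded by a L\'opez--Ros deformation argument. Your ``large modulus forces large Gauss map oscillation'' heuristic does not rule out long, nearly-flat necks and would not yield the contradiction.
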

\begin{proof}
Most of the arguments in this proof can be found in our previous paper~\cite{mpr3}; for the sake of completeness and
also in order to clarify the dependence of the results in~\cite{mpr3} on Colding-Minicozzi theory,
we will include some details about this proof. By Theorem~1.1 in~\cite{ckmr1},
the middle ends of $M$ are not limit ends. As $M$ has genus zero, then these middle ends can be represented by annuli.
By Collin's theorem~\cite{col1}, every annular end of $M$ is asymptotic to the end of a plane or catenoid.
By Theorem~3.5 in~\cite{ckmr1}, there exists a sequence of horizontal planes $\{ P_i\} _{i\in \N }$ with
increasing heights such that $M$ intersects each plane $P_i$ transversely in a compact set,
every middle end of $M$ has an end representative which is the closure of the intersection of $M$ with
the slab bounded by $P_i\cup P_{i+1}$, and every such slab contains exactly one of these middle end representatives.
By the Halfspace Theorem~\cite{hm10},
the restriction of the harmonic third coordinate function $x_3$ to
the portion $M(+)$ of $M$ above $P_0$ is not bounded from above and extends smoothly across the middle ends.
By Theorem~3.1 in~\cite{ckmr1}, $M(+)$ has a parabolic
conformal structure. After compactification of $M(+)$ by adding its middle ends and their limit point
$p_{\infty }$ corresponding to the top end in $M(+)$, we obtain a conformal parameterization
of this compactification defined on the unit disk $\D =\{ |z|\leq 1\} $, so that $p_{\infty }=0$, the middle ends
in $M(+)$ correspond to a sequence of points $p_i\in \D -\{ 0\} $ converging to zero and $x_3|_{M(+)}(z)=-\l \ln|z|+c$
for some $\l ,c\in \R $, $\l>0$.
Also note that different planar ends cannot have the same height above $P_0$ (since they lie in different slabs
bounded by the planes $P_i$). In particular, $M(+)$ intersects every plane $P'$ above $P_0$ in a simple closed curve
if the height of $P'$ does not correspond to the height of any middle end, while $P'$ intersects $M(+)$ is a
Jordan arc when the height of $P'$ equals the height of a middle end. This implies that the zeros and poles
of the stereographically projected Gauss map $g$ of $M$ at the middle ends of $M(+)$ have order two.
Since the behavior of $M(-)=M-[M(+)\cup P_0]$ can be described analogously, then items 1, 2, 3 and 4 of
the theorem are proved.

The fact that the Gaussian curvature $K_M$ of $M$ is bounded with the bound depending
 only on an upper bound of the horizontal part of the flux vector $F_M$ (item 5 of the theorem),
 was proven in Theorem~5
of~\cite{mpr3} in the more general case of a sequence $\{ M(i)\} _i\subset \mathcal{M}$ such that
$\{ h(i)\} _i$ is bounded, where $F_{M(i)}=(h(i),0,1)$ for each $i\in \N$. In this setting, the conclusion of
Theorem~5 of~\cite{mpr3} is that the sequence of Gaussian curvature functions
$\{ K_{M(i)}\} _i$ of the $M(i)$ is
uniformly bounded. Since we will use later this stronger version of the curvature estimates
(namely, in Proposition~\ref{propos7.6}), we now
sketch its proof. The argument is by contradiction, so assume that $\{ K_{M(i)}\} _i$ is not
uniformly bounded.

The first step consists of finding points $p(i)\in M(i)$ and positive numbers $\l (i)\to \infty $
such that after passing to a subsequence, the surfaces $M'(i)=\l (i)(M(i)-p(i))$
converge uniformly on compact subsets of $\R^3$ with
multiplicity 1 to a vertical helicoid $H$ passing through the origin $\vec{0}$, with $|K_H|\leq 1$ and
$|K_H|(\vec{0})=1$. This is done in Lemma~5 of~\cite{mpr3}, whose proof uses a standard blow-up argument
on the scale of curvature; this blow-up process creates a subsequential limit of the $M'(i)$, which is simply
 connected by a flux argument (non-zero fluxes of the limit surface must come from non-zero arbitrarily small
fluxes on the $M(i)$, which is impossible). This limit of the $M'(i)$ is a helicoid $H$
by the classification by Meeks and Rosenberg of the simply connected,
properly embedded minimal surfaces~\cite{mr8} (we remind the reader that this classification depends
on Colding-Minicozzi results contained in~\cite{cm21,cm22,cm24,cm23}). The axis of
$H$ is vertical as the Gauss maps of the $M(i)$ omit the vertical directions by the already proven item 3 of the
theorem, and the normalization of
$K_H$ follows directly from construction. This finishes the first step.

The second step consists of
renormalizing the surfaces $M'(i)$ by rescaling and rotation around the $x_3$-axis, so that
\begin{description}
  \item[(P1)] $\vec{0}\in M'(i)$.
 \item[(P2)] The horizontal section of $M'(i)$ at height zero is a simple closed curve.
  \item[(P3)] $\{ M'(i)\} _i$ converges on compact subsets of $\R^3$ to a vertical helicoid with
  axis passing through $\vec{0}$.
\end{description}
Property {\bf (P3)} above insures that one can find an open arc $\a '(i)\subset M'(i)\cap \{ x_3=0\} $
centered at $\vec{0}$ so that the Gauss map of $M'(i)$ takes values at the end points of $\a '(i)$
in different hemispheres determined by the horizontal equator. By continuity, this allows us
to find a point $q'(i)\in [M'(i)\cap \{ x_3=0\} ]-\a '(i)$ closest to the origin where the
Gauss map of $M'(i)$ is horizontal, and then renormalize the $M'(i)$ by rescaling and rotation around
the $x_3$-axis to define new surfaces $\wt{M}(i)$ so that $q'(i)$ is independent of $i$
and has the form $\wt{q}=(0,6\tau ,0)\in \wt{M}(i)$, where
$\tau >\tau _0$ is fixed but arbitrary, and $\tau _0>1$ is defined by the following
auxiliary property (this is Lemma~4 in~\cite{mpr3},
whose proof only uses the curvature estimates for stable minimal surfaces by Schoen~\cite{sc3}):

\begin{description}
\item[(P4)] There exists $\tau _0>1$ such that given a properly embedded, non-compact,
orientable, stable minimal surface $\Delta $ contained in a horizontal slab of width not greater
than 1, and such that the boundary $\partial \Delta $ lies inside a vertical cylinder of radius
$1$, the portion of $\Delta $ at distance greater than $\tau _0$ from the axis of the cylinder
consists of a finite number of graphs over the complement of a disk of radius $\tau _0$ in the
$(x_1,x_2)$-plane.
\end{description}

The third step consists of finding embedded
closed curves $\de (\tau ,i)\subset \wt{M}(i)$ so that the flux of $\wt{M}(i)$ along $\de (\tau ,i)$ decomposes as
\begin{equation}
\label{eq:teorKestim1}
\mbox{Flux}\left( \wt{M}(i),\de (\tau ,i)\right) =V(\tau ,i)+W(\tau ,i),
\end{equation}
where $V(\tau ,i),W(\tau ,i)\in \R^3$ are vectors such that $\lim
_{i\rightarrow \infty }V(\tau, i)=(12\tau ,0,0)$
and $\| W(r,i)\| $ is bounded by a constant independent of $i,\tau $.
Assuming (\ref{eq:teorKestim1}), the desired contradiction that will give item 5 of the theorem comes from the fact that
the angle between the flux vector $F_{M(i)}$ of the $M(i)$ and its horizontal projection
$h(i)$ is invariant under translations, homotheties and rotations around the $x_3$-axis. But
(\ref{eq:teorKestim1}) implies that the corresponding angles for the flux vectors of the
surfaces $\wt{M}(i)$ tend to zero as $i\rightarrow \infty $ and $\tau \rightarrow \infty $,
which contradicts that the $h(i)$ were assumed to be bounded. To finish this sketch of the proof
of item~5 of Theorem~\ref{thmcurvestim} we will give details on how to construct the connection loops $\de (\tau ,i)$.
Each of these connection loops consists of four consecutive arcs $\a _1(\tau ,i)$, $L(\tau ,i)$,
$\a _2(\tau ,i)$, $\wt{L}(\tau ,i)$ contained in $\wt{M}(i)$ with the following properties:

\begin{description}
\item[(P5)] $\a _1(\tau ,i)$ is a short curve 
close to the axis of the
highly sheeted vertical helicoid which is forming nearby $\vec{0}$ by property {\bf (P3)} above,
which goes down exactly one level in the double staircase structure occurring around
$\vec{0}$.
\item [(P6)] $L(\tau, i)$, $\wt{L}(\tau ,i)$ are approximations of a horizontal segment from $\vec{0}$ to $\wt{q}$,
whose extrema near $\vec{0}$ coincide with the extrema of $\a _1(\tau ,i)$,
    and such that $\wt{L}(\tau ,i)$ lies directly above $L(\tau ,i)$. These almost segments $L(\tau ,i)$,
    $\wt{L}(\tau ,i)$ are defined in pages 23, 24 of~\cite{mpr3} and their existence is insured by
    Lemma~8 in~\cite{mpr3}, which is a delicate application of Colding-Minicozzi theory; this lemma
    asserts that the sequence $\{ \wt{M}(i)\} _i$ is {\it uniformly simply connected in $\R^3$} (ULSC),
    which means that there exists $r>0$ such that every component of the intersection of $\wt{M}(i)$
    with any ball in $\R^3$ of radius $r$ is a disk. This property allows one to use Theorem~0.9
    in Colding and Minicozzi~\cite{cm25}  to conclude that
    after passing to a subsequence, the $\wt{M}(i)$ converges to the foliation $\mathcal{L}$ of $\R^3$
    by horizontal planes with singular set of convergence $S(\mathcal{L})=\G \cup \G'$ being the
    $x_3$-axis $\G $ and the vertical straight line $\G'$ passing through $\wt{q}$. Once
    this limit foliation result is established, the almost-straight-line, almost-horizontal
    segments $L(\tau, i)$, $\wt{L}(\tau ,i)$
    satisfying {\bf (P6)}, as well as the fourth arc $\a _2(\tau ,i)$ in $\de (\tau ,i)$
    satisfying the following property, are easy to construct; for similar constructions, see the Lamination Metric
    Theorem by Meeks (Theorem~2 in~\cite{me30}):
\end{description}
\begin{description}
\item[(P7)] $\a _2(\tau ,i)$ is an embedded arc connecting the end points of $L(\tau, i)$, $\wt{L}(\tau ,i)$ nearby $\wt{q}$,
with length bounded from above by a constant that does not depend either on $i$ or on $\tau $.
\end{description}

Assuming the connection loops $\de (\tau ,i)=\a _1(\tau, i)\cup L(\tau ,i)\cup \a _2(\tau ,i)\cup \wt{L}(\tau ,i)$
are constructed verifying {\bf (P5)}, {\bf (P6)}, {\bf (P7)}, then the decomposition in (\ref{eq:teorKestim1})
reduces to defining $V(\tau ,i)$ as the flux of $\wt{M}(i)$ along $L(\tau ,i)\cup \wt{L}(\tau ,i)$, and
$W(\tau ,i)$ as the flux of $\wt{M}(i)$ along $\a_1(\tau ,i)\cup \a _2(\tau ,i)$. In summary, to
conclude the proof of item~5 of Theorem~\ref{thmcurvestim}
one needs to check that the sequence $\{ \wt{M}(i)\} _i$ satisfies the hypotheses of
Theorem~0.9 in~\cite{cm25} (i.e., that $\{ \wt{M}(i)\} _i$ is
ULSC on every compact subset of $\R^3$; see equation~(0.1) in~\cite{cm25}
for this notion). This proof of this property
starts by demonstrating the following statement (Lemma~6 in~\cite{mpr3}):
\begin{description}
\item[(P8)] There exist $a(i)<0<b(i)$ such that
for every extrinsic ball $B$ of radius 1,
the intersection of $\wt{M}(i)$ with the portion of $B$
inside a horizontal slab $S(a(i),b(i))=\{ (x_1,x_2,x_3)\ | \ a(i)<x_3< b(i)\} $ is
simply connected.
\end{description}
It is worth explaining how the numbers $a(i),b(i)$ in {\bf (P8)} are chosen,
to understand why property {\bf (P8)} holds. We denote by
$\overline{\B }(x_0,r)=\{ x\in \R^3\ | \ \| x-x_0\| \leq r\} $ the closed Euclidean ball
centered at $x_0\in \R^3$ with radius $r>0$. We choose $a(i)<0<b(i)$ so that
$\wt{M}(i)\cap \overline{\B}(\vec{0},1)$
contains an open arc $\be (i)$ passing through $\vec{0}$ connecting $\{ x_3=a(i)\} \cap \partial \B (\vec{0},1)$
to $\{ x_3=b(i)\} \cap \partial \B (\vec{0},1)$, and $\wt{M}(i)\cap \overline{\B}(\wt{q},1)$
contains another open arc passing through $\wt{q}$ connecting $\{ x_3=a(i)\} \cap \partial \B(\wt{q},1)$
to $\{ x_3=b(i)\} \cap \partial \B(\wt{q},1)$, and $S(a(i),b(i))$ is maximal with this property.
Statement {\bf (P8)} is proven by contradiction: the existence of a homotopically non-trivial
 curve $\g(i)\subset \wt{M}(i)\cap S(a(i),b(i))\cap B$ for some extrinsic ball $B$ of radius 1
and a standard area-minimization construction using $\wt{M}(i)$ as a barrier allows us to find
a properly embedded, non-compact, orientable stable minimal surface $\Delta (i)$ in
$S(a(i),b(i))-\wt{M}(i)$ with $\partial \Delta (i)=\g (i)$. As the distance from
$\vec{0}$ to $\wt{q}$ is $6\tau >6\tau _0$ (this $\tau _0$ was defined in Property~{\bf (P4)}),
then the distance from $B$ to at least one of the vertical cylinders $C(\vec{0},1)$, $C(\wt{q},1)$ of radius
1 with axes passing through $\vec{0},\wt{q}$ respectively, is larger than $\tau _0$
(we can assume dist$(B,C(\vec{0},1))>\tau _0$ as the other case can be solved similarly).
By Property~{\bf (P4)}, $\Delta (i)\cap C(\vec{0},1)$ is a union of horizontal graphs, all contained
in $S(a(i),b(i))$. These graphs cross the arc $\be(i)$, which is a contradiction that proves
Property {\bf (P8)}.

With Property~{\bf (P8)} in hand, the next step shows that there exists some $\ve >0$ independent of $i$ so that
$S(-\ve ,\ve )=\{ (x_1,x_2,x_3)\ : \ |x_3|<\ve \} $ is contained in $S(a(i),b(i))$ (Lemma~7 in~\cite{mpr3});
this is essentially a consequence of the 1-sided curvature estimates in~\cite{cm23}. From here we conclude:
\begin{description}
\item[(P9)] The origin is a {\it singular point} for the sequence $\{ \wt{M}(i)\} _i$ (i.e., the Gaussian curvatures of
the $\wt{M}(i)$ near $\vec{0}$ blow-up) and that there exist constants $r,\de > 0$ so that for every extrinsic
ball $B$ of radius $r$ whose center is closer than $\de $ from $\vec{0}$, the intersection of $\wt{M}(i)$
with $B$ consists of compact disks with boundary in $\partial B$. With the notation in Colding-Minicozzi~\cite{cm25}, this can be abbreviated by saying that $\vec{0}\in \mathcal{S}_{ulsc}$.
\end{description}
In this situation one can apply the Non-Mixing Theorem~0.4 in~\cite{cm25}
to conclude that every singular point for the sequence $\{ \wt{M}(i)\} _i$ is in $\mathcal{S}_{ulsc}$, which
in turn implies by Theorem~0.9 in~\cite{cm25} the desired convergence of the $\wt{M}(i)$ to the foliation
$\mathcal{L}$ of $\R^3$ by horizontal planes with singular set consisting of $\G \cup \G '$.

We remark that in order to apply Theorem~0.9 in~\cite{cm25}, one needs to check that some component
of the intersection of $\wt{M}(i)$ with a ball of fixed size centered at $\vec{0}$ is not a disk;
this property holds since otherwise, Theorem~0.1 in \cite{cm23} would lead to the convergence
of (a subsequence of) the $\wt{M}(i)$ to the same horizontal foliation $\mathcal{L}$, but with singular set
consisting solely of $\G $. This contradicts that the tangent plane of the $\wt{M}(i)$ at $\wt{q}$ is
vertical for every $i$.

We also remark that the above argument does not really need that $\{ \wt{M}(i)\} _i$ is ULSC in $\R^3$,
but only that $\{ \wt{M}(i)\} _i$ is ULSC {\it on every compact subset} of $\R^3$,
 as defined in equation~(0.1) in~\cite{cm25}; our argument in~\cite{mpr3}
 to prove that $\{ \wt{M}(i)\} _i$ is ULSC in $\R^3$
is different than the one presented here, as it does not use the Non-Mixing Theorem~0.4 in~\cite{cm25}
but instead, uses a blow-up argument on the scale of topology to produce a new limit object of
a blow-up sequence of the $\wt{M}(i)$ and then finds a contradiction in all possible such limits
(proof of Assertion~2 of~\cite{mpr3}). This finishes our sketch of the proof of
item~5 of Theorem~\ref{thmcurvestim}.

As for the proof of item~6 of the theorem, the fact that the Gaussian curvature function
$K_M$ of a surface $M\in \mathcal{M}$ is bounded implies that $M$ admits an embedded regular neighborhood of
radius $1/\sup \sqrt{|K_M|}$ (see Meeks and Rosenberg~\cite{mr1}). This clearly gives
that the vertical spacing between consecutive ends is bounded from below.
To see why the spacing is bounded from above,
one first checks that for every two consecutive ends of $M$ asymptotic to horizontal planes
$\Pi_n$,$\Pi_{n+1}$, there exists a point $p_n\in M\cap \{ x_3(\Pi _n)<x_3<x_3(\Pi _{n+1})\} $
such that the tangent plane to $M$ at $p_n$ is vertical. Next one shows that given
$\ve >0$ fixed and sufficiently small, there exists a point $q_n\in M$ at intrinsic distance
less than 2 from $p_n$ such that $|K_M(q_n)|>\ve $ (this is a flux argument, since the tangent plane
at $p_n$ is vertical and the vertical component of the flux of $M$ is normalized to be 1).
As $M$ has bounded Gaussian curvature and admits an embedded regular neighborhood of fixed
radius, then the translated surfaces $M-q_n$ converge (after passing to a subsequence) with multiplicity one
to a connected, non-flat, properly embedded minimal planar domain $M_{\infty }$, whose (non-constant)
Gauss map omits the vertical directions. If the spacing between consecutive middle ends of $M$
is unbounded, then one can produce such a limit surface $M_{\infty }$ with a top or bottom end which is
of catenoidal type with vertical limiting normal vector;
this implies that $M_{\infty }$ has vertical flux,
and in this situation one can
 use a variation of the L\'opez-Ros deformation argument~\cite{lor1}
on $M$ to find a contradiction. For details, see page 36 of~\cite{mpr3}.
Similar reasoning shows that the spacing between consecutive ends for a sequence of surfaces
$\{ M_n\} _n\subset \mathcal{M}$ can be bounded from above and below by positive constants
that only depend on upper and non-zero lower bounds of the horizontal component of the flux
vector of the $M_n$. Now item~6 of the theorem is proved.

Finally, we explain the proof of item~7 of the theorem.
Take a divergent sequence $\{ z_k\} _k\subset \C /\langle i\rangle $ and call $g_k(z)=g(z+z_k)$, where
$g$ is the stereographically projected extension of the Gauss map of a surface $M\in \mathcal{M}$.
Recall that the family of functions $\{ g_k\} _k$ is normal if and only if on every compact set $C$ of $\C /\langle
i\rangle $, the sequence of numbers $\{ S_k(C)\} _k$ is bounded from  above, where
\[
S_k(C)=\sup \left\{ \frac{|g_k'(z)|}{1+|g_k(z)|^2} \ | \ z\in C\right\} .
\]
As the height differential of $M$ is $dz$, then the spherical derivative $\frac{|g'(z)|}{1+|g(z)|^2}$ of $g$
is, up to a constant, the square root of the Gaussian curvature of $M$ at the point corresponding to $z$, which is bounded by item~5 of the theorem. Thus, there exists a meromorphic function $g_{\infty }\colon
\C /\langle i\rangle \to \C \cup \{ \infty \} $
so that after passing to a subsequence, the $g_k$ converge uniformly on compact subsets of
$\C /\langle i\rangle $ to $g_{\infty }$. Note that $g_{\infty }$ cannot be constant
since the $z_k$ is at bounded distance in $\C /\langle i\rangle$ from consecutive
ends of $M$ by item~6, where $g$ has zeros and poles. As $g_{\infty }$ is not constant, then
$g_{\infty }$ has only second order zeros and poles by Hurwitz's theorem. The fact that $g_{\infty }$ corresponds to the Gauss map of a surface $M_{\infty }\in \mathcal{M}$
is straightforward; in fact, $M_{\infty }$ is a limit of an appropriately chosen sequence of translations of
$M$. This completes our discussion of the proof of Theorem~\ref{thmcurvestim}.
\end{proof}

Coming back to our discussion of minimal planar domains in $\R^3$, we must rule out the case of one limit end.
This was done in Theorem~1 of~\cite{mpr4}, that we state below.
\begin{theorem}
\label{thm1finallimite}
If $M$ is a connected, properly embedded minimal
  surface  in $\R^3$ with finite genus, then one of the
  following possibilities holds:
\begin{enumerate}
\item $M$ is a plane;
\item $M$ has one end and is asymptotic to the end of a helicoid;
\item $M$ has a finite number of ends greater than one, has finite
 total curvature and each end of $M$ is asymptotic to a plane or to the end
 of a catenoid;
\item $M$ has two limit ends.
\end{enumerate}
Furthermore, $M$ has bounded Gaussian curvature and is conformally
diffeomorphic to a compact Riemann surface punctured in a countable
closed subset which has exactly two limit points if the subset is infinite.
\end{theorem}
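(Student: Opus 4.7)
The plan is to decompose the analysis according to the structure of the space of ends $\mathcal{E}(M)$. By the Ordering Theorem of Frohman--Meeks and the theorem of Collin--Kusner--Meeks--Rosenberg, the only possible limit ends of a properly embedded minimal surface are the top end $e_T$ and the bottom end $e_B$, so the number of limit ends of $M$ is $0$, $1$, or $2$. The strategy is to match these three possibilities (finite topology, exactly one limit end, two limit ends) with the four cases of the statement, then to establish the bounded curvature and the conformal model at the end.

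I would first dispose of the finite topology case. If $M$ has a single end, then either it is simply connected and hence a plane or a helicoid by Meeks--Rosenberg (cases~(1) and~(2)), or it has positive finite genus, in which case the results of Bernstein--Breiner and Meeks--P\'erez force $M$ to be asymptotic to a helicoid (case~(2)). If $M$ has a finite number of ends greater than one, Collin's theorem says each annular end has finite total curvature; combining this with finite genus yields $\int_M |K|\,dA<\infty$, and the classical asymptotic analysis of complete minimal surfaces of finite total curvature (Huber, Osserman, Schoen, Jorge--Meeks) gives case~(3). If $M$ has two limit ends we are immediately in case~(4), so the only outstanding issue is to exclude exactly one limit end.

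The main obstacle is ruling out that $M$ has exactly one limit end, which is the content of~\cite{mpr4}. Assume for contradiction that $M$ has finite genus and precisely one limit end, say $e_T$. Then the complementary end set is finite and all middle ends accumulating toward $e_T$ must be planar ends by the results of~\cite{ckmr1}. The plan is to mimic the analysis of Theorem~\ref{thmcurvestim} for the top end: after excising a compact core containing all the handles, the portion of $M$ above a suitable horizontal plane is a genus-zero piece with one limit end to which the Colding--Minicozzi machinery should apply. A curvature blow-up along the middle ends approaching $e_T$ would produce a limit helicoid, and via the Non-Mixing Theorem~0.4 and Theorem~0.9 of~\cite{cm25} one would extract a subsequential limit lamination of $\R^3$ by horizontal planes with the expected singular set of convergence, yielding a two-limit-end model surface. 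The contradiction would then come from comparing the horizontal flux of $M$ along a horizontal section below the handles (which is bounded in terms of the finitely many bottom ends whose flux is vertical for planar ends and for catenoidal ends) with the horizontal flux produced by the blow-up limit, which forces a nontrivial horizontal component at infinity. The delicate technical point, as in Theorem~\ref{thmcurvestim}, is establishing the ULSC condition on every compact subset of $\R^3$; this in turn requires uniform curvature estimates and embedded regular neighborhoods near $e_T$, which must be obtained by an argument symmetric to items~5 and~6 of Theorem~\ref{thmcurvestim} but adapted to the asymmetric setting of a single limit end.

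Finally, the supplementary statements follow readily. Bounded Gaussian curvature is immediate in cases~(1)--(3) from the explicit asymptotics (planar, helicoidal, or finite total curvature with catenoidal or planar ends). In case~(4), boundedness is obtained by the same blow-up argument as in item~5 of Theorem~\ref{thmcurvestim}, which only uses the finite genus through the flux computation and is otherwise unaffected by a bounded compact region where the handles concentrate. Given bounded curvature, $M$ admits an embedded regular neighborhood of fixed radius, and the parabolicity argument for the top and bottom ends (as in the proof of items~1--4 of Theorem~\ref{thmcurvestim}) realizes $M$ conformally as a compact Riemann surface of genus equal to that of $M$, punctured in a countable closed discrete set; this puncture set has exactly two limit points $e_B$ and $e_T$ in case~(4) and is finite in cases~(1)--(3), completing the argument.
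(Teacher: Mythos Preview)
The paper does not actually prove Theorem~\ref{thm1finallimite}: it states it as Theorem~1 of~\cite{mpr4} and simply records that ``the proof of Theorem~\ref{thm1finallimite} depends on the previous Theorem~\ref{thmcurvestim}, as well as on the Limit Lamination Theorem~0.9 of Colding and Minicozzi~\cite{cm25}.'' Your outline is consistent with this: you reduce to the trichotomy $0$/$1$/$2$ limit ends via~\cite{ckmr1}, dispatch the finite-topology cases by the standard references (Meeks--Rosenberg, Bernstein--Breiner, Collin), and correctly isolate the exclusion of a single limit end as the substantive content, invoking the same Colding--Minicozzi ingredients the paper names.

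That said, your sketch of the one-limit-end contradiction is schematic rather than a proof, and the mechanism you describe is not quite the one in~\cite{mpr4}. You propose a flux contradiction between a horizontal section ``below the handles'' and a blow-up limit near the top, but the actual argument is more delicate: one must first obtain uniform curvature bounds on the one-limit-end representative (this is not automatic from Theorem~\ref{thmcurvestim}, whose proof uses the two-limit-end structure in a more essential way than you suggest), and then rule out the resulting geometry by showing that any limit of translates toward the single limit end would produce a properly embedded genus-zero surface with one limit end and bounded curvature, which is shown to be impossible. The flux argument you sketch does not by itself close this gap. Likewise, your claim that item~5 of Theorem~\ref{thmcurvestim} ``only uses the finite genus through the flux computation'' understates the role of genus zero there: the ULSC verification and the construction of the connection loops $\delta(\tau,i)$ take place on planar domains, and the extension to finite genus proceeds by first passing above (or below) all the handles to a genus-zero end representative, as the paper does in the proof of Theorem~\ref{asympthm}. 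Your overall architecture is right, but the one-limit-end step is where all the work lies, and it requires the full machinery of~\cite{mpr4} rather than a direct flux comparison.
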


The proof of Theorem~\ref{thm1finallimite} depends on the previous Theorem~\ref{thmcurvestim}, as well
as on the Limit Lamination Theorem~0.9 of Colding and Minicozzi~\cite{cm25}.

The discussion in this section completes the reduction of the proof of the main Theorem~\ref{classthm}
to that of Assertion~\ref{ass}.

\section{Jacobi functions on a minimal surface.}
\label{sectShiffman}
The first variation of area allows one to consider
a minimal surface $M\subset \R^3$ to be a critical point for the area
functional acting on compactly supported (normal) variations. The
second variation of area is governed by the {\it stability} or {\it
Jacobi operator} $L=\Delta +|\sigma |^2=\Delta -2K$ where $\Delta $
denotes the intrinsic Laplacian on $M$, $|\sigma |^2$ is the square
of the norm of its second fundamental form and $K$ is its Gaussian
curvature function. $L$ is a linear Schr\"{o}dinger operator whose
potential $|\sigma |^2=|\nabla N|^2$ is associated to the Gauss map
$N\colon M\to \esf^2$ of $M$. The holomorphicity of $N$ is crucial
in understanding the functions in the kernel ${\cal J}(M)$ of $L$
(so called {\it Jacobi functions}), which correspond to normal parts
of infinitesimal deformations of $M$ through minimal surfaces. In
terms of the stereographically projected Gauss map $g$ of $M$, the
Jacobi equation $Lv=0$ can be written as
\begin{equation}
\label{eq:Jacobi}
v_{z\overline{z}}+2\frac{|g'|^2}{(1+|g|^2)^2}v=0,
\end{equation}
where $z$ is a local conformal coordinate on $M$. Note that since
$N\colon M\to \esf^2$ is harmonic, then $\Delta N + |\nabla N|^2N=0$ and thus, ${\cal J}(M)$
always contains the space $L(N)$ of linear functions of the components of $N$
(which we will refer to as {\it linear Jacobi functions}):
\[
L(N)=\{ \langle N,a\rangle \ | \ a\in \R^3\} \subset {\cal J}(M).
\]

Next we briefly recall some well-known facts about the Weierstrass representation,
see e.g. Osserman~\cite{os3,os1}. Besides the
(meromorphic) stereographic projection $g$ of the Gauss map,
we can associate to every minimal surface $M\subset \R^3$
 a holomorphic differential $dh$ (not necessarily exact)
so that $M$ can be parameterized as $X\colon M\to \R^3$, $X(z)=\Re \int ^z\Psi $, where
\begin{equation}
\label{eq:Psi}
\Psi =\left( \frac{1}{2}(\frac{1}{g}-g),\frac{i}{2}(\frac{1}{g}+g),1\right) dh;
\end{equation}
we call $(g,dh)$ the {\it Weierstrass pair} of $M$.
The so called {\it period problem} for $(g,dh)$
amounts to checking that
$\Re \int _{\G }\Psi =0$ for each closed curve $\G \subset M$.
A simple algebraic calculation demonstrates that this vanishing period
condition is
equivalent to the following one for all closed curves $\G \subset M$:
\begin{equation}
\label{eq:periods}
\overline{\int _{\G }\frac{dh}{g}}=\int _{\G }g\, dh,\qquad \Re \int _{\G }dh=0.
\end{equation}

Suppose that $t\mapsto M_t$ is a (smooth) deformation of $M_0=M$ by minimal surfaces.
Away from the set $B(N)$ of branch points of
the Gauss map of $M$, we can use $g$ as a local conformal coordinate for $M_t$, which gives a Weierstrass
pair $(g,dh(t))$ with $dh(0)=dh$. Since the set of meromorphic differentials on $M-B(N)$ is a linear space and
the $\C^3$-valued differential form $\Psi $ in (\ref{eq:Psi}) depends linearly on $dh$, a formal derivation in
(\ref{eq:Psi}) with respect to $t$ at $t=0$ gives rise to a Weierstrass pair
\[
\left( g,\stackrel{\dot{\frown}}{dh}=\left. \frac{d}{dt}\right| _0dh(t)\right) .
\]
The pair $(g,\stackrel{\dot{\frown}}{dh})$
turns out to solve the period problem, defining a branched minimal immersion $X_v$
(possibly constant) with the same Gauss map as $M$. After identification of the space
of infinitesimal deformations of $M$ by minimal surfaces with
the space ${\cal J}(M)$ of Jacobi functions, we have a correspondence
\begin{equation}
\label{eq:Xv}
v\in {\cal J}(M)\mapsto
X_v=\Re \int ^z\left( \frac{1}{2}(\frac{1}{g}-g),\frac{i}{2}(\frac{1}{g}+g),1\right)
\stackrel{\dot{\frown}}{dh}.
\end{equation}
This correspondence was studied by  Montiel and Ros~\cite{mro1}
(see also Ejiri and Kotani~\cite{ek2}), who wrote down explicitly
$X_v,\stackrel{\dot{\frown}}{dh}$ in terms of $v$ as
\begin{equation}
\label{eq:MontielRos}
\left. \begin{array}{l}
{\displaystyle
X_v=vN+\frac{1}{|N_z|^2}\{ v_zN_{\overline{z}}+v_{\overline{z}}N_z\}\colon
M-B(N)\rightarrow \R^3}
\\
{\displaystyle
\stackrel{\dot{\frown}}{dh}=\frac{g}{g'}\left( v_{zz}+\left( \frac{2\overline{g}g'}{1+|g|^2}
-\frac{g''}{g'}\right) v_z\right) dz}
\end{array}\right\}
\end{equation}
where $z$ is any local conformal coordinate on $M$. The Gauss map of $X_v$ is $N$ and its support function
$\langle X_v,N\rangle $ is $v$. Linear Jacobi functions $v\in L(N)$ produce constant maps $X_v$
(and vice versa), and the correspondence $v\mapsto X_v$ is a
linear isomorphism from the linear space of Jacobi functions on $M$
modulo $L(N)$ onto the linear space of all branched
minimal immersions $X\colon M-B(N)\rightarrow \R^3$ with Gauss map $N$
modulo the constant maps. 

A direct consequence of the derivation in $t=0$ of (\ref{eq:Psi}) is that
the map $v\in {\cal J}(M)\mapsto X_v$ behaves well with respect to fluxes,
as stated in the following lemma. Note that since the flux of a minimal surface
is a homological invariant, we can take the curve $\G $ described below away
from the branch points of the  Gauss map.
\begin{lemma}
\label{lema4.1}
In the above setting, let $\{ \psi _t\colon M\rightarrow \R^3\}
_{|t|<\varepsilon }$ be a smooth deformation of $M$ by minimal
surfaces, and denote by $v\in {\cal J}(M)$ the normal part of its variational field.
For any fixed closed curve $\G \subset M$, we have:
\begin{equation}
\label{eq:derivativeflux}
\left. \frac{d}{dt}\right| _{t=0}
\mbox{{\rm Flux}}(\psi _t,\G )=\mbox{{\rm Flux}}(X_v,\G ).
\end{equation}
\end{lemma}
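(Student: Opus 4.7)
The strategy is to express the flux via the Weierstrass representation and then differentiate under the integral sign. The key ingredient I would use is the identity
\[
\mbox{\rm Flux}(X,\G )=-\Im \int _\G \Psi ,
\]
valid for any minimal immersion $X\colon M\to \R^3$ with Weierstrass pair $(g,dh)$ and any closed curve $\G \subset M-B(N)$. This follows from $dX=\Re \Psi $ together with the fact that, in a local conformal coordinate, the unit conormal $\eta $ is obtained from the unit tangent $T$ to $\G $ by multiplication by $i$ via the intrinsic complex structure on $M$; a short computation then yields $\eta \, ds=-\Im (\Psi |_\G )$, whence the identity. Since the flux is a homological invariant and $B(N)$ is a discrete subset of $M$, I may homotope $\G $ to lie in $M-B(N)$ without changing either side of (\ref{eq:derivativeflux}).

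On a tubular neighborhood $U\subset M-B(N)$ of $\G $, the Gauss map $g$ of $M_0=M$ is a local biholomorphism. Hence I may reparameterize each $\psi _t$ on $U$ so that its stereographic Gauss map coincides with the fixed meromorphic function $g$ for every $t$ close to $0$. In this gauge the Weierstrass data of $\psi _t|_U$ take the form $(g,dh(t))$ with $dh(0)=dh$, and $\stackrel{\dot{\frown}}{dh}:=\left. \frac{d}{dt}\right| _0 dh(t)$ is a meromorphic differential on $U$. By the Montiel--Ros formulas (\ref{eq:MontielRos}), this $\stackrel{\dot{\frown}}{dh}$ is precisely the differential attached via (\ref{eq:Xv}) to the normal component $v$ of the variational field of $\psi _t$; in particular, the branched minimal immersion $X_v$ admits $(g,\stackrel{\dot{\frown}}{dh})$ as a Weierstrass pair on $U$.

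With these two ingredients in place, the lemma reduces to linearity of integration. Denote by $\Psi _t$ the $\C ^3$-valued $1$-form (\ref{eq:Psi}) built from $(g,dh(t))$, and by $\stackrel{\dot{\frown}}{\Psi }$ the analogous form built from $(g,\stackrel{\dot{\frown}}{dh})$. Differentiating under the integral sign,
\[
\left. \frac{d}{dt}\right| _0\mbox{\rm Flux}(\psi _t,\G )
=-\Im \int _\G \left. \frac{d\Psi _t}{dt}\right| _0
=-\Im \int _\G \stackrel{\dot{\frown}}{\Psi }
=\mbox{\rm Flux}(X_v,\G ),
\]
which is (\ref{eq:derivativeflux}). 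The only delicate point is the identification, in the chosen gauge, of the derivative $\stackrel{\dot{\frown}}{dh}$ with the differential that the correspondence $v\mapsto X_v$ attaches to the Jacobi function $v$; this matching is exactly the content of (\ref{eq:MontielRos}), after which the remainder of the argument is a one-line calculation.
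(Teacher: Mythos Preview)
Your proof is correct and follows essentially the same approach the paper indicates: the paper states this lemma as ``a direct consequence of the derivation in $t=0$ of (\ref{eq:Psi})'', using exactly the gauge you describe (taking $g$ as a local conformal coordinate away from $B(N)$ so that the Weierstrass data of $\psi_t$ becomes $(g,dh(t))$, with $\Psi$ depending linearly on $dh$). You have simply made explicit the flux identity $\mbox{Flux}(X,\G)=-\Im\int_\G\Psi$ and the differentiation under the integral that the paper leaves implicit.
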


\begin{definition}
{\rm
Given a minimal surface $M$, the {\it conjugate Jacobi function} $v^*$ of a
Jacobi function $v\in {\cal J}(M)$ is defined (locally) as the support function
$\langle (X_v)^*,N\rangle $
of the conjugate minimal immersion $(X_v)^*$. Recall that such a conjugate minimal immersion
is an isometric minimal immersion of the underlying Riemannian
surface, whose coordinate functions are the harmonic conjugates to the ones of $X_v$.
Note that $v^*$ is defined up to additive constants, and $v^*$ is globally well-defined
precisely when $(X_v)^*$ is globally well-defined. We define
\[
{\cal J}_{\Csmall }(M)=\{ v+iv^*\ | \ v\in {\cal J}(M) \mbox{ and $v^*$ is globally defined}\} .
\]
}
\end{definition}
In other words, ${\cal J}_{\Csmall }(M)$ is the space of support
functions $\langle X,N\rangle $ of holomorphic maps $X\colon
M-B(N)\to \C^3$ whose real and imaginary parts are orthogonal to $N$
(now $\langle X,N\rangle $ denotes the usual bilinear complex
product on $\C^3$). Since $(X_v)^{**}=-X_v$, we deduce that the
conjugate Jacobi of $v^*$ is $-v$, which endows ${\cal J}_{\Csmall
}(M)$ with a structure of complex vector space. A simple observation
is that if $v\in L(N)$, then $X_v$ is constant, which means that
$(X_v)^*$ is also
 constant and so $v^*$ is a (globally defined) function in $L(N)$. In other words,
\begin{equation}
\label{eq:L(N)}
L_{\Csmall }(N):=\{ \langle N,a\rangle \ | \ a\in \C^3\} \subseteq {\cal J}_{\Csmall }(M).
\end{equation}
For a general $v\in {\cal J}(M)$, the map $(X_v)^*$ is globally well-defined provided
that all its period vectors along closed curves on $M$
vanish (equivalently, when Flux$(X_v,\G )=0$ for all closed curves $\G \subset M$).
As a direct consequence of Lemma~\ref{lema4.1}, we have the following statement.
\begin{lemma}
\label{lema4.3}
Given a minimal surface $M\subset \R^3$ and $v\in {\cal J}(M)$, the conjugate Jacobi function
$v^*$ of $v$ is globally defined on $M$ if and only if $v$ preserves infinitesimally
the flux vector along every closed curve on $M$.
\end{lemma}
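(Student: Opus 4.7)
The plan is to reduce the statement to the standard fact that, for a minimal immersion, the periods of the conjugate immersion agree with the flux of the original immersion, and then to combine this with Lemma~\ref{lema4.1}.

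First, I would recall what is already granted by the paragraph preceding the lemma: by construction, $v^{*}=\langle (X_v)^{*},N\rangle $ is globally defined on $M$ if and only if the conjugate branched minimal immersion $(X_v)^{*}$ is globally defined on $M-B(N)$ (up to additive constants in $\R^{3}$). Indeed, one implication is immediate, and the other follows because the period vector $P_{\G }\in \R^{3}$ of $(X_v)^{*}$ around any closed curve $\G $ produces the monodromy $\langle P_{\G },N\rangle $ of $v^{*}$; since $N$ is not contained in a plane on any non-empty open set of a non-flat minimal surface (and in the flat case the statement is trivial), the vanishing of $\langle P_{\G },N\rangle $ as a function on $M$ forces $P_{\G }=0$.

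Next I would translate the vanishing of the periods of $(X_v)^{*}$ into a flux condition on $X_v$. From the Weierstrass representation (\ref{eq:Psi}) applied to the pair $(g,\stackrel{\dot{\frown}}{dh})$ associated to $v$ by (\ref{eq:MontielRos}), we have $X_v=\Re \int \Psi _v$ and $(X_v)^{*}=\Im \int \Psi _v$, hence the period vector of $(X_v)^{*}$ around $\G $ is
\begin{equation*}
P_{\G }=\Im \int _{\G }\Psi _v.
\end{equation*}
A classical computation, valid for any minimal immersion, shows that this imaginary-period vector coincides with the flux $\int _{\G }\eta \, ds$ of $X_v$ along $\G $, i.e.\ $P_{\G }=\mbox{Flux}(X_v,\G )$. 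Therefore $(X_v)^{*}$ is single-valued on $M-B(N)$ if and only if $\mbox{Flux}(X_v,\G )=0$ for every closed curve $\G \subset M$.

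Finally, I would invoke Lemma~\ref{lema4.1}: choosing any smooth deformation $\{\psi _t\}$ of $M$ by minimal surfaces whose normal variational field is $v$ (such deformations exist precisely because $v\in {\cal J}(M)$), equation (\ref{eq:derivativeflux}) gives
\begin{equation*}
\left. \frac{d}{dt}\right| _{t=0}\mbox{Flux}(\psi _t,\G )=\mbox{Flux}(X_v,\G )
\end{equation*}
for every closed curve $\G \subset M$. Since the left-hand side is exactly what ``$v$ preserves infinitesimally the flux vector along $\G $'' means, combining this with the previous two steps closes both implications of the lemma.

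The only non-routine step is the period-equals-flux identity $P_{\G }=\mbox{Flux}(X_v,\G )$, but this is a direct computation from the Weierstrass representation, using that the conormal $\eta $ along $\G $ equals $N\wedge \dot{\G }/|\dot{\G }|$ and that the components of $\Psi _v$ are $\partial _z$ of the coordinate functions of $X_v$; I would carry out this identification on simply-connected pieces and extend to closed curves by homology. All other ingredients are already in place in Section~\ref{sectShiffman}.
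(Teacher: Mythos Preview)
Your argument is correct and follows essentially the same route as the paper: the paper states that $(X_v)^*$ is globally defined precisely when $\mbox{Flux}(X_v,\G)=0$ for every closed curve $\G$, and then deduces Lemma~\ref{lema4.3} as a direct consequence of Lemma~\ref{lema4.1}. You simply unpack these two steps in more detail, including the monodromy argument for the equivalence between global definition of $v^*$ and of $(X_v)^*$, and the period--flux identity $\Im\int_\G\Psi_v=\mbox{Flux}(X_v,\G)$.

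One small remark: your parenthetical ``such deformations exist precisely because $v\in{\cal J}(M)$'' is not quite accurate as stated---a Jacobi function is an \emph{infinitesimal} deformation and need not integrate to an actual one-parameter family of minimal immersions. This does not affect your proof, however, since the phrase ``$v$ preserves infinitesimally the flux'' should be (and in the paper implicitly is) interpreted formally as the vanishing of $\mbox{Flux}(X_v,\G)$, which is exactly what Lemma~\ref{lema4.1} identifies as the first-order variation of the flux. With that understanding, no actual integration of $v$ is required.
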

\begin{remark}
\label{rem4.4}
{\rm
As we will see in Sections~\ref{subsecShiffman} and~\ref{sec4.2}, if $M$ is a minimal surface satisfying
the hypotheses of Assertion~\ref{ass} and $M$ is not a Riemann minimal example,
then it admits a non-zero Jacobi function called its {\it Shiffman function} $S_M$,
whose Jacobi conjugate $S_M^*$ is globally defined.
In particular, if there exists a smooth deformation $M_t$ of $M$ by minimal surfaces such that
for every $t$, $M_t$ also admits a Shiffman function $S_{M_t}$ and the normal part of
$\frac{d}{dt}M_t$ equals the Shiffman function $S_{M_t}$, then the flux of $M_t$ along
any closed curve will be independent of~$t$.
}
\end{remark}


\subsection{The Shiffman Jacobi function.}
\label{subsecShiffman}
Next we recall the definition and some basic properties of the Shiffman function.
In 1956, Shiffman~\cite{sh1} introduced a Jacobi function that incorporates the
curvature variation of the parallel sections of a minimal surface. This function
can be defined locally: assume that $(g(z),dh=dz)$ is the Weierstrass data of a
minimal surface $M\subset \R^3$, where $z$ is a local conformal coordinate in $M$
(in particular, $g$ has no zeros or poles and any minimal surface
admits such a local representation around a point with non-vertical normal vector).
The induced metric $ds^2$ by the inner product of
$\R^3$ is $ds^2=\Lambda ^2|dz|^2$, where $\Lambda
=\frac{1}{2}(|g|+|g|^{-1})$. The horizontal level curves $x_3=c$
correspond to $z_c(y)=c+iy$ in the $z$-plane (here $z=x+iy$ with
$x,y\in \R $) and the planar curvature of this level
curve is
\begin{equation}
\label{eq:curvplanarsect} \kappa _c(y)=\left. \left[
\frac{|g|}{1+|g|^2}\Re \left( \frac{g'}{g}\right) \right] \right|
_{z=z_c(y)},
\end{equation}
where the prime stands for derivative with respect to $z$.
\begin{definition}
\label{defShiffman}
{\rm We define the {\it Shiffman function} of $M$ as
\begin{equation}
\label{eq:uShiffman}
S_M=\Lambda \frac{\partial \kappa
 _c}{\partial y}=\Im \left[ \frac{3}{2}\left( \frac{g'}{g}\right) ^2-\frac{g''}{g}
 -\frac{1}{1+|g|^2}\left( \frac{g'}{g}\right) ^2\right],
 \end{equation}
 where $\Im $ stands for imaginary part.
 }
\end{definition}

Since $\Lambda $ is a positive function, the zeros of $S_M$ coincide
with the critical points of $\kappa _c(y)$. Thus, $S_M$ vanishes
identically if and only if $M$ is foliated by pieces of circles and
straight lines in horizontal planes. In a posthumously published
paper, Riemann~\cite{ri2,ri1} classified all minimal surfaces
with such a foliation property: they reduce to the plane, catenoid,
helicoid and the one-parameter family of surfaces defined in Section~\ref{sec2}.
A crucial property is that
$\Delta S_M+|\sigma |^2S_M=0$,
i.e., $S_M$ is a Jacobi function on $M$. Shiffman himself exploited this
property when he proved that if a minimal annulus $M$ is bounded by two circles
in parallel planes, then $M$ is foliated by circles in the intermediate planes;
see also Fang~\cite{fan4} for other applications of the Shiffman function.

Coming back to our properly embedded minimal surface $M\subset \R^3$ in
the family ${\cal M}$ described in Assertion~\ref{ass}, we deduce from
Theorem~\ref{thmcurvestim} (with the notation in that theorem)
that $M$ intersects transversally every horizontal plane and hence,
its Shiffman function $S_M$ can be defined globally on $M=(\C /\langle i\rangle )-\{ p_j,q_j\} _j$.
Expressing $g$ locally around a zero $p_j$, it is straightforward to check that
$S_M$ is bounded around $p_j$, with continuous extension $S_M(p_j)=-\frac{1}{6}\Im \left(
\frac{g^{(5)}(p_j)}{g''(p_j)}\right) $, and a similar result holds for poles of $g$. Hence
$S_M$ can be viewed as a continuous function on the cylinder $\C /\langle i\rangle $. Since
$v=S_M$ solves the Jacobi equation (\ref{eq:Jacobi}) and when
expressed around $p_j$ or $q_j$ the Jacobi equation has the form
$\Delta v+qv=0$ for $q$ smooth (here $\Delta $ refers to the Laplacian in the flat metric
on $\C /\langle i\rangle $),
elliptic regularity implies that $S_M$ extends smoothly to $\C /\langle i\rangle $.
In fact, Corollary~\ref{corol4.10} below implies that $S_M$ is bounded on $\C /\langle i\rangle $.

\subsection{The space of allowed Gauss maps and their infinitesimal deformations.}
\label{sec4.2}
Our method to prove that every $M\in {\cal M}$ is a Riemann minimal example is based on the fact that
the Shiffman function can be integrated at any $M\in {\cal M}$, in a similar manner as
 a vector field on a manifold
admits integral curves passing through any point. To construct a
framework in which this last sentence makes sense, we need
some definitions. Since surfaces $M\in {\cal M}$ have Weierstrass data
$(g,dz)$ on $\C /\langle i\rangle $,
all the information we need for understanding $M$ is contained
in its Gauss map $g$.
We start by defining the appropriate space of functions where these Gauss maps naturally reside.
\begin{definition}
{\rm
A meromorphic function $g\colon \C/\langle i\rangle \to \C \cup \{ \infty \} $ will be called {\it quasiperiodic}
if it satisfies the following two conditions.
\begin{enumerate}
\item There exists a constant $C>0$ such that the distance between
any two distinct points in $g^{-1}(\{ 0,\infty \} )\subset \C / \langle i\rangle $ is at least $C$
and given any $p\in g^{-1}(\{ 0,\infty \} )$,
there exists at least one point in $g^{-1}(\{ 0,\infty \} )-\{ p\} $ of distance less than
$1/C$ from $p$.
\item For every divergent sequence $\{ z_k\} _k\subset
\C /\langle i\rangle $, there exists a subsequence of the meromorphic functions
$g_k(z)=g(z+z_k)$ which converges uniformly on compact subsets of $\C /\langle i\rangle $ to a non-constant
meromorphic function $g_{\infty }\colon \C/\langle i\rangle  \to \C \cup \{ \infty \} $ (in particular,
$g_{\infty }$ is quasiperiodic as well).
\end{enumerate}
}
\end{definition}

\begin{remark}
\label{remark4.6}
{\rm A direct consequence of the last definition is that if  $g:\C /\langle i\rangle \to \C \cup \{ \infty \} $
is a quasiperiodic meromorphic function, then there is a bound on the order of the zeros and a bound on the order of
the poles of $g$, as well as uniform bounds away from zero and from above for the coefficients of $z^k$ (resp. $z^{-k}$)
in the series expansion of $g$ and its derivatives around any zero (resp. pole) of order $k$ of $g$.
}
\end{remark}
\par
\noindent
We consider the space of meromorphic functions
\[
{\cal W}=\left\{ g\colon \C/\langle i\rangle  \to \C \cup \{ \infty \} \ \mbox{quasiperiodic\ } : \
(g)=\prod _{j\in \Zsmall }p_j^2 q_j^{-2}
\right\} ,
\]
where $(g)$ denotes the divisor of zeros and poles of $g$ on $\C /\langle i\rangle $.
Statement~7 of
Theorem~\ref{thmcurvestim} implies that the Gauss map of every $M\in {\cal M}$ lies in ${\cal W}$.
We endow ${\cal W}$ with the topology of uniform convergence on compact sets of $\C /
\langle i \rangle $.
Given $g\in {\cal W}$, it follows from Remark~\ref {remark4.6} that any limit $g_{\infty }$ of (a subsequence of)
$g_k(z)=g(z+z_k)$ with $\{ z_k\} _k\subset \C /\langle i\rangle $ being a divergent sequence, satisfies that
$g_{\infty }$ lies in ${\cal W}$.
If $g\in {\cal W}$ has divisor of zeros $Z=\prod _jp_j^2$, then the
 set $\{ p_j\} _j$ is quasiperiodic in the sense that for every
divergent sequence $\{ z_k\} _k\subset \C /\langle i\rangle $,
there exists a subsequence of $\{ Z+z_k\} _k$ which converges in the Hausdorff distance
on compact subsets of $\C /\langle i\rangle $
to a divisor $Z_{\infty }$ in $\C /\langle i\rangle $ (analogously for poles).

Reciprocally, two disjoint quasiperiodic divisors $Z=\prod _jp_j^2$,
$P=\prod _jq_j^2$ in $\C /\langle i\rangle $ 
define a unique quasiperiodic
meromorphic function $g$ (up to multiplicative non-zero constants)
whose principal divisor is $(g)=Z/P$: existence follows from Douady
and Douady~\cite{Douady1}:
\[
g(z)=\prod _{n\in \Z }c (n)\frac{\cosh \frac{2\pi z-p_j}{2}\cosh \frac{2\pi z-p_{j+1}}{2}}
{\sinh ^2\frac{2\pi z-q_j}{2}},
\]
where $c(n)$ is a non-zero complex number such that the above infinite product converges,
while uniqueness can be shown as follows.
Suppose $g_1,g_2\in {\cal W}$ have $(g_1)=(g_2)$. Then the function
$f=g_1/g_2$ is holomorphic and has no zeros in $\C /\langle i \rangle $.
If $f$ is unbounded on $\C /\langle i\rangle $, then there exists
$\{ z_k\} _k\subset \C /\langle i\rangle $
 such that $f(z_k)$ diverges. Furthermore, $\{ z_k\} _k$ is a divergent sequence
since $f$ has no poles. By quasiperiodicity of $g_1$ and $g_2$,
after extracting a subsequence we can assume that $f_k(z)=f(z+z_k)$ converges uniformly on compact
subsets of $\C /\langle i\rangle$ to a meromorphic function $f_{\infty }
\colon \C/\langle i \rangle \to \C \cup \{ \infty \}$ which is not constant infinity. Then $f_{\infty }$ has no
poles by Hurwitz's theorem, but $f_k(0)=f(z_k)\to \infty $ as $k\to \infty $, which is a contradiction.
Thus $f$ must be bounded, and so $f$ is constant
by Liouville's theorem.

To each $g\in {\cal W}$ we associate the quasiperiodic set
of its zeros $p_j$ and poles
$q_j$ in $\C /\langle i\rangle $
(we choose an ordering for this set of zeros and poles),
together with the value of $g$ at a prescribed point $z_0\in (\C /\langle i\rangle )-g^{-1}(\{ 0,\infty \} )$.
The bijective correspondence
\begin{equation}
\label{eq:topol}
g\mapsto (p_j,q_j,g(z_0))\in \left[ \Pi_{ j\in\mathbb{Z}}\, (\mathbb{C}/\langle i \rangle)\right]
\times (\C -\{ 0\} )
\end{equation}
allows us to identify ${\cal W}$ (endowed with the uniform topology on compact sets on
$\C /\langle i\rangle $) the space $\left[ \Pi_{ j\in\mathbb{Z}}\, (\mathbb{C}/\langle i \rangle)\right]
\times (\C -\{ 0\} )$ (endowed with its metrizable product topology).

Given $\ve >0$, we denote by  $\D (\ve )=\{ t\in \C \ | \ |t|<\ve \}$.
We say that a curve $t\in \D (\ve )\to g_t\in {\cal W}$ with $g_0=g$ is {\it holomorphic} if
the corresponding functions $p_j(t),q_j(t),g_t(z_0)$ depend holomorphically on~$t$.
In this case, the function $\dot{g}\colon \C /\langle i\rangle \to \C \cup \{ \infty \} $ given by
$z\in \C /\langle i\rangle \mapsto \left. \frac{d}{dt}\right| _{t=0}g_t(z)$
is meromorphic on $\C /\langle i\rangle $. We will call $\dot{g}$ the {\it infinitesimal deformation of $g$
associated to the curve $t\mapsto g_t$}.

If $\dot{g}$ is the infinitesimal deformation of $g=g_0\in {\cal W}$ associated to the curve $t\mapsto g_t$
and $g$ has principal divisor $(g)=\prod _jp_j^2q_j^{-2}$, then the principal divisor of $\dot{g}$ clearly satisfies
\begin{equation}
\label{eq:tgW}
(\dot{g})\geq \prod _jp_jq_j^{-3}.
\end{equation}
In particular, if $\dot{g}$ is constant, then $\dot{g}=0$.
Reciprocally, if $f$ is a meromorphic function on $\C /\langle i\rangle $
and its principal divisor verifies $(f)\geq \prod _{j}p_jq_j^{-3}$, then $f$ is the infinitesimal deformation
of $g$ associated to a holomorphic curve $t\mapsto g_t\in {\cal W}$ with $g_0=g$
 (construct $g_t$ up to a multiplicative constant
$a(t)\in \C -\{ 0\} $ from its quasiperiodic principal divisor
$(g_t)=\prod _jp_j(t)^2q_j(t)^{-2}$ where $p_j(t),q_j(t)$ are holomorphic curves in $\C /\langle i\rangle $
such that
$p_j(0)=p_j$, $q_j(0)=q_j$ and the order of $t\mapsto p_j(t)$ at $p_j$ is chosen according to the order of $f$
at $p_j$ for each $j$; then choose the constant $a(t)$ depending holomorphically on $t$ such that $a(0)=
g(z_0)$).

We will denote by $T_g{\cal W}=\left\{ f\colon \C/\langle i\rangle \to \C \cup \{ \infty\} \mbox{ meromorphic }\ | \
(f)\geq \prod _jp_jq_j^{-3} \right\} $
the set of infinitesimal deformations of $g$ associated to holomorphic curves.
By the above arguments, $T_g{\cal W}$ is a complex linear space.

\begin{remark}
\label{remarkg'}
{\rm
Note that $g,g'\in T_g{\cal W}$ are respectively the
infinitesimal deformations at $t=0$ associated to the holomorphic curves
$t\mapsto (t+1)g(z)$, $t\mapsto g(z+t)$ (from now on, we will denote by prime $'$ the
derivation with respect to the conformal coordinate $z$).
}
\end{remark}

Let $\g =\{ it\ | \ t\in [0,1]\}$ be the generator of the homology of the cylinder $\C /\langle i\rangle $.
Given $g\in {\cal W}$, the pair
$(g,dh=dz)$ is the Weierstrass data of a complete, immersed minimal surface in $\R^3$ with embedded
horizontal planar ends (considered each one separately) at the zeros and poles of
$g$ if and only if the corresponding period problem~(\ref{eq:periods}) can be solved. In our setting, this is
equivalent to solving the following equations.
\begin{equation}
\label{eq:periods1}
\overline{\int _{\g }\frac{dz}{g}}=\int _{\g }g\, dz,\qquad \mbox{Res}_{p_j}\left( \frac{dz}{g}\right) =
\mbox{Res}_{q_j}(g\, dz)=0\quad \forall j\in \Z .
\end{equation}
The above equalities suggest defining the {\it period map} $\mbox{Per}\colon {\cal W}\to \C ^2\times \C^{\Zsmall}\times
\C ^{\Zsmall }$ by
\begin{equation}
\label{eq:periodmap}
\mbox{Per}(g)=\left( \int _{\g }\frac{dz}{g}, \int _{\g }g\, dz, \{ \mbox{Res}_{p_j}\left( \frac{dz}{g}\right) \} _j,
\{ \mbox{Res}_{q_j}(g\, dz)\} _j\right) .
\end{equation}
Inside ${\cal W}$ we have the space of {\it immersed} minimal surfaces, i.e., those $g\in {\cal W}$ such that
$(g,dz)$ solves the period problem:
\begin{equation}
\label{eq:Mimm}
{\cal M}_{\mbox{\footnotesize imm}}=\mbox{Per}^{-1}\{ (a,\overline{a},0,0)\ | \ a\in \C \} .
\end{equation}
\begin{definition}
{\rm
A {\it quasiperiodic, immersed minimal surface of Riemann type} is a minimal surface $M\subset \R^3$ which
admits a Weierstrass pair of the form $(g,dz)$ on $(\C /\langle i\rangle )-g^{-1}(\{ 0,\infty \} )$ where $g$ lies in
${\cal M}_{\mbox{\rm \footnotesize imm}}$.
}
\end{definition}
\begin{remark}
{\rm
Since Residue$_{p_j}(\frac{dz}{g})=-\frac{2}{3}\frac{g'''(p_j)}{g''(p_j)^2}$ and Residue$_{q_j}(g\, dz)=
-\frac{2}{3}\frac{(1/g)'''(q_j)}{(1/g)''(q_j)^2}$, the fact that the pair $(g,dz)$ closes the period
at a zero $p_j$ (resp. at a pole $q_j$) of $g$, can be stated equivalently as $g'''(p_j)=0$
(resp. $(1/g)'''(q_j)=0$).
}
\end{remark}

\begin{definition} \label{def4.12}
{\rm
A {\it Jacobi function} associated to an element $g\in {\cal W}$ is a map
$v\colon (\C /\langle i\rangle )-g^{-1}(\{ 0,\infty \} )\to \R $ which satisfies equation (\ref{eq:Jacobi})
on $(\C /\langle i\rangle )-g^{-1}(\{ 0,\infty \} )$.
The linear space of real-valued Jacobi functions associated to $g$ will be denoted by
${\cal J}(g)$. By equations (\ref{eq:Xv}) and (\ref{eq:MontielRos}), every $v\in {\cal J}(g)$ gives rise to
a branched minimal immersion $X_v\colon (\C /\langle i\rangle ) -B(g)\to \R^3$
with (complex) Gauss map $g$, where $B(g)$ is the branch locus of $g$. The {\it conjugate Jacobi function}
$v^*$ of $v\in {\cal J}(g)$ is the (locally defined) support function
of the conjugate minimal immersion $(X_v)^*$ of $X_v$.
}
\end{definition}

We consider the complex linear space
\[
{\cal J}_{\Csmall }(g)=\{ v+iv^*\ | \ v\in {\cal J}(g) \mbox{ and $v^*$ is globally defined}\} .
\]
${\cal J}_{\Csmall }(g)$ is the space of support functions $\langle X,N\rangle $ of
holomorphic maps $X\colon (\C /\langle i\rangle )-B(g)\to \C^3$ whose real and
imaginary parts are orthogonal to $N=\left(
\frac{2g}{|g|^2+1},\frac{|g|^2-1}{|g|^2+1}\right) \in \C \times \R \equiv \R^3$. The linear
functions of $g$ form a complex linear subspace of ${\cal J}_{\Csmall }(g)$:
\[
L_{\Csmall }(g):=\{ \langle N,a\rangle \ | \ a\in \C^3\} \subseteq {\cal J}_{\Csmall }(g).
\]
For later purposes, it is useful to recognize a basis of $L_{\Csmall }(g)$.
Writing $a=(a_1,a_2,a_3)$ with $a_i\in \C $ and using that $g$ is the stereographic projection
of $N$ from the north pole, we have
\begin{equation}
\label{eq:basislinearg}
\langle N,a\rangle =\frac{2}{1+|g|^2}\left[ a_1\Re (g)+a_2\Im (g)
\right] +a_3\frac{|g|^2-1}{|g|^2+1}=\frac{1}{|g|^2+1}\left( Ag+B\overline{g}
\right) +a_3\frac{|g|^2-1}{|g|^2+1},
\end{equation}
where $A,B\in \C $ are determined by the equations $2a_1=A+B$, $2a_2=i(A-B)$. In particular,
$\frac{g}{|g|^2+1},\frac{\overline{g}}{|g|^2+1},\frac{|g|^2-1}{|g|^2+1}$ is a basis of
$L_{\Csmall }(g)$. We will use this fact in the proof of Corollary~\ref{corol4.10} below.

\begin{definition} \label{def4.12quas}
{\rm
A Jacobi function $v\in {\cal J}(g)$ (resp.
${\cal J}_{\Csmall }(g)$) is said to be {\it quasiperiodic} if for every divergent sequence
$\{ z_k\} _k\subset \C /\langle i\rangle $, there exists a subsequence of the functions
$v_k(z)=v(z+z_k)$ which converges uniformly on compact subsets of
$(\C /\langle i\rangle )-g_{\infty }^{-1}(\{ 0,\infty \} )$
 to a function $v_{\infty }$, where $g_{\infty }\in {\cal W}$ is the limit of (a subsequence of)
$\{ g_k(z)=g(z+z_k)\} _k$, which exists since $g$ is quasiperiodic.
Note that $v_{\infty }\in {\cal J}(g_{\infty })$ (resp.
${\cal J}_{\Csmall }(g_{\infty })$), and that if $v_{\infty }$ is constant, then
$v_{\infty }=0$.
}
\end{definition}

Next we give a condition for a Jacobi function to have a globally defined conjugate Jacobi function.

\begin{proposition}
\label{propos4.8}
Given $g\in {\cal M}_{\mbox{\rm \footnotesize imm}}$, we have:
\begin{enumerate}
\item Let $h\colon \C /\langle i\rangle \to \C \cup \{ \infty \} $ be a meromorphic function
which is a rational expression of $g$ and its derivatives with respect to $z$ up to some order, such that
\begin{equation}
\label{eq:gpuntodeh}
\dot{g}(h)=\left(\frac{g^3h'}{2g'}\right) '
\end{equation}
belongs to $T_g{\cal W}$.
Then, the map
\begin{equation}
\label{eq:Jacobideh}
f(h)=\frac{g^2h'}{g'}+\frac{2gh}{1+|g|^2}
\end{equation}
lies in ${\cal J}_{\Csmall }(g)$, is quasiperiodic and bounded on $\C /\langle i\rangle $.
Furthermore, for every closed curve $\G \subset \C /\langle i\rangle $,
\begin{equation}
\label{eq:periodconstant}
\int _{\G }\frac{\dot{g}(h)}{g^2}dz=\int _{\G }\dot{g}(h)\, dz=0.
\end{equation}
\item Reciprocally, if $\dot{g}\in T_g{\cal W}$ satisfies (\ref{eq:periodconstant}),
then there exists a meromorphic function $h$ on $\C /\langle i\rangle $
such that (\ref{eq:gpuntodeh}) holds.
\end{enumerate}
\end{proposition}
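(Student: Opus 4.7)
The proof rests on the Montiel-Ros correspondence (\ref{eq:Xv})-(\ref{eq:MontielRos}) between Jacobi functions and infinitesimal Weierstrass deformations, now applied in the ``dual'' direction: $dh=dz$ is kept fixed while $g$ is perturbed by $\dot g(h)\in T_g{\cal W}$. The key algebraic observation is that, writing $u:=g^3h'/(2g')$ so that $\dot g(h)=u'$, the product rule $(u/g^2)'=u'/g^2-2g'u/g^3$ together with the identity $2g'u/g^3=h'$ gives
\[
\frac{\dot g(h)}{g^2}=\left(\frac{u}{g^2}\right)'+h',
\]
an identity that drives both directions of the proposition.

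For the first implication, I would first verify by direct computation on an open set where $g\notin \{0,\infty\}$ that $\Re f(h)$ satisfies the Jacobi equation (\ref{eq:Jacobi}); equivalently, $f(h)$ is the complex support function $\langle X,N\rangle $ of the holomorphic map $X\colon (\C /\langle i\rangle )-B(g)\to \C^3$ associated via (\ref{eq:Xv})-(\ref{eq:MontielRos}) to the infinitesimal Weierstrass variation $g\mapsto g+t\dot g(h)$ with $dz$ fixed. Integration by parts along any closed curve $\G \subset \C /\langle i\rangle $ then yields (\ref{eq:periodconstant}):
\[
\int_\G \dot g(h)\,dz=[u]_\G=0,\qquad \int_\G \frac{\dot g(h)}{g^2}\,dz=\left[\frac{u}{g^2}\right]_\G+[h]_\G=0,
\]
the boundary terms vanishing because $h$, $u=g^3h'/(2g')$ and $u/g^2=gh'/(2g')$ are all single-valued meromorphic functions on the cylinder. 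By Lemma~\ref{lema4.3}, the vanishing of these infinitesimal flux quantities is precisely what guarantees the conjugate Jacobi function $(\Re f(h))^*$ is globally defined, so $f(h)\in {\cal J}_{\Csmall}(g)$.

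Boundedness of $f(h)$ on $\C /\langle i\rangle $ requires local analysis at each zero $p_j$ and pole $q_j$ of $g$, where the two summands of (\ref{eq:Jacobideh}) each develop apparent singularities. The divisor constraint $(\dot g(h))\ge \prod_j p_jq_j^{-3}$ that defines $T_g{\cal W}$ translates into relations on the principal parts of $h$ which precisely cancel these singularities, leaving $f(h)$ continuous (hence smooth by elliptic regularity for the Jacobi equation) across the ends. Quasiperiodicity of $f(h)$ in the sense of Definition~\ref{def4.12quas}, and a uniform bound on $\C /\langle i\rangle $, follow from the quasiperiodicity of $g$ together with these uniform local expansions.

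For the reciprocal, given $\dot g\in T_g{\cal W}$ satisfying (\ref{eq:periodconstant}) for every closed curve (in particular for small loops around the poles of $\dot g$, so the residues of $\dot g$ and of $\dot g/g^2$ all vanish), the antiderivative $F(z):=\int_{z_0}^z \dot g(\zeta)\,d\zeta$ is a well-defined meromorphic function on $\C /\langle i\rangle $. Setting $h':=2g'F/g^3$ automatically gives $(g^3h'/(2g'))'=F'=\dot g$; integrating by parts using the displayed identity above produces $h=-F/g^2+\int \dot g/g^2\,dz$ up to an additive constant, and the vanishing of $\int_\G \dot g/g^2\,dz$ ensures that $h$ extends to a meromorphic function on $\C /\langle i\rangle $. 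The principal technical difficulty, which I expect to require the most care, is the local analysis in Part~1 at the ends, where one must extract from the algebraic condition $\dot g(h)\in T_g{\cal W}$ the precise cancellations that make $f(h)$ bounded and smooth across each $p_j$ and $q_j$.
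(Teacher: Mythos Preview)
Your approach is essentially the same as the paper's. The identity you write as $\dot g(h)/g^2=(u/g^2)'+h'$ is exactly the paper's equation~(\ref{eq:primitivas}), and both proofs obtain $f(h)\in\mathcal{J}_{\Csmall}(g)$ by recognizing $f(h)$ (up to a linear function of $N$) as the support function of $\left.\frac{d}{dt}\right|_0\int^z\Psi_t$ for the deformation $g_t$ with $\left.\frac{d}{dt}\right|_0 g_t=\dot g(h)$; your detour through Lemma~\ref{lema4.3} is then redundant, since once $f(h)$ is identified as the support function of a globally defined holomorphic $\C^3$-valued map, membership in $\mathcal{J}_{\Csmall}(g)$ is immediate by definition.

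One omission: in the boundedness analysis you only mention the zeros $p_j$ and poles $q_j$ of $g$, but the first summand $g^2h'/g'$ in (\ref{eq:Jacobideh}) also has apparent poles at the zeros of $g'$ away from $g^{-1}(\{0,\infty\})$. The paper treats this as a separate case (its case {\bf (C)}), using that $\dot g(h)\in T_g\mathcal{W}$ forces $(g^3h'/(2g'))'$ to be holomorphic at such a point, hence the branching order of $h$ there is at least that of $g$, which makes $h'/g'$ holomorphic. This is easy but should be included in your local analysis.
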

\begin{proof}
We first demonstrate item 1.
Since $\dot{g}(h)\in T_g{\cal W}$, there exists a holomorphic curve $t\mapsto g_t\in {\cal W}$
such that $g_0=g$ and $\left. \frac{d}{dt}
\right| _{t=0}g_t=\dot{g}(h)$. Therefore
$\langle \left. \frac{d}{dt}\right| _0\int ^z\Psi _t,N\rangle
\in {\cal J}_{\Csmall }(g)$, where
$\Psi _t=\left( \frac{1}{2}(\frac{1}{g_t}-g_t),\frac{i}{2}(\frac{1}{g_t}+g_t),1\right) dz$ and
$N=\left( \frac{2\Re (g)}{|g|^2+1},\frac{2\Im (g)}{|g|^2+1},\frac{|g|^2-1}{|g|^2+1}\right)$.

A simple calculation gives
\begin{equation}
\label{eq:primitivas}
\int ^z\frac{\dot{g}(h)}{g^2}dz=\frac{gh'}{2g'}+h,\qquad
\int ^z\dot{g}(h)\, dz=\frac{g^3h'}{2g'}
\end{equation}
up to additive complex numbers, and then for some $a\in \C^3$ we have
\[
\langle \left. \frac{d}{dt}\right| _0\int ^z\Psi _t,N\rangle =
\langle \int ^z
{\textstyle \left( \frac{1}{2}(-\frac{\dot{g}(h)}{g^2}-\dot{g}(h)),
\frac{i}{2}(-\frac{\dot{g}(h)}{g^2}+\dot{g}(h)),0\right) }dz,N\rangle
\]
\[
=\langle {\textstyle \left( \frac{1}{2}(-\frac{gh'}{2g'}-h-\frac{g^3h'}{2g'}),\frac{i}{2}(-\frac{gh'}{2g'}-h+
\frac{g^3h'}{2g'}),0\right) },N\rangle
+\langle a,N\rangle
\]
\[
=\frac{1}{|g|^2+1}\langle {\textstyle \left( \frac{gh'}{2g'}+h\right) (-1,-i,0)
+\frac{g^3h'}{2g'} (-1,i,0),(\Re(g),\Im(g),0)}\rangle
+\langle a,N\rangle
\]
\[
=\frac{1}{|g|^2+1}\left[ -\left( \frac{gh'}{2g'}+h\right) g-\frac{g^3h'}{2g'}\overline{g}\right]
+\langle a,N\rangle
=-\frac{1}{2}f(h)+\langle a,N\rangle .
\]
In summary,
\begin{equation}
\label{eq:propos4.13new}
\langle \left. \frac{d}{dt}\right| _0\int ^z\Psi _t,N\rangle =
-\frac{1}{2}f(h)+\langle a,N\rangle .
\end{equation}

From (\ref{eq:propos4.13new}) we deduce that $f$ is a Jacobi function and lies in ${\cal J}_{\C }(g)$.
Quasiperiodicity of $f(h)$ follows directly from the quasiperiodicity of $g$, since $h$ is a rational
function of $g$ and its derivatives. In order to prove that $f(h)$ is bounded on
$\C /\langle i\rangle $, we first check that $f(h)$ is bounded around every zero
and pole of $g$ and around every zero of $g'$.
\begin{description}
\item[(A)] \underline{Suppose $z=0$ is a zero of $g$.} It suffices to prove that $\frac{g^2h'}{g'}+2gh$ is bounded around
$z=0$. Since $\frac{g^2h'}{g'}+2gh=\frac{(g^2h)'}{g'}$, we must check that $(g^2h)'$ has a zero at $z=0$. From
equation~(\ref{eq:gpuntodeh}) we have  $h'=\frac{2g'}{g^3}\dot{G}$, where $\dot{G}$ is a primitive of $\dot{g}(h)$
defined in a neighborhood of $z=0$. Thus, $h=\int ^z\frac{2g'}{g^3}\dot{G}\, dz=-\frac{1}{g^2}\dot{G}+\int ^z
\frac{\dot{g}(h)}{g^2}$ and $(g^2h)'=-\dot{g}(h)+\left( g^2\int ^z\frac{\dot{g}(h)}{g^2}dz\right) '$.
As $\dot{g}(h)$ vanishes at $z=0$ (here we are using that $\dot{g}(h)\in T_g{\cal W}$ and formula~(\ref{eq:tgW})), it suffices to show that
$\left( g^2\int ^z\frac{\dot{g}(h)}{g^2}dz\right) '$ vanishes at $z=0$, which clearly follows from the fact that $g$ has an order-two zero and $\dot{g}(h)$
vanishes at $z=0$ (also note that this property does not depend on the constant of integration since $g(0)=0$).
\item[(B)] \underline{Suppose $z=0$ is a pole of $g$.} Since $\dot{g}(h)\in T_g{\cal W}$, then
(\ref{eq:tgW}) implies that  $\left(\frac{g^3h'}{2g'}\right) '$ has at most a
order-three pole at $z=0$. Thus, $\frac{g^2h'}{g'}$ is bounded at $z=0$ and so, $h'$ vanishes at $z=0$. Now
we deduce that $\frac{2gh}{1+|g|^2}$ also vanishes at $z=0$.
\item[(C)] \underline{Suppose $g(0)\neq 0$ and $g'(0)=0$.} Then (\ref{eq:tgW}) implies that
$\left(\frac{g^3h'}{2g'}\right) '$ is holomorphic
at $z=0$. Hence the branching order of $h$ at $z=0$ is not less than the branching order of $g$ at the same point.
Then trivially both $\frac{g^2h'}{g'}$, $\frac{2gh}{1+|g|^2}$ are bounded at $z=0$.
\end{description}
The discussion in  items {\bf (A)}, {\bf (B)}, {\bf (C)} shows that if we consider  the discrete set
$A=g^{-1}(\{ 0,\infty \} )\cup (g')^{-1}(0)$, then for every $Q_j\in A$ there exists a disk $D(Q_j)
\subset \C /\langle i\rangle $ and a positive number $C_j$ such that $|f(h)|\leq C_j$ in $D(Q_j)$.
The quasiperiodicity of $g$ and Remark~\ref{remark4.6} insure that both $C_j$ and the radius of $D(Q_j)$ can be
taken independently of $j$. Hence to deduce that $f(h)$ is bounded on $\C /\langle i\rangle $, it suffices to prove that
$f(h)$ is bounded in $(\C /\langle i \rangle)-\cup _jD(Q_j)$. This last property holds because
$g$ is quasiperiodic, $h$ is a rational expression of $g$ and its derivatives, and $f$ is given in $(\C /\langle i \rangle)-\cup _jD(Q_j)$ in terms of $g,h$ by the formula (\ref{eq:Jacobideh}). Hence, $f(h)$
is bounded on $\C /\langle i\rangle $. Finally, (\ref{eq:periodconstant}) is a direct consequence of
(\ref{eq:primitivas}), and item 1 of the proposition is proved.

Concerning item 2, equation (\ref{eq:primitivas}) together with the hypothesis (\ref{eq:periodconstant})
allow us to find a meromorphic function $h$ on $\C /\langle i \rangle $ such that (\ref{eq:gpuntodeh}) holds. This finishes the proof.
\end{proof}
\begin{remark}
{\rm \mbox{}\newline \par
\vspace{-.5cm}
\begin{enumerate}
\item Equation (\ref{eq:periodconstant}) could be interpreted as the fact that $\dot{g}$ lies in the kernel of the differential $d\, \mbox{Per}_g$ of the period map at $g\in {\cal M}_{\mbox{\rm \footnotesize imm}}$, defined as in (\ref{eq:periodmap}).
\item If one takes $h=c_1+\frac{c_2}{g^2}$ in (\ref{eq:gpuntodeh}) with $c_1,c_2\in \C $,
then $\dot{g}(h)=0$ (and vice versa). Furthermore, $f(h)$ is a complex
linear combination of $\frac{g}{1+|g|^2},\frac{\overline{g}}{1+|g|^2}$, which can be viewed as a horizontal linear function of the ``Gauss map'' $g$. Taking $h=\frac{1}{g}$ in (\ref{eq:gpuntodeh}), then $\dot{g}(h)=-\frac{1}{2}g'$ and
$f(h)=\frac{1-|g|^2}{1+|g|^2}$, which is a vertical linear function of $g$.
\end{enumerate}
}
\end{remark}
\begin{corollary}
\label{corol4.10}
Let $M$ be a quasiperiodic, immersed minimal surface of Riemann type.
Then, its Shiffman function $S_M$ given by (\ref{eq:uShiffman}) admits
a globally defined conjugate Jacobi $S_M^*$, and
$S_M+iS_M^*=f$ is given by equation (\ref{eq:Jacobideh})
for
\begin{equation}
\label{Shiffmandeh}
h=h_S=\frac{i}{2}\frac{(g')^2}{g^3}.
\end{equation}
In particular:
\begin{enumerate}
\item Both $S_M,S_M^*$ are bounded on the cylinder $M\cup g^{-1}(\{ 0,\infty \} )$.
\item The corresponding infinitesimal deformation
$\dot{g}_S=\dot{g}(h_S)\in T_g{\cal W}$ is given by
\begin{equation}
\label{gpuntodeShiffman}
\dot{g}_S=\frac{i}{2}\left( g'''-3\frac{g'g''}{g}+\frac{3}{2}\frac{(g')^3}{g^2}\right) .
\end{equation}
\item If $\dot{g}_S=0$ on $M$, then both $S_M,S_M^*$ are linear.
\end{enumerate}
\end{corollary}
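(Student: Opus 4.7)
The plan is to verify the explicit formula $f(h_S)=S_M+iS_M^*$ by direct algebra, check that the associated $\dot g_S$ lies in $T_g{\cal W}$ through a local expansion at the zeros and poles of $g$, and then apply Proposition~\ref{propos4.8} for the global conclusions.

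Substituting $h_S=\frac{i}{2}(g')^2/g^3$ into (\ref{eq:Jacobideh}) and simplifying gives
\[
\frac{g^2 h_S'}{g'}=i\left(\frac{g''}{g}-\frac{3}{2}\frac{(g')^2}{g^2}\right),\qquad
\frac{2g\,h_S}{1+|g|^2}=\frac{i(g')^2}{g^2(1+|g|^2)},
\]
so $f(h_S)=-iB$, where $B$ denotes the bracket in Definition~\ref{defShiffman}. Since $S_M=\Im B=\Re(-iB)$, the real part of $f(h_S)$ equals $S_M$, and we set $S_M^*:=\Im f(h_S)$. Differentiating the primitive $\frac{g^3h_S'}{2g'}=\frac{i}{2}g''-\frac{3i}{4}(g')^2/g$ yields expression (\ref{gpuntodeShiffman}) for $\dot g_S$, which is item 2.

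The key technical point is to show $(\dot g_S)\geq \prod_j p_jq_j^{-3}$, so that $\dot g_S\in T_g{\cal W}$. At a zero $p_j$, the period condition (\ref{eq:periods1}) forces $g'''(p_j)=0$, giving the local expansion $g(z)=a_2\zeta^2+a_4\zeta^4+a_5\zeta^5+\cdots$ with $\zeta=z-p_j$. Substituting this into (\ref{gpuntodeShiffman}) and tracking powers of $\zeta$, the singular contributions of $-3g'g''/g$ and $\frac{3}{2}(g')^3/g^2$ yield simple poles of opposite residues that cancel, while the $\zeta^0$ terms cancel using $a_3=0$; pushing one more order shows the surviving leading term is a multiple of $a_4\zeta$, so $\dot g_S$ vanishes to order at least one at $p_j$. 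An analogous calculation at a pole $q_j$, performed in the variable $1/g$ and using $(1/g)'''(q_j)=0$, shows $\dot g_S$ has a pole of order at most three there. Hence $\dot g_S\in T_g{\cal W}$, and Proposition~\ref{propos4.8}(1) then yields $f(h_S)\in {\cal J}_{\Csmall}(g)$, bounded and quasiperiodic on $\C/\langle i\rangle$, which is item 1.

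For item 3, assume $\dot g_S\equiv 0$. Then any holomorphic curve $t\mapsto g_t\in{\cal W}$ tangent to $\dot g_S$ is first-order stationary, so the Weierstrass form $\Psi_t$ is stationary and $\frac{d}{dt}\big|_0\int^z\Psi_t$ is a constant vector $b\in\C^3$. Combined with the identity (\ref{eq:propos4.13new}) established inside the proof of Proposition~\ref{propos4.8}, this gives $f(h_S)=2\langle a-b,N\rangle\in L_{\Csmall}(g)$, so $S_M$ and $S_M^*$ are both linear Jacobi functions. The principal obstacle throughout is the local analysis above: the $\zeta^{-1}$ cancellation is automatic, but the vanishing of the $\zeta^0$ coefficient at each $p_j$ uses the period condition $g'''(p_j)=0$ in an essential way, and analogously at each pole $q_j$.
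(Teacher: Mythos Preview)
Your proof is correct and follows essentially the same strategy as the paper: verify by direct computation that substituting $h_S$ into (\ref{eq:Jacobideh}) and (\ref{eq:gpuntodeh}) yields $S_M+iS_M^*$ and the formula (\ref{gpuntodeShiffman}), check that $\dot g_S$ satisfies the divisorial condition (\ref{eq:tgW}), and then invoke Proposition~\ref{propos4.8}. Your local expansion at $p_j$ (with the cancellation of the $\zeta^{-1}$ terms being automatic and the vanishing of the $\zeta^0$ term relying on $g'''(p_j)=0$) makes explicit what the paper states only as ``a direct computation.''

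The one place where you diverge from the paper is item~3. You argue via (\ref{eq:propos4.13new}): since $\dot g_S=0$, the derivative $\frac{d}{dt}\big|_0\int^z\Psi_t$ is constant, forcing $f(h_S)\in L_{\Csmall}(g)$. The paper instead integrates the ODE $\dot g(h_S)=\bigl(\tfrac{g^3h_S'}{2g'}\bigr)'=0$ directly to obtain $h_S=b-c/g^2$ for constants $b,c\in\C$, and then substitutes into (\ref{eq:Jacobideh}) to get the explicit linear form $f(h_S)=2b\,\frac{g}{1+|g|^2}+2c\,\frac{\bar g}{1+|g|^2}$. Both arguments are valid; the paper's is more self-contained and algebraic, while yours reuses the geometric identity established inside the proof of Proposition~\ref{propos4.8}.
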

\begin{proof}
Note that $h$ defined by (\ref{Shiffmandeh}) is a rational expression of $g$ and $g'$.
A direct computation gives that plugging (\ref{Shiffmandeh}) into (\ref{eq:gpuntodeh}) we obtain
(\ref{gpuntodeShiffman}), and that this last expression has the correct behavior expressed in (\ref{eq:tgW}).
In particular, $\dot{g}_S$ is the tangent vector associated to a holomorphic curve in ${\cal W}$ passing
through $g$ at $t=0$, i.e., $\dot{g}_S\in T_g{\cal W}$.
Using Proposition~\ref{propos4.8}, we deduce item 1 of this corollary. It only remains
to check item~3: If $\dot{g}(h_S)=0$, then (\ref{eq:gpuntodeh}) gives $h_S=b-\frac{c}{g^2}$ for
$b,c\in \C $. After substitution in (\ref{eq:Jacobideh}), we obtain
$S_M+iS_M^*=2c\frac{\overline{g}}{1+|g|^2}+2b\frac{g}{1+|g|^2}$. Hence,
both $S_M,S_M^*$ are linear.
\end{proof}

\section{Holomorphic integration of the Shiffman function.}
\label{sec5}
In this section we prove that the Shiffman function $S_M$ of a quasiperiodic,
immersed minimal surface $M$ of Riemann type can be holomorphically integrated
(Theorem~\ref{thmintegrShiffman} below), in the sense that $M$ can be deformed
by a complex family $t\mapsto M_t$ where $t$ moves in a disk $\D (\ve )\subset \C $
centered at the origin, $M_0=M$, such that each $M_t$ is a quasiperiodic, immersed minimal surface
of Riemann type and at any $t\in \D (\ve )\cap \R $, the normal component of the variational
field of $t\mapsto M_t$ is the Shiffman function of $M_t$. This property will be crucial
in our proof of Assertion~\ref{ass}.

The approach to prove the holomorphic integration of $S_M$
is by means of  meromorphic KdV theory, as we next briefly explain.
By Corollary~\ref{corol4.10}, we can associate to $S_M$ an infinitesimal
deformation $\dot{g}_S$ given by equation~(\ref{gpuntodeShiffman}),
which can be considered to be an evolution equation in complex time $t$
involving certain quasiperiodic meromorphic
functions in the cylinder $\C /\langle i\rangle $ (namely, elements in the
space $\mathcal{W}$). The change of variables
\begin{equation}
\label{eq:unew}
u=-\frac{3(g')^2}{4g^2}+\frac{g''}{2g}
\end{equation}
transforms (\ref{gpuntodeShiffman}) into the meromorphic {\it KdV equation}
\begin{equation}
\label{eq:KdVnew}
\dot{u}=-u'''-6uu'.
\end{equation}

In Remark~\ref{rem5.6} we will motivate the reason  for the change of
variables~(\ref{eq:unew}), which could seem to be mysterious at first sight. Therefore,
we are interested in finding a solution $u(z,t)$ of (\ref{eq:KdVnew})
with initial data $u(z,0)=u(z)$ given by (\ref{eq:unew}).
KdV theory insures that this Cauchy problem admits a unique
solution if the initial data $u(z)$ is an {\it algebro-geometric potential} for KdV
(see the paragraph just before Theorem~\ref{sw} for this notion). Although this
integrability result appears to be rather standard in KdV theory, the  reader
interested in minimal surface theory might be unfamiliar with it. Since  to our knowledge, this
is the first time that this theory is applied to minimal surfaces,  we will include a
self-contained proof of the integrability of the Cauchy problem for KdV with
algebro-geometric initial data (Proposition~\ref{teorintegraru}).

Suppose for the moment that the meromorphic function $u(z)$ given by (\ref{eq:unew})
for $g\in {\cal M}_{\mbox{\rm \footnotesize imm}}$ is algebro-geometric, and so
the solution $u(z,t)$ of (\ref{eq:KdVnew}) with $u(z,0)=u(z)$ exists. In order to construct
the desired complex family $M_t$ of quasiperiodic, immersed minimal surfaces of Riemann type,
or equivalently, their Gauss maps $g_t\in {\cal M}_{\mbox{\rm \footnotesize imm}}$,
we argue as follows. First note that $g=1/y^2$ defines a meromorphic function $y(z)$
on $\C $, and that~(\ref{eq:unew}) implies that the following
Schr\"{o}dinger equation in the variable $z$ is satisfied:
\begin{equation}
\label{eq:Sch}
y''+uy=0.
\end{equation}
Now replace $u(z)$ by $u(z,t)$ in~(\ref{eq:Sch}), with the unknown
$y(z,t)$. We will couple this Schr\"{o}dinger equation with an evolution equation in $y$, in such
a way that the integrability condition of the corresponding system of PDEs is that $u(z,t)$
satisfies~(\ref{eq:KdVnew}). Thus, there exists a solution $y(z,t)$ of this coupled system
of PDEs with $y(z,0)=y(z)$. It turns out that letting $g_t(z)=1/y(z,t)^2$,
then $g_t$ solves (\ref{gpuntodeShiffman}). Of course, there are many technical
aspects of this construction which must be taken into account in order for
$g_t$ to define an element in ${\cal M}_{\mbox{\rm \footnotesize imm}}$.

We finish this summary of the results in this section by indicating why
$u=u(z)$ given by (\ref{eq:unew}) for $g\in {\cal M}_{\mbox{\rm \footnotesize imm}}$
is an algebro-geometric potential for KdV. Equation~(\ref{eq:KdVnew}) is just the second term
in a sequence of infinitesimal flows, called the {\it KdV hierarchy}.
By definition, $u(z)$ is algebro-geometric for KdV if this hierarchy stops at some level,
in the sense that the $n$-th flow in the hierarchy is a linear combination of the preceding flows.
The idea here is to associate to each flow in the KdV hierarchy a bounded Jacobi
function on the initial surface $M$ associated to $g\in {\cal M}_{\mbox{\rm \footnotesize imm}}$ (Theorem~\ref{thm9.2}). Then, the
fact that $u(z)$ is algebro-geometric for KdV will follow from the finite dimensionality
of the linear space of bounded Jacobi functions on $M$, a result that will be proven in Appendix~1
(this finite dimensionality also follows from the more general results in~\cite{cm39}).

\subsection{Algebro-geometric potentials of the KdV equation.}
\label{secKdV}
We first introduce some background properties of the
Korteweg-de Vries equation~KdV.  A presentation of the KdV theory
close to the viewpoint we will need here can be found in Gesztesy
and Weikard~\cite{gewe1} and Joshi~\cite{joshi1}. For a quick
introduction one can read Goldstein and Petrich~\cite{gope1},
where the related mKdV equation ({\it modified} Korteweg-de Vries)
is interpreted as a flow of the curvature of a planar curve; for other
applications of the KdV equation in geometry, see Chern and Peng~\cite{chpe1}.
In the literature one can find different normalizations of the KdV equation
(given by different coefficients for $u''', uu'$ in equation
(\ref{kdv}) below); all of them are equivalent up to a change of variables.
We will follow here the normalization that appears in \cite{joshi1}.

Given a meromorphic function $u(z)$, where $z$ belongs to an open set
$O\subset \C $, we consider the {\it KdV infinitesimal flow},
which is the infinitesimal deformation
\begin{equation}
\label{kdv}
\frac{\partial u}{\partial t} = -u'''-6 u u',
\end{equation}
where as usual, $u'$, $u''$, $u'''$, $u^{(4)},\ldots$ denote the successive derivatives
of $u$ with respect to $z$. Associated to (\ref{kdv}) we have the {\it KdV equation},
an evolution equation where we look for a meromorphic function $u(z,t)$, with $z\in O$ and
$t\in \D (\ve )=\{ t\in \C \ | \ |t|<\varepsilon \} $, satisfying~(\ref{kdv}).
The {\it Cauchy problem} for the KdV equation consists of finding a solution $u(z,t)$ of
(\ref{kdv}) with prescribed initial condition $u(z,0)=u(z)$. In fact, the KdV
infinitesimal flow is one
of the terms in a sequence of infinitesimal flows of $u$, called the {{\it KdV hierarchy}:
\begin{equation}
\label{kdvn}
\left\{ \frac{\partial u}{\partial t_n} = -\partial_z{\cal P}_{n+1}(u)\right\} _{n\geq 0},
\end{equation}
where ${\cal P}_{n+1}(u)$ is a differential operator given by a polynomial expression of $u$
and its derivatives up to order $2n$. These operators are defined by the recurrence law
\begin{eqnarray}
\label{law}
\left\{ \begin{array}{l}
\partial_z {\cal P}_{n+1}(u) = (\partial_{zzz} + 4u\,\partial_z+2u'){\cal P}_{n}(u), \\
\rule{0cm}{.5cm}{\cal P}_{0}(u)=\frac{1}{2}.
\end{array}\right.
\end{eqnarray}
In particular, the first operators and infinitesimal flows of the KdV hierarchy are given by
\begin{equation}
\label{eq:KdVhie}
\mbox{}\hspace{-.9cm}
\left.
\begin{array}{l}
{\cal P}_{1}(u)=u\\
\rule{0cm}{.5cm}{\cal P}_{2}(u)=u''+3u^2\qquad \mbox{(KdV)}\\
\rule{0cm}{.5cm}{\cal P}_{3}(u)=u^{(4)}+10 u\, u''+5(u')^2+10u^3\\
\mbox{}\hspace{.7cm}\vdots
\end{array}
\right|
\begin{array}{l}
\frac{\partial u}{\partial t_0} = -u'\\
\rule{0cm}{.5cm}\frac{\partial u}{\partial t_1} = -u'''- 6u\, u'\qquad \mbox{(KdV)}\\
\rule{0cm}{.5cm}\frac{\partial u}{\partial t_2} = -u^{(5)}- 10 u\, u''' -20 u'u''-30 u^2u'\\
\mbox{}\hspace{.7cm}\vdots
\end{array}
\end{equation}
The Cauchy problem for the $n$-th equation of the KdV hierarchy consists
of finding a solution $u(z,t)$ of $\frac{\partial u}{\partial t_n}=
-\partial _z{\cal P}_{n+1}(u)$ with prescribed initial condition $u(z,0)=u(z)$.

A function $u(z)$ is said to be an {\it algebro-geometric potential of the KdV equation}
(or simply {\it algebro-geometric}) if there exists an infinitesimal
flow $\frac{\partial u}{\partial t_n}$
which is a linear combination of the lower order infinitesimal flows:
\begin{equation}
\label{eq:algcj}
\frac{\partial u}{\partial t_n} =  c_0 \frac{\partial u}{\partial t_0}+ \ldots + c_{n-1}\frac{\partial u}{\partial t_{n-1}},
\end{equation}
with $c_0,\ldots,c_{n-1}\in \C $. The next statement collects some important
properties of algebro-geometric potentials.
\begin{theorem}
\label{sw}
Let $u(z)$ be an algebro-geometric potential. Then:
\begin{enumerate}
\item $u$ extends to a meromorphic function $u\colon \C \to \C
\cup \{\infty\}$.
\item If $u$ has a pole at $z=z_0$, then its Laurent expansion around $z_0$ is given by
\[
u(z)=\frac{-k(k+1)}{(z-z_0)^2} + \mbox{\rm holomorphic}(z),
\]
for a suitable positive integer $k$.
\item All the solutions of the linear Schr\"{o}dinger equation $y''+u\, y =0$
are meromorphic functions $y\colon \C \to \C \cup \{\infty\}$.
\end{enumerate}
\end{theorem}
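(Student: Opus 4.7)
The plan is to extract from the algebro-geometric condition a stationary algebraic ODE for $u$, then use a Painlev\'e dominant-balance to establish items~1 and 2 and the Burchnall-Chaundy Lax-pair correspondence to establish item~3.

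First I would integrate (\ref{eq:algcj}) once in $z$ to obtain a constant $C\in\C$ with
\[
{\cal P}_{n+1}(u)-\sum_{j=0}^{n-1} c_j\,{\cal P}_{j+1}(u)=C.
\]
An induction on (\ref{law}) shows that ${\cal P}_{n+1}(u)$ is a polynomial in $u,u',\ldots,u^{(2n)}$ whose dependence on the top derivative is a nonzero constant times $u^{(2n)}$, so $u$ satisfies an algebraic ODE of order $2n$ in normal form with polynomial right-hand side. Standard Cauchy theory then produces analytic continuation of $u$ along any path in $\C$ avoiding singular points, and monodromy is trivial since the ODE has constant coefficients.

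To finish item~1 and to prove item~2 I would run a Painlev\'e analysis at a candidate singular $z_0\in\C$. Postulating $u\sim a(z-z_0)^{-m}$ with $a\neq 0$, an induction shows that the dominant contribution to ${\cal P}_{j+1}(u)$ at $z_0$ is a polynomial in $a$ times $(z-z_0)^{-m-2j}$. Matching leading orders forces $m=2$: already at the KdV level, inserting $u=a(z-z_0)^{-2}$ into $u'''+6uu'$ yields $-12\,a(a+2)(z-z_0)^{-5}$, forcing $a=-2=-1\cdot 2$. For general $n$, substituting the Laurent ansatz $u(z)=a(z-z_0)^{-2}+\sum_{\ell\geq -1}a_\ell(z-z_0)^\ell$ into the stationary equation and computing Kovalevskaya resonances, one finds that the formal expansion closes (no $\log$ obstruction at any resonance) precisely when $a=-k(k+1)$ for some positive integer $k$; this yields item~2, and combined with the continuation argument completes item~1, since at such a movable singularity the Laurent series has a unique meromorphic sum that extends $u$ across $z_0$.

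For item~3 I would invoke the Burchnall-Chaundy correspondence: the stationary equation of the first step is equivalent to the existence of a differential operator $P$ of odd order $2n+1$, with coefficients polynomial in $u,u',\ldots,u^{(2n-1)}$, that commutes with $L=-\partial_z^2+u$. By item~1, rewriting $Ly=0$ as a first-order linear system for $(y,y')$ with single-valued meromorphic coefficients on $\C$ gives analytic solutions away from the poles of $u$. At each such pole, item~2 makes $z_0$ a regular singular point of $Ly=0$ with indicial roots $-k$ and $k+1$; the commuting operator $P$ supplies a Darboux factorization of $L$ near $z_0$ that rules out the logarithmic Frobenius term which the integer indicial difference $2k+1$ could otherwise produce, so $z_0$ is an apparent singularity and every solution extends meromorphically across it. The main obstacle I expect is precisely this no-log resonance analysis underlying items~2 and 3: one must verify that the algebro-geometric condition is exactly what is needed to solve every Kovalevskaya resonance equation of the stationary KdV hierarchy without a logarithmic correction, so that $a=-k(k+1)$ with $k$ a positive integer is both necessary and sufficient.
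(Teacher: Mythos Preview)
The paper does not prove this theorem at all: immediately after the statement it writes ``Item~1 is due to Segal and Wilson~\cite{SeWi} and can be found also in Weikard~\cite{weik1} and Gesztesy and Weikard~\cite{gewe1}. Items~2 and~3 are proved in~\cite{gewe1} and~\cite{weik1}.'' So there is no in-paper proof to compare against; the result is imported as a black box from the KdV literature.

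Your sketch is in the right spirit and indeed resembles the arguments in the cited references (particularly Weikard's Painlev\'e-style analysis), but as a proof it has a genuine gap at exactly the point you flag. The first step---integrating the stationary equation to get an order-$2n$ polynomial ODE in normal form---is fine, but the sentence ``standard Cauchy theory then produces analytic continuation of $u$ along any path in $\C$ avoiding singular points, and monodromy is trivial since the ODE has constant coefficients'' does not establish item~1. A nonlinear polynomial ODE can have movable branch points and movable essential singularities; that the only movable singularities here are poles (the Painlev\'e property) is the whole content of item~1 and is not a consequence of Cauchy theory. What actually carries the argument in the references is the full resonance/no-log analysis you defer: one must show that at every movable singularity the formal Laurent series has the required number of free parameters (equal to the order of the ODE) and that all resonance compatibility conditions are satisfied, so that the Painlev\'e test is passed and the formal series converges. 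Your dominant-balance computation only pins down the leading coefficient at the KdV level; the general statement that $a=-k(k+1)$ with $k\in\N$ and the absence of logarithms at every resonance is precisely the nontrivial theorem being cited. Similarly, for item~3 the phrase ``the commuting operator $P$ supplies a Darboux factorization of $L$ near $z_0$ that rules out the logarithmic Frobenius term'' is a correct summary of the mechanism but is not a proof; making this precise is again the substance of the Gesztesy--Weikard argument.

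In short: the paper cites this result rather than proving it, and your outline identifies the correct architecture but leaves the decisive technical step (the Painlev\'e/no-log verification for the stationary hierarchy) as an assertion.
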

Item $1$ is due to Segal and Wilson \cite{SeWi} and can be found also in
Weikard \cite{weik1} and Gesztesy and Weikard \cite{gewe1}. Items $2$ and
$3$ are proved in~\cite{gewe1} and~\cite{weik1}.

Another fundamental property of algebro-geometric potentials is that the
Cauchy problem for any infinitesimal flow of the KdV hierarchy is uniquely solvable
in the class of algebro-geometric potentials (with fixed coefficients $c_j$ in
equation~(\ref{eq:algcj})). This integrability follows from the commutativity of any
two infinitesimal flows of the KdV hierarchy. We give now a direct proof of this well-known
fact in the particular case of the KdV infinitesimal flow
(\ref{kdv}), which we will use later.

The infinitesimal flow $\frac{\partial u}{\partial t_n}$  defines naturally
 a differential operator  $\frac{\partial }{\partial t_n}$
which acts on differential expressions of $u$ and its derivatives. For instance, we have
\[
\frac{\partial }{\partial t_n}\left( u'\right) = \left(\frac{\partial u}{\partial t_n}
\right)' \qquad {\rm and} \qquad \frac{\partial }{\partial t_n}\left( \frac{u''}{u}+u^2\right)
= \frac{1}{u}\left(\frac{\partial u}{\partial t_n}\right)''+\left(
2u- \frac{u''}{u^2}\right) \frac{\partial u}{\partial t_n}.
\]
\begin{lemma}
\label{lemacomm}
The KdV infinitesimal flow $\frac{\partial }{\partial t}=\frac{\partial }{\partial t_1}$ commutes
with any other infinitesimal flow $\frac{\partial }{\partial t_n}$ in the KdV
hierarchy:
\begin{equation}
\label{conmutan}
\frac{\partial}{\partial t}\frac{\partial u}{\partial t_n} =
\frac{\partial}{\partial t_n}\frac{\partial u}{\partial t}.
\end{equation}
\end{lemma}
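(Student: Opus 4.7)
The plan is to establish (\ref{conmutan}) via the Lax-pair formulation of the KdV hierarchy. Consider the Schr\"odinger operator $L := \partial_z^2 + u$ and, in the algebra of formal pseudo-differential operators in $\partial_z$ with meromorphic coefficients, the fractional powers $L^{n+1/2}$ together with their differential parts $P_n := (L^{n+1/2})_+$; a direct computation gives $P_0 = \partial_z$ and $P_1 = -4\partial_z^3 - 6u\,\partial_z - 3u'$. An order count shows that $[P_n, L]$ is a multiplication operator, and matching it inductively with the recursion (\ref{law}) yields the Lax equation
\begin{equation}
\label{eq:LaxPlan}
\frac{\partial L}{\partial t_n} = [P_n, L],
\end{equation}
which is precisely the $n$-th equation of the hierarchy rewritten in operator form.

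Granting (\ref{eq:LaxPlan}), the commutation (\ref{conmutan}) reduces to showing that
$$Q_n := \frac{\partial P_n}{\partial t_1} - \frac{\partial P_1}{\partial t_n} + [P_n, P_1] = 0,$$
since a Jacobi-identity calculation then yields $[\partial_{t_1}, \partial_{t_n}] L = [Q_n, L]$, and comparing zeroth-order parts gives (\ref{conmutan}). To prove $Q_n = 0$, note that (\ref{eq:LaxPlan}) extends to fractional powers of $L$: the identity $\partial_{t_k} L^{n+1/2} = [P_k, L^{n+1/2}]$ is automatic from the Leibniz rule in the pseudo-differential ring once it holds for $L$. Taking plus-parts and expanding,
$$\frac{\partial P_n}{\partial t_1} = [P_1, P_n] + \bigl([(L^{3/2})_+, (L^{n+1/2})_-]\bigr)_+,$$
together with the symmetric formula for $\partial P_1/\partial t_n$, and substituting both into $Q_n$ and applying the tautological identity $[L^{3/2}, L^{n+1/2}] = 0$ (after decomposing into $\pm$ parts and projecting to the differential part) produces exactly the required cancellation, giving $Q_n = 0$.

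The main obstacle will be the preliminary step of verifying (\ref{eq:LaxPlan}) compatibly with the explicit recursion (\ref{law}): one has to identify the zeroth-order operator $[P_n, L]$ with multiplication by $-\partial_z \mathcal{P}_{n+1}(u)$. This is an induction on $n$ carried out inside the pseudo-differential algebra, in which the Lenard operator $\partial_{zzz} + 4u\partial_z + 2u'$ in (\ref{law}) emerges from the projection to zeroth order of $L \cdot L^{n+1/2} = L^{n+3/2}$. An entirely elementary alternative, which I would keep as a fallback, is to prove (\ref{conmutan}) directly by induction on $n$ without pseudo-differential calculus: differentiate $\partial_z \mathcal{P}_{n+1}(u) = (\partial_{zzz} + 4u\partial_z + 2u')\mathcal{P}_n(u)$ in $t = t_1$, substitute $\partial_t u = -u''' - 6uu'$, and collapse the resulting expression using (\ref{law}) and the inductive hypothesis applied at levels below $n$. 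This route is substantially longer, but the cancellations are forced by the specific coefficients $4$ and $2$ in the Lenard operator and use only structure already present in the excerpt.
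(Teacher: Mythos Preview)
Your Lax-pair route is correct and is the standard conceptual proof of commutativity in the KdV hierarchy; the paper instead carries out exactly what you call the ``entirely elementary alternative''. Concretely, the paper argues by induction on $n$: from the inductive hypothesis at level $n-1$ it first extracts the intermediate identity $\frac{\partial}{\partial t}\mathcal{P}_n = -\mathcal{P}_n''' - 6u\,\mathcal{P}_n'$ (after killing a spurious additive constant by noting that $\mathcal{P}_n$ has no zeroth-order term), then expands both $\frac{\partial}{\partial t}\frac{\partial u}{\partial t_n}$ and $\frac{\partial}{\partial t_n}\frac{\partial u}{\partial t}$ via the recursion (\ref{law}) as explicit polynomials in $u$, $\mathcal{P}_n$ and their $z$-derivatives, and checks by direct comparison that the two expressions agree. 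Your primary approach buys a cleaner reason \emph{why} the flows commute---it is forced by $[L^{3/2},L^{n+1/2}]=0$---at the cost of importing the pseudo-differential calculus and, as you note, having to match the Gelfand--Dickey operators $(L^{n+1/2})_+$ with the recursion (\ref{law}), which is a genuine side lemma (and incidentally your formula for $P_1$ already carries an implicit normalization factor $-4$ relative to the stated definition $P_n=(L^{n+1/2})_+$). The paper's route is longer and less illuminating but entirely self-contained, using nothing beyond (\ref{law}) and polynomial algebra.
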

{\it Sketch of the proof.}
The proof is by induction on $n$. The lemma clearly holds for $n=1$.
Assuming that $\frac{\partial }{\partial t}$ commutes with $\frac{\partial }
{\partial t_{n-1}}$ we want to prove (\ref{conmutan}). We will write simply
${\cal P}_n$ instead of ${\cal P}_n(u)$. It follows from
equations~(\ref{kdvn}) and~(\ref{eq:KdVhie}) that
\begin{equation}
\label{ut}
\frac{\partial u }{\partial t} =  -(u''+3u^2)'.
\end{equation}
Thus, the induction hypothesis implies that the two expressions below have the same value:
\[
\begin{array}{l}
\frac{\partial}{\partial t_{n-1}} \frac{\partial u }{\partial t} =
-\frac{\partial}{\partial t_{n-1}} (u''+3u^2)' =
-\left( \left(\frac{\partial u}{\partial t_{n-1}}\right)''
+ 6\, u\frac{\partial u}{\partial t_{n-1}} \right)',
\\
\rule{0cm}{.5cm}
\frac{\partial}{\partial t} \frac{\partial u }{\partial t_{n-1}} =
-\frac{\partial}{\partial t}{\cal P}_n'=
- \left( \frac{\partial}{\partial t}{\cal P}_n\right)'.
\end{array}
\]
Therefore, there exists $c\in \C $ such that
$\frac{\partial}{\partial t}{\cal P}_n=
(\frac{\partial u}{\partial t_{n-1}})'' +
6\, u\frac{\partial u}{\partial t_{n-1}} + c$.
We claim that $c=0$: since ${\cal P}_n$ is a polynomial expression of
$u$ and its derivatives, if we differentiate ${\cal P}_n$ with respect to $t$
and we make the substitution $\frac{\partial u }{\partial t} = -u'''-6u'$,
we will obtain another polynomial expression in the variables $u$
and its derivatives without independent term,
which gives our claim. Therefore
$
\frac{\partial}{\partial t}{\cal P}_n=
\left(\frac{\partial u}{\partial t_{n-1}}\right)''
+ 6\, u\frac{\partial u}{\partial t_{n-1}}
$
which, using that $\frac{\partial u}{\partial t_{n-1}}=-{\cal P}_{n}'$,
transforms into
\begin{equation}
\label{induccion}
\frac{\partial}{\partial t}{\cal P}_n=
-{\cal P}_{n}''' - 6\, u{\cal P}_{n}'.
\end{equation}

We are now ready to prove the commutativity at the $n$-th level: Using
equations (\ref{kdvn}) and (\ref{law}) we have
\[
\frac{\partial}{\partial t} \frac{\partial u }{\partial t_{n}} =
-\frac{\partial}{\partial t}\left[
\left(\partial_{zzz} + 4 u \partial_z + 2u'\right){\cal P}_n\right] =
-\left(\partial_{zzz} + 4 u \partial_z + 2u'\right)\frac{\partial
{\cal P}_n}{\partial t}
-4\frac{\partial u}{\partial t}{\cal P}_n'
-2\left(\frac{\partial u}{\partial t}\right)' {\cal P}_n.
\]
Substituting (\ref{ut}) and (\ref{induccion}) in the last expression,
we find a polynomial expression $E_1$ of $u$, ${\cal P}_n$ and their
derivatives with respect to $z$, for
$\frac{\partial}{\partial t} \frac{\partial u }{\partial t_{n}}$.
On the other hand, (\ref{ut}) gives
\[
\frac{\partial}{\partial t_{n}} \frac{\partial u }{\partial t} =
-\frac{\partial}{\partial t_{n}}\left[ (u''+3u^2)'\right] =
-\left( (\frac{\partial u}{\partial t_{n}})'' +
6\, u\frac{\partial u}{\partial t_{n}} \right)',
\]
which combined with the recurrence law $ \frac{\partial u}{\partial t_{n}} =
-(\partial_{zzz} +4u\partial_z+2u'){\cal P}_n $ gives a
polynomial expression $E_2$ in the variables $u$, ${\cal P}_n$ and
its derivatives, too. Comparing both expressions $E_1,E_2$, a
lengthy but direct computation shows that
\[
\frac{\partial}{\partial t}\frac{\partial u}{\partial t_n}
- \frac{\partial}{\partial t_n}\frac{\partial u}{\partial t}=0,
\]
which proves the lemma.
 {\hfill\penalty10000\raisebox{-.09em}{$\Box$}
\par
\vspace{.2cm}
Next we prove the integrability of the KdV infinitesimal flow for an
algebro-geometric initial condition.

\begin{proposition}
\label{teorintegraru}
Let $u=u(z):\C \to \C \cup\{\infty\}$ be
an algebro-geometric potential of the KdV equation so that
$\frac{\partial u}{\partial t_n} =  c_0 \frac{\partial u}{\partial t_0}+
\ldots + c_{n-1}\frac{\partial u}{\partial t_{n-1}}$. Then, there exist
$\varepsilon >0$ and a unique map $u=u(z,t)\colon \C\times
\D(\varepsilon)\to \C\cup\{\infty\}$ such that the following properties hold.
\begin{enumerate}
\item $u(z,0) = u(z)$ and $u_t(z)=u(z,t)$ is algebro-geometric, for each $t\in \D (\ve )$.
\item $u(z,t)$ is holomorphic in $\{(z,t)\in \C \times
\D (\ve )\ : \ |u(z,t)|<\infty\}$ and is a solution of the system of
partial differential equations
\begin{equation}
\label{eq:(AG)(KdV)}
\left. \begin{array}{l}
{\mbox{\rm (A-G)\hspace{1cm}}\displaystyle \frac{\partial u}{\partial t_n} - c_0 \frac{\partial u}{\partial t_0}
- \cdots - c_{n-1}\frac{\partial u}{\partial t_{n-1}}=0
}
 \\
\rule{0cm}{.7cm}{\mbox{\rm (KdV)\hspace{0.9cm}}\displaystyle \frac{\partial u}{\partial t} =
-u'''-6uu'}
\end{array}\right\}
\end{equation}
%
%
where as usual, prime denotes derivative with respect to $z$.
\item If there exists $\omega \in \C $ such that $u(z+\omega)=u(z)$
for all $z\in \C $, then $u(z+\omega,t)=u(z,t)$ for all $z\in \C $.
\item If the jet
\[
J(z_0)=\left( u(z_0),u'(z_0),\ldots, u^{(2n)}(z_0)\right) \in \C^{2n+1}
\]
is bounded by a constant $C>0$, then there exist
 $\delta>0$ and $C_1=C_1(\delta,C)>0$ such that $u(z,t)$,
$u'(z,t)$ and $\frac{\partial u}{\partial
t}(z,t)$ are holomorphic functions bounded by $C_1$  in
$\{ z\in \C \ : \ |z-z_0|<\de \} \times \D(\delta)$.
\end{enumerate}
\end{proposition}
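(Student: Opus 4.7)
The plan is to convert the coupled system (A-G)+(KdV) into two compatible ODEs---one in $z$ and one in $t$---on a finite-dimensional jet space, and then to invoke the commutativity of the KdV hierarchy (Lemma~\ref{lemacomm}) together with the meromorphic extension statement for algebro-geometric potentials (Theorem~\ref{sw}) to produce and globalize the solution.

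\emph{Reduction of (A-G) to a $z$-ODE.} Since ${\cal P}_{n+1}(u)$ is a differential polynomial in $u$ involving $z$-derivatives up to order $2n$, the algebro-geometric identity $\partial_z{\cal P}_{n+1}(u)=\sum_{j=0}^{n-1}c_j\partial_z{\cal P}_{j+1}(u)$ integrates once in $z$ to an order-$2n$ algebraic ODE
\[
{\cal P}_{n+1}(u)-c_0{\cal P}_1(u)-\cdots-c_{n-1}{\cal P}_n(u)=c_n,\qquad c_n\in\C.
\]
I identify its phase space with $\C^{2n}$ via the truncated jet $\wt{J}(z)=(u(z),u'(z),\ldots,u^{(2n-1)}(z))$; the displayed relation determines $u^{(2n)}$ as a rational function of $\wt{J}$, so the ODE takes the form $\wt{J}'=X_z(\wt{J})$ for a rational vector field $X_z$ on $\C^{2n}$. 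The constant $c_n$ is fixed by the initial data $J(z_0)$ in item~4.

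\emph{Evolving the jet in $t$.} Differentiating the KdV equation $u_t=-u'''-6uu'$ in $z$ gives, for $k=0,\ldots,2n-1$, an expression for $\partial_t u^{(k)}$ involving $u$ and its first $2n+2$ derivatives. Substituting the formulas for $u^{(2n)}$, $u^{(2n+1)}$, $u^{(2n+2)}$ obtained by differentiating the (A-G) relation above in $z$, one obtains a rational ODE $\wt{J}_t=X_t(\wt{J})$ on $\C^{2n}$. Picard--Lindel\"of applied at $\wt{J}(z_0,0)=\wt{J}(z_0)$ yields $\ve>0$ and a unique holomorphic curve $t\mapsto\wt{J}(z_0,t)\in\C^{2n}$ on $\D(\ve)$, with holomorphic dependence on the initial jet that will supply the quantitative statement in item~4.

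\emph{Compatibility via commutativity and global extension.} For each $t\in\D(\ve)$, integrate $X_z$ from $\wt{J}(z_0,t)$ to obtain a local meromorphic $z\mapsto u(z,t)$ solving (A-G); by definition of $X_t$ this function satisfies (KdV) at $z=z_0$. The main obstacle is to propagate (KdV) to all nearby $z$. For this I would invoke Lemma~\ref{lemacomm}: the KdV flow commutes with the $n$-th flow of the hierarchy and hence preserves the stationary locus $\partial_{t_n}u=\sum c_j\partial_{t_j}u$, which is precisely (A-G). In vector field language this says that $X_z$ and $X_t$ commute on $\C^{2n}$, so the two orders of integration produce the same curve $t\mapsto\wt{J}(z,t)$; equivalently, setting $F(z,t)=u_t+u'''+6uu'$, the identity (\ref{conmutan}) yields a linear $z$-ODE for $F$ with trivial jet at $z_0$, forcing $F\equiv 0$. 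Once (KdV) holds, every slice $u(\cdot,t)$ is algebro-geometric with the same constants $c_0,\ldots,c_n$, and Theorem~\ref{sw}(1) extends it meromorphically to all of $\C$, yielding item~2; joint holomorphicity in $(z,t)$ away from the polar set follows from standard parameter dependence of the two rational flows.

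\emph{Uniqueness, periodicity and bounds.} Uniqueness in item~1 is immediate from uniqueness in each of the two ODE integrations. For item~3, if $u(z+\omega)=u(z)$ then $\wt{J}(z_0+\omega,0)=\wt{J}(z_0,0)$, so Picard--Lindel\"of for $X_t$ gives $\wt{J}(z_0+\omega,t)=\wt{J}(z_0,t)$, and reintegrating $X_z$ yields $u(z+\omega,t)=u(z,t)$. Item~4 follows from the holomorphic dependence of the $X_t$-flow on its initial datum combined with a uniform $z$-radius of convergence for the $X_z$-flow on bounded subsets of $\C^{2n}$, which produces common bounds for $u$, $u'$ and $u_t$ on a product $\{|z-z_0|<\delta\}\times\D(\delta)$ whenever $\|J(z_0)\|\leq C$.
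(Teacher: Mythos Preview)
Your argument is correct and follows essentially the same route as the paper's proof: both reduce the system (A-G)+(KdV) to a pair of commuting flows whose compatibility is supplied by Lemma~\ref{lemacomm}, solve locally via Frobenius/Picard--Lindel\"of, and then globalize in $z$ using Theorem~\ref{sw}(1); items~3 and~4 are handled identically via uniqueness and standard parameter dependence. The only cosmetic difference is that you integrate (A-G) once to work with a $2n$-dimensional jet plus the constant $c_n$, whereas the paper keeps the $(2n{+}1)$-jet and invokes Frobenius more directly, but the content is the same.
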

\begin{proof}
We will use the notation $\frac{\partial }{\partial s}=\frac{\partial
}{\partial t_n} -  c_0 \frac{\partial }{\partial t_0}- \ldots -
c_{n-1}\frac{\partial }{\partial t_{n-1}}$. Hence the
system~(\ref{eq:(AG)(KdV)}) can be equivalently written as
\begin{eqnarray}
\label{eq:A-G,KdV}
\left\{ \begin{array}{l}
{\displaystyle \frac{\partial u}{\partial s}=0,}
 \\
\rule{0cm}{.7cm}{\displaystyle
\frac{\partial u}{\partial t} = -u''' - 6 u u'.}
\end{array}\right.
\end{eqnarray}
According to the Frobenius Theorem, the integrability condition of
(\ref{eq:A-G,KdV}) is given by the commutativity $\frac{\partial
}{\partial s}\frac{\partial u}{\partial t}=\frac{\partial }{\partial
t} \frac{\partial u}{\partial s}$ which follows from
Lemma~\ref{lemacomm}. Therefore given any $z_0\in \C$ which is not a
pole of $u(z)$, there exists a positive number $\de $ and a unique solution
$u(z,t)$, $(z,t)\in \{ |z-z_0|<\delta)\} \times \D(\delta)$,
of the system~(\ref{eq:(AG)(KdV)}) with initial conditions
\begin{equation}
\label{jet}
\frac{\partial^k u}{\partial z^k }(z_0,0)= u^{(k)}(z_0), \qquad
k=0,\ldots,2n.
\end{equation}
(Note that the operator $\frac{\partial }{\partial s}$ involves derivatives
with respect to $z$ up to order $2n+1$). As $u_t$ satisfies (A-G), then
it is algebro-geometric. Thus part~1 of Theorem \ref{sw} insures that
$u_t$ extends meromorphically to the whole plane $\C $.
As equation (A-G) is an ODE in the variable $z$ and $\frac{\partial u}
{\partial t_n}$ involves derivatives with respect to $z$ up to order
$2n+1$, it follows from the initial condition (\ref{jet}) that
$u(z,0)=u(z)$. This proves items 1, 2 of
Proposition~\ref{teorintegraru}.

Item 3 of the proposition follows easily from the uniqueness part, and
the local estimate in item~4 is the standard dependence of the solution
of an initial value problem on the initial data.
\end{proof}

Our next result describes the evolution in time of the poles of
a solution of the Cauchy problem for the KdV equation, for a special case
which we will find when applying this machinery to a quasiperiodic, properly
immersed minimal surface of Riemann type.

\begin{theorem}
\label{integrar2}
Let $u=u(z)\colon \C /\langle i\rangle \to \C \cup\{\infty\}$ be a quasiperiodic
algebro-geometric potential on the cylinder, whose Laurent expansion around
each pole $z_0$ of $u$ is given by
\begin{equation}
\label{eq:thm6.4*}
u(z)=\frac{-2}{(z-z_0)^2}+\mbox{\rm holomorphic}(z).
\end{equation}
Let $u=u(z,t)\colon \C /\langle i\rangle \times \D(\varepsilon)\to  \C \cup\{\infty\}$
be the solution of the system~{\rm (\ref{eq:(AG)(KdV)})}
with initial data $u(z)$. Then, the following properties hold.
\begin{enumerate}
\item $u(z,t)$ is meromorphic (as a function of two variables) and
$u_t(z)=u(z,t)$ is quasiperiodic for each $t$.
\item Given a pole $z_0$ of $u(z)$, there exists a holomorphic curve
$t\in \D(\ve )\mapsto z_0(t)$ with $z_0(0)=z_0$, such that in
a neighborhood of $(z_0,0)$ we have
\begin{equation}
\label{eq:thm6.4A}
u(z,t)=\frac{-2}{(z-z_0(t))^2}+\mbox{\rm holomorphic}(z,t).
\end{equation}
Moreover, all the poles of $u_t(z)$ are obtained in this way.
\end{enumerate}
\end{theorem}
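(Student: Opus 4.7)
The plan is to split the proof into the two natural items. For item~1 I will globalize Proposition~\ref{teorintegraru} using the quasiperiodicity of the initial data $u$ to obtain uniform constants, and then argue that each time-slice $u_t$ inherits both the algebro-geometric property and quasiperiodicity. For item~2 I will introduce an auxiliary meromorphic solution $\psi$ of the Schr\"odinger equation $\psi''+u\psi=0$, evolve it via the KdV Lax pair, and track its simple pole by applying the implicit function theorem to $1/\psi$.

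\textbf{Part~1 (meromorphicity and quasiperiodicity).} Quasiperiodicity of $u$ and Remark~\ref{remark4.6} give a uniform positive lower bound for the distance between distinct poles and a uniform upper bound for the jets $(u(z_0),u'(z_0),\dots,u^{(2n)}(z_0))$ on
\[
V_\rho \,:=\, \{\,z\in\C/\langle i\rangle \,:\, d(z,\text{poles}(u))\ge\rho\,\},
\]
for any fixed small $\rho>0$. Item~4 of Proposition~\ref{teorintegraru} then yields uniform constants $\delta,\varepsilon,C_1>0$ so that the Cauchy problem~(\ref{eq:(AG)(KdV)}) has a unique holomorphic solution bounded by $C_1$ on $\{|z-z_0|<\delta\}\times\D(\varepsilon)$ for every $z_0\in V_\rho$; uniqueness (item~2 of the same proposition) lets these local solutions glue into a holomorphic function $u(z,t)$ on $V_\rho\times\D(\varepsilon)$. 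For each fixed $t\in\D(\varepsilon)$, item~1 of Proposition~\ref{teorintegraru} says $u_t$ satisfies (A-G), so it is algebro-geometric; Theorem~\ref{sw}(1) extends $u_t$ meromorphically to $\C$ and item~3 of Proposition~\ref{teorintegraru} gives $u_t(z+i)=u_t(z)$, so $u_t$ descends to the cylinder. For the quasiperiodicity of each $u_t$, given a divergent sequence $\{z_k\}\subset\C/\langle i\rangle$, extract a subsequence so that $u(\cdot+z_k,0)$ converges locally uniformly away from poles to a limit $u_\infty$; since the (A-G) relation is polynomial in $u$ and its derivatives, it passes to the limit, so $u_\infty$ is algebro-geometric, and then item~4 of Proposition~\ref{teorintegraru} forces $u(\cdot+z_k,t)\to u_\infty(\cdot,t)$ locally uniformly away from the poles of $u_\infty(\cdot,t)$.

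\textbf{Part~2 (pole dynamics).} Fix a pole $z_0$ of $u$, where Theorem~\ref{sw}(2) gives $u(z)=-2/(z-z_0)^2+\text{holomorphic}(z)$. By Theorem~\ref{sw}(3) all solutions of $\psi''+u(z)\psi=0$ are meromorphic on $\C$; fix $\psi_0$ to be a solution with a simple pole at $z_0$ (which exists since the indicial exponents at $z_0$ are $2$ and $-1$). The KdV equation $u_t=-u'''-6uu'$ is the compatibility condition for the Lax pair
\[
\psi''+u(z,t)\psi=0,\qquad \psi_t=-4\psi'''-6u\psi'-3u'\psi=u'\psi-2u\psi',
\]
the last equality using $\psi'''=-u'\psi-u\psi'$. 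Taking Cauchy data $(\psi(z_1,0),\psi'(z_1,0))=(\psi_0(z_1),\psi_0'(z_1))$ at a regular point $z_1\in V_\rho$ near $z_0$, the pair $(\psi(z_1,t),\psi'(z_1,t))$ evolves by an ODE in $t$ with holomorphic coefficients depending on $u(z_1,t)$ and $u'(z_1,t)$; propagation in $z$ via the Schr\"odinger equation then defines $\psi(z,t)$ on $V_\rho\times\D(\varepsilon')$ for some $\varepsilon'\in(0,\varepsilon]$, and since each $u_t$ is algebro-geometric, Theorem~\ref{sw}(3) forces $\psi(\cdot,t)$ to be single-valued on $\C$, so this propagation has trivial monodromy. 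For a small disk $D\ni z_0$, $\psi(z,t)$ is nonvanishing on $(D\cap V_\rho)\times\D(\varepsilon')$ (after shrinking $D$ and $\varepsilon'$), so $1/\psi(z,t)$ is holomorphic there and remains bounded as $z\to z_0$ because $\psi(\cdot,0)$ has a simple pole at $z_0$ and $\psi(\cdot,t)$ varies continuously; Riemann's extension theorem then makes $1/\psi$ holomorphic on $D\times\D(\varepsilon')$, with $1/\psi(z_0,0)=0$ and $\partial_z(1/\psi)(z_0,0)\neq0$. The implicit function theorem provides a holomorphic curve $t\mapsto z_0(t)$ with $z_0(0)=z_0$ and $1/\psi(z_0(t),t)=0$, so $\psi$ has a simple pole at $z=z_0(t)$. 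A direct Laurent computation at a simple pole of $\psi$ shows $u(z,t)=-\psi''/\psi$ has a pole of order two at $z=z_0(t)$ with leading coefficient $-2$ and a $(z-z_0(t))^{-1}$-term which must vanish by Theorem~\ref{sw}(2); hence $u(z,t)=-2/(z-z_0(t))^2+\text{holomorphic}(z,t)$ near $(z_0,0)$, which simultaneously gives joint meromorphicity of $u(z,t)$ there and the local form asserted in item~2. Finally, since $u(z,t)$ is holomorphic on $V_\rho\times\D(\varepsilon')$ by Part~1, every pole of $u_t$ lies inside one of the small disks around a pole of $u(z,0)$; the above analysis provides exactly one pole in each such disk, yielding the last sentence of item~2.

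\textbf{Main obstacle.} The delicate point is ruling out bifurcation or change of order of the pole of $u_t$ near $z_0$ as $t$ varies. This is handled by packaging the analysis through the auxiliary function $1/\psi$: Riemann's extension theorem promotes $1/\psi$ to a holomorphic function on $D\times\D(\varepsilon')$ with a simple zero at $(z_0,0)$; the implicit function theorem then produces the holomorphic curve $z=z_0(t)$; and Theorem~\ref{sw}(2) pins down the leading Laurent coefficient of $u_t$ at this moving pole to be $-2$, thereby forcing the pole of $u_t$ to remain of the same type for all small $t$.
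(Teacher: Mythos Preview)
Your approach via the Lax pair and an auxiliary Schr\"odinger solution $\psi$ is genuinely different from the paper's, and it is an attractive idea. However, Part~2 contains a real gap at the step where you invoke Riemann's extension theorem for $1/\psi$. You claim that $1/\psi(z,t)$ ``remains bounded as $z\to z_0$ because $\psi(\cdot,0)$ has a simple pole at $z_0$ and $\psi(\cdot,t)$ varies continuously.'' But the only place where you have established joint holomorphicity (or even continuity) of $\psi$ in $(z,t)$ is on the annulus $(D\cap V_\rho)\times\D(\varepsilon')$; inside the small disk $B(z_0,\rho)$ all you know, via Theorem~\ref{sw}(3), is that $\psi(\cdot,t)$ is meromorphic in $z$ for each \emph{fixed} $t$. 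Nothing prevents $\psi(\cdot,t)$ from acquiring a zero in $B(z_0,\rho)$ for $t\neq 0$: the winding number of $\psi(\cdot,t)$ along $\partial B(z_0,\rho)$ is $-1$, which is consistent, for instance, with a pole of order $k\ge 2$ together with $k-1$ simple zeros. This corresponds precisely to the scenario in which $u_t$ develops a pole of the form $-k(k+1)/(z-a)^2$ with $k\ge 2$, which Theorem~\ref{sw}(2) does \emph{not} exclude a priori. Since you then apply the implicit function theorem to $1/\psi$, you need $1/\psi$ to be jointly holomorphic on $D\times\D(\varepsilon')$, and this is exactly what is not established.

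The paper confronts the same obstruction and resolves it differently. It works directly with $u_t$: using that each pole of $u_t$ has zero residue (so a primitive $v_t$ of $u_t$ exists on $D$), it observes that the contour integral $\frac{1}{2\pi i}\int_{\partial D} v_t\,dz=\sum_j k_j(k_j+1)$ is continuous in $t$ and integer-valued, hence constant and equal to $2$; this forces a single pole with $k=1$. Only after this counting is in place does the paper track the pole; and even then it explicitly avoids the implicit function theorem (noting that joint holomorphicity of the relevant auxiliary function is not known), using instead the contour formula $z_0(t)=\frac{1}{2\pi i}\int_{\partial D}\frac{z\,w_t'(z)}{w_t(z)}\,dz$ with $w_t=1/v_t$. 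Your argument would be repaired by inserting this counting step before analyzing $\psi$; once you know $u_t$ has a single pole with $k=1$ in $D$, the indicial exponents $-1$ and $2$ guarantee that a solution with a pole stays a solution with a \emph{simple} pole, and then your $1/\psi$ argument goes through. As written, though, the ``continuity'' appeal does not do the work. A secondary issue: your quasiperiodicity argument in Part~1 gives convergence of $u(\cdot+z_k,t)$ only away from poles; the paper closes this by proving a uniform spherical-derivative bound (Montel/Marty), which also relies on first knowing the pole structure of $u_t$.
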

\begin{proof}
Since $u(z)$ is algebro-geometric,
Proposition~\ref{teorintegraru} gives a unique solution of (\ref{eq:(AG)(KdV)})
with $u(z,0)=u(z)$, $z\in \C $. Furthermore, $u(z,t)$
is holomorphic in $\{(z,t)\in \C \times
\D (\ve )\ : \ |u(z,t)|<\infty\}$
and $u_t(z)=u(z,t)$ descends to
$\C /\langle i\rangle $.

We next prove that every pole $z_0$ of
$u(z)$ propagates holomorphically in $t$ to a curve of poles $z_0(t)$ of
$u_t(z)$ with the desired Laurent expansion. Let $D$ be a
closed disk centered at $z_0$, such that $u(z)$ does not vanish in
$D-\{ z_0\} $. By continuity, $u_t(z)$ has no
zeros in $\partial D$ for $|t|$ sufficiently small. Recall that
$u_t$ is meromorphic in $\C $ since it is algebro-geometric.
By the argument principle,
\begin{equation}
\label{eq:8.4A}
\# (u_t^{-1}(\infty )\cap D)-\# (u_t^{-1}(0)\cap D)=
\# (u^{-1}(\infty )\cap D)-\# (u^{-1}(\infty )\cap D)=1.
\end{equation}
Let $a_1,\ldots ,a_{m}$ the poles of $u_t$ in $D$ (both $m$ and the $a_j$
may depend on $t$). As $u_t$ is algebro-geometric, part~2 of Theorem~\ref{sw}
insures that there exist positive integers $k_1,\ldots ,k_m$ such that
\[
u_t(z)=\frac{-k_j(k_j+1)}{(z-a_j)^2}+\mbox{\rm holomorphic}(z,t)
\]
in a neighborhood of $a_j$. Since the residue of $u_t$ at $a_j$ is zero for all $j$,
there exists a meromorphic function $v_t(z)$ defined on
$D$ such that $v_t'=u_t$ in $D$. Moreover $v_t$ is unique up to an additive constant,
which we choose so that $v_t$ has value $1$ at some point $p_0\in \partial D$ and
$v_t|_{\partial D}$ has no zeros. The Laurent expansion of $v_t$ around its poles is
\[
v_t(z)=\frac{k_j(k_j+1)}{z-a_j}+\mbox{\rm holomorphic}(z,t).
\]
Since $S(t)=\sum _jk_j(1+k_j)=\frac{1}{2\pi i}\int _{\partial D}v_t(z)ds$ is continuous
and integer-valued, $S(t)$ is constant. As $S(0)=2$, we conclude that $v_t$ has just one
pole which is simple, i.e., $m=1$ and $k_1=1$. Thus, $u_t$ has just one pole $z_0(t)$
in $D$ (which has order two) with the coefficient $-2$ for the term in $(z-z_0(t))^{-2}$
in its Laurent expansion, i.e., equation~(\ref{eq:thm6.4A}) holds. By (\ref{eq:8.4A}),
$u_t$ has no zeros in $D$ for $|t|<\ve $.

Next we prove that the curve $t\mapsto z_0(t)$ is holomorphic (note that we
cannot use the implicit function theorem since the function $w$ below
is not known to be holomorphic as a function of two variables around $(z_0,0)$).
Consider the function
\[
w(z,t)=\frac{1}{v_t(z)},\quad (z,t)\in D \times \D(\ve ).
\]
Note that $w(z,t)$ is holomorphic in $z$ since $v_t$ does not vanish in $D$
(this follows because $A(t)=\# (v_t^{-1}(\infty ))-\# (v_t^{-1}(0))$ is constant,
$A(0)=1$ and $v_t$ has a unique pole in $D$ counted with multiplicities).
For $t$ fixed, $w(\cdot ,t)$ has a unique zero $z_0(t)$ in $D$, and $z_0(0)=z_0$.
As a function of two variables, $w(z,t)$ is holomorphic outside
$\{ (z_0(t),t)\ | \ t\in \D (\ve )\} $. Now the holomorphicity of $t\mapsto z_0(t)$
is a consequence of the following observation. The function $w_t(z)=w(z,t)$
is holomorphic in the closure $\overline{D}$ of $D$, has a simple
zero at $z_0(t)\in D$ and no more zeros in $\overline{D}$. Hence,
$\frac{z w_t'(z)}{w_t(z)}dz$ is a meromorphic differential in
$\overline{D}$ with a simple pole at $z=z_0(t)$ and thus,
\[
\int_{\partial D} \frac{z w_t'(z)}{w_t(z)}dz = 2\pi i
\mbox{ Res}_{z_0(t)}\left( \frac{z w_t'(z)}{w_t(z)}dz\right) =
2\pi i z_0(t).
\]
Since the integrand in the left-hand-side of the above formula depends holomorphically
on $t$, the same holds for $z_0(t)$. This argument proves the following
property that we state separately for future reference.
\begin{assertion}
\label{ass6.5}
Let $h(z,t)$ be a holomorphic function in $\{(z,t)\in \D(\ve ) \times
\D (\ve )\ : \ |h(z,t)|<\infty\}$,
such that $z\mapsto h_t(z)=h(z,t)$ has exactly one zero in $\D (\ve )$ counting multiplicities, and
$h(0,0)=0$. Then, there exists a holomorphic curve $\alpha (t)$,
$|t|<\varepsilon$, such that the zeros of
$h$ in a neighborhood of $(0,0)$ are given by the trace of $\alpha$.
\end{assertion}
We now return to the proof of Theorem~\ref{integrar2}.
To prove the first part of item~1, we only need to check
that $u$ is meromorphic around the points in
$\Gamma=\{(z_0(t),t)\ |\ |t|<\varepsilon\}$ where
$u=\infty$. This follows from equation (\ref{eq:thm6.4A}):
 as $u(z,t)+2(z-z_0(t))^{-2}$ is holomorphic and bounded outside
of the analytic subset $\Gamma$, it extends holomorphically through
$\Gamma$.

It remains to check that $u_t(z)$ is quasiperiodic for $|t|$ sufficiently small.
This fact will hold if we prove the following inequality for the spherical gradient of $u_t(z)$:
\begin{equation}
\label{eq:8.4B}
\frac{|u'_t(z)|}{1+|u_t(z)|^2}\leq C
\end{equation}
for all $z\in \C$, where $C>0$ is independent of $z$.
Repeating the arguments above at every pole $z_{0,j}$ of $u$,
we obtain a sequence of pairwise disjoint closed disks $\{ D_j\} _j$ such that
each $D_j$ is centered at $z_{0,j}$, for $|t|$ small (independently of $j$)
$u_t(z)$ has a unique pole at $z_{0,j}(t)\in D_j$,
and the curve $t\mapsto z_{0,j}(t)$ is holomorphic in $t$.
Note that since $u(z)$ is quasiperiodic, the radii of the $D_j$ can be taken independently
of $j$. Since the jet $J(z_1)=\left( u(z_1),u'(z_1),\ldots ,u^{(2n)}(z_1)\right) $ is uniformly
bounded in $\C^{2n+1}$ for $z_1\in (\C /\langle i\rangle )-\cup _jD_j$
(because $u$ is quasiperiodic), part~4 of Proposition~\ref{teorintegraru}
implies that both $u(z,t),u'(z,t)$ are uniformly bounded for
$(z,t)\in \left[ (\C /\langle i\rangle )-\cup _jD_j\right] \times \D(\ve )$ for $\ve $
sufficiently small. Therefore, (\ref{eq:8.4B}) holds outside $\cup _jD_j$ with $C$ uniform in
$t$. Now consider one of the disks $D_j$. For $t$ fixed, $u_t|_{D_j}$ omits a neighborhood
of zero which is independent of $t$ (this property needs estimates for $u_t$ in a
slightly bigger disk, which we may assume). By Montel's theorem, $\{ u_t|_{D_j}\} _t$
form a normal family, which implies that (\ref{eq:8.4B}) holds for $z\in D$ uniformly in $t$.
Now the proof is complete.
\end{proof}

\subsection{The Shiffman hierarchy associated to a Riemann type minimal surface.}
\label{secKdV2}
Let $M\in {\cal M}$ be a quasiperiodic, immersed minimal surface
of Riemann type, with Gauss map $g\in {\cal M}_{\mbox{\footnotesize imm}}$.
In this section we will associate to $g$ a sequence of
infinitesimal deformations $\frac{\partial g}{\partial t_n}$ which generalizes
the tangent vector $\dot{g}_S\in T_g{\cal W}$ associated
to the complex Shiffman function, which was given in equation (\ref{gpuntodeShiffman}).
For this reason, we call this sequence the {\it Shiffman hierarchy.}
In order to define the Shiffman hierarchy, we will first define a related
hierarchy associated to a linear Schr\"{o}dinger equation.

Consider meromorphic functions $y,u,g\colon \C \to \C \cup\{\infty\}$ related by the equations
\[
y''+u \, y=0\qquad {\rm and} \qquad g=\frac{1}{y^2}.
\]
From these relations we obtain
\begin{equation}
\label{u}
u =-\frac{3(g')^2}{4g^2}+\frac{g''}{2g}.
\end{equation}
\begin{remark}
\label{rem5.6}
{\rm The reader may wonder why the KdV equation appears in
connection to the Shiffman function. The change of variables $x=g'/g$
transforms the expression (\ref{gpuntodeShiffman}) for $\dot{g}_S$
into an equation of mKdV type, namely $\dot{x}=\frac{i}{2}(x'''-
\frac{3}{2}x^2x')$. It is well known that  mKdV equations in $x$ can
be transformed into KdV equations in $u$ through the so called {\it
Miura transformations,} $x\mapsto u=ax'+bx^2$ with $a,b$ suitable
constants (see for example~\cite{gewe1} page~273).
Equation~(\ref{u}) is nothing but the composition of $g\mapsto x$
and a Miura transformation. Since the KdV theory is more standard
than the mKdV theory, we have opted to deal only with the KdV
equation and avoid dealing with the mKdV equation. }
\end{remark}
The {\it Schr\"{o}dinger hierarchy} is defined as a sequence of infinitesimal
flows of $y$ given by
\begin{equation}
\label{eq:SchH}
\left\{ \frac{\partial y}{\partial t_n} =
{\cal P}_n(u)' y - 2 {\cal P}_n(u)y'\right\} _{n\geq 0},
\end{equation}
where ${\cal P}_n(u)$ is the polynomial expression of $u$ and its derivatives given by
equation (\ref{law}). The connection between the Schr\"{o}dinger and
the KdV hierarchies comes from the fact that the integrability
conditions for the system of partial differential equations
\begin{eqnarray}
\label{PDEy}
\left\{ \begin{array}{l}
y''+uy=0 \\
\rule{0cm}{.5cm}\frac{\partial y}{\partial t_n}
= {\cal P}_n(u)' y - 2 {\cal P}_n(u)y'
\end{array}\right.
\end{eqnarray}
are precisely that $u(z,t)$ satisfies the $n$-th equation of the KdV hierarchy,
see Joshi \cite{joshi1}. Both hierarchies are related by
\begin{equation}
\label{relacion}
\frac{\partial u}{\partial t_n} =
-\frac{\partial }{\partial t_n}\left( \frac{y''}{y}\right) .
\end{equation}
If we rewrite these infinitesimal flows in terms of $g$, we obtain the following sequence
of infinitesimal flows of $g$, which we call the {\it Shiffman hierarchy:}
\[
\frac{\partial g}{\partial t_n} =
\frac{\partial }{\partial t_n}\left( \frac{1}{y^2}\right) =
-\frac{2}{y^3}\frac{\partial y}{\partial t_n} =
-2\frac{{\cal P}_n(u)' y - 2 {\cal P}_n(u)y'}{y^3}=
-2\,\partial_z\left(\frac{{\cal P}_n(u)}{y^2}\right) .
\]
By construction, we have the following statement.
\begin{lemma}
The Shiffman hierarchy is given by
\begin{equation}
\label{sh}
\left\{ \frac{\partial g}{\partial t_n} =
-2\,\partial_z (g{\cal P}_n(u))\right\} _{n\geq 0}.
\end{equation}
\end{lemma}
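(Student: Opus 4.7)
The statement is essentially a bookkeeping computation that rewrites the Schr\"odinger hierarchy (\ref{eq:SchH}) in terms of the variable $g=1/y^{2}$, and in fact the chain of equalities immediately preceding the lemma already carries out the calculation. My plan is therefore to present this as a short direct verification rather than invoking any further machinery.

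First I would differentiate the relation $g=y^{-2}$ with respect to the parameter $t_n$, obtaining
\[
\frac{\partial g}{\partial t_n}=-\frac{2}{y^{3}}\,\frac{\partial y}{\partial t_n}.
\]
Next I would substitute the definition of the $n$-th Schr\"odinger flow from (\ref{eq:SchH}), namely $\partial y/\partial t_n={\cal P}_n(u)'\,y-2{\cal P}_n(u)\,y'$, to get
\[
\frac{\partial g}{\partial t_n}=-\frac{2}{y^{3}}\bigl({\cal P}_n(u)'\,y-2{\cal P}_n(u)\,y'\bigr)=-2\,\frac{{\cal P}_n(u)'\,y^{2}-2{\cal P}_n(u)\,y\,y'}{y^{4}}.
\]
The numerator is precisely $\partial_z\bigl({\cal P}_n(u)\bigr)\,y^{2}-{\cal P}_n(u)\,\partial_z(y^{2})$, so the quotient equals $\partial_z\!\left({\cal P}_n(u)/y^{2}\right)$. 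Using $g=1/y^{2}$ once more, this is $\partial_z\bigl(g\,{\cal P}_n(u)\bigr)$, which yields (\ref{sh}).

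There is no real obstacle here: every step is a one-line manipulation, and the only ingredients used are the defining relation $g=1/y^{2}$ between the Shiffman and Schr\"odinger variables, the definition of the Schr\"odinger hierarchy (\ref{eq:SchH}), and the quotient rule. I would simply emphasize at the end that this makes the compatibility with the KdV hierarchy transparent: by (\ref{relacion}) and the comments following (\ref{PDEy}), the integrability condition of the coupled system (\ref{PDEy}) is equivalent to $u(z,t)$ satisfying the $n$-th KdV flow, and hence the Shiffman flows (\ref{sh}) on $g$ correspond termwise to the KdV flows on $u$ under the Miura-type change of variables (\ref{u}), as anticipated in Remark~\ref{rem5.6}.
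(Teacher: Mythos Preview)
Your proof is correct and follows exactly the same route as the paper: the lemma is stated there ``by construction'' after the identical chain of equalities $\partial g/\partial t_n = -2y^{-3}\,\partial y/\partial t_n = -2\,\partial_z({\cal P}_n(u)/y^2)$ that you reproduce. There is nothing to add.
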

If we compute explicitly these infinitesimal flows solely in terms of $g$
(by substitution of (\ref{law}) and (\ref{u}) in (\ref{sh})),
each right-hand-side is a rational expression in
$g$ and its derivatives. The first infinitesimal flow in this hierarchy is
the infinitesimal deformation
in ${\cal W}$ given by translations in the parameter domain, and the second one is,
up to a multiplicative constant, the infinitesimal deformation $\dot{g}_S$ given by
(\ref{gpuntodeShiffman}),
which corresponds to the Shiffman function (recall from Remark~\ref{remarkg'} and
Corollary~\ref{corol4.10} that both infinitesimal deformations lie in $T_g{\cal W}$).
We also provide the expression for the third infinitesimal flow:
\[
\begin{array}{l}
\frac{\partial g}{\partial t_0} =-g' \\
\rule{0cm}{.5cm}\frac{\partial g}{\partial t_1} =
-g'''+3\frac{g'g''}{g}-\frac{3}{2}\frac{(g')^3}{2g^2} \\
\rule{0cm}{.5cm}\frac{\partial g}{\partial t_2} =
-g^{(5)}+5\frac{g'g^{(4)}}{g}+10\frac{g''g'''}{g}
-\frac{35}{2}\frac{(g')^2g'''}{g^2}-\frac{55}{2}\frac{g'(g'')^2}{g^2}
+\frac{95}{2}\frac{(g')^3g''}{g^3}-\frac{135}{8}\frac{(g')^5}{g^4}\\
\hspace{0.7cm}
\vdots
\end{array}
\]

Another key reason why we are interested in the KdV equation is that
the associated Shiffman hierarchy provides a sequence of bounded
Jacobi functions on any quasiperiodic, immersed minimal surface of
Riemann type, as we now explain. If we let $g\in
{\cal M}_{\mbox{\footnotesize imm}}$ be the complex Gauss map of such a surface,
observe that $g$ has order-two zeroes and order-two poles without residues.
Thus there exists a meromorphic function $y\colon \C \to  \C \cup\{\infty\}$
such that $g=1/y^2$. Moreover $y$ is either periodic, $y(z+i)=y(z)$, or
antiperiodic $y(z+i)=-y(z)$. The later one is the case for the
Riemann minimal examples ${\cal R}_h$, $h>0$ (this follows since the
Gauss map $g$ of ${\cal R}_h$ restricts to each compact horizontal
section with degree one).

\begin{theorem}
\label{thm9.2}
If $g\in {\cal M}_{\mbox{\rm \footnotesize imm}}$, then each of the infinitesimal
flows $\frac{\partial g}{\partial t_n}$ in the Shiffman hierarchy produces a
Jacobi function $f(h_n)\in {\cal J}_{\Csmall }(g)$, which is bounded and quasiperiodic on
$\C /\langle i \rangle $.
\end{theorem}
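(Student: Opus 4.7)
The strategy is to construct, for each $n\ge 0$, a rational expression $h_n$ in $g$ and its $z$-derivatives such that the $n$-th Shiffman flow coincides with $\dot g(h_n)=(g^3h_n'/(2g'))'$, and then to invoke part~1 of Proposition~\ref{propos4.8}. Comparing the definition $\partial g/\partial t_n=-2\partial_z(g{\cal P}_n(u))$ with the formula for $\dot g(h)$ suggests taking $h_n'=-4g'{\cal P}_n(u)/g^2$, up to an additive constant. An easy check recovers $h_0=2/g$ (the vertical translation flow, whose associated Jacobi function is the linear function $-(|g|^2-1)/(|g|^2+1)$) and $h_1=-(g')^2/g^3$, which is a nonzero constant multiple of the Shiffman tangent $h_S$ from Corollary~\ref{corol4.10}.

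The heart of the proof, and its main obstacle, is to show that $-4g'{\cal P}_n(u)/g^2$ is itself the $z$-derivative of a rational function of $g,g',g'',\ldots$ for every $n$. I would proceed by induction on $n$, using the KdV recursion~(\ref{law}). An initial integration by parts gives $-4g'{\cal P}_n/g^2=(4{\cal P}_n/g)'-4{\cal P}_n'/g$, reducing the task to expressing $\int {\cal P}_n'/g\,dz$ as a rational expression in $g$ and its derivatives. The recursion ${\cal P}_{n+1}'={\cal P}_n'''+4u{\cal P}_n'+2u'{\cal P}_n$, together with the substitutions $u=g''/(2g)-3(g')^2/(4g^2)$ and $1/g=y^2$ with $y''+uy=0$, allows one to rewrite the stage-$(n{+}1)$ integrand as a total derivative plus a multiple of $\int {\cal P}_k'/g\,dz$ for some $k\le n$, closing the induction.

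Next I would verify that $\dot g_n:=\partial g/\partial t_n$ lies in $T_g{\cal W}$, i.e., $(\dot g_n)\ge\prod_j p_jq_j^{-3}$. Because $g\in {\cal M}_{\mbox{\footnotesize imm}}$, the period equations $g'''(p_j)=0$ and $(1/g)'''(q_j)=0$ force the Laurent expansion of $u$ at each zero and pole of $g$ to take the canonical KdV form $u=-2/(z-z_0)^2+\mbox{holomorphic}(z)$, with vanishing residue. A local Laurent computation of $(g{\cal P}_n(u))'$ then produces the required zero and pole orders. For the period identities~(\ref{eq:periodconstant}), the first integral $\int_\Gamma\dot g_n\,dz=0$ is automatic since $\dot g_n=-2(g{\cal P}_n(u))'$ is manifestly exact on $\C/\langle i\rangle$, and the second identity $\int_\Gamma\dot g_n/g^2\,dz=0$ follows from one further integration by parts using the rational expression for $h_n$ obtained at the previous step.

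With these verifications in place, part~1 of Proposition~\ref{propos4.8} applied to $h_n$ yields $f(h_n)=-4{\cal P}_n(u)+2gh_n/(1+|g|^2)\in {\cal J}_{\Csmall}(g)$, bounded and quasiperiodic on $\C/\langle i\rangle$. The delicate point is the inductive construction of $h_n$ as a rational expression in $g$ and its derivatives; once that is achieved, the order conditions at the zeros and poles of $g$ and the period identities amount to book-keeping made possible by the recursion~(\ref{law}) of the KdV hierarchy.
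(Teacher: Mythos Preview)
Your strategy is the same as the paper's: produce a rational expression $h_n$ in $g$ and its derivatives with $\dot g(h_n)=\partial g/\partial t_n$, check $\dot g(h_n)\in T_g{\cal W}$, and invoke Proposition~\ref{propos4.8}. The paper carries out the construction of $h_n$ by the same substitution $g=1/y^2$ you suggest; rather than a vague induction, it obtains the clean closed identity
\[
y^2{\cal P}_n' \;=\; \partial_z\Bigl(y^2{\cal P}_{n-1}''-2yy'{\cal P}_{n-1}'+2\bigl((y')^2-yy''\bigr){\cal P}_{n-1}\Bigr),
\]
which shows directly that ${\cal P}_n'/g=y^2{\cal P}_n'$ has a global meromorphic primitive on $\C/\langle i\rangle$ and hence that $h_n$ exists as a rational expression in $g,g',g'',\ldots$.

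One factual slip: at the \emph{poles} $q_j$ of $g$ the potential $u$ is \emph{holomorphic}, not of the form $-2/(z-q_j)^2+\mbox{holomorphic}$. (A short computation with $u=-\tfrac34(g'/g)^2+\tfrac12 g''/g$ and $g=a/z^2+b+cz+\cdots$ shows the $1/z^2$ terms cancel.) This makes the pole analysis easier than you indicate: ${\cal P}_n(u)$ is holomorphic at $q_j$, so $(g{\cal P}_n(u))'$ has at most an order-three pole. At the \emph{zeros} $p_j$ the period condition $g'''(p_j)=0$ gives the sharper expansion $u=-2/z^2+b+z^2f(z)$ (no linear term), and the paper uses this to prove by induction that ${\cal P}_n(u)=c/z^2+\mbox{holomorphic}$, whence $g{\cal P}_n(u)$ is holomorphic and its derivative vanishes at $p_j$. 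Your ``local Laurent computation'' needs this inductive structure to go through.

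Finally, you do not need to verify the period identities~(\ref{eq:periodconstant}) separately: they are part of the \emph{conclusion} of Proposition~\ref{propos4.8}, not a hypothesis. Once $h_n$ is a rational expression in $g$ and its derivatives and $\dot g(h_n)\in T_g{\cal W}$, that proposition delivers $f(h_n)\in{\cal J}_{\C}(g)$ bounded and quasiperiodic, together with the period identities, all at once.
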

\begin{proof}
First observe that equation (\ref{u}) and the fact that ${\cal
P}_n(u)$ is a polynomial expression of $u$ and its derivatives
implies that $\frac{\partial g}{\partial t_n}$ is meromorphic and
quasiperiodic, with poles only at (some of) the zeroes and poles of
$g$. To prove the theorem we will use Proposition~\ref{propos4.8}; hence it
suffices to demonstrate the following statement
\par
\noindent
\begin{assertion}
\label{ass9.3}
Under the hypotheses of Theorem~\ref{thm9.2}, for any $n$ there exists
a meromorphic function $h_n$ on $\C /\langle i\rangle $ which is a rational expression
of $g$ and its derivatives up to some order (depending on $n$),
such that $\frac{\partial g}{\partial t_n}=
\partial_z\left( \frac{g^3h_n'}{2g'}\right) $ and
$\frac{\partial g}{\partial t_n}\in T_g{\cal W}$.
\end{assertion}
\par
\noindent
{\it Proof of Assertion~\ref{ass9.3}.} Viewing the equation
$-{2}\partial_z (g{\cal P}_n(u))=\partial_z\left( \frac{g^3h_n'}{2g'}\right) $
as an ODE for the unknown $h_n$ and substituting $g=1/y^2$, we have
\[
\frac{1}{4}h_n'=(y^2)'{\cal P}_n(u)+c(y^4)'=
(y^2{\cal P}_n(u))'-y^2{\cal P}_n(u)'+c(y^4)',
\]
where $c\in \C$ is a constant of integration. Therefore, the existence of the desired $h_n$
will follow if we see that $y^2{\cal P}_n(u)'$ has a global primitive on
$\C /\langle i\rangle $ which is 
meromorphic. By the recurrence law (\ref{law}) for the operators ${\cal P}_n$,
rewritten using the function $y$ instead of $u$, we have
\[
{\cal P}_{n}'={\cal P}_{n-1}'''-\frac{4y''}{y}{\cal P}_{n-1}'
+2\frac{y'y''-yy'''}{y^2}{\cal P}_{n-1}.
\]
Hence, by direct computation,
\[
\begin{array}{l}
y^2{\cal P}_n'=
y^2{\cal P}_{n-1}'''-{4yy''}{}{\cal P}_{n-1}'+2(y'y''-yy'''){\cal P}_{n-1}  \\
\rule{0cm}{.5cm}\hspace{0.8cm}=\partial_z\left(y^2{\cal P}_{n-1}''
-2yy'{\cal P}_{n-1}'+2((y')^2-yy''){\cal P}_{n-1}\right),
\end{array}
\]
from where we conclude the existence of a global primitive of
$y^2{\cal P}_n(u)'$ in $\C /\langle i\rangle $. Furthermore, such a global primitive
is meromorphic on $\C /\langle i\rangle $ by the same property which holds for $y,u$ and ${\cal P}_{n-1}(u)$.
Therefore, the existence of $h_n$ is proved. Since $u,{\cal P}_n(u)$ are rational expressions of
$g$ and its derivatives, then the same holds for $h_n$
(in particular, $h$ is meromorphic on $\C /\langle i\rangle $).

In order to see that the meromorphic function
$\frac{\partial g}{\partial t_n}$ lies in $T_g{\cal W}$, we must check
that its principal divisor $D$ satisfies $D\geq \prod _jp_jq_j^{-3}$,
where $(g)=\prod _jp_j^2q_j^{-2}$ is the principal divisor of $g$ in
$\C /\langle i\rangle $. Since $\frac{\partial g}{\partial t_n}$ is
holomorphic outside from zeros and poles of~$g$, we only need to study
the behavior of $\frac{\partial g}{\partial t_n}$ around the points $p_j,q_j$.
\vspace{.2cm}
\par
\noindent
{\it Claim 1: $\frac{\partial g}{\partial t_n}$ has a zero at every zero $p_j$ of $g$.}
\vspace{.2cm}
\par
\noindent
{\it Proof of Claim 1.} We may assume $p_j=0$. Since
$g\in {\cal M}_{\mbox{\footnotesize imm}}$, the Weierstrass pair $(g,dz)$ closes periods
at $p_j$. Thus $g'''(0)=0$, which gives a series expansion
$g(z)=a z^2 + z^4 f_1(z)$, where $a, b, \ldots$ will denote complex
numbers and $f_1, f_2,\ldots$ will represent holomorphic functions
around $z=0$, during this proof and that of Claim 2 below. Using
(\ref{u}), we obtain
\begin{equation}
\label{uu}
u(z) =  -\frac{2}{z^2} + b + z^2 f_2(z).
\end{equation}
Assume we have proved that ${\cal P}_{n}(u)$ has an order-two pole without
residue at each zero of $g$, i.e.,
\begin{equation}
\label{jn}
{\cal P}_{n}(u)=\frac{c}{z^2}+f_3(z).
\end{equation}
Then we conclude that
\[
g(z){\cal P}_{n}(u)= ac +z^2f_4(z),
\]
which implies that $\frac{\partial g}{\partial
t_n}=-2\partial_z(g{\cal P}_{n}(u))$ has a zero at the origin, as we
wanted. It remains to check that ${\cal P}_{n}(u)$ has an order-two pole
without residue at the origin, which will be proved by induction.
Since ${\cal P}_{1}(u)=u$, the case $n=1$ follows from equation
(\ref{uu}). Assuming (\ref{jn}) we next study the Laurent expansion
for ${\cal P}_{n+1}(u)$ around $z=0$. The recurrence law
(\ref{law}), equations (\ref{uu}) and (\ref{jn}) and a direct
computation give
\[
\partial_z {\cal P}_{n+1}(u) = (
\partial_{zzz} + 4u\,\partial_z+2u'){\cal P}_{n}(u)=
\frac{d}{z^3}+\frac{e}{z}+f_5(z).
\]
As the left-hand-side has a well-defined primitive, we obtain $e=0$ and thus,
${\cal P}_{n+1}(u)$ has the correct behavior at the origin. Now Claim 1 is proved.
\vspace{.2cm}
\par
\noindent
{\it Claim 2: $\frac{\partial g}{\partial t_n}$ has at most an
order-three pole at every pole $q_j$ of $g$.}
\vspace{.2cm}
\par
\noindent
{\it Proof of Claim 2.} Again we can suppose $q_j=0$. First observe that,
as $g$ has a pole at $z=0$ without residue, then $u$ is holomorphic at $z=0$
(direct computation). Since ${\cal P}_1(u)=u$ and
$\partial_z {\cal P}_{n}(u) = (\partial_{zzz} + 4u\,\partial_z+2u'){\cal P}_{n-1}(u)$,
we deduce that $\partial_z {\cal P}_{n}(u) $ is holomorphic at $z=0$. It follows that
${\cal P}_n(u)$ is holomorphic at $z=0$ for all $n$. As $g$ has an order-two pole at $z=0$, we deduce that
$\frac{\partial g}{\partial t_n}=-2\partial _z(g{\cal P}_n(u))$ has at
most an order-three pole at $z=0$.
This completes the proofs of Claim 2 and of Theorem~\ref{thm9.2}.
\end{proof}

\begin{corollary}
\label{sag}
For every $g\in {\cal M}_{\mbox{\rm \footnotesize imm}}$, the function
$u =-\frac{3(g')^2}{4g^2}+\frac{g''}{2g}$
is an algebro-geometric potential of the KdV equation.
\end{corollary}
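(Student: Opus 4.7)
The plan is to combine Theorem~\ref{thm9.2} with the finite-dimensionality of the space ${\cal J}_{\infty }(M)$ of bounded Jacobi functions on $M$ (proved in Appendix~1) to extract a linear dependence among the Shiffman flows on $g$, and then to transfer this dependence to the KdV flows on $u$ through the change of variables $u=-\tfrac{3(g')^2}{4g^2}+\tfrac{g''}{2g}$.

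First I would apply Theorem~\ref{thm9.2}: for every $n\geq 0$ the Shiffman flow $\tfrac{\partial g}{\partial t_n}$ is realized as $\dot{g}(h_n)$ for a meromorphic $h_n$ that is a rational expression in $g$ and its derivatives, and the associated $f(h_n)\in {\cal J}_{\Csmall }(g)$ is bounded and quasiperiodic on $\C /\langle i\rangle $. Writing $f(h_n)=v_n+iv_n^*$, both $v_n$ and $v_n^*$ are bounded real Jacobi functions, so the sequence $\{f(h_n)\}_{n\geq 0}$ lies in ${\cal J}_{\infty }(M)+i\,{\cal J}_{\infty }(M)$, which is finite-dimensional over $\C$ by Appendix~1. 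Hence the $f(h_n)$ must be linearly dependent: there exist an integer $N\geq 1$ and constants $c_0,\ldots ,c_N\in \C$ with $c_N\neq 0$ such that $\sum_{k=0}^N c_k\, f(h_k)=0$.

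The next step is to prove and exploit the injectivity of the map $h\mapsto f(h)$. By the $\C$-linearity of $f$, the relation reads $f(H)=0$ with $H=\sum_k c_k h_k$, that is, by~(\ref{eq:Jacobideh}),
\[
\frac{g^2 H'}{g'}+\frac{2gH}{1+|g|^2}=0.
\]
The first summand is meromorphic in $z$ while the second is not, so applying $\partial_{\bar z}$ annihilates the meromorphic term; using $\partial_{\bar z}(1+g\bar g)^{-1}=-g\,\overline{g'}/(1+|g|^2)^2$ this yields $\tfrac{2\,g^2 H\,\overline{g'}}{(1+|g|^2)^2}\equiv 0$, which forces $H\equiv 0$ on the open set where $g\, g'$ is non-zero, and hence everywhere by meromorphicity. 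Applying the $\C$-linear operator $h\mapsto \dot{g}(h)$ to $H=0$ produces the desired relation at the level of Gauss maps, $\sum_{k=0}^N c_k\, \tfrac{\partial g}{\partial t_k}=0$ in $T_g{\cal W}$.

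Finally, the Shiffman hierarchy was designed in Section~\ref{secKdV2} precisely so that the substitution $g=1/y^2$ followed by $u=-y''/y$ sends the $n$-th Shiffman flow $\tfrac{\partial g}{\partial t_n}$ to the $n$-th KdV flow $\tfrac{\partial u}{\partial t_n}=-\partial_z{\cal P}_{n+1}(u)$. Since $u$ is a rational differential expression in $g$, the induced map $\dot{g}\mapsto \dot{u}$ is a linear differential operator, so the relation among the Shiffman flows transfers directly to $\sum_{k=0}^N c_k\,\tfrac{\partial u}{\partial t_k}=0$ with $c_N\neq 0$. Solving for $\tfrac{\partial u}{\partial t_N}$ expresses it as a $\C$-linear combination of the lower-order flows, which is exactly the algebro-geometric condition. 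The substantive content of the argument is concentrated in Theorem~\ref{thm9.2} and the finite-dimensionality of ${\cal J}_{\infty }(M)$ from Appendix~1; once these are available, the injectivity of $f$ together with the Shiffman--KdV correspondence reduces the corollary to a formal manipulation.
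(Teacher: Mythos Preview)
Your proof is correct and follows essentially the same approach as the paper's: use Theorem~\ref{thm9.2} and the finite-dimensionality of ${\cal J}_{\infty }(M)$ from Appendix~1 to obtain a linear relation among the $f(h_n)$, invoke injectivity of $h\mapsto f(h)$ to pass to a relation among the $h_n$ and hence among the Shiffman flows $\partial g/\partial t_n$, and then push this through the change of variables to the KdV flows via the chain rule (the paper cites (\ref{relacion}) and (\ref{sh}) for this last step). Your explicit $\partial_{\bar z}$ argument for the injectivity of $h\mapsto f(h)$ is a welcome addition, since the paper merely asserts this fact.
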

\begin{proof}
Using Theorems~\ref{thm9.2} and~\ref{bounded} in Appendix~1,
we deduce that there exists $n\in \N $
such that
the Jacobi function $f(h_n)\in {\cal J}_{\C }(g)$ associated to the
infinitesimal flow $\frac{\partial g}{\partial t_n}$ is a
linear combination of the Jacobi functions $f(h_0),\ldots ,f(h_{n-1})$ associated to
$\frac{\partial g}{\partial t_0},\ldots,\frac{\partial g}{\partial t_{n-1}}$, respectively.
Note that the linear map $h\mapsto f(h)$ given by equation~(\ref{eq:Jacobideh}) is injective. Therefore,
$h_n$ is a linear combination of $h_0,\ldots ,h_{n-1}$, and (\ref{eq:gpuntodeh}) implies that
the $n$-th infinitesimal flow $\frac{\partial g}{\partial t_n}$ of the
Shiffman hierarchy is a linear combination of
$\frac{\partial g}{\partial t_0},\ldots,\frac{\partial g}{\partial t_{n-1}}$.
By equations (\ref{relacion}) and (\ref{sh}), each of the infinitesimal flows
$\frac{\partial u}{\partial t_n}$ of the KdV hierarchy can be
expressed in terms of the ones of the Shiffman hierarchy as
\[
\frac{\partial u}{\partial t_n} =
\frac{\partial }{\partial t_n}\left(-\frac{3(g')^2}{4g^2}+\frac{g''}{2g} \right) ,
\]
from where we conclude that $\frac{\partial u}{\partial t_n}$ depends linearly
on the lower order infinitesimal flows in the KdV hierarchy.
\end{proof}

\begin{lemma}
\label{lemma6.11}
Let $u\colon \C\to \C \cup\{\infty\}$ be a meromorphic function
with Laurent expansion given by {\rm (\ref{eq:thm6.4*})} around
any of its poles, and let $y\colon \C\to \C \cup\{\infty\}$ be a meromorphic
solution of the equation $y''+uy=0$. Then, the following properties hold.
\begin{enumerate}
\item Outside of the poles of $u$, the function $y$ is holomorphic and its zeros
are simple.
\item At a pole of $u$, the function $y$ has either a simple pole or an order-two zero.
\end{enumerate}
\end{lemma}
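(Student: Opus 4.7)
\smallskip\noindent\textbf{Proof proposal.}
The proof splits naturally according to whether $z_0$ is a regular point or a pole of $u$, and in both cases reduces to a leading-order comparison in the ODE $y''+uy=0$.

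For item~1, I would fix a point $z_0\in\C$ where $u$ is holomorphic and first rule out that $y$ has a pole there. If $y$ had a pole of order $k\geq 1$ at $z_0$, then $y''$ would have a pole of order $k+2$ while $uy$ would have a pole of order only $k$ (since $u$ is bounded near $z_0$); comparing leading Laurent coefficients in $y''+uy=0$ gives an immediate contradiction. Hence $y$ is holomorphic at $z_0$. To see that the zeros of $y$ are simple, I would invoke uniqueness of solutions of the linear ODE $y''+uy=0$ at a regular point: if $y(z_0)=y'(z_0)=0$, then the zero solution and $y$ agree to first order at $z_0$, so $y\equiv 0$. Since we assume $y\not\equiv 0$, every zero is simple.

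For item~2, let $z_0$ be a pole of $u$ and set $w=z-z_0$, so $u(w)=-2w^{-2}+\varphi(w)$ with $\varphi$ holomorphic at $w=0$. I would then analyze the leading behavior of a meromorphic solution $y\not\equiv 0$ by writing its Laurent expansion $y(w)=a w^{r}+o(w^{r})$ with $a\neq 0$ and $r\in\Z$. Substituting into $y''+uy=0$, the leading term is
\[
\bigl[\,r(r-1)-2\,\bigr]\,a\, w^{r-2} + o(w^{r-2})=0,
\]
which forces $r(r-1)=2$, i.e., $(r-2)(r+1)=0$. The only admissible integer values are therefore $r=-1$ (simple pole) and $r=2$ (order-two zero), giving the two cases in the statement. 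The cases $r=0$ (nonvanishing holomorphic) and $r\geq 1$, $r\neq 2$, as well as $r\leq -2$, are excluded by this indicial relation.

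The plan is therefore largely routine once one writes out leading orders; the only step requiring any care is the statement of item~2, where a priori one might worry that Frobenius theory forces a logarithmic term in the second solution (since the two indicial roots $-1$ and $2$ differ by an integer). However, the lemma assumes outright that $y$ is meromorphic, so the indicial analysis above, applied to the actual Laurent expansion of $y$, is enough: any logarithmic branch is simply excluded by hypothesis, and no Frobenius construction of a second solution is needed. I do not expect any serious obstacle in this proof.
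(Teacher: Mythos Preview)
Your proof is correct and follows essentially the same approach as the paper: a leading-order comparison in the ODE $y''+uy=0$. The paper phrases the computation at a pole of $u$ as evaluating $u=-y''/y$ from the local expansion of $y$ and matching the $-2/(z-z_0)^2$ coefficient (obtaining $k(k+1)=2$ when $y$ has a pole of order $k$, and $k(k-1)=2$ when $y$ has a zero of order $k$), whereas you unify both cases into the single indicial equation $r(r-1)=2$; these are the same calculation, and your remark about logarithmic Frobenius solutions being excluded by the meromorphy hypothesis is a nice clarification that the paper leaves implicit.
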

\begin{proof}
First suppose that $y$ has a pole of order $k\geq 1$ at $z=0$. Then
locally $y(z)=z^{-k}f(z)$ with $f(0)\neq 0$, from where we conclude
that
\[
u(z)=-\frac{y''(z)}{y(z)}=
-\frac{k(k+1)}{z^2}+\frac{2kf'}{f}\frac{1}{z}+\mbox{ holomorphic}(z).
\]
This implies that every pole of $y$ is also a pole of $u$, which
is the first part of item~1. By equation~{\rm (\ref{eq:thm6.4*})}
we have $k(k+1)=2$; thus $k=1$ and so, all poles of $y$ are simple.

We now deal with the zeros of $y$. If $y$ has a zero at a point $a$
where $u$ is finite, then it must be a simple zero of $y$ (because
the solutions of $y''+uy=0$ are locally determined by
$(y(a),y'(a))$). Thus it suffices to study the behavior of $y$ at a
pole $z_0$ of $u$ such that $y$ is holomorphic around $z_0$. In this
case, we can write locally $y(z)=(z-z_0)^kf(z)$ for some non-negative
integer $k$ and some holomorphic function $f$ with $f(z_0)\neq 0$.
Then,
\[
u(z)=-\frac{y''(z)}{y(z)}=-\frac{k(k-1)}{(z-z_0)^2}
-\frac{2kf'(z)}{(z-z_0)f(z)}+\mbox{holomorphic}(z).
\]
Again equation~{\rm (\ref{eq:thm6.4*})} implies
$k(k-1)=2$; hence $k=2$ and the lemma is proved.
\end{proof}

\begin{definition}
\label{defholomorphicallyintegrated}
{\rm
Let $M$ be a quasiperiodic, immersed minimal surface of Riemann type,
with Weierstrass pair $(g,dz)$ on $(\C /\langle i\rangle )-g^{-1}(\{ 0,\infty \} )$.
Let $(g)=\prod _{j\in \Zsmall }p_j^2q_j^{-2}$ be the principal divisor of $g$ and
let $z_0\in \C /\langle i\rangle $ be a point different from $p_j$ and $q_j$ for all $j$.
The Shiffman function $S_M$ of $M$ is said to be {\it
holomorphically integrated} if there exist $\varepsilon >0$ and families
$\{ p_j(t)\} _j$, $\{q_j(t)\} _j\subset \C /\langle i\rangle $, $a(t)\in \C -\{ 0\} $  such that
\begin{description}
\item[{\it i)}] For each $j\in \Z $, the functions $t\in \D (\ve )\mapsto p_j(t)$,
$t\mapsto q_j(t)\in \C /\langle i\rangle $ are holomorphic with $p_j(0)=p_j$,
$q_j(0)=q_j$. Also, the function $t\mapsto a(t)$ is holomorphic as well.
\item[{\it ii)}] For any $t\in \D (\varepsilon )$, the divisor
$\prod _jp_j(t)^2q_j(t)^{-2}$ defines an element $g_t\in {\cal M}_{\mbox{\rm \footnotesize imm}}$
with $g_0=g$ and $g_t(z_0)=a(t)$. Let $M_t$ be the quasiperiodic, immersed minimal surface of Riemann
type with Weierstrass pair  $(g_t,dz)$.
\item[{\it iii)}] For $t\in \D (\ve )$, the derivative of $t\mapsto g_t$ with respect to $t$ equals
\[
\frac{d}{dt}g_t=\frac{i}{2}\left( g_t'''-3\frac{g_t'g_t''}{g_t}+
\frac{3}{2}\frac{(g_t')^3}{g_t^2}\right)  \quad \mbox{on }\C /\langle i\rangle .
\]
\end{description}
}
\end{definition}

\begin{remark}
\label{remark6.13}
{\rm
Definition~\ref{defholomorphicallyintegrated} is motivated by the following fact. With the notation in
that definition, suppose that the Shiffman function $S_M$ of $M$
can be holomorphically integrated. Consider the Weierstrass pair $(g_t,dz)$, $t\in \C $, $|t|<\ve $.
Fix a point $z_0\in (\C /\langle i\rangle )-g^{-1}(\{ 0,\infty \} )$.  Applying equation~(\ref{eq:propos4.13new}) to
$\Psi _t=\left( \frac{1}{2}(\frac{1}{g_t}-g_t),\frac{i}{2}(\frac{1}{g_t}+g_t),1\right) dz$
at every $t$, we obtain
\begin{equation}
\label{eq:SMholintegr}
\langle \left. \frac{d}{dt}\right| _t\int _{z_0}^z\Psi _t,N_t\rangle =
-\frac{1}{2}f(h_t)+\langle a(t),N_t\rangle ,
\end{equation}
where $f(h_t)=S_{M_t}+iS^*_{M_t}$ is the (complex valued) Jacobi function of $M_t$. Writing $t=t_1+t_2i$ with
$t_1,t_2\in \R $ and taking real parts in the last displayed equation at $t=t_1+0i$, we obtain
\[
\langle \left. \frac{\partial }{\partial t_1}\right| _{t_1}\Re \int _{z_0}^z\Psi _{t_1},N_{t_1}\rangle =
-\frac{1}{2}S_{M_t}+\langle \Re (a(t_1)),N_{t_1}\rangle .
\]
Calling $\psi _{t_1}=\Re \int _{z_0}^z\Psi _{t_1}-\int _0^{t_1}\Re (a(s))\, ds$, we have that the normal component
of the variational field of the deformation $t_1\mapsto \psi _{t_1}$ of $M$ equals (up to a multiplicative constant),
the (real valued) Shiffman function of $M_t$.
}
\end{remark}

\begin{theorem}
\label{thmintegrShiffman} Let $M=(\C /\langle i\rangle ,g,dz)$
be a quasiperiodic, immersed minimal surface of Riemann type {\rm
(}i.e., $g\in {\cal M}_{\mbox{\rm \footnotesize imm}}${\rm ).} Then, its
Shiffman function can be holomorphically integrated. Furthermore, if
$M$ is embedded, then its related surfaces $M_t$ are also
embedded for $|t|$ sufficiently small.
\end{theorem}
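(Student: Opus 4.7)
The strategy is to convert the Shiffman evolution into a flow of meromorphic KdV type, integrate it using the algebro-geometric property of the associated potential (Corollary~\ref{sag}) together with Theorem~\ref{integrar2}, and then transport the solution back via the change of variables $g=1/y^2$ coupled with the Schr\"odinger equation $y''+uy=0$. A suitable rescaling of the complex time will convert the standard KdV flow into the Shiffman flow (\ref{gpuntodeShiffman}) up to a universal factor.

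The starting point is Corollary~\ref{sag}, which gives that $u(z) = -\tfrac{3(g')^2}{4g^2} + \tfrac{g''}{2g}$ is an algebro-geometric potential of KdV. A direct Laurent expansion at the zeros $p_j$ of $g$ (using $g'''(p_j)=0$, which holds since $g\in {\cal M}_{\mbox{\rm \footnotesize imm}}$) shows that $u$ has poles precisely at the $p_j$, with leading term $u(z) = -2/(z-p_j)^2 + \mbox{holomorphic}(z)$ matching (\ref{eq:thm6.4*}); the poles $q_j$ of $g$ are regular points of $u$ because of the vanishing of the residues of $g\, dz$ there. Hence Theorem~\ref{integrar2} applies and yields a meromorphic solution $u(z,t_1)$ of the KdV Cauchy problem on $\C/\langle i\rangle \times \D(\varepsilon)$, quasiperiodic in $z$ for each fixed $t_1$, whose poles move along holomorphic curves $p_j(t_1)$ with $p_j(0)=p_j$.

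I would then solve the coupled Schr\"odinger system (\ref{PDEy}) at level $n=1$, with initial condition $y(z,0)$ given by $g = 1/y(z,0)^2$ (choosing a globally consistent branch, which exists by the (anti)periodicity noted just before Theorem~\ref{thm9.2}). Since the integrability condition of (\ref{PDEy}) is precisely the KdV equation for $u$, a unique holomorphic solution $y(z,t_1)$ exists on any region where $u$ is finite. Lemma~\ref{lemma6.11} then describes $y(\cdot,t_1)$: simple zeros off the poles of $u$, and at each $p_j(t_1)$ either a simple pole or an order-two zero; continuity in $t_1$ together with the initial datum $y(\cdot,0)$ having a simple pole at $p_j$ (since $g$ has an order-two zero there) forces the simple pole type for all small $t_1$. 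Setting $g_{t_1}(z) = 1/y(z,t_1)^2$, a direct computation from the $n=1$ Schr\"odinger flow gives
\[
\frac{\partial g_{t_1}}{\partial t_1} = -2\,\partial_z\bigl(g_{t_1}\, u(z,t_1)\bigr) = -g_{t_1}''' + 3\frac{g_{t_1}'g_{t_1}''}{g_{t_1}} - \frac{3}{2}\frac{(g_{t_1}')^3}{g_{t_1}^2},
\]
and after the rescaling $t = 2it_1$ this is exactly (\ref{gpuntodeShiffman}). The poles $q_j(t)$ of $g_t$ are the simple zeros of $y(z,t)$, which form holomorphic curves by Assertion~\ref{ass6.5}, while Remark~\ref{remark4.6} and part~4 of Proposition~\ref{teorintegraru} transfer the quasiperiodicity of $u(\cdot,t)$ to $g_t$; thus $g_t \in \mathcal{W}$ for every $t\in \D(\varepsilon)$.

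The final verification is that $g_t \in {\cal M}_{\mbox{\rm \footnotesize imm}}$, i.e.\ that the period problem closes. For this I would use that the Shiffman tangent vector annihilates the differential of the period map: by equation (\ref{eq:periodconstant}) of Proposition~\ref{propos4.8} applied through Corollary~\ref{corol4.10}, for every closed curve $\Gamma \subset \C/\langle i\rangle$ and each $t$ one has $\int_\Gamma \dot g_S(g_t)/g_t^2\, dz = 0 = \int_\Gamma \dot g_S(g_t)\, dz$, together with vanishing of the local residues of $dz/g_t$ and $g_t\, dz$. Hence the holomorphic function $t\mapsto \mbox{Per}(g_t)$ has vanishing derivative on $\D(\varepsilon)$, and since $\mbox{Per}(g_0)=\mbox{Per}(g)\in \{(a,\overline{a},0,0)\}$ by hypothesis, the period conditions persist for all $t$. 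Setting $a(t) := g_t(z_0)$ (holomorphic in $t$ by construction) supplies the remaining datum in Definition~\ref{defholomorphicallyintegrated}. For the embeddedness statement, item~6 of Theorem~\ref{thmcurvestim} endows $M$ with an embedded tubular neighborhood of uniform radius, and since the Weierstrass data of $M_t$ depend holomorphically on $t$ and the construction is uniform on the cylinder (quasiperiodicity), for $|t|$ small $M_t$ is a small normal graph perturbation of $M$ on every fundamental domain and therefore remains embedded. The main obstacle I anticipate is controlling the meromorphic extension of $y(z,t_1)$ uniformly across all moving pole curves $p_j(t_1)$ simultaneously, so that the simple-pole alternative of Lemma~\ref{lemma6.11} is selected globally and the implicit function argument locating $q_j(t)$ is uniform in $j$; the quasiperiodicity furnished by Theorem~\ref{integrar2} and the local estimates in part~4 of Proposition~\ref{teorintegraru} are precisely the tools that make this uniformity possible.
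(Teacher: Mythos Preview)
Your proposal is correct and follows essentially the same route as the paper: pass from $g$ to the algebro-geometric potential $u$ via Corollary~\ref{sag}, integrate the KdV flow with Theorem~\ref{integrar2}, solve the coupled Schr\"odinger system (\ref{PDEy}) at level $n=1$ to recover $y(z,t)$, use Lemma~\ref{lemma6.11} plus a winding-number/continuity argument to pin down the pole structure of $y_t$, set $g_t=1/y_t^2$, and freeze the periods via (\ref{eq:periodconstant}). The only cosmetic differences are that the paper makes the argument-principle step (your ``continuity in $t_1$'' line) explicit via the count (\ref{eq:thm7.5B}), and it invokes the maximum principle rather than a tubular-neighborhood argument for the embeddedness of $M_t$.
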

\begin{proof}
Choose a meromorphic function $y\colon \C \to \C \cup \{ \infty \} $ such
that $g(z)=y(z)^{-2}$ and let $u\colon \C \to \C \cup \{ \infty \} $ be given
by $u(z)=-y''(z)/y(z)$, which is also meromorphic.
Note that $y(z+i)=\pm y(z)$ and $u(z+i)=u(z)$ and that each of the three functions
$g,y$ and $u$ is quasiperiodic. By Corollary~\ref{sag}, $u$ is algebro-geometric.
A direct computation (using that the zeros and poles of $g$ have order two)
shows that around each pole $z_0$ of $u$, the Laurent expansion of $u$ is
\[
u(z)=\frac{-2}{(z-z_0)^2}+\mbox{\rm holomorphic}(z).
\]
Using Theorem \ref{integrar2}, we can solve the Cauchy problem
for the KdV equation with initial condition $u(z)$ and we get a meromorphic function
$u_t(z)=u(z,t)$, $z\in\C /\langle i\rangle $ and $|t|<\varepsilon$,  which is quasiperiodic
for each $t$. Moreover the poles of $u_t$ are given by holomorphic curves $t\mapsto z_0(t)$, and
\begin{equation}
\label{eq:thm7.5A}
u(z,t)=\frac{-2}{(z-z_0(t))^2}+\mbox{\rm holomorphic}(z,t)
\end{equation}
around $z_0(t)$. Consider the differential system in (\ref{PDEy}) for $n=1$ with unknown $y(z,t)$,
\begin{eqnarray}
\label{PDEy1}
\left\{ \begin{array}{l}
y''+u(z,t)y=0 \\
\rule{0cm}{.5cm}\frac{\partial y}{\partial t_1} =
{\cal P}_1(u)' y - 2 {\cal P}_1(u)y'= u'(z,t) y - 2 u(z,t) y'
\end{array}\right.
\end{eqnarray}
(The second equation in (\ref{PDEy1}) corresponds to the Shiffman flow in the
Schr\"{o}dinger hierarchy). The compatibility condition of (\ref{PDEy1}) is just
the KdV equation for $u$, see Appendix~A in Joshi~\cite{joshi1}.
By the Frobenius theorem, (\ref{PDEy1}) admits a unique solution $y=y(z,t)$
with initial condition $y(z,0)=y(z)$. Since $z\mapsto u(z,t)$ is algebro-geometric
for every $t$, part~3 of Theorem~\ref{sw} implies that $y(z,t)$ is defined on
$\C \times \D(\varepsilon)$ (for some $\ve >0$) and is meromorphic in $z$.
The uniqueness of solution of an initial value problem together with the fact that
$y(z+i)=\pm y(z)$, give that $y(z+i,t)=\pm y(z,t)$, with the same choice of signs as for $y(z)$.

By Lemma~\ref{lemma6.11} applied to $u_t(z)=u(z,t)$ and $y_t(z)=
y(z,t)$, we find that $y_t$ is holomorphic with simple zeros
outside of the poles of $u_t$, and that at a pole of $u_t$, either
$y_t$ has a simple pole or $y_t$ has an order-two zero. We claim that
this last possibility cannot occur. To see the claim,
let $D$ be a closed disk centered at a pole $z_0=z_0(0)$ of $u(z)$
and let $\ve >0$ such that $z_0(t)\in \mbox{Int}(D)$ is the unique pole of $u_t(z)$
in $D$ whenever $|t|<\ve $, see the proof of Theorem~\ref{integrar2} for details.
Note that since $g(z)$ has order-two zeros and poles, then the zeros and poles of
$y(z)$ are simple. Since $u(z)$ has a pole at $z_0\in D$,
then Lemma~\ref{lemma6.11} demonstrates that $y(z)$ has a simple pole at $z_0$.
We can also assume without loss of generality that $y(z)$ has no other zeros or poles
in $D$ and, by continuity, $y_t(z)$ has no zeros or poles in $\partial D$ for $t$
sufficiently close to zero. Arguing by contradiction, assume there exists $\widehat{t}$ with $|\widehat{t}|<\ve $,
such that $y_{\widehat{t}}$ has an order-two zero at $z_0(\widehat{t})$. Then,
\begin{equation}
\label{eq:thm7.5B}
\# (y_t^{-1}(\infty )\cap D)-\# (y_t^{-1}(0)\cap D)=
\# (y^{-1}(\infty )\cap D)-\# (y^{-1}(0)\cap D).
\end{equation}
The right-hand-side of (\ref{eq:thm7.5B}) is $1$, while the left-hand-side
for $t=\widehat{t}$ equals $-2$. This contradiction proves our claim.

Since the zeros $p_j(t)$ and poles $q_j(t)$ of $y_t(z)$ are simple, Assertion~\ref{ass6.5}
insures that $p_j(t),q_j(t)$ depend holomorphically on $t$.
Furthermore, the fact that $y(z,0)=y(z)$ implies that $p_j(0)=p_j$, $q_j(0)=q_j$,
where $(y)=\prod _jp_jq_j^{-1}$ is the principal divisor of $y(z)$. The same arguments
in the proof of Theorem~\ref{integrar2} now give that $y_t(z)$ is quasiperiodic. Finally,
define the quasiperiodic meromorphic function
\[
g(z,t)=\frac{1}{y^2(z,t)}, \qquad (z,t)\in (\C /\langle i\rangle ) \times \D (\ve ).
\]
As a function of $z$, $g_t(z)=g(z,t)$ has only order-two zeros and poles, and
$t\mapsto g_t$ is a holomorphic curve in ${\cal W}$. The complex periods of the Weierstrass pair
$((\C /\langle i\rangle )-g_t^{-1}(\{ 0,\infty \} ),g_t,dz)$ along every closed curve $\G \subset \C /\langle i\rangle $
are constant in $t$ provided that we prove that the following integrals vanish:
\begin{equation}
\label{eq:thm7.5C}
\frac{d}{dt}\int _{\G }\frac{dz}{g_t}=
-\int _{\G }\frac{\frac{\partial g_t}{\partial t}}{g_t^2}dz,
\qquad
\frac{d}{dt}\int _{\G } g_t\, dz=
\int _{\G }\frac{\partial g_t}{\partial t}\, dz.
\end{equation}
By Assertion~\ref{ass9.3} together with
equation~(\ref{eq:periodconstant}), both integrals in
(\ref{eq:thm7.5C}) vanish if we check that $\frac{\partial
g}{\partial t}=\frac{\partial g}{\partial t_1}$, where the right-hand-side in the
last equation is the flow of the Shiffman hierarchy for $n=1$. Also note that
once we know that the complex periods of $(g_t,dz)$ on $(\C /\langle i\rangle)-g_t^{-1}(\{ 0,\infty \} )$
do not depend on $t$, we can easily deduce that this pair is the
Weierstrass data of a quasiperiodic, immersed minimal surface of Riemann type
$M_t \subset \R^3$.

Next we prove that $\frac{\partial g}{\partial t}=\frac{\partial
g}{\partial t_1}$. Since $y''_t+u_ty_t=0$
 and $g_t=y_t^{-2}$, we have $u_t=-\frac{3(g'_t)^2}{4g_t^2}
+\frac{g''_t}{2g_t}$, which is equation (\ref{u}) for time $t$.
Using that $y(z,t)$ satisfies (\ref{PDEy1}) and comparing with
(\ref{PDEy}) and (\ref{sh}), we deduce the desired equality. Note
that $\frac{\partial g}{\partial t_1}$ is a constant multiple of the
(complex) Shiffman Jacobi function. Hence, we have proved that the
Shiffman function can be holomorphically integrated on $M$.

Finally, the fact that $M_t$ is embedded for $|t|$
sufficiently small provided that $M$ is embedded, follows from the previous arguments
together with the maximum principle for minimal surfaces.
\end{proof}

\section{The proofs of Theorems~\ref{mainthm} and~\ref{classthm}.}
\label{per}

The goal of this section is to prove Assertion~\ref{ass} stated in the introduction.
Recall from Section~\ref{sec3} that Assertion~\ref{ass} implies our main
Theorem~\ref{mainthm} and its consequence, Theorem~\ref{classthm}. Our strategy to prove
Assertion~\ref{ass} is as follows (we follow the notation in that assertion). First,
we will prove in Proposition~\ref{propos7.3}
that if the Shiffman function $S_M$ of a surface $M\in {\cal M}$ is linear,
then $M$ is a Riemann minimal example.
Second, we show that for every surface $M\in {\cal M}$, its Shiffman function is linear
(item 2 of Proposition~\ref{propos7.6}).

\subsection{Minimal surfaces of Riemann type whose Shiffman function is linear.}

\begin{lemma}
\label{lema7.2}
Suppose that the Shiffman function $S_M$ of a quasiperiodic, immersed minimal surface
of Riemann type $M\subset \R^3$, is linear. Then, $M$ is singly-periodic, and its smallest
orientable quotient surface $M_1$ is a torus punctured in two points with total curvature $-8\pi $.
Furthermore, $M_1$ is properly and minimally immersed in $\R^3/\langle v\rangle$, where $v\in \R^3-\{ 0\} $ is
a translation vector of $M$, and the punctures of $M_1$ correspond to planar ends of $M$.
\end{lemma}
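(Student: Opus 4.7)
The plan is to show that the linearity of $S_M$ forces the Gauss map $g$ to satisfy a first-order autonomous cubic ODE, and then to exploit the resulting elliptic-function nature of $g$ to extract a translation symmetry of $M$. The key algebraic step is the following. Since $S_M$ is linear, Corollary~\ref{corol4.10} together with the Montiel--Ros correspondence implies that the complex-valued Shiffman function $f(h_S) = S_M + iS_M^*$, where $h_S = \tfrac{i}{2}(g')^2/g^3$ and $f$ is the map in equation~(\ref{eq:Jacobideh}), lies in $L_{\mathbb{C}}(g)$. A direct calculation shows that $f(1)$, $f(1/g)$ and $f(1/g^2)$ span $L_{\mathbb{C}}(g)$, and that $f$ is injective on meromorphic inputs: the equation $f(h)=0$ rewrites as $h'/h = -2g'/[g(1+|g|^2)]$, forcing the meromorphic left-hand side to agree with a non-meromorphic right-hand side, which is only possible if $h \equiv 0$. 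Combined, there exist constants $b,\lambda,c\in\C$ with
\[
\frac{i}{2}\,\frac{(g')^2}{g^3} \;=\; b \;+\; \frac{\lambda}{g} \;-\; \frac{c}{g^2},
\]
and clearing denominators produces the autonomous first-order cubic ODE
\[
(g')^2 \;=\; \alpha_3\,g^3 + \alpha_2\,g^2 + \alpha_1\,g \qquad (\alpha_i\in\C).
\]

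Next, I would lift $g$ to the universal cover $\C$ of $\C/\langle i\rangle$ and analyze this ODE. Because $g$ is quasiperiodic on the cylinder with infinitely many distinct order-two zeros, the cubic $P(w) = \alpha_3 w^3 + \alpha_2 w^2 + \alpha_1 w$ cannot have a multiple root (otherwise its solutions would be rational or trigonometric, contradicting quasiperiodicity). Hence $g\colon\C\to\C\cup\{\infty\}$ is a genuine nondegenerate elliptic function with period lattice $\Lambda = i\Z + \omega\Z$ of rank two, where $\omega\in\C$ satisfies $\omega\notin i\R$ (any purely imaginary extra period would produce a dense set of vertical periods, again contradicting quasiperiodicity). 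Since $dh = dz$ is trivially $\omega$-periodic and the Weierstrass $1$-form $\Psi$ in equation~(\ref{eq:Psi}) depends only on $g$, $\Psi$ is $\omega$-periodic on $\C$, so
\[
v \;:=\; \Re\int_{z_0}^{z_0+\omega}\Psi \;\in\;\R^3
\]
is independent of $z_0$, and the lift $\tilde X\colon \C\to\R^3$ of the Weierstrass immersion of $M$ satisfies $\tilde X(z+\omega) = \tilde X(z)+v$. Since the third component of $v$ equals $\Re(\omega)\neq 0$, we have $v\neq 0$, so $M$ is invariant under the nontrivial translation $v$; hence $M$ is singly periodic, and $M_1 := M/\langle v\rangle$ is a properly minimally immersed surface in $\R^3/\langle v\rangle$ conformally parameterized by the torus $T=\C/\Lambda$ with finitely many punctures.

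Finally, to identify $M_1$ I would read its topology off the ODE. The relation $(g')^2 = P(g)$ exhibits $g\colon T\to\esf^2$ as a branched double cover of the sphere, so $\deg(g)=2$. Because each zero and pole of $g$ has order two, $g$ has exactly one zero and one pole on $T$, so $M_1$ has precisely two punctures. The residue conditions $\mbox{Res}_{p_j}(dz/g) = \mbox{Res}_{q_j}(g\,dz)=0$ valid for $g\in\mathcal{M}_{\mbox{\footnotesize imm}}$ identify both punctures as planar ends, and the Jorge--Meeks formula gives total curvature $-4\pi\deg(g) = -8\pi$. The main technical obstacle I anticipate is the initial algebraic step: verifying that the hypothesis "$S_M$ is linear" really forces $f(h_S)$ to lie in the full complex linear span $L_{\mathbb{C}}(g)$ (rather than giving only real-linearity of $S_M$ itself), together with the injectivity and surjectivity statements for $f$ that pin down $h_S\in\mathrm{span}\{1,1/g,1/g^2\}$; once those are in hand, the elliptic-function analysis and the identification of $M_1$ proceed by standard arguments.
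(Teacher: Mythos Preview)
Your proposal is correct and follows the same overall strategy as the paper: derive the cubic ODE $(g')^2=\alpha_3 g^3+\alpha_2 g^2+\alpha_1 g$, conclude that $g$ is elliptic on $\C$, and read off the twice-punctured torus quotient with total curvature $-8\pi$. The implementations differ in two places. First, to obtain the ODE the paper writes the explicit Shiffman expression~(\ref{eq:uShiffman}) equal to $\langle N,a\rangle$ for some $a\in\C^3$ and separates the resulting identity into its holomorphic and anti-holomorphic parts (a factor of $\overline g$ forces two meromorphic equations, whose combination yields the ODE), whereas you use the map $f$ of Proposition~\ref{propos4.8} together with injectivity of $f$ and $L_{\C}(g)=\mathrm{span}\{f(1),f(1/g),f(1/g^2)\}$; these are equivalent computations, and your anticipated obstacle dissolves because $S_M$ linear forces $S_M^*$ linear (the conjugate Jacobi function of a linear function is linear, since $X_v$ constant implies $(X_v)^*$ constant), so $f(h_S)=S_M+iS_M^*\in L_{\C}(g)$ immediately. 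Second, to rule out the degenerate case (equivalently, that the curve $\{w^2=P(\xi)\}$ is a sphere), the paper gives a residue argument---the pullback of $d\xi/w$ under $\pi=(g,g')$ is $dz$, all residues of $d\xi/w$ vanish, and on a sphere this would make $d\xi/w$ and hence $dz$ exact on the cylinder, a contradiction---while you argue that a repeated root of $P$ yields a simply periodic or rational $g$ with only finitely many poles on $\C/\langle i\rangle$, contradicting quasiperiodicity; the paper's argument is slightly more robust (it uses only that $dz$ is not exact), yours is more direct. One minor slip: you assert $\Lambda=i\Z+\omega\Z$, but $i$ need not be primitive in the period lattice of $g$; however your argument only requires some $\omega\in\Lambda$ with $\Re(\omega)\neq 0$, which follows from $\mathrm{rank}\,\Lambda=2$.
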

\begin{proof}
Let $N$ be the Gauss map of $M$. Since $S_M\in L(N)$, its conjugate Jacobi function
$S_M^*$ is also linear; thus $S_M+iS_M^*\in L_{\Csmall }(N)$. By equations
(\ref{eq:L(N)}) and (\ref{eq:uShiffman}), this linearity of $S_M+iS_M^*$ implies that there exists
$a\in \C ^3$ such that
\begin{equation}
\label{eq:lema7.2A}
\frac{3}{2}\left( \frac{g'}{g}\right) ^2-\frac{g''}{g}
 -\frac{1}{1+|g|^2}\left( \frac{g'}{g}\right) ^2=\langle N,a\rangle .
\end{equation}
After writing $a=(a_1,a_2,a_3)$, $2a_1=A+B$, $2a_2=i(A-B)$ and plugging the
equation~(\ref{eq:basislinearg}) into (\ref{eq:lema7.2A}), we obtain the following ODE for $g$:
\[
\frac{3}{2}\left( \frac{g'}{g}\right) ^2-\frac{g''}{g}-\frac{1}{|g|^2+1}\left(
\frac{g'}{g}\right) ^2=
\frac{1}{|g|^2+1}\left( Ag+B\overline{g}\right) +a_3 \frac{|g|^2-1}{|g|^2+1}.
\]
An algebraic manipulation in the last expression leads to
\[
\overline{g}\left( \frac{3}{2}\frac{(g')^2}{g}-g''-B-a_3g\right) =
\frac{g''}{g}-\frac{1}{2}\left( \frac{g'}{g}\right) ^2+Ag-a_3.
\]
Since $g$ is holomorphic and not constant, we deduce that
\[
\frac{3}{2}\frac{(g')^2}{g}-g''-B-a_3g=0,\qquad
\frac{g''}{g}-\frac{1}{2}\left( \frac{g'}{g}\right) ^2+Ag-a_3=0.
\]
After elimination of $g''$ in both equations, we arrive at $(g')^2=g(-A g^2+2a_3 g+B)$.
Hence we have a (possibly branched) holomorphic covering $\pi =(g,g')$ from the cylinder
$M\cup \{ \mbox{planar ends}\} \equiv \C /\langle i\rangle $ onto the compact Riemann surface
$\Sigma =\{ (\xi ,w)\in (\C\cup \{ \infty \} )^2\ | \ w^2=\xi(-A\xi^2+2a_3\xi+
B)\} $. Clearly, $\Sigma $ is either a sphere or a torus. We claim that $\Sigma $ cannot be a sphere. Otherwise,
consider the meromorphic differential $\frac{d\xi }{w}$ on $\Sigma $, whose pullback by $\pi $ is
$\pi ^*(\frac{d\xi }{w})=\frac{dg}{g'}=dz$. Given a pole $P\in \Sigma $ of $\frac{d\xi }{w}$, choose a point
$z_0\in \C /\langle i\rangle $ such that $\pi (z_0)=P$. The residue of
$\frac{d\xi }{w}$ at $P$ can be computed as the integral of $\frac{d\xi }{w}$ along a small closed curve
$\G _P\subset \Sigma $ which winds once around $P$. After lifting $\G _P$ through $\pi $ locally around $z_0$ we
obtain a closed curve $\widetilde{\G }_P\subset \C /\langle i\rangle $ which winds a positive integer number
of times  around $z_0$, depending on the branching order of $\pi $ at $z_0$. Hence the residue of $\frac{d\xi }{w}$ at
$P$ equals a positive integer multiple of the residue of $dz$ at $z_0$, which is zero. Therefore, $\frac{d\xi }{w}$
has residue zero at all its poles, and so, it is exact on $\Sigma $. This implies that
$dz$ is also exact on $\C /\langle i\rangle $, which is a contradiction. Thus, $\Sigma $ is a torus.
Now consider the following
Weierstrass pair on $\Sigma $:
\[
\left( g_1(\xi ,w)=\xi ,dh_1=\frac{d\xi }{w}\right) .
\]
The associated metric to this pair is $\left( \frac{1}{2}(|\xi |+|\xi |^{-1})\frac{|d\xi |}{|w|}\right) ^2$,
which can be easily proven to be positive definite and complete
in $\Sigma -\{ (0,0),(\infty ,\infty )\} $. Note that $g_1\circ \pi =g$ and
$\pi ^*(\frac{d\xi }{w})=dz$. This implies that the Weierstrass pair $(g,dz)$ of $M$ can be induced on the twice punctured torus
$\Sigma -\{ (0,0),(\infty ,\infty )\} $. From here one easily deduces that $M$ is singly-periodic. Note that
since the degree of the extended Gauss map $g_1$ on $\Sigma $ is 2, then the total curvature of the quotient minimal
surface is $-8\pi $.
\end{proof}


\begin{proposition}
\label{propos7.3}
If the Shiffman function $S_M$ of an embedded surface $M\in {\cal M}$ is linear,
then $M$ is a Riemann minimal example.
\end{proposition}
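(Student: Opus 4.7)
The plan is to combine Lemma~\ref{lema7.2} with the authors' previous classification of periodic, properly embedded minimal planar domains. By hypothesis, $M\in\mathcal{M}$ is embedded and $S_M$ is linear, so Lemma~\ref{lema7.2} provides a nonzero translation vector $v\in\R^3$ under which $M$ is invariant, and identifies the smallest orientable quotient $M_1=M/\langle v\rangle \subset \R^3/\langle v\rangle$ as a properly, minimally immersed, twice-punctured torus with total curvature $-8\pi$, whose punctures correspond to planar ends.

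First I would observe that $M_1$ is properly embedded (not merely immersed) in $\R^3/\langle v\rangle$, since $M$ itself is embedded in $\R^3$. Next, because $M\in\mathcal{M}$ is parameterized by the cylinder $\C/\langle i\rangle$ with height function $x_3(z)=\Re(z)$ and an infinite discrete set of planar-end heights accumulating at $\pm\infty$ (Theorem~\ref{thmcurvestim}), the translation $v$ must have nonzero vertical component; after replacing $v$ by $-v$ if necessary, we may assume this component is positive. In particular $M$ is \emph{singly-periodic} in the precise sense of~\cite{mpr1}.

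At this point the plan is to invoke the main classification result of our earlier paper~\cite{mpr1}, which establishes Theorem~\ref{mainthm} under the additional assumption of periodicity: any connected, properly embedded, singly-periodic minimal planar domain in $\R^3$ with an infinite number of ends is, up to rigid motion and homothety, a Riemann minimal example. Since the surface $M$ under consideration satisfies exactly these hypotheses (it lies in $\mathcal{M}$, hence is a connected properly embedded minimal planar domain with two limit ends and infinitely many planar middle ends, and has just been shown to be singly-periodic with a vertically nontrivial period), this periodic uniqueness result identifies $M$ with some $\mathcal{R}_{t_0}$. The flux normalization $F_M=(h,0,1)$ built into the definition of $\mathcal{M}$ then forces $t_0=h$, so $M=\mathcal{R}_h$, completing the proof.

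The main obstacle I anticipate is a bookkeeping issue rather than a conceptual one: one must verify that the period vector $v$ produced by Lemma~\ref{lema7.2} yields a quotient $M_1$ of exactly the conformal and topological type required by the hypotheses of~\cite{mpr1}, i.e., a properly embedded minimal torus with two planar ends in $\R^3/\langle v\rangle$. The total curvature count $-8\pi$ together with the explicit identification of the punctures of $M_1$ with planar ends of $M$ (both provided by Lemma~\ref{lema7.2}) rule out exotic quotient geometries, so this reduction should go through without obstruction and allow the direct appeal to the periodic classification.
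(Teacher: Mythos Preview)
Your proposal is correct and follows essentially the same route as the paper: apply Lemma~\ref{lema7.2} to see that $M$ is singly-periodic with the prescribed quotient structure, then invoke the periodic classification from~\cite{mpr1} to conclude $M$ is a Riemann minimal example. The paper's proof is more terse (it simply defines ${\cal M}_1$ and cites~\cite{mpr1}, noting Appendix~2 gives a self-contained alternative), but your added observations about embeddedness of the quotient and the nonzero vertical component of $v$ are exactly the bookkeeping details that justify this citation.
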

\begin{proof}
Let ${\cal M}_1\subset {\cal M}$ be the subset of surfaces which are singly-periodic and
their smallest orientable quotient is a properly embedded, twice-punctured minimal torus in a quotient
of $\R^3$ by a translation. By Lemma~\ref{lema7.2}, our proposition reduces to proving that
${\cal M}_1$ coincides with the family ${\cal R}=\{ {\cal R}_t\} _t$
of Riemann minimal examples. This result is implied by the main theorem in~\cite{mpr1}, see also
Appendix 2 for a self-contained proof. Up to this reduction, the proposition is proved.
\end{proof}

\subsection{The linearity of the Shiffman function for every surface $M\in {\cal M}$.}
\label{lin} Recall that ${\cal M}$ is the space of properly
embedded, minimal planar domains $M\subset \R^3$ with two limit ends
and flux $F=F_M=(h,0,1)$, $h=h(M)>0$. ${\cal M}$ is endowed with
its natural topology of uniform convergence on compact sets.
According to
the notation in Theorem~\ref{thmcurvestim}, the
heights of the planar ends of every $M\in {\cal M}$ are
\begin{equation}
\label{eq:heightends}
\ldots <\Re (p_{-1})<\Re (q_{-1})<\Re (p_0)<\Re (q_0)<\Re (p_1)<\Re (q_1)<\ldots
\end{equation}
Recall from Section~\ref{sec4.2} that we view
${\cal M}$ as a subset of ${\cal W}$ by the map $M\mapsto g$ that associates to
each $M\in {\cal M}$ its meromorphic Gauss map $g\in {\cal W}$. For this
inclusion to make sense we must identify surfaces in ${\cal M}$ up to horizontal
translations in $\R^3$ (two elements in ${\cal W}$ that differ in a vertical translation
are considered as different elements in ${\cal W}$, and the same holds for two surfaces
in ${\cal M}$ that differ in a vertical translation). Indeed,
${\cal M}\subset {\cal M}_{\mbox{\footnotesize imm}}\subset {\cal W}$, where
${\cal M}_{\mbox{\footnotesize imm}}$ is defined by (\ref{eq:Mimm}). This identification
of ${\cal M}$ as a subset of ${\cal W}$ is consistent with the topology of both sets
(${\cal W}$ was equipped with the uniform convergence of compact sets of $\C /\langle i\rangle $),
because the convergence of surfaces in ${\cal M} $ produces convergence of the corresponding
Gauss maps in ${\cal W}$ (for this property one needs to extend uniform convergence across the planar ends
$g^{-1}(\{ 0\})=\{ p_j\} _j$, $g^{-1}(\{ \infty \})=\{ q_j\} _j$).
Also recall that the topology of ${\cal W}$ is equivalent to the product topology on
$\left[ \Pi_{ j\in\mathbb{Z}}\, (\mathbb{C}/\langle i \rangle)\right]
\times (\C -\{ 0\} )$ by the bijection $g\mapsto (p_j,q_j,g(z_0))$ defined in (\ref{eq:topol}).

Next we want to define the functions which map each surface $M\in {\cal M}$ to the relative height
of each of its planar ends with respect to the end corresponding to $p_0$.
We define the positive functions $h_j\colon {\cal M}\to \R $ for $j\in \N$ by
\[
h_1(M)=\Re (q_0-p_0),\quad h_2(M)=\Re (p_0-q_{-1}),\quad h_3(M)=\Re (p_1-p_0),\quad h_4(M)=\Re (p_0-p_{-1})\ldots
\]
\begin{proposition}
\label{propos7.6}
Given $F=(h,0,1)$ with $h>0$, let ${\cal M}_F=\{ M\in {\cal M}\ | \ F_M=F\} $.
Then:
\begin{enumerate}
\item There exists a surface $M_{\max }\in {\cal M}_F$ which  maximizes
each of the functions $h_{j+1}$ in ${\cal M}_F(j)=\{ M\in {\cal M}_F(j-1)\ | \ h_j(M)=\max _{{\cal M}_F(j-1)}h_j\} $ for all $j\geq 1$, where ${\cal M}_F(0)={\cal M}_F$. Also, there exists
a surface $M_{\min }\in {\cal M}_F$ which  minimizes
each $h_{j+1}$ in $\widetilde{\cal M}_F(j)=\{ M\in \widetilde{\cal M}_F(j-1)\ | \ h_j(M)=
\min _{\widetilde{\cal M}_F(j-1)}h_j\} $ for all $j\geq 1$. Furthermore,
the Shiffman function of every such surface $M_{\min }, M_{\max }$ is linear.
\item The Shiffman function of every surface $M\in {\cal M}$ is linear.
\end{enumerate}
\end{proposition}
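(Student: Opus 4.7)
The plan is to prove item~1 first and then bootstrap to item~2 via Proposition~\ref{propos7.3} combined with the uniqueness of the Riemann minimal example with a given flux.

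For the existence of $M_{\max}$ (and symmetrically $M_{\min}$), I would extract a compactness statement for $\mathcal{M}_F$ modulo horizontal translation from items~5, 6 and~7 of Theorem~\ref{thmcurvestim}: the uniform bound on Gaussian curvature, the uniform lower bound on the radius of an embedded tubular neighborhood, and the two-sided bounds on the vertical spacings (all depending only on $h$) allow one to extract smooth-on-compact-sets subsequential limits after normalizing, say, $p_0=0$, with the limit again in $\mathcal{M}_F$. Since item~6 also bounds each $h_j$ from above, the supremum is attained, and $M_{\max}$ is produced by iterative extremization over the nested, closed subsets $\mathcal{M}_F(j)$.

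To prove that $S_{M_{\max}}$ is linear, I would apply Theorem~\ref{thmintegrShiffman} to obtain a holomorphic family $t\in\D(\varepsilon)\mapsto M_t$ integrating the Shiffman function, with $M_0=M_{\max}$ and each $M_t$ embedded for $|t|$ small. The complex periods of $\Psi_t$ depend holomorphically on $t$ and are constant for real $t$ by Remark~\ref{rem4.4}, so they are constant throughout $\D(\varepsilon)$, and hence $M_t\in\mathcal{M}_F$ for every $t$. Each spacing $h_j(M_t)$ is then the real part of a holomorphic function of $t$ and attains its maximum over $\mathcal{M}_F$ at the interior point $t=0$; the strong maximum principle forces $h_1(M_t)\equiv h_1(M_{\max})$ on $\D(\varepsilon)$, and inductively $h_j(M_t)\equiv h_j(M_{\max})$ for every $j$. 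This constancy forces $p_j(t)-p_0(t)$ and $q_j(t)-p_0(t)$ (holomorphic in $t$ with constant real part) to be themselves constant, so that $p_j(t)=p_j+\phi(t)$ and $q_j(t)=q_j+\phi(t)$ for a holomorphic $\phi(t)$ with $\phi(0)=0$. The uniqueness of quasiperiodic meromorphic functions with prescribed divisor (Section~\ref{sec4.2}) then gives $g_t(z)=c(t)\,g(z-\phi(t))$ for a holomorphic $c(t)$ with $c(0)=1$, and the flux constraint $F_{M_t}=(h,0,1)$ together with the explicit form~(\ref{gpuntodeShiffman}) of the Shiffman tangent vector isolates $c(t)\equiv 1$. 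Differentiating at $t=0$ yields $\dot g_S=-\phi'(0)\,g'$, which by the remark following Proposition~\ref{propos4.8} (case $h=1/g$) corresponds to the vertical linear Jacobi function $\frac{1-|g|^2}{1+|g|^2}$; hence $S_{M_{\max}}\in L(N)$, and the same argument gives $S_{M_{\min}}\in L(N)$, completing item~1.

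For item~2 the plan is to invoke Proposition~\ref{propos7.3}: $M_{\max}$ and $M_{\min}$ are Riemann minimal examples, and by uniqueness of the Riemann example with flux $F$ both coincide with $\mathcal{R}_h$. Therefore $\max_{\mathcal{M}_F} h_j=\min_{\mathcal{M}_F} h_j$ for every $j$, i.e., each $h_j$ is constant on $\mathcal{M}_F$. For an arbitrary $M\in\mathcal{M}_F$, the holomorphic family $M_t\in\mathcal{M}_F$ from Theorem~\ref{thmintegrShiffman} automatically satisfies $h_j(M_t)\equiv h_j(M)$ for every $j$ (because $h_j$ is constant on $\mathcal{M}_F$), and the closing part of the previous paragraph applies verbatim with $M$ in place of $M_{\max}$, yielding $S_M\in L(N)$.

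The main technical obstacle I anticipate is the isolation of $c(t)\equiv 1$ in the ansatz $g_t(z)=c(t)\,g(z-\phi(t))$: writing $A=\oint dz/g$ and $B=\oint g\,dz$, the flux equations $\Im[\tfrac{A}{c}-cB]=2h$ and $\Re[\tfrac{A}{c}+cB]=0$ isolate $c=1$ only when $\Im(A+B)\neq 0$, so the argument must rule out the degenerate case $A+B\in\mathbb{R}$ by combining the embeddedness of $M$ with the specific third-order algebraic structure of the Shiffman tangent vector~(\ref{gpuntodeShiffman}). This is where the interplay between the period map~(\ref{eq:periodmap}) and the KdV-type identity satisfied by the variable $u=-\tfrac{3(g')^2}{4g^2}+\tfrac{g''}{2g}$ becomes essential.
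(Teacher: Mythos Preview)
Your approach follows the paper's exactly: compactness of $\mathcal{M}_F$ from the uniform estimates, holomorphic integration of the Shiffman function, the maximum principle to freeze every $h_j$ along the deformation, and then the bootstrap from $M_{\max}=M_{\min}=\mathcal{R}_h$ to item~2. The only divergence is that you make explicit the multiplicative constant $c(t)$ in $g_t(z)=c(t)\,g(z-\phi(t))$, a step the paper's sentence ``geometrically, this means that the maps $\psi_t$ coincide with $\psi_0$ up to translations'' passes over.

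Your anticipated obstacle is not one, and no further KdV input is needed to force $c(t)\equiv 1$. Since $g_t\in\mathcal{M}_{\mbox{\rm \footnotesize imm}}$ for every $t$ (Theorem~\ref{thmintegrShiffman}), the period relation $\overline{\int_\gamma dz/g_t}=\int_\gamma g_t\,dz$ holds; writing $A=\int_\gamma dz/g$ and $B=\int_\gamma g\,dz=\overline{A}$, this becomes $\overline{A}/\overline{c(t)}=c(t)B$, i.e.\ $|c(t)|^2=1$ (and $B=-ih\neq 0$). As $c$ is holomorphic on $\D(\ve)$ with $c(0)=1$, the open mapping theorem forces $c\equiv 1$. (Alternatively, with $A=ih$ and $B=-ih$ your two flux equations reduce to $\Re(1/c+c)=2$ and $\Im(1/c-c)=0$, whose unique solution is $c=1$; the degenerate case $\Im(A+B)=0$ you flagged does occur, yet the isolation still holds.) From $c\equiv 1$ you get $\dot g_S=-\phi'(0)\,g'$ and the linearity of $S_{M_0}$ exactly as you wrote.
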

\begin{proof}
By the uniform curvature estimates in
Theorem~\ref{thmcurvestim} and subsequent uniform local area estimates,
we deduce that ${\cal M}_F$ is compact where ${\cal M}_F$ is
considered as a topological subspace of $\left[ \Pi_{ j\in\mathbb{Z}}\, (\mathbb{C}/\langle i \rangle)\right]
\times (\C -\{ 0\} )$ with its metrizable product topology.
Note that $h_j$ is continuous. Thus, there exists a maximum of $h_1$ in
${\cal M}_F$. Now consider the restriction of $h_2$ to the non-empty
subset ${\cal M}_F(1)= \{ M\in {\cal M}_F \ | \ h_1(M)=\max _{{\cal
M}_F}h_1\} $. As before, we can maximize $h_2$ on ${\cal M}_F(1)$,
which implies that the space ${\cal M}_F(2)$ defined in the statement of the proposition is non-empty,
and maximize $h_3$ in ${\cal M}_F(2)$. Repeating the argument, induction
lets us maximize $h_{j+1}$ in ${\cal M}_F(j)\neq \mbox{\O}$ for each $j\in \N $.
Since the compact subsets ${\cal M}_F(j)$ satisfy ${\cal
M}_F(j)\supset {\cal M}_F(j+1)$ for all $j$, this collection of
closed sets satisfies the finite intersection property. By the
compactness of ${\cal M}_F$,  we conclude that $\bigcap _{j\in \N}
{\cal M}_F(j)\neq \mbox{\O }$. Thus there exists a surface $M_{\max
}\in {\cal M}_F$ that maximizes each of the functions $h_{j+1}$ in
${\cal M}_F(j)$ for all $j\geq 1$. In the same way, we find a
surface $M_{\min }\in {\cal M}_F$ that minimizes the function
$h_{j+1}$ on $\widetilde{\cal M}_F(j)$ for all $j\geq 1$. This proves the
first statement of item~1.

Next we prove that if $M_0\in {\cal M}_F$ maximizes each of the functions $h_{j+1}$ in
${\cal M}_F(j)$ for all $j\geq 1$, then its Shiffman function is linear (for
minimizing surfaces the argument is similar). By
Theorem~\ref{thmintegrShiffman}, the Shiffman function $S_{M_0}$ of
$M_0$ can be holomorphically integrated. Thus we find
a curve $t\in \D (\ve )\mapsto g_t\in {\cal M}_{\mbox{\rm \footnotesize imm}}
\subset {\cal W}$ whose
zeros  $p_j(t)$ and poles $q_j(t)$ depend holomorphically on $t$,
satisfying items i), ii) and iii) of
Definition~\ref{defholomorphicallyintegrated} for $M=M_0$. With the
notation of that definition, let $\psi _t\colon (\C /\langle
i\rangle )-\{ p_j(t),q_j(t)\} _j\to \R^3$ be the parametrization of
$M_t$ given by
\[
\psi _t(z)=\Re \int _{z_0}^z\left( \frac{1}{2}(\frac{1}{g_t}-g _t), \frac{i}{2}(\frac{1}{g_t}+g_t),1\right)
\, dz,
\]
where $z_0$ has been chosen in $(\C /\langle i\rangle )-\{ p_j(t),q_j(t)\ | \ j\in \Z, |t|<\ve \} $.
By equation~(\ref{eq:SMholintegr}),
the normal part of the variational field of $t\in \D (\ve )\mapsto \psi _t$ is
(up to a multiplicative constant) the complex
valued Shiffman function $S_{M_t}+iS_{M_t}^*$ of $M_t$  plus
a linear function of the Gauss map $N_t$ of $M_t$,
we conclude from Remark~\ref{rem4.4}
that $M_t\in {\cal M}_F$ for all $t$. Therefore,
the harmonic function $t\in \D (\ve  )\mapsto h_1(M_t)=\Re (q_0(t)-p_0(t))$ attains a
maximum at $t=0$, and hence it is constant. From here we conclude that
the holomorphic function $t\in \D(\ve )\mapsto q_0(t)-p_0(t)$ is also constant.
The same argument applies to each function $t\mapsto h_j(M_t)$, concluding that for any $t$, all the planar ends
$p_j(t),q_j(t)$ of $M_t$ are placed at
\[
p_j(t)=p_0(t)+p_j-p_0,\quad q_j(t)=p_0(t)+q_j-p_0.
\]
Geometrically, this means that the maps $\psi _t$ coincide with $\psi _0$ up to translations in the parameter domain and
in $\R^3$. Therefore, the normal part of the variational field of $t\mapsto \psi _t$ is linear, as desired.
This proves the second statement in item~1 of the proposition.

Finally, we prove item 2 of the proposition. Given $M\in {\cal M}$, let
$F=(h,0,1)$ be the flux vector of $M$. By item~1, there exist surfaces $M_{\max },
M_{\min }\in {\cal M}_F$ such that $M_{\max }$ maximizes
each of the functions $h_{j+1}$ in ${\cal M}_F(j)$ (resp. $M_{\min }$ minimizes
$h_{j+1}$ in $\widetilde{\cal M}_F(j)$), for all $j\geq 1$. Furthermore, the Shiffman functions of $M_{\max },
M_{\min }$ are linear. By Proposition~\ref{propos7.3}, both $M_{\max },M_{\min }$ are Riemann
minimal examples. Since the flux parameterizes the space of Riemann minimal
examples, it follows that the Riemann minimal example ${\cal R}_F$ in ${\cal M}_F$
is unique up to translation. Thus, $M_{\max }$ and $M_{\min }$ are translations of
${\cal R}_F$. On the other hand,
the vertical distance between the ends $p_0,q_0$ of $M$ (with the notation in (\ref{eq:heightends}))
is bounded above (resp. by below) by the distance between the corresponding ends of $M_{\max }$
(resp. of $M_{\min }$). Therefore, the vertical distance between the ends $p_0,q_0$ of $M$
is maximal, or equivalently, $M$ maximizes $h_1$ on ${\cal M}_F$.
Analogously, $M$ maximizes each of the functions $h_{j+1}$ in
${\cal M}_F(j)$ for all $j\geq 1$ and applying item~1,
its Shiffman function $S_M$ is linear. Now the proof of the proposition is complete.
\end{proof}

\section{Linearity of bounded Jacobi functions on Riemann minimal examples.}
\label{3dim}
We devote this section to describing the set of bounded Jacobi functions
on every Riemann minimal example, which is the goal of Theorem~\ref{teorJacobiacotadaenRiemanneslineal}
below. This result plays a central role in our proof in Section~\ref{asymp} that any
limit end of a properly embedded minimal surface with finite genus and
horizontal limit tangent plane at infinity converges
exponentially in height to a limit end of one of the Riemann minimal examples.

\begin{theorem}
\label{teorJacobiacotadaenRiemanneslineal}
Let ${\cal R}={\cal R}_t
\subset \R^3$ be a Riemann minimal example. Then, any bounded Jacobi
function on ${\cal R}$ is linear.
\end{theorem}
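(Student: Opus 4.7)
The strategy is to combine the algebro-geometric structure of Riemann's examples (inherited from the KdV machinery of Section~\ref{sec5}) with the singly-periodic nature of ${\cal R}$ to explicitly compute the space ${\cal J}_{\infty }({\cal R})$. Since ${\cal R}$ is foliated by horizontal circles and lines, its Shiffman function $S_{\cal R}$ vanishes identically, so by Corollary~\ref{corol4.10} we have $\dot{g}_S=0$; this is equivalent to the stationary KdV equation $u''+3u^2=\mathrm{const}$ for the potential $u=-\frac{3(g')^2}{4g^2}+\frac{g''}{2g}$, exhibiting $u$ as a genus-one algebro-geometric KdV potential. This algebraic fact is the key rigidity which, together with Lemma~\ref{lema7.2}, identifies the minimal orientable quotient ${\cal R}/\langle T\rangle$ of ${\cal R}$ under its period translation $T$ with a twice-punctured torus of total curvature $-8\pi$.

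The first ingredient is the finite-dimensionality of ${\cal J}_{\infty }({\cal R})$, established in Appendix~1. Since $T$ preserves the Jacobi operator and boundedness, $T^*$ acts as a linear automorphism of the finite-dimensional complex vector space ${\cal J}_{\infty }({\cal R})\otimes\C$, and I would decompose this space into generalized eigenspaces of $T^*$. A Jordan block of size at least two, or an eigenvalue $\lambda$ with $|\lambda|\ne 1$, produces a function growing (respectively polynomially or exponentially) in the period direction upon iterating $T^*$, contradicting boundedness; hence $T^*$ is diagonalizable with unimodular eigenvalues.

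The heart of the argument is to prove that $\lambda=1$ is the only eigenvalue that occurs. For each $\lambda=e^{i\theta}$, the $\lambda$-eigenspace consists of bounded Jacobi functions on ${\cal R}$ transforming by $\lambda$ under $T$, equivalently bounded sections over ${\cal R}/\langle T\rangle$ of a flat $U(1)$ line bundle $L_\theta$; boundedness plus elliptic regularity force smooth extension across the two planar punctures. I would then invoke the algebro-geometric structure: because $u$ is a genus-one KdV potential, the Bloch--Floquet analysis of the zero-energy Schr\"odinger problem for $\Delta-2K$ on the universal cover of ${\cal R}/\langle T\rangle$ has trivial kernel for all $\theta$ outside a discrete set, and at exceptional $\theta$ the Bloch eigenspaces can be identified from the explicit Weierstrass representation of Section~\ref{sec2} (reducing the Jacobi equation to a Lam\'e-type ODE whose bounded solutions are classical), yielding $\theta=0$ as the only contributing parameter. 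Once reduced to the $T$-invariant case, bounded $T$-invariant Jacobi functions extend to the Jacobi kernel on the closed torus obtained by filling in the two punctures; by the Montiel--Ros correspondence~\cite{mro1} applied to the degree-$2$ extended Gauss map, this kernel equals $L(N)$, of dimension three. Combining with $L(N)\subseteq{\cal J}_{\infty }({\cal R})$, we conclude ${\cal J}_{\infty }({\cal R})=L(N)$.

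The main obstacle will be the rigorous Bloch--Floquet computation at the exceptional parameters: ruling out twisted bounded Jacobi functions for $\theta\ne 0$ requires a careful Lam\'e-type analysis specific to the Riemann examples. A possible alternative route that avoids this spectral detour is to show directly, using the Shiffman hierarchy of Theorem~\ref{thm9.2} together with the Montiel--Ros integration formula~(\ref{eq:MontielRos}), that every bounded Jacobi function on ${\cal R}$ has a globally defined conjugate and hence lies in ${\cal J}_{\Csmall }(g)$; then the algebro-geometric finiteness of Corollary~\ref{sag}, combined with a dimension count for the space of admissible infinitesimal deformations of $g\in{\cal M}_{\mbox{\rm \footnotesize imm}}$ preserving the flux vector and the period, bounds ${\cal J}_{\infty }({\cal R})$ by three and yields the conclusion without invoking explicit Floquet theory.
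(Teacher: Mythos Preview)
Your reduction in the first two steps is correct and clean: finite dimensionality of ${\cal J}_\infty({\cal R})$ (Appendix~1) plus the uniform bound $\|(T^*)^nw\|_{L^\infty}=\|w\|_{L^\infty}$ force $T^*$ to be power-bounded on a finite-dimensional space, hence diagonalizable with unimodular spectrum. Your final step is also legitimate; the paper itself remarks (Appendix~2) that $T$-invariant bounded Jacobi functions on ${\cal R}$ are linear by Montiel--Ros~\cite{mro1}.

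The genuine gap is the step you yourself flag as ``the main obstacle'': you have not shown that the twisted kernel vanishes for $\theta\ne 0$. The phrases ``Bloch--Floquet analysis'', ``reducing the Jacobi equation to a Lam\'e-type ODE'', and ``yielding $\theta=0$ as the only contributing parameter'' describe a plan, not a proof. The Jacobi equation on ${\cal R}$ is a two-variable PDE; even granting that a Fourier decomposition along the circle fibers produces a countable family of second-order ODEs with elliptic-function coefficients, you would still need to identify the bounded solutions of each and rule out all nontrivial Floquet multipliers---a substantial explicit computation you have not supplied. Your alternative route has the same defect: that every bounded Jacobi function on ${\cal R}$ has a globally defined conjugate is precisely the kind of statement that requires proof (it is \emph{not} a formal consequence of Corollary~\ref{sag} or Theorem~\ref{thm9.2}, which concern specific Jacobi functions built from the Shiffman hierarchy, not arbitrary bounded ones), and the promised ``dimension count'' is never performed.

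The paper's argument is completely different and avoids spectral theory altogether. It uses the concrete symmetry of ${\cal R}$: the nodal set of $\langle N,e_2\rangle$ cuts ${\cal R}$ into domains $\Omega_i$, each mapped biholomorphically by $N$ to a hemisphere of $\esf^2$. On a hemisphere the second Neumann eigenvalue equals $2$, which gives (Assertion~\ref{ass8.2}) $\int_{\partial\Omega}v\,\partial_\eta v\,ds\ge 0$ for any bounded solution of $\Delta v+2v=0$, with equality iff $v$ is linear. Summing these nonnegative quantities over the nodal domains between two horizontal straight lines $r_k$, $r_{-k}\subset{\cal R}$ telescopes to boundary terms along $r_{\pm k}$; letting $k\to\infty$ and using that translates of $v$ subconverge to limits $v_{\pm\infty}$ which are themselves forced linear by the same mechanism, the boundary contribution vanishes, forcing equality in every nodal domain and hence linearity of $v$. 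This uses nothing beyond the Neumann spectrum of a hemisphere and the explicit nodal structure---no Floquet theory, no Lam\'e analysis, no KdV input.
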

\begin{proof}
We first homothetically rescale ${\cal R}$ so that its middle ends are placed at integer heights.
Let $N$ be the Gauss map of ${\cal R}$ and $\G $ the zero set of the linear Jacobi function $\langle
N,e_2\rangle $, where $e_2=(0,1,0)$. $\G $ consists of the
horizontal straight lines in ${\cal R}$ plus the intersection of
${\cal R}$ with the $(x_1,x_3)$-plane (of reflective symmetry).
Viewing the parameter domain as a cylinder $\esf ^1\times \R $
punctured at integer heights, the reflection in the $(x_1,x_3)$-plane produces
a reflection symmetry of the cylinder by a plane passing through its axis, and
$\G $ is represented in the cylinder by the wider circles and straight lines in Figure~\ref{fig3} left.

\begin{figure}
\begin{center}
  \includegraphics[width=11.1cm]{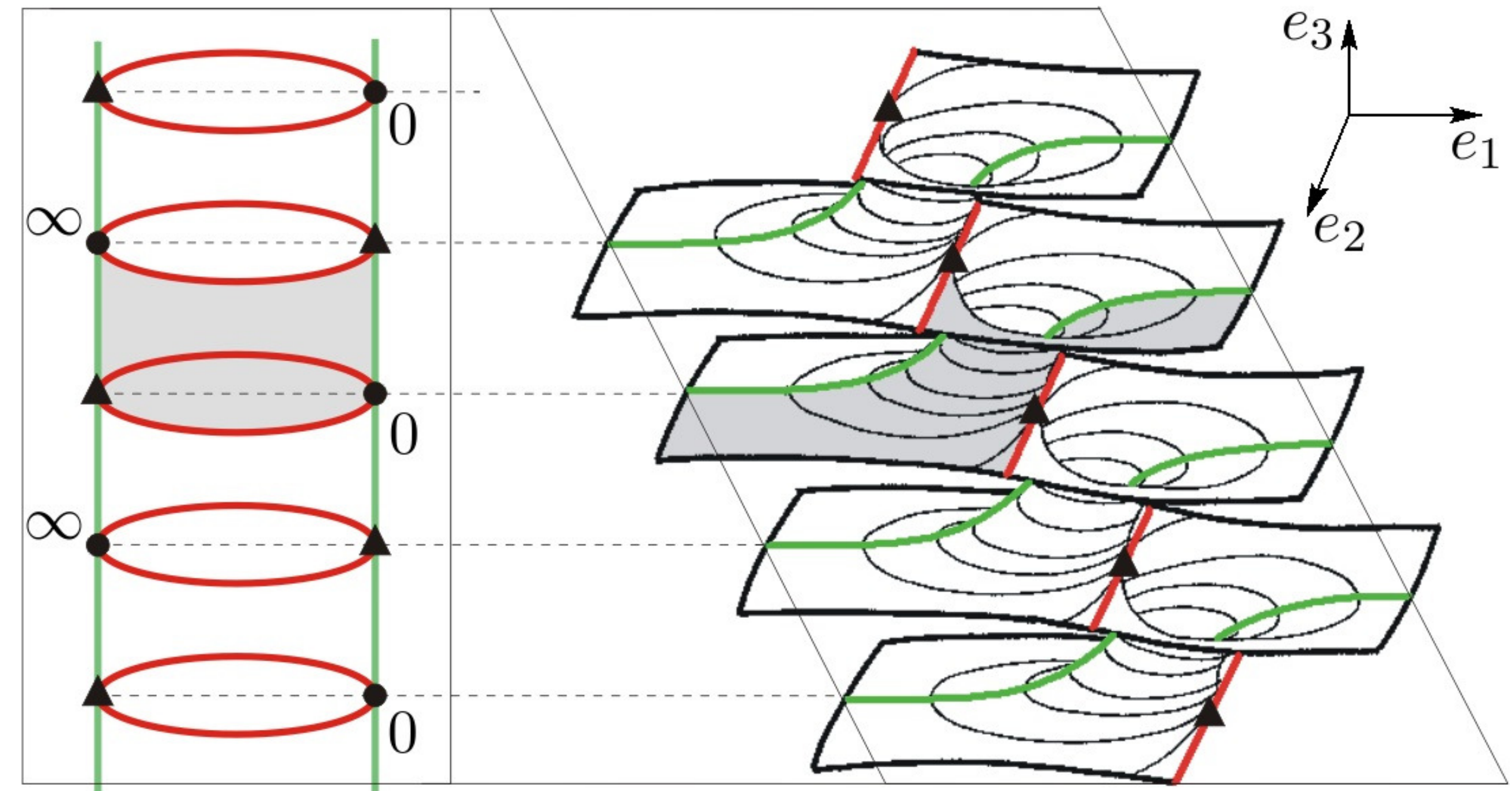}
  \caption{Left: The conformal model $\esf ^1\times \R $. Right: A Riemann minimal example.
Triangles denote finite branch points, and dots denote ends. The planar curves of symmetry together with the
straight lines form the zero set of $\langle N,e_2\rangle $, and we have shadowed one of its nodal domains.}
\label{fig3}
\end{center}
\end{figure}
$\G $ divides ${\cal R}$ into infinitely many
components $\Omega _i$, $i\in \Z $, to which we call
{\it nodal domains} (one of these nodal domains is shaded in Figure~\ref{fig3}).
The branch values of $N$ lie in the great circle $\esf ^2\cap \{ x_2=0\} $.
Thus $N$ restricts to $\Omega _i$ as a biholomorphism onto one of the open hemispheres
in which $\{ x_2=0\} $ divides $\esf^2$, for each $i\in \Z$. Furthermore,
such a biholomorphism has continuous extension to
the boundaries of these domains. Since the induced metric on $\Omega _i$ by the inner product of $\R^3$ and
the spherical metric are conformally related by $N$, we can
identify $\Omega _i$ with the hemisphere $N(\Omega _i)$ and express the
stability form associated to the Jacobi operator as the quadratic form
$\int _{\Omega _i}(|\nabla w|^2-2w^2)\, dA$ for any function in the Sobolev
space $W^{1,2}(\Omega _i)$ with the spherical metric.

\begin{assertion}
\label{ass8.2}
Let $v\in W^{1,2}(\Omega )$ be a bounded solution of $\Delta v+2v=0$ on a hemisphere $\Omega \subset \esf^2$.
Then:
\[
\int _{\partial \Omega }v\frac{\partial v}{\partial \eta }\, ds \geq 0,
\]
with equality if and only if $v$ linear, i.e., $v(x)=\langle x,a\rangle $ for some $a\in \R^3$ (here $\eta $
stands for the exterior conormal unit vector to $\Omega $ along its boundary).
\end{assertion}
{\it Proof of Assertion~\ref{ass8.2}.}
Clearly we can assume that $\Omega $ is one of the two hemispheres in $\esf^2-\{ x_2=0\} $.
Recall that the Neumann problem for the spherical Laplacian on $\Omega $ has first eigenvalue~$0$
(whose eigenfunctions are constant) and second eigenvalue $2$, with eigenfunctions being of the type
$x\in \Omega \mapsto \langle x,b\rangle $ with $b\in \R^3$ orthogonal to $e_2$. In particular, for every function
$w\in W^{1,2}(\Omega )$ with $\int _{\Omega}w\, dA=0$ we have:
\begin{equation}
\label{eq:JacobiacotadaenRiemanneslineal1}
\int _{\Omega }(|\nabla w| ^2-2w^2)\, dA\geq 0,
\end{equation}
and equality holds if and only if $w(x)=\langle x,b\rangle $ for any $b$ orthogonal to $e_2$.

Let $\varphi $ be the restriction of $x_2=\langle x,e_2\rangle $ to $\esf^2$.
The function $\varphi $ has constant non-zero sign on $\Omega $ and we can consider, for
any bounded function $v\in W^{1,2}(\Omega )$, the real number $c=\frac{\int _{\Omega }v\, dA}
{\int _{\Omega }\varphi \, dA}$. After applying (\ref{eq:JacobiacotadaenRiemanneslineal1}) to
$w=v-c\varphi $ on $\Omega $, we obtain
\eqnarray 0&\leq &{\displaystyle
\int _{\Omega }\left[ | \nabla (v-c\varphi )|^2-2(v-c\varphi )^2\right] dA}\nonumber \\
&=&{\displaystyle \int _{\Omega }\left( | \nabla v|^2-2v^2\right) dA
+c^2\int _{\Omega }\left( | \nabla \varphi |^2-2\varphi ^2\right) dA
-2c\int _{\Omega }\left( \langle \nabla v,\nabla \varphi \rangle
-2v\varphi \right) dA.\hspace{.9cm}}
\label{eq:JacobiacotadaenRiemanneslineal2}
\endeqnarray
Since $\varphi =0$ on $\partial \Omega $, integration by parts gives
\[
\int _{\Omega }\left( \langle \nabla v,\nabla \varphi \rangle -2v\varphi \right) dA
= -\int _{\Omega }\varphi \left( \Delta v+2v\right) dA=0.
\]
In the same way, the second integral in (\ref{eq:JacobiacotadaenRiemanneslineal2}) also
vanishes and the first integral in (\ref{eq:JacobiacotadaenRiemanneslineal2}) is
\begin{equation}
\label{eq:linearRiemannA}
\int _{\Omega }\left( | \nabla v|^2-2v^2\right) dA
=\int _{\Omega }\left[ \mbox{div}(v\nabla v)-v(\Delta v+2v)\right] dA
=\int _{\partial \Omega }v\frac{\partial v}{\partial \eta }\, ds,
\end{equation}
which gives the inequality in Assertion~\ref{ass8.2}. If equality
holds, then (\ref{eq:JacobiacotadaenRiemanneslineal1}) implies that
$v-c\varphi $ is linear on $\Omega $. Hence, $v$ is linear  in $\Omega $,
which proves the necessary condition in Assertion~\ref{ass8.2}. The proof of the sufficient
condition is straightforward.
\vspace{.2cm}
\par
\noindent
We next continue with the proof of Theorem~\ref{teorJacobiacotadaenRiemanneslineal}.
Let $T=(T_1,0,2)\in \R^3-\{ 0\} $ be the smallest orientation preserving
translation vector of ${\cal R}$ (recall that the planar ends are
placed at integer heights). Take a bounded Jacobi function $v$ on
${\cal R}$, and we will prove that $v$ is linear.
For $j\in \Z $ fixed, denote by $v_j$ the function on ${\cal R}$ given by
\[
v_j(p)=v(p+jT), \quad p\in {\cal R}.
\]
Since $\{ v_j\} _j$ is a sequence of bounded Jacobi functions on ${\cal R}$,
after extracting a subsequence we can assume that as $j\to +\infty $, the sequence
$\{ v_j\} _j$ converges 
 smoothly to a bounded Jacobi function
$v_{\infty }\colon {\cal R}\to \R $ on compact subsets of ${\cal R}$,
see for instance~\cite{gt1}. In fact, both $v_j,v_{\infty }$ extend smoothly through
each planar end of ${\cal R}$ and the smooth convergence $\{ v_j\} _j\to v_{\infty }$
extends to the planar ends as well.
\begin{assertion}
\label{ass8.3}
$v_{\infty }$ is linear on ${\cal R}$.
\end{assertion}
{\it Proof of Assertion~\ref{ass8.3}.}
By the convergence above, there exists a sequence of straight lines $ r_k={\cal R}\cap \{ x_3=j_k\} $
with $j_k\in \N $, $j_k\nearrow +\infty $, such that the values of $v$ and its derivatives along $r_k$
converge uniformly as $k\to +\infty $ to the values of $v_{\infty }$ and its derivatives along
the straight line $r={\cal R}\cap \{ x_3=0\} $. By Assertion~\ref{ass8.2} and equation (\ref{eq:linearRiemannA}),
we have
\begin{equation}
\label{eq:linearRiemannB}
0\leq \sum _j\int _{\Omega _j}(|\nabla v|^2-2v^2)\, dA=\int _{r_{k+1}}v\frac{\partial v}{\partial \eta }\, ds
-\int _{r_k}v\frac{\partial v}{\partial \eta }\, ds,
\end{equation}
where the sum runs in those nodal domains $\Omega _j$ between the heights of $r_k$ and $r_{k+1}$.
Taking $k\to +\infty $ in (\ref{eq:linearRiemannA}), we deduce that
\begin{equation}
\label{eq:linearRiemannC}
\lim _{j\to +\infty } \int _{\Omega _j}(|\nabla v|^2-2v^2)=0
\end{equation}
for every sequence of nodal domains $\Omega _j$ with heights going to $+\infty $.
Finally, the convergence $\{ v_j\} _j\to v_{\infty }$ and (\ref{eq:linearRiemannC}) together with
Assertion~\ref{ass8.2} imply that $v_{\infty }$ is linear on any nodal domain of ${\cal R}$.
By analyticity, $v_{\infty }$ is linear on ${\cal R}$ and Assertion~\ref{ass8.3} is proved.

The argument above can be repeated when $j\to -\infty $, from where we deduce that after passing to a subsequence,
$v_j$ converges as $j\to -\infty $ to a linear function $v_{-\infty }$ on ${\cal R}$
(possibly distinct of $v_{\infty }$).
Furthermore, there exists a sequence of straight lines $ r_k={\cal R}\cap \{ x_3=j_k\} $
with $k\in \Z \mapsto j_k\in \Z $ increasing, such that the values of $v$ and its derivatives along $r_k$
converge uniformly as $k\to +\infty $ (resp. as $k\to -\infty $) to the values of $v_{\infty }$
(resp. of $v_{-\infty }$) and its derivatives along the straight line $r={\cal R}\cap \{ x_3=0\} $.

Next consider the piece of ${\cal R}$ bounded by the straight lines $r_k,r_{-k}$ with $k\in \N$.
The same arguments above demonstrate that for the nodal domains $\Omega _j$ between the heights of
$r_k$ and $r_{-k}$, we have
\[
0\leq \sum _j\int _{\Omega _j}(|\nabla v|^2-2v^2)\, dA=\int _{r_{k}}v\frac{\partial v}{\partial \eta }\, ds
-\int _{r_{-k}}v\frac{\partial v}{\partial \eta }\, ds
\]
\[
\stackrel{(k\to +\infty )}{\longrightarrow }
\int _{r}v_{\infty }\frac{\partial v_{\infty }}{\partial \eta }\, ds
-\int _{r}v_{-\infty }\frac{\partial v_{-\infty }}{\partial \eta }\, ds.
\]
Thus it only remains to check that if $w$ is a linear function on ${\cal R}$, then
\begin{equation}
\label{eq:linearRiemannD}
\int _{r}w\frac{\partial w}{\partial \eta }\, ds
=0.
\end{equation}
To prove (\ref{eq:linearRiemannD}) we again use the spherical geometry: the straight line
$r$ corresponds via the Gauss map $N$
of ${\cal R}$ to a twice covered geodesic arc $\g \subset \esf^2\cap \{ x_2=0\} $ starting at the north or south pole
and ending at a non-vertical branch value of $N$ (when we view $r$ in $\R^3$,
this description corresponds to traveling along $r$ from one of its
ends to the finite branch point $P$ of $N$ along $r$, and from $P$ to the other end of $r$), and $w$ corresponds
to the height function $w(x)=\langle x,a\rangle $ for certain $a\in \R^3$. Then (\ref{eq:linearRiemannD}) holds since
the conormal vector $\eta $ at a point $x\in r$, viewed at one
of the two halves of $r$, is opposite to the value of $\eta $ at the same point $x$, viewed in the other half
of $r$. Now the theorem is proved.
\end{proof}

\section{Asymptotic behavior of finite genus minimal surfaces.}
\label{asymp}
In this section, we will give the following descriptive result for the asymptotic behavior
of every properly embedded minimal surface of finite genus and infinitely many ends.

\begin{theorem}[Asymptotic Limit End Property]
\label{asympthm}
Let $M$ be a properly embedded minimal surface in $\rth$ with finite genus $g$ and
an infinite number of ends. Then, $M$ has bounded curvature and after a possible
rotation and a homothety, the following statements hold.
\begin{enumerate}
\item  $M$ has two limit ends. In fact, $M$ is conformally
diffeomorphic to  $\overline{M} -{\cal E}_M$, where  $\overline{M}$ is a compact
Riemann surface of genus $g$  and ${\cal E}_M=\{e_n \mid n\in \Z\}
\cup\{e_{-\infty},\,e_{\infty}\}$ is a countable closed subset of $\overline{M}$
with exactly two limit points
 $e_{\infty}$ and $e_{-\infty}$. Furthermore,   $\lim_{n\to -\infty}
e_n =e_{-\infty} $ and $ \lim_{n\to\infty}e_n= e_{\infty} $. The set of points
${\cal E}_M$ is called the {\bf space of ends} of $M$, the point $e_{-\infty}$ is
called the {\bf bottom end},  the point $e_{\infty}$ is called the {\bf top end} and
every point $e_n$ with $n\in \Z$ is called a {\bf middle end} of $M$.
\item For each $n \in \Z$, there exists a punctured disk neighborhood $E_n
\subset M\subset \overline{M}$ of $e_n$ which is asymptotic in $\rth$ to a
horizontal plane $\Pi_n$ and which is a graph over its projection to $\Pi_n$.
Furthermore, the usual linear ordering on the index set $\Z$ respects the linear
ordering of the heights of the related planes.
The ordered set of heights $H=\{ h_n
=x_3(\Pi_n)\mid n\in \Z\}$ of these planes naturally corresponds to the set of heights
of the middle ends of $M$.
\item There exists a positive constant $C_M$ such that if $|t|>C_M$, then the
horizontal plane $\{x_3 = t\}$ intersects $M$ in a proper arc when $t\in H$,
or otherwise, $\{x_3 = t\}$ intersects $M$ in a simple closed curve.
\item Let ${\eta}$ denote the unitary outward conormal along the boundary of $M_t=M\cap \{x_3\leq t\}$.
Then the flux vector of $M$, which is defined to be
\[
F_M =\int_{\partial M_t} \eta \, ds
\]
{\rm (}here $ds$ stands for the length element\,{\rm ),} is independent of the
choice of $t$ and has the form $F_M= (h,0,1)$, for some $h>0$.
\item Let ${\cal R}_h\subset \R^3$ be the Riemann minimal example with horizontal tangent plane at infinity and
flux vector $F=(h,0,1)$ along a compact horizontal section. Then, there exists a
 translation vector $v_{\infty}\in \rth$ such that as $t\to \infty $, the function
 \[
 d_{+}(t)=\sup \left\{ \mbox{\rm dist}(p,{\cal R}_h+v_{\infty }) \ | \ p\in M\cap \{ x_3\geq t\} \right\}
 \]
is finite and decays exponentially to zero.
In a similar manner, there exists $v_{-\infty}\in \rth$ such that as $t\to -\infty$, the function
\[
d_{-}(t)=\sup \left\{ \mbox{\rm dist}(p,{\cal R}_h+v_{-\infty }) \ | \ p\in M\cap \{ x_3\leq t\} \right\}
\]
is finite and decays exponentially to zero. Furthermore,
$x_2(v_{\infty}) = x_2(v_{-\infty})$ and for $t$ large, $M\cap \{ x_3\geq t\} $ (resp.
$M\cap \{ x_3\leq -t\} $) can be expressed as a small (with arbitrarily small $C^k$-norm for any $k$)
normal graph over its projection on ${\cal R}_h+v_{\infty }$ (resp. on ${\cal R}_h+v_{-\infty }$).
%
\end{enumerate}
\end{theorem}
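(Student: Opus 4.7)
The plan is to treat items (1)--(4) as direct consequences of results already in hand and to concentrate effort on item (5). For (1), I apply Theorem~\ref{thm1finallimite}: since $M$ has finite genus and infinitely many ends, the only remaining alternative in that theorem is that $M$ has two limit ends, has bounded Gaussian curvature, and is conformally a compact Riemann surface of genus $g$ punctured at a countable closed set with exactly two limit points; the Ordering Theorem of Frohman--Meeks then provides the $\mathbb{Z}$-indexing of the middle ends by height. Items (2)--(4) follow from Theorem~\ref{thmcurvestim} applied to each limit end after the rotation making the limit tangent plane horizontal and a homothety normalizing the vertical component of the flux to $1$: this yields planar asymptotics for the middle ends, the circle/arc dichotomy for horizontal slices above a fixed height, and the normalized form $F_M=(h,0,1)$.

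For item (5), I first produce the translation vector $v_\infty$ by compactness. Choose $t_n\to+\infty$ with $t_n\in H$, and set $M_n=M-(0,0,t_n)$. By the uniform curvature bound, the existence of an embedded regular neighborhood of fixed size, and the bounded vertical spacing between consecutive middle ends (all from Theorems~\ref{thm1finallimite} and~\ref{thmcurvestim}), a subsequence of $\{M_n\}_n$ converges smoothly on compact subsets of $\rth$ with multiplicity one to a properly embedded minimal surface $M_\infty$. This limit has bounded curvature, genus zero (the $g$ handles of $M$ escape to $-\infty$ under the translations), two limit ends, infinitely many planar middle ends, and flux vector $(h,0,1)$ by continuity of the flux under smooth compact convergence. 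Hence $M_\infty\in\mathcal{M}$, and Theorem~\ref{mainthm} forces $M_\infty={\cal R}_h+v_\infty$ for some $v_\infty\in\rth$. The symmetric construction at $t_n\to-\infty$ yields $v_{-\infty}$.

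The main obstacle, and the heart of item (5), is upgrading this subsequential smooth convergence to an exponential rate. For $t$ large enough, combining the graphical nature of the middle ends with the compact smooth convergence allows me to represent $M\cap\{x_3\geq t\}$ as a normal graph of a small function $u$ over the corresponding portion of ${\cal R}_h+v_\infty$. Minimality becomes a quasilinear elliptic equation for $u$ whose linearization at $u=0$ is the Jacobi equation $Lu=(\Delta-2K)u=0$ on ${\cal R}_h$. Since ${\cal R}_h$ is invariant under a vertical translation $v_h$, descending $L$ to the twice-punctured quotient torus ${\cal R}_h/\langle v_h\rangle$ yields an operator amenable to Fourier/Floquet decomposition in the vertical parameter. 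Theorem~\ref{teorJacobiacotadaenRiemanneslineal} is the decisive ingredient here: the bounded Jacobi functions on ${\cal R}_h$ are exhausted by the (at most three-dimensional) space of linear functions of the Gauss map, which rules out any vertically bounded mode outside this finite-dimensional kernel and produces a strictly positive spectral gap separating the bounded kernel from all other modes contributing to $u$ near infinity.

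Since the linear Jacobi functions integrate to translations of $\rth$, the bounded part of $u$ is exactly an infinitesimal change of $v_\infty$ and can be absorbed into the choice of $v_\infty$ itself (with the horizontal part further constrained by the prescribed flux $F_M=(h,0,1)$). The part of $u$ orthogonal to the bounded kernel then decays exponentially in $t$ at a rate determined by the spectral gap; a standard bootstrap controls the nonlinear terms and upgrades exponential decay to all $C^k$-norms, giving the required estimate for $d_+(t)$. The same argument at the bottom end produces $v_{-\infty}$ and the decay of $d_-(t)$. Finally, $x_2(v_\infty)=x_2(v_{-\infty})$ follows from the reflective symmetry of the normalized ${\cal R}_h$ across the $(x_1,x_3)$-plane combined with the vanishing of the $x_2$-component of $F_M$: the $x_2$-mean of $M$ along any horizontal planar section is independent of height and must equal both $x_2(v_\infty)$ and $x_2(v_{-\infty})$.
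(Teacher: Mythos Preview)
Your treatment of items (1)--(4) is essentially the same as the paper's and is fine.

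For the exponential decay in item (5), your approach diverges from the paper's and contains a genuine gap. You produce $v_\infty$ only as a \emph{subsequential} limit of translates, then immediately write all of $M\cap\{x_3\geq t\}$ as a small normal graph $u$ over ${\cal R}_h+v_\infty$. But subsequential convergence does not give this: different divergent sequences may a priori converge to different translates of ${\cal R}_h$, so there is no single reference surface over which $u$ is globally small, and the linearization/spectral-gap machinery cannot be launched. This circularity --- needing $v_\infty$ to define $u$, but needing the decay of $u$ to pin down $v_\infty$ --- is exactly what the paper avoids. The paper works instead with the Shiffman function $S_{M^+}$, which is \emph{intrinsically} defined on $M^+$ (no reference surface required) and is known to tend to zero since every translated limit is a Riemann example. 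The exponential decay of $S_{M^+}$ is then obtained by contradiction: if decay were subexponential one could rescale and extract a bounded Jacobi function on ${\cal R}_h$, which by Theorem~\ref{teorJacobiacotadaenRiemanneslineal} must be linear, but a linear Jacobi function has at most two zeros on some horizontal circle (Assertion~\ref{ass9.5}) while the Shiffman function has at least four on every compact section by the Four Vertex Theorem. Only after $S_{M^+}$ decays exponentially does the paper build Riemann pieces $R_n'={\cal R}_h+v_n$ in successive slabs and show $\{v_n\}$ is Cauchy, thereby \emph{constructing} $v_\infty$ rather than assuming it.

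Your argument for $x_2(v_\infty)=x_2(v_{-\infty})$ is also incorrect. There is no conserved ``$x_2$-mean along horizontal sections'' for a minimal surface; the $x_2$-coordinate is harmonic, so its conormal flux is conserved, but its average along level curves is not. The paper instead uses the divergence-free one-form $\alpha_p(v)=p\times v$ (the torque): applying the divergence theorem on the region between $\gamma(n)$ and $\gamma(-n)$, noting that the torque contributions of the planar middle ends are horizontal, and computing the remaining boundary integrals on the limiting translated Riemann examples yields $\langle (v_\infty-v_{-\infty})\times F_M,\,e_3\rangle=0$, which forces $x_2(v_\infty)=x_2(v_{-\infty})$ since $F_M\times e_3$ points along the $x_2$-axis.
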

{\it Proof. }
Let $M\subset \R^3$ be a properly embedded minimal surface
satisfying the hypotheses of Theorem~\ref{asympthm}. $M$ has exactly
two limit ends by Theorem~1 in~\cite{mpr4}. In this situation,
Theorem~3.5 in~\cite{ckmr1} gives that  between any two middle ends of $M$, there is a
plane that intersects $M$ transversely in a compact set. It follows that all middle ends
of $M$ are planar. Now items~1, 2, 3, 4 of the theorem follow from similar arguments
as those giving items~1, 2, 3, 4 of Theorem~\ref{thmcurvestim}.

Item~3 of Theorem~\ref{asympthm} allows us to reduce the proof of the property that
$M$ has bounded curvature to proving it for the three regions $M\cap \{ |x_3|\leq C_M\} $,
$M\cap \{ x_3> C_M\} $ and $M\cap \{ x_3\leq -C_M\} $. The region $M\cap \{ |x_3|\leq C_M\} $
consists of a finite number of graphs outside of a compact set, and thus it has bounded
Gaussian curvature. The other two regions have genus zero and one can argue similarly
as we explained in the proof of Theorem~\ref{thmcurvestim}.

It remains to prove item~5 of the theorem. The already proven items 1, 2, 3, 4
of the theorem imply that
there exists a translation vector $T\in \R^3$  such that $M^+=(M-T)
\cap \{ x_3\geq 0\} $ can be conformally parameterized
 by the half-cylinder $\C ^+/\langle i\rangle $
(here $\C^+=\{ x+iy\ | \ x\geq 0\} $) punctured in an infinite discrete set of interior
points $\{ p_j,q_j\} _{j\in \Nsmall }$, which represent respectively those ends of
$M^+$ where its Gauss map points to the north and south poles.
In this setting, the proofs of items~5, 6 , 7 of Theorem~\ref{thmcurvestim}
remain valid with $M$, $\C $, $\Z $ replaced by $M^+$, $\C ^+$, $\N $,
with the only change being in the statement of
item 7, where the last two instances of $\C /\langle i\rangle $ are left unchanged,
by choosing the third coordinate of $T$ sufficiently large. We remark
that these arguments rely solely on Colding-Minicozzi theory for planar domains.

Consider the Shiffman function $S_{M^+}$ of $M^+$, which exists by
item 3 of the theorem.
Reasoning as in the last paragraph of Section~\ref{subsecShiffman}
we deduce that $S_{M^+}$ extends smoothly through the points
$p_j,q_j$ to a function on $\C ^+/\langle i\rangle $, which we also
denote by $S_{M^+}$. Note that $S_{M^+}$ is asymptotic to zero on
the end of $\C ^+/\langle i\rangle $, because any sequence of
translations of $M^+$ diverging in height, up to a horizontal
translation (which may depend upon the sequence), converges by
Theorem \ref{mainthm} to the Riemann minimal example ${\cal R}_h$
whose flux vector is $(h,0,1)$. In particular, $S_{M^+}$ is bounded
and  can considered to be a function in $\esf ^1\times [0,\infty )$.
The next lemma gives a control on the decay of $S_{M^+}\colon
\esf^1\times [0,\infty) \to \R $.

\begin{lemma}
\label{lema9.3}
There exist $C,a>0$ so that $|S_{M^+}(\theta ,t)|\leq Ce^{-at}$, for all $(\theta ,t)\in \esf^1\times [0,\infty )$.
\end{lemma}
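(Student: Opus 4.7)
The plan has two parts. First, I would invoke the pointwise decay $\|S_{M^+}(\cdot, t)\|_{L^\infty(\esf^1)} \to 0$ as $t \to \infty$ already sketched in the paragraph preceding Lemma~\ref{lema9.3}: by Theorem~\ref{mainthm}, any divergent sequence of translations $M^+ - t_n e_3$ subconverges smoothly on compact sets, after possible horizontal translations, to the Riemann minimal example ${\cal R}_h$; since $S_{{\cal R}_h} \equiv 0$ by Definition~\ref{defShiffman} (being foliated by circles and lines in horizontal planes) and since $S_M$ is a polynomial in $g, g', g''$ via formula~(\ref{eq:uShiffman}), smooth convergence of the Gauss maps forces $S_{M^+}$ to tend to zero on compact subsets of the end.

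Second, I would upgrade this to exponential decay by a spectral analysis on the cylinder end. In the conformal parameterization, the Jacobi equation reads $v_{\theta\theta}+v_{tt}+q(\theta,t)v=0$ on $\esf^1\times[0,\infty)$, with potential $q=2|\nabla N|^2$ converging smoothly as $t\to\infty$ (modulo horizontal translation in the parameter domain) to a $\tau$-periodic potential $q_\infty$, where $\tau>0$ is the vertical period of ${\cal R}_h$. Writing the equation as a first-order system on cross-sectional phase space, the limiting period map $\Phi_\infty$ has Floquet multipliers whose structure is rigidly controlled by Theorem~\ref{teorJacobiacotadaenRiemanneslineal}: the only bounded Jacobi functions on ${\cal R}_h$ are the three-dimensional family of linear functions $\langle N_{{\cal R}_h}, a\rangle$, $a\in\R^3$, so the generalized eigenspace of $\Phi_\infty$ associated to multipliers with $|\lambda|=1$ has dimension exactly $3$, and there exists $\rho>0$ such that all remaining multipliers satisfy $|\log|\lambda||\ge\rho$.

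Projecting $S_{M^+}$ onto the stable, unstable, and neutral invariant subspaces of $\Phi_\infty$, boundedness forces the unstable component to vanish, while the pointwise decay from Step~1 forces the neutral component to vanish as well: a nonzero neutral component would correspond to a nonzero linear Jacobi function on ${\cal R}_h$, which cannot tend uniformly to zero because between any two consecutive planar ends the Gauss map $N_{{\cal R}_h}$ covers a full hemisphere of $\esf^2$ (item~4 of Theorem~\ref{thmcurvestim}), so $\langle N_{{\cal R}_h}, a\rangle$ oscillates through all values in $[-|a|,|a|]$ infinitely often. The surviving stable component then decays exponentially at some rate $a<\rho$, yielding $|S_{M^+}(\theta,t)|\le Ce^{-at}$.

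The main obstacle I expect is the rigorous treatment of the asymptotic Floquet decomposition for the non-periodic operator on $M^+$: one must verify that the period maps of the equation on $M^+$ converge to $\Phi_\infty$ in an operator norm strong enough to transfer the spectral gap of $\Phi_\infty$ into stable/unstable invariant-subspace decompositions for the perturbed equation. Alternatively, this technicality can be bypassed by a more elementary three-annulus/compactness argument: if exponential decay failed, one could choose heights $t_n\to\infty$ along which $\|S_{M^+}(\cdot,\cdot+t_n)\|_{L^\infty(\esf^1\times[0,\tau])}$ decays slower than any exponential, normalize $S_{M^+}$ by this sup, and extract a subsequential smooth limit which is a nonzero bounded Jacobi function on ${\cal R}_h$ that nevertheless tends uniformly to zero at the top end, contradicting Theorem~\ref{teorJacobiacotadaenRiemanneslineal} combined with the density-of-$N({\cal R}_h)$ argument above.
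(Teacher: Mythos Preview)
Your compactness alternative is the paper's route, but two steps in your sketch do not go through as stated.

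First, boundedness of the limit. Normalizing $S_{M^+}(\cdot,\cdot+t_n)$ by its sup on one period gives no control for $t<0$: the shifted function lives on $\esf^1\times[-t_n,\infty)$ and the ratio $f(t+t_n)/f(t_n)$ can blow up there. To apply Theorem~\ref{teorJacobiacotadaenRiemanneslineal} you need the limit bounded on the \emph{full} cylinder. The paper secures this with a short real-analysis lemma (Assertion~\ref{assexp}): if $f\colon[0,\infty)\to[0,\tfrac12]$ tends to $0$ but not exponentially, then for each $n$ there is $T_n\ge n$ with $f(t)\le 2f(T_n)$ for all $t\in[T_n-n,\infty)$. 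Centering at these $T_n$ makes the normalized functions $h_n$ bounded by $2$ on $[-n,\infty)$, hence the limit $h_\infty$ is bounded on all of $\esf^1\times\R$.

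Second, and this is the real gap, the contradiction you propose---that $h_\infty$ ``tends uniformly to zero at the top end''---does not follow. After normalization both numerator and denominator go to zero, and nothing forces $f(t+T_n)/f(T_n)\to 0$ uniformly in $n$ as $t\to+\infty$; indeed once you know $h_\infty=\langle N_{{\cal R}_h},a\rangle$ is linear and periodic it \emph{cannot} decay, so your argument would need to rule out linearity before invoking it, which is circular. The paper's contradiction is structural and specific to Shiffman functions: by the Four Vertex Theorem each compact horizontal section of $M^+$ carries at least four zeros of $S_{M^+}$, hence of every $h_n$, and this passes to $h_\infty$. The paper then shows (Assertion~\ref{ass9.5}) that any nonzero linear Jacobi function $\langle N_{{\cal R}_h},a\rangle$ admits a horizontal circle on which it has at most two nondegenerate zeros. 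That is the actual clash with Theorem~\ref{teorJacobiacotadaenRiemanneslineal}.

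Your Floquet approach is a legitimate alternative in principle, and your diagnosis of its obstacle (transferring the spectral gap of $\Phi_\infty$ to the non-periodic flow) is accurate. The paper simply sidesteps that machinery via the Four Vertex argument, which exploits that the Jacobi function in question is not an arbitrary bounded one but specifically a Shiffman function.
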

\begin{proof} We first prove the following assertion.
\begin{assertion}
\label{assexp}
Let $f=f(t)\colon [0,\infty )\to [0,\frac{1}{2}]$ be a continuous function such that
\begin{enumerate}
\item $f(t)\to 0$ as $t\to \infty $.
\item For any $a>0$, there exists
$t(a)\geq 0$ such that $f(t(a))\geq 2^{-at(a)}$.
\end{enumerate}
  Then for each $n \in \N$, there exists $T_n \geq n$ such that $f(t)\leq
2f(T_n)$ for
$t \in [T_n - n,\infty).$
\end{assertion}
{\it Proof of Assertion~\ref{assexp}.} Fix $n\in \N$.  Let $t_n \in [0,\infty )$ be the smallest $t$
such that $f(t)=2^{-\frac{t}{n}}$
(the existence of $t_n$ follows from $f(0)\leq \frac{1}{2}$ and from
hypothesis 2). Since $f(t_n)\leq \frac12$, then $t_n \geq n$.
Let $T_n\in [t_n,\infty )$ be a point where $f$ attains its maximum
value ($T_n$ exists by hypothesis 1). We now prove that
$f(t)\leq 2f(T_n)$ for all $t\in [T_n-n,\infty )$ by discussing two
possibilities.
\par
\vspace{.2cm} \noindent \underline{\rm Assume $T_n-n\leq t_n$.} If
$t\in [t_n,\infty )$, then $f(t)\leq f(T_n)\leq 2f(T_n)$, and if
$t\in [T_n-n,t_n]$, then $f(t)\leq 2^{-t/n}\leq
2^{\frac{-(t_n-n)}{n}}=2 \cdot 2^{-t_n/n}=2f(t_n)\leq 2f(T_n)$.
\par
\vspace{.2cm} \noindent \underline{\rm Assume $T_n-n>t_n$.} Then
$[T_n-n,\infty )\subset [t_n,\infty )$, so we apply the first two
inequalities in the case above. This completes the proof of Assertion~\ref{assexp}.
{\hfill\penalty10000\raisebox{-.09em}{$\Box$}\par\medskip}
\par
\vspace{.2cm}
We now continue the proof of the lemma. Arguing by contradiction,
suppose that $S_{M^+}(\theta ,t)$ does not decay exponentially.
Choose a constant $C>0$ such that the function
\[
f(t)=C\, |\max _{\theta \in \esfsmall ^1}S_{M^+}(\theta ,t)|
\]
satisfies $f(t) \leq \frac{1}{2}$ for all $t$.
By Assertion~\ref{assexp}, there exist sequences $T_n\in [0,\infty )$, $\theta _n\in \esf^1$ such that
$T_n\geq n$  and $C|S_{M^+}(\theta ,t)|\leq 2f(T_n)$ for every $(\theta ,t)\in \esf^1\times [T_n-n,\infty )$.
For any $n\in \N$, consider the function $h_n(\theta ,t)=\frac{C}{f(T_n)}
S_{M^+}(\theta ,t+T_n)$, defined on $\esf^1\times (-n,\infty )$. By construction,
$|h_n|\leq 2$ and $|h(\theta (n),0)|=1$ for some $\theta (n)\in \esf^1$. Therefore
after extracting a subsequence, the $h_n$ converge to a bounded Jacobi function $h_{\infty }$
on the Riemann minimal example
${\cal R}_h$, considered to be a function defined on the cylinder $\esf^1\times \R $.

By Theorem~\ref{teorJacobiacotadaenRiemanneslineal}, $h_{\infty }$
is linear, and so $h_{\infty }= \langle N_{\infty
},a\rangle $ for some $a\in \R^3-\{ 0\} $, where $N_{\infty }$ is
the Gauss map of ${\cal R}_h$. 
By the Four Vertex Theorem, the Shiffman Jacobi function $S_{M^+}$ has at least four zeros
at each compact horizontal section of $M^+$, and so, the same holds for each of the functions
$h_n$, which contradicts the assertion below and completes the proof of the lemma.
\end{proof}

\begin{assertion}
\label{ass9.5}
Given a Riemann minimal example $\cal R$ with Gauss map $N$ and a vector $a\in \mathbb{R}^3-\{0\}$,
there is a horizontal circle $\Gamma ={\cal R}\cap \{x_3=t\}$ such that the linear function
$v=\langle N,a\rangle$ has at most two zeros on $\Gamma$.
Moreover these zeros are non-degenerate.
 \end{assertion}
\begin{proof}
Consider the great circle $\g _a=\esf^2\cap \{ x\in \R^3\ | \ \langle x,a\rangle =0\} $
and a horizontal line $L\subset {\cal R}$. We can assume that $L={\cal R}\cap \{x_3=0\}$
and that $N(p_0)=(0,0,1)$, where $p_0$ denotes the end of ${\cal R}$ at level $x_3=0$.
The Gaussian image $N(L)$ consists of a twice covered geodesic arc in $\esf^2\cap \{ x_2=0\} $
whose extrema are $(0,0,1)$ with $N(p)\in \esf^2$, where $\{ p\} =L\cap \{x_2=0\}$ is the unique
branch point of $N$ along  $L$.

If $a$ is horizontal, then $\g _a$ passes through the north and south poles. As the Gauss image of
any compact horizontal circle $\G $ on ${\cal R}$ winds once around the north and south poles of $\esf^2$,
it follows that $\Gamma $ intersects $\g _a$ transversely into two points. Thus, the assertion holds
in this case.

If $a$ is not horizontal, then we discuss the following cases:
\par
\vspace{.2cm}
\noindent
{\it 1. The great circle $\g _a$ and the geodesic arc $N(L)$ are disjoint.}
In this case, we have that $v$ does not vanish along $\Gamma_t={\cal R}\cap \{x_3=t\}$
for $t>0$ small enough.
\par
\vspace{.2cm}
\noindent
{\it 2. The great circle $\g _a$ meets the interior of the geodesic arc $N(L)$.}
Then $v(p)\neq 0$ , $v(p_0)\neq 0$
 and $v$ has exactly two zeros along $L$, which are non-degenerate. It follows that $v$ has just two
non-degenerate zeros along the nearby circle $\Gamma_t$, for any small positive $t$.
\par
\vspace{.2cm}
\noindent
{\it 3. The great circle $\g _a$ passes through the point $N(p)$.}
If we parameterize $L$ by $\g (s)= p + s \, e_2$ where $e_2=(0,1,0)$,
then we have that $(v\circ \g )(0)=\langle N(p),a\rangle =0$. Also note that
$(v\circ\g)'(0)=\langle (N\circ\g)'(0),a\rangle =0$ since
$p$ is a branch point of $N$. We claim that
$(v\circ\g)''(0)\neq 0$: otherwise, $(N\circ \g )''(0)=\l \, N(p)+\mu \, a\times N(p)$
for certain $\l ,\mu \in \R $, where $\times $ denotes cross product. But
$(N\circ \g )''(0)\times N(p)\neq 0$ (because
this is the tangent component to ${\cal R}$ of $(N\circ \g )''(0)$,
and $N$ has ramification order~$1$ at $p$); hence $\mu \neq 0$,
which in turn implies that $a\times N(p)$ is orthogonal to $e_2$ (because
both $(N\circ \g )''(0)$ and  $N(p)$ are orthogonal to $e_2$). Since
$e_2$ is also orthogonal to $N(p)$, we deduce that $e_2$ is parallel
to $a$, a contradiction. Therefore, $(v\circ\g)''(0)\neq 0$. From here we conclude
that, for small positive $t$, either $v$ does not vanish along $\Gamma_t$ or it has just two distinct
simple zeros along $\G _t$.
\end{proof}

Next we prove the first statement in item~5 of Theorem~\ref{asympthm}, namely
that the exponential convergence of the top end of $M$
to a translated image of the top end of the Riemann minimal example
${\cal R}={\cal R}_h$ with
flux $(h,0,1)$ equal to the flux of $M$ (the corresponding property
for bottom ends follows similarly). During this proof,
we will make clear that the graphing property in the last statement of item~5 of the theorem also holds. Since we will use the notion of surface
written as a graph over another surface, we first make precise this notion.
We will consider minimal surfaces $\Sigma $ in a horizontal slab $\{ a\leq x_3\leq b\} $, bounded
by two Jordan curves, one in each boundary plane of the slab (in particular, $\Sigma $
is transversal to the boundary of the slab). Furthermore, $\Sigma $ will have genus zero and
(possibly) finitely many horizontal planar ends. Thus, after compactification at the planar ends,
we obtain $\overline{\Sigma }$ which is conformally a cylinder $\esf^1(r)\times [a,b]$ for certain $r>0$.
We take on $\overline{\Sigma }$ a unitary, smooth, transversal vector field $\nu $ such that
\begin{itemize}
\item $\nu |_{\partial \Sigma }$ coincides with one of the two horizontal,
normal vector fields to the planar curves that bound $\Sigma $.
\item If $\Sigma $ is non-compact, then $\nu =\pm (0,0,1)$ in a neighborhood of each of the planar
ends of $\Sigma $.
\end{itemize}
Note that $\nu $ can be thought as a deformation of the Gauss map of $\Sigma $. Although
$\nu $ is not unique, we will assume that given a surface $\Sigma $ we have made a choice of
this transversal vector field.
If we have a second surface $\Sigma '$ under the same conditions as $\Sigma $, then we say that
$\Sigma '$ is {\it a graph over $\Sigma $} if it can be written as the graph over $\Sigma $
of a function $u\in C^2(\overline{\Sigma })$, in the direction of $\nu $, i.e.,
\[
p\in \Sigma \mapsto p+u(p)\nu (p)\in \Sigma '.
\]
The notation $\| \Sigma -\Sigma '\| _{C^{k,\a }}$ that appear below will stand for
$\| u\| _{C^{k,\a }}$ (taken with respect to the flat metric in the cylinder
$\overline{\Sigma }$), where we are assuming that $\Sigma '$ is the graph
of a function $u$ over $\Sigma $ in the sense above.

All the above observations can be easily translated to
minimal surfaces in a half-space above or below a horizontal plane.
We leave the details to the reader.

By quasiperiodicity of $M^+$, we can choose a sequence of positive numbers $\{ b_n\} _{n\in \N }\nearrow \infty $ with
$M^+=M\cap \{ x_3\geq b_1\} $, satisfying the following properties.
\begin{itemize}
\item  $\{ b_{n+1}-b_n\} _n$ is bounded away from zero and bounded above.
 \item The surface $M^+$ intersects transversely the horizontal plane $\{ x_3=b_n\} $ in a Jordan curve $\G _n$, whose length is bounded above independently of $n$.
\item The surface $\Sigma _n=M\cap \{ b_n\leq x_3\leq b_{n+1}\} $ is either a compact annulus or has just one planar end. Furthermore
$\Sigma _n$ has total curvature smaller than $\pi $.
\end{itemize}
As a consequence of Lemma~\ref{lema9.3} and linear elliptic theory, we have that the derivatives of
$S_{M^+}$ of any order also decay exponentially and, from the definition of the Shiffman function we deduce
that for $n$ large, the curves $\G _n$ can be exponentially approximated by horizontal circles $\G '_n\subset \{ x_3=b_n\} $, in the sense that
\[
\| \Gamma_n-\Gamma'_n\| _{C^{4,\alpha}}\leq C_1 e^{-ab_n},
\]
for certain constant $C_1>0$ independently of $n$.

As the total curvature of $\Sigma _n$ is less than $\pi $, then $\Sigma _n$ is stable and there are no bounded Jacobi functions on $\Sigma_n$
vanishing at its boundary. An application of the Implicit Function Theorem in the Banach space context (see for instance White~\cite{wh1}
for the compact case, and P\'erez and Ros~\cite{pro4} and P\'erez~\cite{perez3} for the necessary
modifications in the case $\Sigma_n$ has a planar end) implies that
there exists an embedded minimal surface ${ R}_n$, described as a graph over $\Sigma _n$ whose boundary is $\partial R_n=
\G _{n}'\cup \G _{n+1}'$, and such that
\begin{equation}
\label{eq:Ckalpha}
\| \Sigma _n-R_n\|_{C^{2,\alpha}}\leq C_2e^{-ab_n}.
\end{equation}
Recall that the notion of graph over $\Sigma _n$ depends on the choice of a transversal vector field $\nu _n$ along $\Sigma _n$. By
the quasiperiodicity of $M^+$, we can assume that both $\Sigma _n$ and $\nu _n$ have geometry uniformly bounded in $n$, and
$\nu _{n-1}=\nu _n$ along $\G _n$. In particular,
the constant $C_2$ in (\ref{eq:Ckalpha}) can be chosen independently of $n$. In the sequel, we will
find other positive constants independent of $n$, which will be denoted by $C_3,C_4,\ldots $

Also note that $R_n$ is compact when $\Sigma_n$ is compact and $R_n$ has a horizontal planar end when $\Sigma_n$ has a planar end.
By the maximum principle, $R_n\subset \{ b_n\leq x_3\leq b_{n+1}\} $.
Furthermore, the horizontal sections of $R_n$ are either closed curves
or an open arc (this last case occurs at the height of the planar end of $R_n$, if it exists).
It also follows from (\ref{eq:Ckalpha}) that the total curvature of $R_n$
is smaller that $3\pi/2$ and so, $R_n$ is strictly stable~\cite{bc1}.
Since the Shiffman function is well-defined and bounded on $R_n$ and vanishes at $\partial R_n$,
the stability of $R_n$ implies that its Shiffman function vanishes identically. Thus, $R_n$ is a piece of a
Riemann minimal example. Furthermore, (\ref{eq:Ckalpha}) implies that the flux of the Riemann minimal example which
contains $R_n$ is exponentially close to the flux $F_{M_+}=F_{\cal R}$.

As a consequence, there exists a piece $R'_n=({\cal R}+v_n)\cap \{ b_n\leq x_3\leq b_{n+1}\} $ of a translated image of the
Riemann minimal example $\cal R$ such that $\| R_n-R'_n\|_{C^{2,\alpha}}\leq C_3 e^{-a b_n}$
for $n$ large. By the triangle inequality, we have
\begin{equation}
\label{eq:SigmaR'}
\|\Sigma_n-R'_n\|_{C^{2,\alpha}}\leq C_4 e^{-a b_n}.
\end{equation}
We next explain how to conclude all statements in item~5 of Theorem~\ref{asympthm},
except the property that $x_2(v_{\infty}) = x_2(v_{-\infty})$, which will be proven later. It is enough to prove the following statement:
\begin{assertion}
There exists a vector $v_{\infty }\in \R^3$ such that if $W_n=\{ b_n\leq x_3\leq b_{n+1}\} $, then
\[
\| M^+-({\cal R}+v_{\infty })\| _{C^{2,\a }(W_n)}\leq C_5e^{-ab_n}.
\]
\end{assertion}
\begin{proof}
By (\ref{eq:SigmaR'}) applied to $\Sigma _{n-1}$ and $\Sigma _n$ which share the common boundary curve $\G _n$
(recall that the transversal vector fields $\nu _{n-1},\nu _n$ both coincide along $\G _n$), we have that
both $({\cal R}+v_n)\cap \{ x_3=b_n\} $, $({\cal R}+v_{n-1})\cap \{ x_3=b_n\} $ are exponentially close to $\G _n$ in the norm
$\| \cdot \| _{C^{2,\a }}$. Since ${\cal R}$ is a periodic surface,
we can choose $v_n$ so that  $\| v_n-v_{n-1}\| \leq C_6e^{-ab_n}$.
The triangle inequality gives
\begin{equation}
\label{eq:series}
\| v_{n+k}-v_{n}\| \leq C_6\sum _{j=n+1}^{n+k}e^{-ab_j}.
\end{equation}
The convergence of the series $\sum _{j=1}^{\infty }e^{-ab_j}$ shows that $\{ v_n\} _n$ is a
Cauchy sequence, and so it converges to a vector $v_{\infty }\in \R^3$. Finally,
\[
\| M^+-({\cal R}+v_{\infty })\| _{C^{2,\a }(W_n)}\leq \| M^+-({\cal R}+v_n)\| _{C^{2,\a }(W_n)}+
\| ({\cal R}+v_{\infty })-({\cal R}+v_n)\| _{C^{2,\a }(W_n)}
\]
\[
\leq
C_4e^{-ab_n}+C_7\| v_{\infty }-v_n\| ,
\]
where in the last equality we have used (\ref{eq:SigmaR'}) for the first
summand and the fact that ${\cal R}+v_{\infty }$
and ${\cal R}+v_n$ differ by a small translation (namely $v_{\infty }-v_n$). Finally, (\ref{eq:series})
implies
\[
\| M^+-({\cal R}+v_{\infty })\| _{C^{2,\a }(W_n)}
\leq  C_4e^{-ab_n}+C_6\sum _{j=n+1}^{\infty }e^{-ab_j}\leq C_5e^{-ab_n},
\]
which completes the proof of the assertion (and consequently,
item~5 of Theorem~\ref{asympthm} is proved except for
the property stated in the following lemma).
\end{proof}

\begin{lemma} $x_2(v_{\infty})=x_2(v_{-\infty})$.
\end{lemma}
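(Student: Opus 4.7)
The plan is to exploit the homological invariance of the flux of the vertical rotational Killing field of $\rth$, when evaluated on both $M$ and on the two asymptotic translated Riemann examples.

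First I would record the following general fact. Let $V$ be a Killing field on $\rth$ and let $\Sigma\subset\rth$ be a minimal surface. Then along any closed curve $\gamma\subset\Sigma$, the integral
\[
\mathcal{T}(\gamma)=\int_{\gamma}\langle V,\eta\rangle\,ds
\]
(where $\eta$ is the unit conormal) is a homological invariant. Indeed, writing $V^{T}=V-\langle V,N\rangle N$ for the tangential component of $V$, one has $\mathrm{div}_{\Sigma}V=0$ because $\nabla V$ is pointwise antisymmetric by the Killing equation, and $\mathrm{div}_{\Sigma}(\langle V,N\rangle N)=-2H\langle V,N\rangle=0$ by minimality; subtracting, $V^{T}$ is divergence-free on $\Sigma$ and Stokes' theorem gives the invariance (using $\langle V,\eta\rangle=\langle V^{T},\eta\rangle$ since $\eta\perp N$). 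Apply this to the Killing field $V(x_{1},x_{2},x_{3})=(-x_{2},x_{1},0)$ generating rotations about the $x_{3}$-axis, producing the torque
\[
\mathcal{T}_{\Sigma}(\gamma)=\int_{\gamma}(-x_{2}\eta_{1}+x_{1}\eta_{2})\,ds.
\]
On $M$, any two horizontal sections $\alpha_{t_{1}},\alpha_{t_{2}}$ (with $t_{i}\notin H$) are homologous modulo small loops around the finitely many intervening planar ends. A direct estimate shows the torque of a loop of radius $\varepsilon$ around a planar end is $O(\varepsilon)$ (using that a planar end has zero flux and that the integrand is continuous), so the torque on $\alpha_{t}$ equals a single constant $T_{M}$ independent of $t$. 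The same argument yields the analogous constancy along horizontal sections of any translated Riemann example.

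Second, I would compute $\mathcal{T}_{\mathcal{R}_h+v}$ along a horizontal circle for $v=(\alpha,\beta,\gamma)$. The reflective symmetry of $\mathcal{R}_h$ in the $(x_{1},x_{3})$-plane forces every horizontal circle of $\mathcal{R}_h$ to have its center on the plane $\{x_{2}=0\}$, so the horizontal circle of $\mathcal{R}_h+v$ at height $c$ admits the parametrization $(a(c-\gamma)+\alpha+r\cos\theta,\beta+r\sin\theta,c)$ for suitable $a,r$. Expanding $(-x_{2}\eta_{1}+x_{1}\eta_{2})$ and using the flux identities $\int\eta_{1}\,ds=h$ and $\int\eta_{2}\,ds=0$ along the section (both immediate from $F_{\mathcal{R}_h+v}=(h,0,1)$), every term involving $\alpha$ or $a(c-\gamma)$ drops out because it is multiplied by $\int\eta_{2}\,ds=0$, and one obtains
\[
\mathcal{T}_{\mathcal{R}_h+v}=T_{0}-h\,x_{2}(v),
\]
where $T_{0}:=\mathcal{T}_{\mathcal{R}_h}$ is a constant depending only on $\mathcal{R}_h$.

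Finally, I would invoke the exponential asymptotic convergence already proved in item~5 of Theorem~\ref{asympthm}: as $t\to+\infty$ (resp. $t\to-\infty$), the horizontal section $\alpha_{t}$ of $M$ converges to the corresponding horizontal section of $\mathcal{R}_h+v_{\infty}$ (resp. $\mathcal{R}_h+v_{-\infty}$) in $C^{2,\alpha}$ at exponential rate. Passing to the limit in $\mathcal{T}_{M}(\alpha_{t})=T_{M}$ gives both
\[
T_{M}=T_{0}-h\,x_{2}(v_{\infty})\quad\text{and}\quad T_{M}=T_{0}-h\,x_{2}(v_{-\infty}),
\]
whence $x_{2}(v_{\infty})=x_{2}(v_{-\infty})$ because $h>0$. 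The main technical obstacle will be verifying this last limit: since $\mathcal{R}_h$ is periodic under a translation in the $(x_{1},x_{3})$-plane with nonzero $x_{1}$-component, the coordinate $x_{1}$ along a horizontal section at height $t$ grows linearly with $|t|$, so the integrand of the torque is unbounded in $t$. One must check that the exponential decay of the graphing function in $C^{2,\alpha}$ dominates this linear growth, so that the torque difference along corresponding sections of $M$ and the Riemann example is bounded by a constant times $|t|\,e^{-a|t|}$ and vanishes as $|t|\to\infty$.
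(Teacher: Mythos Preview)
Your approach is essentially the same as the paper's: both compute the $e_3$-component of the torque $\int_\gamma p\times\eta\,ds$ (equivalently, the flux of the rotational Killing field $V=(-x_2,x_1,0)$) along horizontal sections, exploit its homological invariance on $M$, and compare the two limits $t\to\pm\infty$ against the asymptotic translated Riemann examples. The paper packages this as the divergence theorem on a compact domain bounded by $\gamma(n)$, $\gamma(-n)$ and small loops around the intervening ends, then takes the $e_3$-component at the end; you take the $e_3$-component from the start. The computation $\mathcal{T}_{\mathcal{R}_h+v}=T_0-h\,x_2(v)$ is exactly the paper's identity $\langle v\times F_M,e_3\rangle=-h\,x_2(v)$.

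One genuine gap: your justification that the torque around a planar end is $O(\varepsilon)$ is incorrect as written. A loop of conformal radius $\varepsilon$ around a planar end corresponds in $\rth$ to a curve of diameter $\sim 1/\varepsilon$ going to infinity, so the position $p$ diverges and the integrand $-x_2\eta_1+x_1\eta_2$ is \emph{not} continuous at the puncture; the naive $O(\varepsilon)$ bound fails. What is actually true---and what the paper invokes---is that the full torque vector $\int_{\beta_k} p\times\eta\,ds$ of a horizontal planar end is a \emph{horizontal} vector (it points along the direction of the straight line asymptotic to the level section at the height of that end). Hence its $e_3$-component vanishes, and your constancy of $T_M$ follows. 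Replace your continuity argument with this fact.

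Your concern about the limit as $t\to\pm\infty$ is well placed and matches what the paper does implicitly: since the period vector $T$ of $\mathcal{R}_h$ lies in $\{x_2=0\}$, the term $\langle nT\times F_M,e_3\rangle$ vanishes identically, so the $e_3$-torque of $\mathcal{R}_h+v$ along the section at height $x_3(v+nT)$ is independent of $n$. Then the exponential $C^{2,\alpha}$ convergence of $\alpha_t$ to the corresponding Riemann section dominates the linear growth of $x_1$ in the integrand, so the error is $O(|t|e^{-a|t|})\to 0$, exactly as you anticipated.
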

\begin{proof}
Recall that  ${\cal R}={\cal R}_h\subset \R^3$ is the Riemann minimal example
with the same flux vector $(h,0,1)$ as $M$. Let $T$ be the smallest
orientation preserving translation vector of ${\cal R}$, with $x_3(T)>0$.
Also, assume that ${\cal R}$ is normalized by a translation so that the
$(x_1,x_2)$-plane intersects ${\cal R}$ in a circle $\g$.
By the first statement in item~5 of Theorem~\ref{asympthm},
for $n\in \N$ large, the curve $\g(n)=M\cap
\{x_3=x_3(v_{\infty}+nT)\}$ is closely approximated by the horizontal
circle $\g+v_{\infty}+nT\subset {\cal R}+v_{\infty}$. Similarly, for $n$ large,
$\g(-n)=M\cap\{x_3=x_3(v_{-\infty}-nT)\}$ is closely approximated
by the horizontal circle $\g+v_{-\infty}-nT\subset {\cal R}+v_{-\infty}$.

As $M$ is minimal, the $\R^3$-valued one-form $\a \colon
M\rightarrow \R^3$ given by $\a _p(v)=p\times v$ for all $p\in M$
and $v\in T_pM$, has divergence zero. The Divergence Theorem applied
to $\a $ on a compact subdomain $\Omega \subset M$ gives:
\begin{equation}
\label{eq:lemaverticalslab1}
\vec{0}=\int _{\partial \Omega }\a (\eta )\, ds= \int _{\partial \Omega }p\times \eta \, ds,
\end{equation}
where $\eta $ denotes the exterior conormal field to $\Omega $ along
its boundary. We now
choose a domain $\Omega $ adapted to our setting:  For $n$ large,
label by $A(n)$ the component of $M-[\g (n)\cup \g (-n)]$ whose
boundary is $\partial A(n)=\g (n)\cup \g (-n)$. The proper domain $A(n)$ contains
a finite positive number $l(n)$ of planar
ends. For each end $e_k$ in $A(n)$, choose an embedded curve $\beta
_k\subset A(n)$ around this end, the $\beta _k$ curves being
disjoint. Finally, define $\Omega (n)$ to be the
 compact subdomain of $A(n)$
bounded by $\g (n)\cup \g (-n)\cup \beta _1\cup \ldots  \cup \beta
_{l(n)}$.
 Then, (\ref{eq:lemaverticalslab1})
can be written as
\begin{equation}
\label{eq:lemaverticalslab2} \vec{0}=\int _{\g (n)}p\times \eta \,
ds+\int _{\g (-n)}p\times \eta
 \, ds+\sum _{k=1}^{l(n)}\int _{\beta _k}p\times \eta \, ds.
\end{equation}

Each integral along $\beta _k$ in the summation above is the torque
vector associated to the end~$e_k$. This is a
horizontal vector pointing to the direction of the straight line
asymptotic to the (non-compact) level section at the height of the end
$e_k$. Thus, the third term
 in (\ref{eq:lemaverticalslab2})
will disappear after taking inner products with $e_3=(0,0,1)$. Concerning the first
integral in (\ref{eq:lemaverticalslab2}), we can estimate it as the corresponding
integral over the translated Riemann minimal
example ${\cal R}+v_{\infty }$ up to an error
$\ve _n$ such that $\ve _n\to 0$ as $n\to \infty $:
\[
\begin{array}{rcl}
{\displaystyle \int _{\g (n)}p\times \eta \, ds} & = &
{\displaystyle \ve_n+ \int _{\g+v_{\infty}+nT}p({\cal R}+v_{\infty})\times
 \eta ({\cal R}+v_{\infty}) \, ds({\cal R}+v_{\infty })}
\\
& = &
{\displaystyle \rule{0cm}{.6cm}\ve_n+ \int _{\g}[p({\cal R})+v_{\infty }+nT]\times \eta ({\cal R})\, ds({\cal R}})
\\
& = &
{\displaystyle \rule{0cm}{.6cm}\ve_n+ \int _{\g}p({\cal R})\times \eta ({\cal R})\, ds({\cal R})+
[v_{\infty}+nT]\times \mbox{Flux}(M),}
\end{array}
\]
where $\eta ({\cal R}+v_{\infty })$ is the unitary conormal vector to
${\cal R}+v_{\infty }$ along $\g +v_{\infty }+nT$ (we follow a similar notation for
$p({\cal R}), ds({\cal R})$), and we have used that
$\int _{\g }\eta ({\cal R})\, ds({\cal R})=\mbox{Flux}({\cal R})=\mbox{Flux}(M)$.

If we repeat the same argument along $\g(-n)$, then we must take into account that the
exterior conormal vectors to $\Omega (n)$ along $\g (n)$ and $\g (-n)$ are almost
opposite, thus after taking limits the conormal vector $\eta ({\cal R})$ along $\g $
in the integral of the right-hand-side of the last displayed expression associated
to $\g (n)$ is opposite to the corresponding one for $\g (-n)$.
Therefore, equation (\ref{eq:lemaverticalslab2}) implies:
\[
\begin{array}{rcl}
0&=&{\displaystyle \lim _{n\rightarrow \infty}\left\langle
 \int _{\g (n)}p\times \eta \, ds+\int _{\g (n)}p\times \eta \, ds,\ e_3\right\rangle}\\
&=& \rule{0cm}{.5cm}\left\langle \left[ (v_{\infty}-v_{-\infty}) +2nT\right] \times \mbox{Flux}(M),e_3\right\rangle
\\
&=&\rule{0cm}{.5cm}\left\langle \left[ (v_{\infty}-v_{-\infty})+2nT\right] ,\mbox{Flux}(M)\times
e_3\right\rangle.
\end{array}
\]
Since $\mbox{Flux}(M)\times e_3$ is a non-zero vector pointing to
the $x_2$-axis, we deduce that $x_2(v_{\infty})=x_2(v_{-\infty}),$ which completes
the proof of the lemma (thus, the proof of Theorem~\ref{asympthm} is complete).
\end{proof}

\begin{remark}
{\rm
By Theorem~\ref{mainthm}, any properly embedded with genus zero and an infinite number of
ends is a Riemann minimal example. In the statement of this result, it
is natural to replace the hypothesis of properness  by
completeness. The authors have found a proof of this result in the complete setting under
the additional hypothesis that the surface has countably many ends~\cite{mpr9}. We remark
that every properly embedded minimal surface in $\R^3$ has a countable number of ends,
regardless of its genus~\cite{ckmr1}.
}
\end{remark}

\section {Appendix 1: Finite dimensionality of the
space of bounded Jacobi functions.}
\label{dim}
We saw in Corollary~\ref{corol4.10} that the Shiffman function $S_M$
associated to a quasiperiodic, immersed minimal surface $M$ of Riemann type
extends smoothly through the planar ends of $M$
to a bounded, smooth function on the cylinder
$\C /\langle i\rangle $. Given such an immersed minimal surface $M$
we produced in Section~\ref{secKdV2} a sequence $\{ v_n\} _n$ of
(complex valued) Jacobi functions on $M$, one of whose terms is $S_M+iS_M^*$. A key point in the holomorphic integration of $S_M$ is that the sequence $\{ v_n\} _n$ only has a finite number of linearly independent
functions. This property follows from two facts: firstly, the fact that each Jacobi function $v_n$
extends smoothly to a bounded function on $\C /\langle i\rangle $ (Theorem~\ref{thm9.2}) and secondly,
that the bounded kernel of the Jacobi operator on a quasiperiodic, immersed minimal surface
of Riemann type is finite dimensional. This finite dimensionality can be deduced from Theorem~0.5 in  Colding, de Lellis and Minicozzi~\cite{cm39}. For the sake of completeness, we provide a direct proof of this property communicated to us by to Frank Pacard.

\begin{theorem}
\label{bounded}
Let $M\subset \R^3$ be a quasiperiodic, immersed minimal surface
of Riemann type. Then, the linear space ${\cal J}_{\infty }(M)=\{ v\in {\cal J}(M)\ | \ v \mbox{ is bounded}\} $
is finite dimensional.
\end{theorem}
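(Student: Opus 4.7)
The plan is to prove that ${\cal J}_{\infty}(M)$ is finite dimensional by showing that it is a Banach space whose closed unit ball is sequentially compact in the sup-norm; Riesz's theorem then forces finite dimensionality. First I would establish that every $v \in {\cal J}_{\infty}(M)$ extends smoothly across the planar ends $\{p_j,q_j\}$ of $M$, so that ${\cal J}_{\infty}(M)$ embeds into $C^{\infty}_{b}(\C/\langle i\rangle)$. This uses the conformal representation from Theorem~\ref{thmcurvestim}: equation~(\ref{eq:Jacobi}) reads $v_{z\overline{z}}+Q\,v=0$ with potential $Q(z)=2|g'|^2/(1+|g|^2)^2$ which, although $g$ has zeros/poles at the $p_j,q_j$, remains bounded and smooth across these points because the zeros and poles are of order two. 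Together with interior Schauder estimates, this gives uniform $C^{k,\alpha}$-bounds on the cylinder in terms of the $C^0$-bound, so ${\cal J}_{\infty}(M)$ is a closed subspace of $L^{\infty}(\C/\langle i\rangle)$.

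For the compactness of the closed unit ball $B\subset {\cal J}_{\infty}(M)$, let $\{v_n\}\subset B$. By the $C^{k,\alpha}$-estimates and Arzel\`a--Ascoli, a subsequence converges in $C^{2}_{\text{loc}}$ to some $v_{\infty}\in {\cal J}_{\infty}(M)$. The crux is to upgrade this to uniform convergence. Suppose to the contrary there are points $z_n\in \C/\langle i\rangle$ with $|\Re(z_n)|\to\infty$ and $|v_n(z_n)-v_{\infty}(z_n)|\geq \varepsilon>0$. By the quasiperiodicity of $g$ (item~7 of Theorem~\ref{thmcurvestim}), a subsequence of the translates $g(z+z_n)$ converges uniformly on compact subsets of $\C/\langle i\rangle$ to some $g_{\infty}\in {\cal W}$ corresponding to a limit surface $M_{\infty}\in {\cal M}$, and the translates $w_n(z)=(v_n-v_{\infty})(z+z_n)$ are bounded Jacobi functions on the translated surfaces. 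Elliptic compactness then gives a further subsequential locally uniform limit $w_{\infty}\in {\cal J}_{\infty}(M_{\infty})$ with $|w_{\infty}(0)|\geq \varepsilon$ and $\|w_{\infty}\|_{\infty}\leq 2$.

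To close the argument, I would encode each $v\in {\cal J}_{\infty}(M)$ by an ``asymptotic trace'' along both limit ends: fix reference escape sequences $z_k^{\pm}$ with $\Re(z_k^{\pm})\to \pm\infty$ and associated limit surfaces $M_{\infty}^{\pm}$, and send $v$ to the pair of locally uniform limits $(v_{\infty}^+,v_{\infty}^-)\in {\cal J}_{\infty}(M_{\infty}^+)\times {\cal J}_{\infty}(M_{\infty}^-)$. The compactness step above shows that if this linear map is injective on every such pair of limits, then the unit ball of its kernel is compact, while iteration of the ``escape to infinity'' procedure across diagonal sequences recovers the full dimension. Injectivity reduces to a Liouville-type statement: a bounded Jacobi function on $M$ whose asymptotic traces at both limit ends vanish must itself vanish. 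I would prove this by an energy/integration-by-parts argument on horizontal slabs $\{a<\Re(z)<b\}\subset \C/\langle i\rangle$ with $a\to -\infty$, $b\to+\infty$, using the bounded-curvature and quasiperiodic structure from Section~\ref{sec3} to control the boundary terms, in the same spirit as the spherical-hemisphere estimate of Assertion~\ref{ass8.2}. The main obstacle I expect is precisely making this Liouville vanishing rigorous, since one must avoid any circular dependence on the eventual classification of Riemann minimal examples and work intrinsically with the quasiperiodic data; Pacard's insight here is that the boundary terms at $\Re(z)=a,b$ decay along a well-chosen exhaustion, yielding $\int_M(|\nabla v|^2-|\sigma|^2v^2)=0$ for the decaying Jacobi function and forcing $v\equiv 0$.
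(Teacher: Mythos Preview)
Your approach has a genuine gap at its core. The proposed Liouville step --- deducing $v\equiv 0$ from the identity $\int_M(|\nabla v|^2-|\sigma|^2v^2)=0$ for a decaying Jacobi function --- cannot work: for any Jacobi function this quadratic form vanishes whenever the boundary terms do (just integrate $v(\Delta v+|\sigma|^2v)=0$ by parts), so the identity is a tautology, not a constraint. More conceptually, the stability form on $M$ is highly indefinite (these surfaces have infinite Morse index), so no positivity argument of the type you invoke is available. The spherical-hemisphere inequality in Assertion~\ref{ass8.2} works only because each nodal domain there carries exactly one negative eigenvalue and one uses a specific test function to cancel it; nothing analogous is set up in your sketch. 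A second problem is logical: your contradiction argument produces a nonzero $w_\infty\in{\cal J}_\infty(M_\infty)$ on some limit surface, but that is no contradiction --- linear Jacobi functions already live there --- and your ``asymptotic trace'' map is not well-defined, since different escape sequences $z_k^\pm$ can yield different limits $M_\infty^\pm$ and different limiting Jacobi functions.

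The paper's argument (due to Pacard) is of an entirely different nature and does not use compactness of the unit ball in sup-norm. After passing to the flat cylinder $\esf^1\times\R$, the Jacobi equation becomes $(\Delta_{\esf^1}+\partial_t^2+V)w=0$ with $V\in L^\infty$. One first proves a weighted estimate for the unperturbed operator: if $\delta^2$ avoids the spectrum $\{\lambda_j=j^2\}$ of $-\Delta_{\esf^1}$, then $\|e^{-\delta t}U\|_{L^2}\le (\inf_j|\delta^2-\lambda_j|)^{-1}\|e^{-\delta t}(\Delta_{\esf^1}+\partial_t^2)U\|_{L^2}$, obtained by Fourier transform in $t$ and eigenfunction expansion on $\esf^1$ (this is the Lockhart--McOwen mechanism). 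Since the spectral gaps $\lambda_{j+1}-\lambda_j=2j+1$ are unbounded, one may choose $\delta$ with $\inf_j|\delta^2-\lambda_j|\ge 2\|V\|_\infty$, and then the potential term can be absorbed. Applying this with cutoffs to any bounded $w$ in the kernel yields an a priori bound $\|e^{-\delta|t|}w\|_{L^2(\esf^1\times\R)}\le C\|w\|_{W^{1,2}(\esf^1\times(-1,1))}$. If the kernel were infinite dimensional, an $L^2(e^{-2\delta|t|}\,dt)$-orthonormal sequence $\{w_n\}$ would, by elliptic estimates and Rellich on the fixed slab $\esf^1\times(-1,1)$, have a subsequence Cauchy in $W^{1,2}(\esf^1\times(-1,1))$, hence Cauchy in the weighted $L^2$-norm on the whole cylinder by the inequality above; passing to the limit contradicts orthonormality. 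The moral: finite dimensionality here comes from Fredholm-type weighted analysis, not from a sup-norm compactness or Liouville argument.
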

\begin{proof}
By definition, $M$ is conformally equivalent to $(\C /\langle
i\rangle )-g^{-1}(\{ 0,\infty \} )$ where $g\in {\cal
M}_{\mbox{\footnotesize imm}}$ is the Gauss map of $M$. Recall that
$\C /\langle i\rangle $ is isometric to $\esf^1\times \R $. Take
global coordinates $(\theta ,t)$ on $\esf^1\times \R $ and consider
the product metric $d\theta ^2\times dt^2$, which is conformal to
the metric $ds^2$ on $M$ induced by the usual inner product of
$\R^3$. If we write $N$ for the Gauss map of $M$ and $ds^2=\l
^2(d\theta ^2\times dt^2)$, then the Jacobi operator $L=\Delta
+|\sigma |^2=\Delta +|\nabla N|^2$ of $M$ is $L=\l ^{-2}L_M$, where
$L_M=(\Delta_{\esfsmall^1}+\partial^2_t)+V_M$ is a
Schr\"{o}dinger operator on $\esf^1\times \R$ with potential $V_M$
given by the square of the norm of the differential of $N=N(\theta
,t)$ (with respect to $d\theta ^2\times dt^2$). Since $M$ is
quasiperiodic, $V_M$ is globally bounded on $\esf^1\times \R $.

By elliptic regularity, any bounded Jacobi function $v$ on $M$
extends smoothly through the zeros and poles of $g$ to a function
$\widehat{v}$ in the kernel of the operator $L_M$, such that
$\widehat{v}$ is bounded at both ends of $\esf^1\times \R $.
Therefore, the space ${\cal J}_{\infty }(M)$ of bounded Jacobi
functions identifies naturally with the bounded kernel of $L_M$. Now
the theorem is a consequence of part (1) of a more general
result proved by Colding, de Lellis and Minicozzi, namely Theorem~0.5
in~\cite{cm39}. For the sake of completeness, we provide a
direct proof of the finite dimensionality of the bounded kernel of
$L_M$ in a simpler setting which is sufficient for our purposes,
see Assertion~\ref{assloMcOw2} below. We thank Frank Pacard, who
communicated to us this argument. Modulo Assertion~\ref{assloMcOw2} and Remark~\ref{remark5.4} below,
Theorem~\ref{bounded} is proved.
\end{proof}

Let $\Sigma $ be a compact manifold endowed with a Riemannian metric
$h$, and let $\l _0=0<\l _1<\l _2<\ldots $ be the  eigenvalues of
the Laplacian $-\Delta _h$. Our goal is to give a sufficient condition
under which the bounded
kernel of the Schr\"{o}dinger operator on the metric cylinder
$(\Sigma \times \R ,h\times dt^2)$ given by
$(\Delta _h+\partial ^2_t)+V$,
where $V\in L^{\infty }(\Sigma
\times \R )$, has finite dimension. Such a condition will relate the spectral gaps $\{ \l _{j+1}-\l _j\} _j$ of $-\Delta _h$ and
$\| V\| _{L^{\infty }(\Sigma \times \R )}$, see
Assertion~\ref{assloMcOw2} below.

We first study the operator
$\Delta _h+\partial ^2_t$ acting on functions belonging to the
weighted space $e^{\de t}L^2(\Sigma \times \R )$, where
$\de $ is a real number (we will assume from now on that $\Sigma
\times \R $ is endowed with the product metric $h\times dt^2$).
The following result is a refinement
of some ideas in the paper of Lockhart and McOwen~\cite{loMcOw1}.

\begin{assertion}
\label{assloMcOw1}
Assume that $\de \in \R $ is chosen so that $\de^2\neq \l _j$ for
all $j\geq 0$. Then, if $(\Delta _h+\partial ^2_t)U=F$
with $U,F\in e^{\de t}L^2(\Sigma \times \R )$, we have
\begin{equation}
\label{eq:Pacard1}
\| e^{-\de t}U\| _{L^2(\Sigma \times \R )}\leq \frac{1}{\inf _j
|\de ^2-\l _j|}\| e^{-\de t}F\| _{L^2(\Sigma \times \R )}.
\end{equation}
\end{assertion}
\begin{proof}
First observe that the functions $u=e^{-\de t}U$, $f=e^{-\de t}F$ belong to $L^2(\Sigma \times \R )$ and $f=e^{-\de t}(\Delta _h+
\partial _t^2)(e^{\de t}u)$. We perform the Fourier transform of $t\mapsto
u(y,t)$ and $t\mapsto f(y,t)$ for $y\in \Sigma $ fixed, defining the complex valued functions
\[
\widehat{u}(y,s)=\frac{1}{\sqrt{2\pi }}\int _{\R }u(y,t)e^{ist}dt,
\qquad
\widehat{f}(y,s)=\frac{1}{\sqrt{2\pi }}\int _{\R }f(y,t)e^{ist}dt,
\]
for all $(y,s)\in \Sigma \times \R $. To keep notations short, we set
$z:=\de -is\in \C $. 
It is straightforward to check that given an $s\in \R$,
the functions $\widehat{u}(\cdot,s)$, $\widehat{f}(\cdot ,s )$ satisfy (in the sense of distributions) the equation
\[
(\Delta _h+z^2)\widehat{u}(\cdot ,s)=\widehat{f}(\cdot ,
s) \quad \mbox{on }\Sigma .
\]
Given $z\in \C $, consider the linear Schr\"{o}dinger operator
$\widehat{B}_{z}=\Delta _h+z^2$,
acting on complex valued functions on $\Sigma $. Let us denote by
$E_0,E_1,E_2,\ldots $ the eigenspaces of $-\Delta _h$ corresponding
to the eigenvalues $\l _0=0<\l _1<\l _2<\ldots $, respectively.
By classical elliptic theory, if $z^2\neq \l _j$ for all $j\in \N\cup \{ 0\} $,
then there exists a bounded operator
$\widehat{R}_z:L^2(\Sigma )\to L^2(\Sigma )$
which is a right inverse of $\widehat{B}_z$, i.e.,
$\widehat{B}_z\circ \widehat{R}_z$ is the identity on $L^2(\Sigma )$,
where we keep the notation $L^2(\Sigma )$ for $L^2$-complex valued functions
on  $\Sigma $.
Also note that the condition $z^2\neq \l _j$ holds for all $j$ since
$|z^2-\l _j|\geq |\de ^2-\l _j|>0$.

Using the orthogonal eigendata decomposition $\widehat{f}(\cdot ,s)=
\sum _{j\geq 0}\sum _{\widehat{f}_h\in E_j}\widehat{f}_h$, it is
easy to check that
\[
\widehat{R}_z(\widehat{f}(\cdot ,s))=\sum _{j\geq 0}\frac{1}{z^2-\l _j}
\sum _{\widehat{f}_h\in E_j}\widehat{f}_h.
\]
Plancherel's formula then implies
\[
\| \widehat{R}_z(\widehat{f}(\cdot ,s))\| ^2_{L^2(\Sigma )}=
\sum _{j\geq 0}\frac{1}{|z^2-\l _j|^2}\sum _{\widehat{f}_h\in E_j}
\| \widehat{f}_h\| ^2_{L^2(\Sigma )}.
\]
Using the inequality $|z^2-\l _j|\geq |\de ^2-\l _j|$, we obtain
\begin{equation}
\label{eq:Pacard2}
\| \widehat{R}_z(\widehat{f}(\cdot ,s))\| ^2_{L^2(\Sigma )}
\leq
\frac{1}{\inf _j|z^2-\l _j|^2}
\sum _{j\geq 0}
\sum _{\widehat{f}_h\in E_j}
\| \widehat{f}_h\| ^2_{L^2(\Sigma )}
\leq
\frac{1}{\inf _j|\de^2-\l _j|^2}
\| \widehat{f}(\cdot ,s)\| ^2_{L^2(\Sigma )}.
\end{equation}
Note that $\widehat{R}_z(\widehat{f}(\cdot ,s))
=\widehat{u}(\cdot ,s)$ (because $\widehat{B}_z\widehat{u}=\widehat{f}$ and
$z^2\neq \l _j$ for all $j$). Since the Fourier transform is an
isometry of $L^2(\Sigma )$, one has
\[
\begin{array}{rcl}
\| u\| ^2_{L^2(\Sigma \times \R )}&=&
{\displaystyle \| \widehat{u}\| ^2_{L^2(\Sigma \times
\R )}=\int _{\R }\| \widehat{u}(\cdot ,s)\| ^2_{L^2(\Sigma )}ds=
\int _{\R }\| \widehat{R}_z(\widehat{f}(\cdot ,s))
\| ^2_{L^2(\Sigma )}ds}
\\
&\stackrel{(\ref{eq:Pacard2})}{\leq }&
{\displaystyle
\frac{1}{\inf _j|\de^2-\l _j|^2}\int _{\R }
\| \widehat{f}(\cdot ,s)\| ^2_{L^2(\Sigma )}ds
=\frac{1}{\inf _j|\de^2-\l _j|^2}\| \widehat{f}
\| ^2_{L^2(\Sigma \times \R )}}
\\
&=&
{\displaystyle
\frac{1}{\inf _j|\de^2-\l _j|^2}\| f\| ^2_{L^2(\Sigma \times
\R )},}
\end{array}
\]
from which one deduces directly the inequality (\ref{eq:Pacard1}). Hence, the assertion is proved.
\end{proof}

\begin{assertion}
\label{assloMcOw2}
Let $(\Sigma ,h)$ be a compact Riemannian manifold and $V\in L^{\infty }
(\Sigma \times \R )$. Assume that there exists $j_0\in \N$ such that
\begin{equation}
\label{eq:Pacard3}
4\| V\| _{L^{\infty }(\Sigma \times \R )}\leq \l _{j_0+1}-\l _{j_0},
\end{equation}
where $\l _0=0<\l _1<\l _2<\ldots $ is the spectrum of $-\Delta _h$
on $\Sigma $. Then, the bounded kernel of $\Delta _h+\partial ^2_t+V$
on $\Sigma \times \R $ is finite dimensional.
\end{assertion}
\begin{remark}
\label{remark5.4}
{\rm
In the case where $\Sigma =\esf^1$ with its standard metric, then
$\l _j=j^2$ and $\l _{j+1}-\l _j=2j+1$, so the hypothesis (\ref{eq:Pacard3}) is always fulfilled.
}
\end{remark}
\begin{proof}
Again the proof is essentially borrowed from the paper by
Lockhart and McOwen~\cite{loMcOw1} (see Section 2 on pages 420, 421).
First consider a
function $w$ in the bounded kernel of $\Delta _h+\partial ^2_t+V$.
Let $\chi \in C^{\infty }(\Sigma \times \R )$ be a cutoff function
only depending on $t$, equal to $0$ for $t<-1$ and equal to $1$ for
$t>1$. We choose $\de >0$ such that
\[
\de^2=\frac{1}{2}(\l _{j_0+1}+\l _{j_0}),
\]
in particular $\de^2\neq \l _j$ for all $j\geq 0$.

Applying the result in Assertion~\ref{assloMcOw1} to this value of
$\de $ and to the functions $U:=\chi w$, $
F:=(\Delta _h+\partial ^2_t)(\chi w)=-V\chi w+w\partial ^2_t\chi
+2\partial _t\chi \, \partial _tw$, we obtain
\[
\begin{array}{rcl}
{\displaystyle
\| e^{-\de t}\chi w\| _{L^2(\Sigma \times \R )}\hspace{-0.2cm}}
&\leq \hspace{-0.2cm}&
{\displaystyle
\frac{1}{\inf _j|\de^2-\l _j|}\| e^{-\de t}F\| _{L^2(\Sigma \times
\R )}
}
\\
& \leq \hspace{-0.2cm}&
{\displaystyle
\frac{1}{\inf _j|\de^2-\l _j|}\left( \| e^{-\de t}V\chi w\| _{L^2(\Sigma \times \R )}
+\right.
}
\\
& &
{\displaystyle
\hspace{2.5cm}
\left. \| e^{-\de t}w\partial ^2_t\chi \| _{L^2(\Sigma \times \R )}
+\| e^{-\de t}2\partial _t\chi \, \partial _tw\| _{L^2(\Sigma \times \R )}\right )
}
\\
& \leq \hspace{-0.2cm}&
{\displaystyle
\frac{1}{\inf _j|\de^2-\l _j|}\left(
\| V\| _{L^{\infty }(\Sigma \times \R )}\, \| e^{-\de t}
\chi w\| _{L^2(\Sigma \times \R )}
+c\| w\| _{W^{1,2}(\Sigma \times (-1,1))}\right ) ,
}
\end{array}
\]
for some constant $c>0$ which only depends on the choice of the
cutoff function $\chi $ (note that we have used that
$\partial _t\chi =0$ outside $\Sigma \times (-1,1)$ in the
last inequality).

On the other hand, (\ref{eq:Pacard3}) gives that
\[
\inf _j|\de ^2-\l _j|=\frac{1}{2}(\l _{j_0+1}-\l _{j_0})
\geq 2\| V\| _{L^{\infty }(\Sigma \times \R )}.
\]
Thus,\quad  ${\textstyle \| e^{-\de t}\chi w\| _{L^2(\Sigma \times \R )}
\leq \frac{1}{2}\| e^{-\de t}\chi w\| _{L^2(\Sigma \times
\R )}+\frac{2c}{\l _{j_0+1}-\l _{j_0}}\| w\| _{W^{1,2}(\Sigma \times (-1,1))}}$,
which implies
\begin{equation}
\label{eq:Pacard5}
\| e^{-\de t}\chi w\| _{L^2(\Sigma \times \R )}
\leq
\frac{4c}{\l _{j_0+1}-\l _{j_0}}\| w\| _{W^{1,2}(\Sigma \times (-1,1))}
\leq
C\| w\| _{W^{1,2}(\Sigma \times (-1,1))},
\end{equation}
for some constant $C>0$ that depends on $\l _{j_0},\l _{j_0+1}$ and on
$\chi $ (but these data have been fixed once for all).

Next we apply Assertion~\ref{assloMcOw1} to the functions
$U:=(1-\chi )w$,
$F:=(\Delta _h+\partial ^2_t)((1-\chi )w)=-V(1-\chi )w-w\partial ^2_t\chi
-2\partial _t\chi \, \partial _tw$ and this time we change $\de $ to
$-\de $, to get
\[
\begin{array}{rcl}
{\displaystyle
\| e^{\de t}(1-\chi )w\| _{L^2(\Sigma \times \R )}\hspace{-0.2cm}}
&\leq \hspace{-0.2cm}&
{\displaystyle
\frac{1}{\inf _j|\de^2-\l _j|}\left(
\| V\| _{L^{\infty }(\Sigma \times \R )}\| e^{\de t}(1-\chi )
w\| _{L^2(\Sigma \times \R )}\right.
}
\\
& & {\displaystyle
\left. \hspace{4cm}\rule{0cm}{.4cm}
+c\| w\| _{W^{1,2}(\Sigma \times (-1,1))}\right) }.
\end{array}
\]
Arguing as above, we conclude that
\begin{equation}
\label{eq:Pacard6}
\| e^{\de t}(1-\chi )w\| _{L^2(\Sigma \times \R )}
\leq C\| w\| _{W^{1,2}(\Sigma \times (-1,1))}.
\end{equation}
Collecting (\ref{eq:Pacard5}) and (\ref{eq:Pacard6}) and using the
triangle inequality, we deduce that
\begin{equation}
\label{eq:Pacard7}
\| e^{-\de |t|}w\| _{L^2(\Sigma \times \R )}
\leq 2C\| w\| _{W^{1,2}(\Sigma \times (-1,1))}
\end{equation}
for certain $C>0$, and this estimate holds for any function $w$ in
the bounded kernel of $\Delta _h+\partial _t^2+V$.

Another ingredient we will use in proving the assertion is a
$W^{2,2}$-estimate valid for any function in the kernel
of $\Delta _h+\partial _t^2+V$. Namely, the classical $L^p$-estimates applied to the solution $w$ of
$(\Delta _h+\partial _t^2)w=-Vw$ (see for instance Theorem~9.11
of Gilbarg and Trudinger~\cite{gt1}), imply that
\begin{equation}
\label{eq:Pacard8}
\| w\| _{W^{2,2}(\Sigma \times (-1,1))}\leq C\left(
1+\| V\| _{L^{\infty }(\Sigma \times \R )}\right)
\| w\| _{L^2(\Sigma \times (-2,2))}.
\end{equation}
The final ingredient is the compactness of the Sobolev embedding
(Rellich's theorem, see for instance~\cite{gt1} Theorem 7.26)
\begin{equation}
\label{eq:Pacard9}
W^{2,2}(\Sigma \times (-1,1))\hookrightarrow W^{1,2}(\Sigma \times (-1,1)).
\end{equation}

We finally prove the finite dimensionality of the bounded kernel $K$
of $\Delta _h+\partial ^2_t+V$. Arguing by contradiction, suppose
that $K$ were infinite dimensional. There would exist a sequence of
linearly independent functions $w_n\in K$, which can be normalized
so that
\begin{equation}
\label{eq:Pacard10}
\int _{\Sigma \times \R }w_nw_me^{-2\de |t|}\, dt\, dh=\left\{ \begin{array}{ll}
0 & \mbox{if }n\neq m,
\\
1 & \mbox{if }n=m.
\end{array}\right.
\end{equation}

Recall that $\de >0$ and hence the integrals are well-defined since
the functions $w_n$ are bounded. As $e^{-\de |t|}$ is bounded away from zero in $(-2,2)$, we conclude from (\ref{eq:Pacard10}) that the
sequence $\{ w_n\} _n$ is bounded in $L^2(\Sigma \times (-2,2))$,
and (\ref{eq:Pacard8}) then implies that it is also bounded in $W^{2,2}
(\Sigma \times (-1,1))$. Now, using the compactness of the embedding
(\ref{eq:Pacard9}), there exists a subsequence (still denoted by
$\{ w_n\} _n)$ which converges in $W^{1,2}(\Sigma \times (-1,1))$.

The convergence of $\{ w_n\} _n$ in $W^{1,2}(\Sigma \times (-1,1))$
together with (\ref{eq:Pacard7}) applied to $w_n-w_m$
implies that $\{ e^{-\de |t|}w_n\} _n$ is a Cauchy sequence in $L^2
(\Sigma \times \R )$. But, $L^2(\Sigma \times \R )$ being a Banach
space, this sequence converges in $L^2(\Sigma \times \R )$ to some
function $W\in L^2(\Sigma \times \R )$. Passing to the limit
as $n\to \infty $ in (\ref{eq:Pacard10}) (keeping $m$ fixed in the first integral),
we have
\begin{equation}
\label{eq:Pacard11}
\int _{\Sigma \times \R }Ww_me^{-\de |t|}\, dt\, dh=0,
\qquad
\int _{\Sigma \times \R }W^2\, dt\, dh=1.
\end{equation}
Finally, passing to the limit as $m\to \infty $ in the left integral
above, one obtains
\[
\int _{\Sigma \times \R }W^2\, dt\, dh=0,
\]
which contradicts the right integral in (\ref{eq:Pacard11}). This finishes the proof of the assertion.
\end{proof}

From the descriptions in Theorems~\ref{thmcurvestim} and~\ref{thm1finallimite}, it follows that every properly embedded minimal surface $M$
in $\R^3$ with finite genus $g$ and infinite topology is conformally diffeomorphic to a compact Riemann surface of genus
$g$ with a countable set of points removed, and this set of points has exactly two limit points on the compact surface.  Furthermore, $M$ has bounded curvature, each middle end  is planar and $M$ has two limit ends of Riemann type
(see Theorem~\ref{asympthm} for  an improved  description of such an $M$).  In particular, $M$ has a partial conformal compactification $\overline{M}$ by adding  its non-limit ends, and a complete metric on $\overline{M}$
with two cylindrical ends. As in the previously considered case when the genus of $M$ was zero,  the bounded Jacobi functions of $\overline{M}$  can be  identified with the bounded kernel of $\Delta_{\overline{M}}+V$, where
$V$ is a bounded potential.  In this case, the
arguments in the proof of Theorem~\ref{bounded}  can be easily modified and imply the result below. We remark
that when $M$ has $0<k<\infty $ ends, then $M$ has finite total curvature and the statement below is well-known.
We also note that Theorem~\ref{thm5.5} is a particular case of the more general result in Theorem~0.5 in
Colding, de Lellis and Minicozzi~\cite{cm39}.

\begin{theorem}
\label{thm5.5}
Let $M\subset \R^3$ be a properly embedded minimal surface with finite genus and more than one end. Then,
the linear space of bounded Jacobi functions on $M$ is finite dimensional.
\end{theorem}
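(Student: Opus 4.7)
The plan is to split into two cases based on the number of ends of $M$. If $M$ has a finite number $k$ of ends with $1 < k < \infty$, then by Collin's theorem $M$ has finite total curvature and is conformally equivalent to a compact Riemann surface minus finitely many points. In this classical setting, the bounded kernel of $\Delta + |\sigma|^2$ on $M$ is finite dimensional by standard elliptic theory, since bounded Jacobi functions extend smoothly across the ends and the Jacobi operator becomes a Schr\"{o}dinger operator with smooth bounded potential on a compact Riemann surface, whose kernel is necessarily finite dimensional. So the only interesting case is that of infinitely many ends.

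For the case of infinitely many ends, I would follow exactly the strategy outlined in the paragraph preceding the theorem statement. By Theorems~\ref{thm1finallimite} and~\ref{asympthm}, $M$ has exactly two limit ends, all middle ends are planar, and $M$ admits a partial conformal compactification $\overline{M}$ obtained by filling in the planar middle ends; this $\overline{M}$ is a compact surface of genus $g$ minus two points (the two limit ends). I would equip $\overline{M}$ with a complete conformal metric $g_c$ that is isometric outside a compact core $K$ to the disjoint union of two half-cylinders $\esf^1 \times [0,\infty)$. As in the proof of Theorem~\ref{bounded}, bounded Jacobi functions on $M$ correspond via elliptic regularity to bounded solutions on $(\overline{M}, g_c)$ of $(\Delta_{g_c} + V)\widehat{v} = 0$, where $V = \mu^2 |\sigma|^2$ and $\mu^2$ is the conformal factor relating the induced metric on $M$ to $g_c$.

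The next step is to verify $V \in L^\infty(\overline{M})$. On the compact core $K$ this is automatic, and on each cylindrical end it follows from the exponential convergence to a Riemann minimal example (item~5 of Theorem~\ref{asympthm}), since the corresponding potential on any Riemann minimal example is bounded on its conformal cylinder (precisely as in the proof of Theorem~\ref{bounded}); the boundedness persists across the planar middle ends themselves since at such a point $g(w)\sim cw^2$ and the vanishing of $|\sigma|^2$ exactly compensates the blow-up of $\mu^2$. With $V \in L^\infty(\overline{M})$ in hand, I would adapt the argument of Assertions~\ref{assloMcOw1} and~\ref{assloMcOw2} as follows: choose $\delta > 0$ so that $\delta^2$ avoids the spectrum of $-\Delta_{\esf^1}$ and such that the spectral-gap condition $4\|V\|_{L^\infty} \leq \lambda_{j_0+1} - \lambda_{j_0}$ holds for some $j_0$ (always possible since $\lambda_j = j^2$ on $\esf^1$ have unbounded gaps, cf.\ Remark~\ref{remark5.4}). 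Then apply the weighted $L^2$-estimate of Assertion~\ref{assloMcOw1} on each cylindrical end separately, with weight $e^{-\delta t}$ on one end and $e^{\delta t}$ on the other, to obtain an analogue of inequality~(\ref{eq:Pacard7}) controlling any bounded kernel element by its $W^{1,2}$-norm on a fixed neighborhood of $K$. Combined with interior elliptic estimates and Rellich compactness on that neighborhood, the same contradiction argument used in Assertion~\ref{assloMcOw2} produces finite dimensionality.

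The main obstacle is essentially bookkeeping rather than substantive new analysis: Assertion~\ref{assloMcOw2} already covers a cylinder $\Sigma \times \R$ with two cylindrical ends, and the only structural change in the higher-genus setting is that the middle slab $\Sigma \times (-1,1)$ is replaced by a genus-$g$ compact piece $K'$ with two circle boundary components. Standard interior elliptic regularity and Rellich compactness on $K'$ go through without modification, so the full contradiction argument (normalize an infinite orthonormal family with respect to the weight $e^{-2\delta|t|}$, extract a $W^{1,2}$-convergent subsequence on $K'$, upgrade to an $L^2$-Cauchy sequence on all of $\overline{M}$ via the weighted estimate, and obtain a nonzero limit simultaneously orthogonal to every term) is formally identical to the genus-zero case. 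The only delicate verification beyond citations is the boundedness of $V$ at the planar middle ends noted above, which is a direct computation.
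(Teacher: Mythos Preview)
Your proposal is correct and follows essentially the same route as the paper. The paper does not give a standalone proof of Theorem~\ref{thm5.5}: the paragraph preceding the statement sketches exactly the two-case split you describe (finite ends via Collin and finite total curvature; infinite ends via the partial compactification $\overline{M}$ with two cylindrical ends and bounded potential $V$, then the Lockhart--McOwen/Pacard argument of Assertions~\ref{assloMcOw1}--\ref{assloMcOw2} adapted from $\esf^1\times\R$ to a compact genus-$g$ core with two half-cylinders attached), and your write-up simply supplies the details the paper leaves implicit. One minor remark: you invoke item~5 of Theorem~\ref{asympthm} (exponential convergence) to get $V\in L^\infty$, but this is more than needed---the boundedness of $V$ on the cylindrical ends already follows from the quasiperiodicity of the Gauss map on $M^+$ (the analogue of item~7 of Theorem~\ref{thmcurvestim}, which the paper notes carries over to the half-cylinder), together with the local computation at the planar ends you correctly flag.
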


\section{Appendix 2: Uniqueness of the Riemann minimal examples in the 2-ended periodic case.}
\label{app2}
Recall that $\mathcal{M}_1\subset \mathcal{M}$ is the subset of singly-periodic
surfaces that define a two-ended torus in their orientable quotient by a translation with smallest absolute total curvature. In Proposition~\ref{propos7.3} of this paper we showed that if $S_M$ is linear for a surface $M\in \mathcal{M}$, then $M\in \mathcal{M}_1$. At this point we can conclude that $M\in \mathcal{R}$ by quoting our previous characterization in~\cite{mpr1}. We give below an alternative proof of the characterization $\mathcal{M}_1=\mathcal{R}$ based on the Four Vertex Theorem.

Consider the flux map $h_1=h|_{{\cal M}_1}\colon {\cal M}_1\to (0,\infty )$ where $F_M=(h(M),0,1)$ is the
flux vector of $M\in {\cal M}_1$. We first prove that ${\cal R}$ is a connected component of ${\cal M}_1$.
Since ${\cal R}$ is a path connected closed set in ${\cal M}_1$, it remains to prove that ${\cal M}_1-{\cal R}$
is closed in ${\cal M}_1$. Otherwise, there exists a sequence of surfaces $\{\Sigma _n\} _n\subset {\cal M}_1-
{\cal R}$ which converges on compact subsets of $\R^3$ to some ${\cal R}_t\in {\cal R}$. Note that
the Shiffman functions $S_{\Sigma _n}$ of the $\Sigma _n$ are not identically zero, and after the normalization
$\widehat{S}_{\Sigma _n}=\frac{1}{\sup _{\Sigma _n}|S_{\Sigma _n}|}S_{\Sigma _n}$,
we find a bounded sequence of Jacobi functions which converges (up to extracting a subsequence) to a
periodic Jacobi function $\widehat{S}_{\infty }$ on ${\cal R}_t$. By the Four Vertex Theorem,
$\widehat{S}_{\Sigma _n}$
has at least four zeros on each compact horizontal section of ${\Sigma _n}$ (counted with
multiplicity), and the same holds for $\widehat{S}_{\infty }$ on each compact horizontal section of ${\cal R}_t$.
On the other hand, the only periodic Jacobi functions on ${\cal R}_t$ are the linear ones (this follows, for
instance, from Montiel and Ros~\cite{mro1} and also follows from our more general result in
Theorem~\ref{teorJacobiacotadaenRiemanneslineal}). This contradicts Assertion~\ref{ass9.5} and proves
that ${\cal R}$ is a connected component of ${\cal M}_1$.

Since ${\cal R}$ is a connected component of ${\cal M}_1$ and
$h|_{\cal R}\colon {\cal R}\to (0,\infty )$ is bijective, to deduce that
${\cal R}={\cal M}_1$ it suffices to prove that the following three properties.
\begin{enumerate}
\item $h_1$ is a proper map.
\item $h_1$ is an open map.
\item There exists $\ve >0$ such that if $h_1(M)<\ve $, then $M\in {\cal R}$.
\end{enumerate}
The properness of $h_1$ in point 1 above follows from the curvature estimates in Theorem~5 of~\cite{mpr3},
which in fact insures properness of $h\colon {\cal M}\to (0,\infty )$.
Both the openness point 2 and the local uniqueness point 3 above follow from arguments in~\cite{mpr1},
but we give simpler arguments below; once we prove these two points, then Proposition~\ref{propos7.3} will hold.

We first prove the openness of $h_1$ in point 2 above. Consider the space
${\cal W}_1=\{ (\Sigma ,g,[\a ])\} $ where $\Sigma $ is a compact Riemann surface of genus one,
$g\colon \Sigma \to \C \cup \{ \infty \} $ is a meromorphic function of degree two with
an order-two zero $p$ and an order-two pole $q$, and $[\a ]$ is a homology class in $H_1(\Sigma -\{ p,q\} ,\Z )$
which is non-trivial in $H_1(\Sigma ,\Z )$. We denote
the elements in ${\cal W}_1$  simply by $g$. The space ${\cal W}_1$ is a two-dimensional complex manifold, with
local charts given by $g\mapsto (a_1+a_2,a_1\cdot a_2)$, where $a_1,a_2\in \C-\{ 0\} $ are
the (possibly equal) branch values of $g\in {\cal W}_1$ close to a given element $g_0\in {\cal W}_1$
(in a chart, we can forget about the homology class associated to $g$ after identification with that of $g_0$).
Given $g\in {\cal W}_1$, we associate a unique holomorphic differential $\phi $ on $\Sigma $ by the equation
$\int _{\a }\phi =2\pi i$. Consider the {\it period map} $\mbox{Per}_1\colon {\cal W}_1\to \C^2$ given by
\[
\mbox{Per}_1(g)=\left( \int _{\a }\frac{1}{g}\phi ,\int _{\a }g\, \phi \right) .
\]
Then, the space of elements $g\in {\cal W}_1$ such that $(g,\phi )$ is the Weierstrass pair of an
immersed minimal surface are ${\cal M}_{1}^{\mbox{\footnotesize imm}}=
\mbox{Per}_1^{-1}(\{ (a,\overline{a})\ | \ a\in \C \} )$
(note that we do not need to impose any residue condition at
the ends, since $g$ has a unique zero and a unique pole and the sum of residues of a meromorphic differential
on a compact Riemann surface is zero). Since $\mbox{Per}_1$ is holomorphic, for $a\in \C $ fixed the set
${\cal M}_1^{\mbox{\footnotesize imm}}(a)=\mbox{Per}_1^{-1}(a)$ is a complex
analytic subvariety of ${\cal W}_1$. Since the limit of embedded surfaces is embedded,
we have that the subset ${\cal M}_1 \subset {\cal M}_{1}^{\mbox{\footnotesize imm}}$ of
{\it embedded} surfaces is closed in ${\cal M}_{1}^{\mbox{\footnotesize imm}}$.
An application the maximum principle at infinity~\cite{mr1} gives that ${\cal M}_1$ is also open in
${\cal M}_{1}^{\mbox{\footnotesize imm}}$. In particular, the set
${\cal M}_1(a)={\cal M}_{1}^{\mbox{\footnotesize imm}}(a)\cap {\cal M}_1$ is a complex analytic subvariety
of ${\cal W}_1$. By our uniform curvature estimates in
Theorem~5 of~\cite{mpr3} and subsequent uniform local area estimates,
 ${\cal M}_1(a)$ is compact. As the only compact, complex analytic subvarieties
of ${\cal W}_1$ are finite sets (see Lemma~4 in~\cite{mpr1}),
we deduce that ${\cal M}_1(a)$ is finite. Thus, given $M\in {\cal M}_1$,
there exists an open neighborhood $U$ of $M$ in ${\cal W}_1$ such that $U\cap {\cal M}_{1}(a)=
U\cap {\cal M}_{1}^{\mbox{\footnotesize imm}}(a)=\{ M\} $. In this setting, the openness
theorem for finite holomorphic maps (Chapter 5.2 of Griffiths and Harris~\cite{GriHar}) gives that
$\mbox{Per}_1$ is an open map locally around $M$. Finally, the relationship between the period map
$\mbox{Per}_1$ and the flux map $h_1\colon {\cal M}_1\to (0,\infty )$ gives the
desired {\bf openness} for $h_1$.

Our next statement is the local uniqueness point 3 in the list
of properties of $h_1$ (and thus, it finishes the proof of
Proposition~\ref{propos7.3}). In fact, the argument below does not use periodicity
for the surfaces in question, so it can be stated in ${\cal M}$ instead of in ${\cal M}_1$.

\begin{theorem}
\label{thmFalmostvertRiemann}
There exists $\ve >0$ such that of $M\in {\cal M}$ has flux vector $F_M=(h,0,1)$ with $0<h<\ve $,
then $M$ is a Riemann minimal example.
\end{theorem}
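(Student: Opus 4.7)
The approach is by contradiction, paralleling the Four Vertex / normalized-Shiffman argument used earlier in this appendix to show that $\mathcal{R}$ is a connected component of $\mathcal{M}_1$. The plan is to assume there is a sequence $\{M_n\}_n\subset\mathcal{M}\setminus\mathcal{R}$ with $F_{M_n}=(h_n,0,1)$ and $h_n\to 0^+$, and derive a contradiction by (a) showing that after suitable translations $M_n$ subconverges to a vertical catenoid $\mathcal{C}$, (b) extracting a non-trivial bounded Jacobi limit $v_\infty$ of the normalized Shiffman functions on $\mathcal{C}$, and (c) using that linear Jacobi functions on a catenoid have at most two zeros per horizontal circle, while the Four Vertex Theorem forces at least four. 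By Proposition~\ref{propos7.3} (via Lemma~\ref{lema7.2}), $M_n\notin\mathcal{R}$ forces $S_n:=S_{M_n}$ to be not identically zero; by Corollary~\ref{corol4.10} it extends to a bounded smooth function on the conformal cylinder $\mathbb{C}/\langle i\rangle$. Set $\widehat{S}_n:=S_n/\|S_n\|_\infty$.

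\textbf{Compactness step.} By item~5 of Theorem~\ref{thmcurvestim} the Gaussian curvatures of $M_n$ are uniformly bounded (since $\{h_n\}$ is bounded). Recalling from Section~\ref{sec2} that the Riemann examples $\mathcal{R}_t$ converge as $t\to 0^+$, after suitable translations, to a vertical catenoid, one would select points $q_n\in M_n$ lying on a catenoidal neck (i.e., a point at which the Gauss map of $M_n$ is horizontal and $|K_{M_n}|$ is bounded below on a definite scale), chosen additionally so that $|\widehat{S}_n(q_n)|$ is not smaller than a fixed positive constant. The existence of such $q_n$ is to be verified by combining the quasi-periodicity of $M_n$ (item~7 of Theorem~\ref{thmcurvestim}) with the analogous quasi-periodicity of $S_n$. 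After translating each $M_n$ so that $q_n$ lies at the origin and passing to a subsequence, the translated $M_n$ would converge smoothly on compact subsets of $\mathbb{R}^3$ to a properly embedded, non-flat, genus-zero minimal surface $M_\infty$ with flux $(0,0,1)$. Appealing to Theorem~\ref{thm1finallimite}, and ruling out planes (via $|K_{M_n}(q_n)|$ bounded below) and helicoids (incompatible with a bounded-curvature limit containing a catenoidal neck as a limit feature), one concludes that $M_\infty$ is a vertical catenoid $\mathcal{C}$.

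\textbf{Jacobi limit and Four Vertex contradiction.} The normalized Shiffman functions $\widehat{S}_n$ solve the Jacobi equation on the translated $M_n$ with uniformly bounded coefficients and satisfy $\|\widehat{S}_n\|_\infty=1$. By elliptic regularity, a further subsequence converges smoothly on compact subsets of $\mathcal{C}$ to a bounded Jacobi function $v_\infty$ on $\mathcal{C}$; by the choice of $q_n$, $v_\infty\not\equiv 0$. Bounded Jacobi functions on a catenoid are linear functions of its Gauss map $N_\mathcal{C}$ (classical; alternatively, an application of the Fourier-decomposition argument of Appendix~1 to the cylindrical conformal model of $\mathcal{C}$), so $v_\infty=\langle N_\mathcal{C},a\rangle$ for some $a\in\mathbb{R}^3\setminus\{0\}$. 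On a horizontal circle $\Gamma_s=\mathcal{C}\cap\{x_3=s\}$ on which $v_\infty$ is not identically zero, the restriction takes the form $\alpha\cos\theta+\beta\sin\theta+\gamma$ in the natural angular parameter and so has at most two zeros counted with multiplicity. On the other hand, by the Four Vertex Theorem applied to the simple closed horizontal sections of $M_n$ (item~4 of Theorem~\ref{thmcurvestim}), $\widehat{S}_n$ has at least four zeros counted with multiplicity on each such section. Applying Hurwitz's theorem to the smooth convergence $\widehat{S}_n\to v_\infty$ in a tubular neighborhood of a circle $\Gamma_{s_0}$ on which $v_\infty\not\equiv 0$ then forces $v_\infty|_{\Gamma_{s_0}}$ to carry at least four zeros counted with multiplicity, contradicting the upper bound of two.

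\textbf{Main obstacle.} The hard part is the compactness step: rigorously identifying the subsequential limit $M_\infty$ as a vertical catenoid rather than a plane, helicoid, or more degenerate limit lamination, and coordinating the selection of the centers $q_n$ with the normalization of $\widehat{S}_n$ so that the limit Jacobi function $v_\infty$ is non-trivial. This requires carefully exploiting the quasi-periodic structure of the $M_n$, the fine asymptotics of the Riemann family as $h\to 0$, the uniform bounded-curvature framework, and quite possibly the maximum principle at infinity to exclude unwanted limits. Once $M_\infty=\mathcal{C}$ is established with $v_\infty\not\equiv 0$, the remainder of the argument is a direct adaptation of the Four Vertex mechanism already used to prove that $\mathcal{R}$ is a connected component of $\mathcal{M}_1$.
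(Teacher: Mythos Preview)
Your overall strategy is close to the paper's, but there is a genuine gap in the compactness step that inverts the logic of the argument. You want to select points $q_n$ on a catenoidal neck with $|\widehat S_n(q_n)|$ bounded below by a fixed positive constant, and you note that ``the existence of such $q_n$ is to be verified.'' In fact such points need not exist: the paper's Assertion~\ref{assSchifnormalizedgoesto0oncat} shows precisely that on the catenoidal pieces $C_n$ the normalized Shiffman function satisfies $\sup_{C_n}|v_n|\to 0$. The Four Vertex argument you describe is exactly the mechanism that \emph{proves} this vanishing --- not a contradiction to a presupposed nonzero limit. So your step~(b), extracting a nontrivial Jacobi limit on the catenoid, cannot be carried out as stated.

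What the paper does instead is use the Four Vertex reasoning to force $\sup_{C_n}|v_n|\to 0$ on every catenoidal piece, which pushes the maximum point $p_n$ of $|v_n|$ into one of the flat graphical regions $\Omega_n$ containing a planar end. There the limit surface is \emph{not} a catenoid, and the contradiction comes from a different tool: the elementary maximum principle of Lemma~\ref{lemaprincmaxJacobi}, which says that on a strictly stable piece with $N_3\geq 1-\delta$ one has $(1-\delta)\sup_{\Omega_n}|v_n|\leq \sup_{\partial\Omega_n}|v_n|$. Since $\partial\Omega_n\subset\partial C_n$ and $\sup_{C_n}|v_n|\to 0$, this contradicts $|v_n(p_n)|=1$. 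Your proposal is missing this second half of the argument; once you add it (and reorient the Four Vertex step as proving vanishing on $C_n$ rather than assuming non-vanishing), you recover essentially the paper's proof.
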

\begin{proof}We will present here a different proof from the one we gave in~\cite{mpr1}.
Arguing by contradiction, assume we have a sequence $\{ M_n\} _n \subset {\cal M}$ with
flux vector $F_{M_n}=(h(M_n),0,1)$ and $h(M_n)\to 0$ as $n\to \infty $, and assume that none of the $M_n$
are Riemann minimal examples. Point~5 in
Theorem~\ref{thmcurvestim} insures that there exists a uniform bound for the Gaussian curvature
of the surfaces $M_n$, $n\in \N $. A suitable modification of the arguments in the proof of Lemma~3
in~\cite{mpr1} can be used to show that as $n \rightarrow \infty$, the surfaces
$M_n$  become arbitrarily close to an infinite discrete collection of larger and
larger translated pieces of a vertical catenoid with flux $e_3=(0,0,1)$ joined by
flatter and flatter graphs containing the ends of $M_n$. For each $n$, let
$\overline{M}_n$ be the conformal cylinder obtained by attaching the middle ends to
$M_n$, and let $S_{M_n}$ be the Shiffman function of $M_n$.

By Corollary~\ref{corol4.10}, $S_{M_n}$ extends smoothly to a bounded function on $\overline{M}_n$.
Note that for fixed $n$, the function $|S_{M_n}|$ needs not attain its maximum on
$M_n$, but in that case we can exchange each $M_n$ by a limit of suitable
translations of $M_n$ (hence such limit also belongs to ${\cal M}$), so that the Shiffman function
in absolute value reaches its maximum on this limit. Since the flux of a surface in ${\cal M}$
does not change under translations, we do not lose generality by assuming that for all $n$ large,
$|S_{M_n}|$ attains its maximum at a point $p_n\in M_n$. We now define
$v_n=\frac{1}{|S_{M_n}(p_n)|}S_{M_n}$.

Take a sequence $\{ \de (n)\} _n\subset (0,1)$ converging to $1$. For $n$ large, let
$C_n\subset M_n$ be one of the connected components of $\langle N_n,e_3\rangle
^{-1}[-\de (n),\de (n)]$ which contains $p_n$ or is adjacent to a horizontal
graphical region containing $p_n$.
 By our previous arguments, $C_n$ is arbitrarily close to a
translated image of the intersection of a vertical catenoid $C_{\infty }$ of
vertical flux $e_3$ centered at the origin with a ball of arbitrarily large radius
also centered at the origin.
\begin{assertion}
\label{assSchifnormalizedgoesto0oncat} $\{ \sup _{C_n}|v_n|\} _n$ tends to
zero as $n\to \infty $.
\end{assertion}
{\it Proof of Assertion~\ref{assSchifnormalizedgoesto0oncat}.} Since
$\{ v_n|_{C_n}\} _n$ is a bounded sequence of Jacobi functions on the forming
catenoidal pieces $C_n$ and suitable
translations of the $C_n$ converge to the catenoid $C_{\infty }$, it is not
difficult to check that a subsequence of $\{ v_n|_{C_n}\} _n$ (denoted in the same
way) converges to a bounded Jacobi function on $C_{\infty }$. Since bounded Jacobi
functions on a catenoid are linear, we conclude that $\{ v_n|_{C_n}\} _n$ converges
to a linear Jacobi function $v$ on $C_{\infty }$ (or by identifying the compactification
of $C_{\infty }$ with the sphere $\esf^2$ through the Gauss map of $C_{\infty }$,
we can view $v$ as a linear function
on $\esf ^2$). We now check that $v$ is identically zero on $\esf^2$.

Arguing by contradiction, suppose $v$ is not identically zero on $\esf^2$. Recall that the
Shiffman function $(S_{M_n})|_{C_n}$ measures the derivative of the curvature of each
planar section of $C_n$ with respect to a certain parameter times a positive
function. By the Four Vertex Theorem, each horizontal section of $C_n$ contains at
least four zeros of $S_{M_n}$ and so, also at least four zeros of $v_n$. Since
horizontal sections of the $C_n$ (suitably translated) converge to horizontal
sections of $C_{\infty }$ and any non-trivial linear function on $\esf ^2$ has at
most two zeros on each horizontal section (with a possible exceptional horizontal
section if the linear function is the vertical coordinate, but this does not affect
our argument by taking a different horizontal section), we conclude that at least
two zeros of $v_n$ in a certain horizontal section must collapse into a zero of $v$;
hence the gradient of $v$ will vanish at such a collapsing zero. But the gradient of
a non-trivial linear function on $\esf^2$ never vanishes at a zero of the function.
This contradiction proves Assertion~\ref{assSchifnormalizedgoesto0oncat}.

Recall that $|v_n(p_n)|=1$ for all $n$. By
Assertion~\ref{assSchifnormalizedgoesto0oncat}, $N_n(p_n)$ must converge to
the vertical or equivalently, $p_n$ must lie in one of the graphical components of
the complement of all the catenoidal pieces in $M_n$, a non-compact minimal graph
which we will denote by $\Omega _n$. Note that $\Omega _n$ is a graph over an
unbounded domain in the plane $\{ x_3=0\} $, $\partial \Omega _n$ consists of two
almost-circular, almost-horizontal curves with $\langle N_n,e_3\rangle |_{\partial
\Omega _n}= \pm \de (n)$ and $\Omega _n$ contains exactly one end of $M_n$. Hence we
can apply Lemma~\ref{lemaprincmaxJacobi} below to the minimal surface $\Omega _n$ and to the bounded
Jacobi function $v_n|_{\Omega _n}$, contradicting that $v_n|_{\partial \Omega _n}$
converges to zero (Assertion~\ref{assSchifnormalizedgoesto0oncat}) but
$|v_n(p_n)|=1$. This contradiction finishes the proof of Theorem~\ref{thmFalmostvertRiemann}.
\end{proof}

\begin{lemma}
\label{lemaprincmaxJacobi}
 Let $\de \in (0,1)$ and let $\Omega \subset \R^3$  be a complete, non-compact minimal surface
with non-empty compact boundary and finite total curvature, whose Gauss map
$N$ satisfies $N_3=\langle N,e_3\rangle \geq 1-\de $ in $\Omega $. Then, for every
bounded Jacobi function $v$ on $\Omega $,
\[
(1-\de )\sup _{\Omega }|v|\leq \sup _{\partial \Omega }|v|.
\]
\end{lemma}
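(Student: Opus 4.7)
The plan is to exploit the positivity of $N_3=\langle N,e_3\rangle$, which is itself a Jacobi function by harmonicity of the Gauss map, in order to reduce the claimed inequality to a classical maximum principle for a divergence-form elliptic operator with no zeroth-order term. By hypothesis $N_3\geq 1-\delta>0$ on $\Omega$, so $N_3$ is a strictly positive Jacobi function, and I would set $w=v/N_3$, which is well-defined and bounded on $\Omega$ since $v$ is bounded and $N_3$ is bounded below by $1-\delta$.

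A direct computation from $\Delta v+|\sigma|^2 v=0$ and $\Delta N_3+|\sigma|^2 N_3=0$ yields $N_3\Delta w+2\langle\nabla N_3,\nabla w\rangle=0$, which rewrites as
\[
\Div\bigl(N_3^2\,\nabla w\bigr)=0\quad\text{on }\Omega.
\]
So $w$ solves a homogeneous, linear, divergence-form elliptic equation whose coefficient $N_3^2$ lies in $[(1-\delta)^2,1]$. The next step is to extend $w$ across the ends so as to apply the maximum principle on a compact surface with boundary. Because $\Omega$ has finite total curvature, each end is conformally a punctured disk, and the asymptotic description of such ends (planar or catenoidal, by Schoen--Osserman) together with $N_3\geq 1-\delta>0$ forces every end to be either a planar end with horizontal upward-pointing asymptotic normal or a vertical-axis upward catenoidal end; in both cases $N_3\to 1$ at the puncture. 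Hence in conformal coordinates on a small punctured disk around each puncture, the coefficient $N_3^2$ is continuous and uniformly bounded above and away from zero, and $w$ is a bounded weak solution. A standard removable singularity result for bounded weak solutions of uniformly elliptic divergence-form equations without zeroth-order term then implies that $w$ extends continuously (in fact smoothly) across each puncture, producing a continuous bounded solution $\bar w$ on the compact surface $\bar\Omega$ whose boundary is $\partial\Omega$.

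Finally, the weak maximum principle applied to $\bar w$ on $\bar\Omega$ gives $\sup_{\bar\Omega}|\bar w|=\sup_{\partial\Omega}|\bar w|$. Since $0<N_3\leq 1$ on $\Omega$, one has $|v|=N_3|w|\leq|w|$ on $\Omega$, and $|w|=|v|/N_3\leq(1-\delta)^{-1}|v|$ on $\partial\Omega$, so
\[
\sup_\Omega|v|\leq\sup_{\bar\Omega}|\bar w|=\sup_{\partial\Omega}|\bar w|\leq\frac{1}{1-\delta}\sup_{\partial\Omega}|v|,
\]
which is equivalent to the stated inequality. The main obstacle is the removable-singularity step at the ends: it is where the finite-total-curvature hypothesis is used essentially, both to identify the conformal type of each end as a punctured disk and to guarantee $N_3\to 1$ at each puncture so that the equation for $w$ remains uniformly elliptic there; the rest of the argument is elementary.
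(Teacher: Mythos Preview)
Your proof is correct and rests on the same underlying idea as the paper's: the positivity of the Jacobi function $N_3$ yields a maximum principle for the Jacobi operator on the compactified surface. The paper carries this out by direct comparison---observing that $v\pm\frac{a}{1-\delta}N_3$ (with $a=\sup_{\partial\Omega}|v|$) are Jacobi functions with the correct sign on $\partial\Omega$, then invoking strict stability to propagate the sign to all of $\overline{\Omega}$, which gives $|v|\leq\frac{a}{1-\delta}N_3\leq\frac{a}{1-\delta}$. You instead perform the ground-state substitution $w=v/N_3$, reducing to a pure divergence-form equation $\Div(N_3^2\nabla w)=0$ and applying the classical maximum principle to $w$. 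These two arguments are standard equivalent formulations of the same principle (positive solution of a Schr\"odinger operator $\Leftrightarrow$ maximum principle holds), so neither is really more general; the paper's version is a few lines shorter because it avoids the explicit removable-singularity discussion for $w$.

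One minor inaccuracy: you assert that $N_3\to 1$ at each end, arguing the end must be horizontal planar or vertical-axis catenoidal. This is not forced by the hypothesis $N_3\geq 1-\delta$; the limiting normal at an end need only satisfy $\langle N_\infty,e_3\rangle\geq 1-\delta$. Fortunately your argument does not actually use $N_3\to 1$: all you need is that $N_3$ extends continuously across each puncture (which follows from finite total curvature) and remains bounded below by $1-\delta$, so that the equation for $w$ stays uniformly elliptic and the bounded-solution removable-singularity theorem applies. So the overclaim is harmless.
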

\begin{proof}
Since $\Omega $ has finite total curvature, $\Omega $ compactifies after attaching
its ends to a compact Riemann surface $\overline{\Omega }$ with boundary. As $v$
is bounded on $\Omega $, $v$ extends smoothly across the punctures to a Jacobi
function on $\overline{\Omega }$. We will let $a=\sup _{\partial \Omega }|v|$. Since
$N_3\geq 1-\de >0$ in $\Omega $ and $N_3$ is a Jacobi function, we conclude that $\Omega $
is strictly stable and so, $a>0$. Now, $v+\frac{a}{1-\de }N_3\geq 0$ on $\partial \Omega $ and
$v+\frac{a}{1-\de }N_3$ is again a Jacobi function on $\Omega $. Thus by stability,
$v+\frac{a}{1-\de }N_3\geq 0$ in $\Omega $. Analogously, $v-\frac{a}{1-\de }N_3\leq 0$ in
$\partial \Omega $, and hence $v-\frac{a}{1-\de }N_3\leq 0$ in $\Omega $. These inequalities
together with $N_3\leq 1$ give $|v|\leq \frac{a}{1-\de }$ in $\Omega $, as desired.
\end{proof}


\center{William H. Meeks, III at profmeeks@gmail.com\\
Mathematics Department, University of Massachusetts, Amherst, MA
01003}
\center{Joaqu\'\i n P\'{e}rez at jperez@ugr.es\\
Department of Geometry and Topology, University of Granada, 18071
Granada, Spain}
\center{Antonio Ros at aros@ugr.es\\
Department of Geometry and Topology, University of Granada, 18071
Granada, Spain}

\bibliographystyle{plain}

\bibliography{bill}

\begin{thebibliography}{10}

\bibitem{bc1}
J.~L. Barbosa and M.~{do Carmo}.
\newblock On the size of a stable minimal surface in $\mathbb{R}^3$.
\newblock {\em Amer. J. of Math.}, 98(2):515--528, 1976.
\newblock MR0413172, Zbl 0332.53006.

\bibitem{bb2}
J.~Bernstein and C.~Breiner.
\newblock Conformal structure of minimal surfaces with finite topology.
\newblock {\em Comm. Math. Helv.}, 86(2):353--381, 2011.
\newblock MR2775132, Zbl 1213.53011.

\bibitem{Bord1}
J.~C. Borda.
\newblock Eclaircissement sur les m\'{e}thodes de trouver ler courbes qui
  jouissent de quelque propi\'{e}t\'{e} du maximum ou du minimum.
\newblock {\em M\'{e}m. Acad. Roy. Sci. Paris}, pages 551--565, 1770.
\newblock Presented in 1767.

\bibitem{chm3}
M.~Callahan, D.~Hoffman, and W.~H. Meeks~III.
\newblock The structure of singly-periodic minimal surfaces.
\newblock {\em Invent. Math.}, 99:455--481, 1990.
\newblock MR1032877, Zbl 695.53005.

\bibitem{chpe1}
S.-S. Chern and C.~Peng.
\newblock Lie groups and {K}d{V} equations.
\newblock {\em Manuscripta Math.}, 28(1-3):207--217, 1979.
\newblock MR0535702, Zbl 0408.35074.

\bibitem{cm39}
T.~H. Colding, C.~de~Lellis, and W.~P. Minicozzi~II.
\newblock Three circles theorems for {S}chr\"{o}dinger operators on cylindrical
  ends and geometric applications.
\newblock {\em Comm. Pure Appl. Math.}, 61(11):1540--1602, 2008.
\newblock MR2444375, Zbl pre05358518.

\bibitem{cm25}
T.~H. Colding and W.~P. Minicozzi~II.
\newblock The space of embedded minimal surfaces of fixed genus in a
  $3$-manifold {V}; {F}ixed genus.
\newblock Preprint math.DG/0509647v2 (2012).

\bibitem{cm21}
T.~H. Colding and W.~P. Minicozzi~II.
\newblock The space of embedded minimal surfaces of fixed genus in a
  $3$-manifold {I}; {E}stimates off the axis for disks.
\newblock {\em Ann. of Math.}, 160:27--68, 2004.
\newblock MR2119717, Zbl 1070.53031.

\bibitem{cm22}
T.~H. Colding and W.~P. Minicozzi~II.
\newblock The space of embedded minimal surfaces of fixed genus in a
  $3$-manifold {I}{I}; {M}ulti-valued graphs in disks.
\newblock {\em Ann. of Math.}, 160:69--92, 2004.
\newblock MR2119718, Zbl 1070.53032.

\bibitem{cm24}
T.~H. Colding and W.~P. Minicozzi~II.
\newblock The space of embedded minimal surfaces of fixed genus in a
  $3$-manifold {I}{I}{I}; {P}lanar domains.
\newblock {\em Ann. of Math.}, 160:523--572, 2004.
\newblock MR2123932, Zbl 1076.53068.

\bibitem{cm23}
T.~H. Colding and W.~P. Minicozzi~II.
\newblock The space of embedded minimal surfaces of fixed genus in a
  $3$-manifold {I}{V}; {L}ocally simply-connected.
\newblock {\em Ann. of Math.}, 160:573--615, 2004.
\newblock MR2123933, Zbl 1076.53069.

\bibitem{col1}
P.~Collin.
\newblock Topologie et courbure des surfaces minimales de $\rth$.
\newblock {\em Ann. of Math. (2)}, 145--1:1--31, 1997.
\newblock MR1432035, Zbl 886.53008.

\bibitem{ckmr1}
P.~Collin, R.~Kusner, W.~H. Meeks~III, and H.~Rosenberg.
\newblock The geometry, conformal structure and topology of minimal surfaces
  with infinite topology.
\newblock {\em J. Differential Geom.}, 67:377--393, 2004.
\newblock MR2153082, Zbl 1098.53006.

\bibitem{Douady1}
A.~Douady and R.~Douady.
\newblock Changements de cadres \'a partir des surfaces minimales.
\newblock {\em Cahier de DIDIREM}, 23(1), 1994.
\newblock Edited by IREM de Paris7.

\bibitem{ek2}
N.~Ejiri and M.~Kotani.
\newblock Index and flat ends of minimal surfaces.
\newblock {\em Tokyo J. Math.}, 16(1):37--48, 1993.
\newblock MR1223287, Zbl 0856.53013.

\bibitem{eul}
L.~Euler.
\newblock {\em Methodus inveniendi lineas curvas maximi minimive propietate
  gaudeates sive solutio problematis isoperimetrici latissimo sensu accepti}.
\newblock Harvard Univ. Press, Cambridge, MA, 1969.
\newblock {\it Opera omnia(1)}, {\bf 24}, Fussli, Turici, 1952. A source book
  in mathematics, partially translated by D. J. Struik, see pages 399--406.

\bibitem{fan4}
Y.~Fang.
\newblock On minimal annuli in a slab.
\newblock {\em Comment. Math. Helv.}, 69(3):417--430, 1994.
\newblock MR1289335, Zbl 0819.53006.

\bibitem{fme2}
C.~Frohman and W.~H.~Meeks III.
\newblock The ordering theorem for the ends of properly embedded minimal
  surfaces.
\newblock {\em Topology}, 36(3):605--617, 1997.
\newblock MR1422427, Zbl 878.53008.

\bibitem{gewe1}
F.~Gesztesy and R.~Weikard.
\newblock Elliptic algebro-geometric solutions of the {K}d{V} and {AKNS}
  hierarchies---an analytic approach.
\newblock {\em Bull. Amer. Math. Soc. (N.S.)}, 35(4):271--317, 1998.
\newblock MR1638298, Zbl 0909.34073.

\bibitem{gt1}
D.~Gilbarg and N.~S. Trudinger.
\newblock {\em Elliptic partial differential equations of second order}.
\newblock Springer-Verlag, New York, 2nd edition, 1983.
\newblock MR0737190, Zbl 0562.35001.

\bibitem{gope1}
R.~E. Goldstein and D.~M. Petrich.
\newblock The {K}orteweg-de {V}ries hierarchy as dynamics of closed curves in
  the plane.
\newblock {\em Phys. Rev. Lett.}, 67(23):3203--3206, 1991.
\newblock MR1135964, Zbl 0990.37519.

\bibitem{GriHar}
P.~Griffiths and J.~Harris.
\newblock {\em Principles of Algebraic Geometry}.
\newblock (Pure and Applied Mathematics). Wiley-Interscience, 1978.
\newblock MR0507725, Zbl 408.14001.

\bibitem{hauP1}
L.~Hauswirth and F.~Pacard.
\newblock Higher genus {R}iemann minimal surfaces.
\newblock {\em Inventiones Mathematicae}, 169(3):569--620, 2007.
\newblock MR2336041, Zbl 1129.53009.

\bibitem{hm10}
D.~Hoffman and W.~H. Meeks~III.
\newblock The strong halfspace theorem for minimal surfaces.
\newblock {\em Invent. Math.}, 101:373--377, 1990.
\newblock MR1062966, Zbl 722.53054.

\bibitem{htw1}
D.~Hoffman, M.~Traizet, and B.~White.
\newblock Helicoidal minimal surfaces of precribed genus {I}{I}.
\newblock Preprint, arXiv:1304.6180.

\bibitem{hweb1}
D.~Hoffman, M.~Weber, and M.~Wolf.
\newblock An embedded genus-one helicoid.
\newblock {\em Ann. of Math.}, 169(2):347--448, 2009.
\newblock MR2480608, Zbl 1213.49049.

\bibitem{hofw1}
D.~Hoffman and B.~White.
\newblock Genus-one helicoids from a variational point of view.
\newblock {\em Comm. Math. Helv.}, 83(4):767--813, 2008.
\newblock MR2442963, Zbl 1161.53009.

\bibitem{joshi1}
N.~Joshi.
\newblock The second {P}ainlev\'e hierarchy and the stationary {K}d{V}
  hierarchy.
\newblock {\em Publ. Res. Inst. Math. Sci.}, 40(3):1039--1061, 2004.
\newblock MR2074710, Zbl 1063.33030.

\bibitem{lag1}
J.~L. Lagrange.
\newblock Essai d'une nouvelle m\'ethode pour determiner les maxima et les
  minima des formules integrales indefinies.
\newblock {\em Miscellanea Taurinensia 2}, 325(1):173--199, 1760.

\bibitem{loMcOw1}
R.~B. Lockhart and R.~C. Mc{O}wen.
\newblock Elliptic differential operators on noncompact manifolds.
\newblock {\em Ann. Scuola Norm. Sup. Pisa}, 12(3):409--447, 1985.
\newblock MR0837256, Zbl 0615.58048.

\bibitem{lor1}
F.~J. L\'opez and A.~Ros.
\newblock On embedded complete minimal surfaces of genus zero.
\newblock {\em J. Differential Geom.}, 33(1):293--300, 1991.
\newblock MR1085145, Zbl 719.53004.

\bibitem{me30}
W.~H. Meeks~III.
\newblock The limit lamination metric for the {C}olding-{M}inicozzi minimal
  lamination.
\newblock {\em Illinois J. of Math.}, 49(2):645--658, 2005.
\newblock MR2164355, Zbl 1087.53058.

\bibitem{mpe3}
W.~H. Meeks~III and J.~P\'{e}rez.
\newblock Embedded minimal surfaces of finite topology.
\newblock Preprint available at {\tt
  http://www.ugr.es/local/jperez/papers/papers.htm}.

\bibitem{mpe1}
W.~H. Meeks~III and J.~P\'{e}rez.
\newblock Conformal properties in classical minimal surface theory.
\newblock In {\em Surveys of Differential Geometry IX - Eigenvalues of
  Laplacian and other geometric operators}, pages 275--336. International
  Press, edited by Alexander Grigor'yan and Shing Tung Yau, 2004.
\newblock MR2195411, Zbl 1086.53007.

\bibitem{mpr8}
W.~H. Meeks~III, J.~P\'{e}rez, and A.~Ros.
\newblock Bounds on the topology and index of classical minimal surfaces.
\newblock Preprint available at {\tt
  http://www.ugr.es/local/jperez/papers/papers.htm}.

\bibitem{mpr9}
W.~H. Meeks~III, J.~P\'{e}rez, and A.~Ros.
\newblock The embedded {C}alabi-{Y}au conjectures for finite genus.
\newblock Preprint available at {\tt
  http://www.ugr.es/local/jperez/papers/papers.htm}.

\bibitem{mpr1}
W.~H. Meeks~III, J.~P\'{e}rez, and A.~Ros.
\newblock Uniqueness of the {R}iemann minimal examples.
\newblock {\em Invent. Math.}, 133:107--132, 1998.
\newblock MR1626477, Zbl 916.53004.

\bibitem{mpr3}
W.~H. Meeks~III, J.~P\'{e}rez, and A.~Ros.
\newblock The geometry of minimal surfaces of finite genus {I}; curvature
  estimates and quasiperiodicity.
\newblock {\em J. Differential Geom.}, 66:1--45, 2004.
\newblock MR2128712, Zbl 1068.53012.

\bibitem{mpr4}
W.~H. Meeks~III, J.~P\'{e}rez, and A.~Ros.
\newblock The geometry of minimal surfaces of finite genus {I}{I}; nonexistence
  of one limit end examples.
\newblock {\em Invent. Math.}, 158:323--341, 2004.
\newblock MR2096796, Zbl 1070.53003.

\bibitem{mr1}
W.~H. Meeks~III and H.~Rosenberg.
\newblock The maximum principle at infinity for minimal surfaces in flat
  three-manifolds.
\newblock {\em Comment. Math. Helvetici}, 65:255--270, 1990.
\newblock MR1057243, Zbl 713.53008.

\bibitem{mr8}
W.~H. Meeks~III and H.~Rosenberg.
\newblock The uniqueness of the helicoid.
\newblock {\em Ann. of Math.}, 161:723--754, 2005.
\newblock MR2153399, Zbl 1102.53005.

\bibitem{meu1}
J.~B. Meusnier.
\newblock M\'{e}moire sur la courbure des surfaces.
\newblock {\em M\'{e}m. Math\'{e}m. Phys. Acad. Sci. Paris, pr\'{e}s. par div.
  Savans}, 10:477--510, 1785.
\newblock Presented in 1776.

\bibitem{mro1}
S.~Montiel and A.~Ros.
\newblock Schr\"{o}dinger operators associated to a holomorphic map.
\newblock In {\em Global Differential Geometry and Global Analysis (Berlin,
  1990)}, volume 1481 of {\em Lecture Notes in Mathematics}, pages 147--174.
  Springer-Verlag, 1991.
\newblock MR1178529, Zbl 744.58007.

\bibitem{os3}
R.~Osserman.
\newblock Global properties of minimal surfaces in {$E^3$} and {$E^n$}.
\newblock {\em Ann. of Math.}, 80(2):340--364, 1964.
\newblock MR0179701, Zbl 0134.38502.

\bibitem{os1}
R.~Osserman.
\newblock {\em A Survey of Minimal Surfaces}.
\newblock Dover Publications, New York, 2nd edition, 1986.
\newblock MR0852409, Zbl 0209.52901.

\bibitem{perez3}
J.~P\'{e}rez.
\newblock On singly-periodic minimal surfaces with planar ends.
\newblock {\em Transactions of the AMS}, 6:2371--2389, 1997.
\newblock MR1407709, Zbl 882.53007.

\bibitem{pro4}
J.~P\'{e}rez and A.~Ros.
\newblock The space of properly embedded minimal surfaces with finite total
  curvature.
\newblock {\em Indiana Univ. Math. J.}, 45(1):177--204, 1996.
\newblock MR1406689, Zbl 864.53008.

\bibitem{ri2}
B.~Riemann.
\newblock \"{U}ber die {F}l\"{a}che vom kleinsten {I}nhalt bei gegebener
  {B}egrenzung.
\newblock {\em Abh. K\"{o}nigl, d. Wiss. G\"{o}ttingen, Mathem. Cl.}, 13:3--52,
  1867.
\newblock K. Hattendorf, editor. JFM 01.0218.01.

\bibitem{ri1}
B.~Riemann.
\newblock {\em Ouevres Math\'ematiques de {R}iemann}.
\newblock Gauthiers-Villars, Paris, 1898.

\bibitem{sc3}
R.~Schoen.
\newblock {\em Estimates for Stable Minimal Surfaces in Three Dimensional
  Manifolds}, volume 103 of {\em Ann. of Math. Studies}.
\newblock Princeton University Press, 1983.
\newblock MR0795231, Zbl 532.53042.

\bibitem{sc1}
R.~Schoen.
\newblock Uniqueness, symmetry, and embeddedness of minimal surfaces.
\newblock {\em J. Differential Geom.}, 18:791--809, 1983.
\newblock MR0730928, Zbl 0575.53037.

\bibitem{SeWi}
G.~Segal and G.~Wilson.
\newblock Loop groups and equations of ${K}d{V}$ type.
\newblock {\em Pub. Math. de I.H.E.S.}, 61:5--65, 1985.
\newblock MR0783348, Zbl 0592.35112.

\bibitem{sh1}
M.~Shiffman.
\newblock On surfaces of stationary area bounded by two circles, or convex
  curves, in parallel planes.
\newblock {\em Ann. of Math.}, 63:77--90, 1956.
\newblock MR0074695, Zbl 0070.16803.

\bibitem{tra1}
M.~Traizet.
\newblock An embedded minimal surface with no symmetries.
\newblock {\em J. Differential Geometry}, 60(1):103--153, 2002.
\newblock MR1924593, Zbl 1054.53014.

\bibitem{ww1}
M.~Weber and M.~Wolf.
\newblock Teichm\"{u}ller theory and handle addition for minimal surfaces.
\newblock {\em Ann. of Math.}, 156:713--795, 2002.
\newblock MR1954234, Zbl 1028.53009.

\bibitem{weik1}
R.~Weikard.
\newblock On rational and periodic solutions of stationary {K}d{V} equations.
\newblock {\em Doc. Math.}, 4:107--126 (electronic), 1999.
\newblock MR1683290, Zbl 0972.35121.

\bibitem{wh1}
B.~White.
\newblock The space of m-dimensional surfaces that are stationary for a
  parametric elliptic functional.
\newblock {\em Indiana {Univ.} {M}ath. {J}ournal}, 30:567--602, 1987.
\newblock MR0905611, Zbl 0770.58005.

\end{thebibliography}

\end{document}